\documentclass[CJK,11pt]{amsart}
\usepackage{geometry}
\geometry{top=2.5cm, bottom=2.8cm, left= 1.5cm, right= 1.5cm}

\usepackage{amsmath,amsfonts,amssymb,amsthm}
\usepackage{graphics}
\usepackage{dsfont}
\usepackage{epsfig}
\usepackage{esint}
\usepackage[colorlinks,linkcolor=blue]{hyperref}
\setlength{\parindent}{0cm}

\newcommand{\lessim}{\stackrel{<}{\sim}}

\newcommand{\ignore}[1]{}

\newtheorem{definition}{Definition}
\newtheorem{proposition}{Proposition}
\newtheorem{theorem}{Theorem}

\newtheorem{lemma}{Lemma}
\newtheorem{corollary}{Corollary}
\newtheorem{assumption}{Assumption}



\newcommand{\R}{\mathbb{R}}

\newcommand{\lt}{\left}
\newcommand{\rt}{\right}
\newcommand{\Boule}{B}
\newcommand{\langl}{\langle}
\newcommand{\rangl}{\rangle}

\newcommand{\LL}{L}

\newcommand{\abar}{\overline{a}}
\newcommand{\dd}{d}


\usepackage{color}
\definecolor{darkred}{rgb}{0.9,0.1,0.1}
\definecolor{darkblue}{rgb}{0,0,0.7}
\definecolor{darkgreen}{rgb}{0,0.5,0}

\begin{document}

\title[Bias in the Representative Volume Element method]{{\bf Bias in the Representative Volume Element method:\\
periodize the ensemble instead of its realizations}}
\author{Nicolas Clozeau, Marc Josien, Felix Otto and Qiang Xu}
\thanks{Max Planck Institute for Mathematics in the Sciences,
Inselstrasse 22, 04103 Leipzig, Germany}
\maketitle
\begin{abstract}
We study the Representative Volume Element (RVE) method, which is a method to approximately infer the effective behavior $a_{\rm hom}$ of a stationary random medium.
The latter is described by a coefficient field $a(x)$ generated from a given ensemble $\langle\cdot\rangle$ and the corresponding linear elliptic operator $-\nabla\cdot a\nabla$. 
In line with the theory of homogenization, the method proceeds by computing $d=3$ correctors ($d$ denoting the space dimension).
To be numerically tractable, this computation has to be done on a finite domain: the so-called ``representative''  volume element, i.~e.~a large box with, say, 
periodic boundary conditions. 
The main message of this article is:
Periodize the ensemble instead of its realizations.

\smallskip

By this we mean that it is better to sample from a suitably periodized ensemble
than to periodically extend the restriction of a realization $a(x)$ 
from the whole-space ensemble $\langle\cdot\rangle$.
We make this point by investigating the bias (or systematic error), i.~e.~the difference between 
$a_{\rm hom}$ and the expected value of the RVE method, 
in terms of its scaling w.~r.~t.~the lateral size $L$ of the box.
In case of periodizing $a(x)$, 
we heuristically argue that this error is generically $O(L^{-1})$.
In case of a suitable periodization of $\langle\cdot\rangle$, 
we rigorously show that it is $O(L^{-d})$. 
In fact, we give a characterization of the leading-order error term for
both strategies, and argue that even in the isotropic case it is generically non-degenerate.

\smallskip

We carry out the rigorous analysis in the convenient
setting of ensembles $\langle\cdot\rangle$ of Gaussian type,
which allow for a straightforward periodization, 
passing via the (integrable) covariance function.
This setting has also the advantage of making the Price theorem and the
Malliavin calculus available
for optimal stochastic estimates of correctors.
We actually need control of second-order correctors 
to capture the leading-order error term.
This is due to inversion symmetry when applying the two-scale expansion to the Green function.
As a bonus, we present a stream-lined strategy to estimate
the error in a higher-order two-scale expansion of the Green function.
\end{abstract}
\tableofcontents

\section{Introduction and statement of rigorous result}


\subsection{Uniformly elliptic coefficient fields}
The basic objects of this paper are $\lambda$-uniformly elliptic tensor
fields $a=a(x)$ (that are not necessarily symmetric)
in $d$-dimensional space, by which we mean that for all points $x$
\begin{align}\label{Ellipticite}
\xi\cdot a(x)\xi\ge\lambda|\xi|^2\quad\mbox{and}\quad
\xi\cdot a(x)\xi\ge|a(x)\xi|^2\quad\mbox{for all}\;\xi\in\mathbb{R}^d.
\end{align}
\footnote{Note that the second condition implies $|a(x)\xi|$ $\le|\xi|$. 
These conditions are not equivalent, unless $a(x)$ is symmetric, 
however the form of the bounds in (\ref{Ellipticite}) is the one preserved 
under homogenization.} 
Such a tensor field $a$ gives rise to the
heterogeneous elliptic operator $-\nabla\cdot a \nabla$ acting on functions $u$.\footnote{
While we use scalar language and notation, $\mathbb{R}$ as a space for the values of $u$
may be replaced by a finite dimensional vector space.} 

\medskip

Homogenization means assimilating the effective, i.~e.~large-scale, behavior
of a heterogeneous medium 
to a homogeneous one, as described by the constant tensor $a_{\rm hom}$. By this one means that
the difference of the solution operators $(-\nabla\cdot a\nabla)^{-1}$
$-(-\nabla\cdot a_{\rm hom}\nabla)^{-1}$ converges to zero when applied to functions 
$f$ varying only on scales $L\uparrow\infty$. 
Homogenization is known to take place
in a number of situations, see \cite{Tartar} for a general notion,
e.~g.~when the coefficient field $a$ is periodic or when it is
sampled from a stationary and ergodic ensemble $\langle\cdot\rangle$.
While we are interested in the latter, it is convenient to introduce the
Representative Volume Element (RVE) method in the context of the former.


\subsection{The RVE method}
Unless $d=1$, there is no explicit formula that allows to compute in practice $a_{\rm hom}$ 
for a general ensemble $\langle\cdot\rangle$. Early work treated
specific ensembles that admit asymptotic explicit formulas in limiting regimes,
like spherical inclusions covering a low volume fraction
in \cite[p.\ 365]{Maxwell-1873}. Explicit upper and lower bounds on
$a_{\rm hom}$ in terms of features of the ensemble $\langle\cdot\rangle$ 
play a major role in the engineering literature, see for instance \cite{MoriTanaka,Torquato}.
On the contrary, the RVE method is a computational method to obtain
convergent approximations to $a_{\rm hom}$ for a general the ensemble $\langle\cdot\rangle$.
As the name ``volume element'' indicates, it is based on samples $a$ of $\langle\cdot\rangle$
in a (computational) domain, typically a cube of side-length $L$. 
It consists in inverting $-\nabla\cdot a\nabla$ for $d$ (representative) boundary conditions. 
The question of the appropriate size $L$ of the RVE evolved from a philosophical
one in \cite{HillRVE} (large enough to be statistically typical and so that boundary
effects are dominated by bulk effects)
towards a more practical one in \cite{DruganWillis} (just large enough so that the
statistical properties relevant for the physical quantity $a_{\rm hom}$ are captured).
The convergence of the method has been extensively investigated by numerical experiments
in the engineering literature, see some references below.
In this paper, we rigorously analyze some aspects of the convergence
for a certain class of ensembles $\langle\cdot\rangle$.

	
%
\medskip		

We now introduce the RVE method. 
Suppose (momentarily) that the coefficient field $a$ is $L$-periodic, meaning that
$a(x+Lk)=a(x)$ for all $x$ and $k\in\mathbb{Z}^d$. 
Given a Cartesian coordinate direction $i=1,\cdots,d$ and denoting by $e_i$
the unit vector in the $i$-th direction, we define (up to additive constants)
$\phi_i^{(1)}$ as the $L$-periodic solution of
\begin{align}\label{pde:9.1_quad}
-\nabla\cdot a(\nabla\phi_i^{(1)}+e_i)=0.
\end{align}
The function $\phi_i^{(1)}$ is called first-order
corrector, because it additively corrects the affine coordinate
function $x_i$ in such a way that the resulting function
$x\mapsto x_i+\phi_i^{(1)}(x)$ is $a$-harmonic, by
which we understand that it vanishes under application of $-\nabla\cdot a\nabla$.

\medskip

Let us momentarily adopt the language of a conducting medium: On the
microscopic level, multiplication with the tensor field $a$ converts 
the electric field into the electric flux. On the macroscopic level, it is $a_{\rm hom}$
that relates the averaged field to the averaged flux.
In view of (\ref{pde:9.1_quad}),  $\nabla\phi_i^{(1)}+e_i$ can be considered as an electric
field in the absence of charges, 
arising from the electric potential $-(\phi_i^{(1)}+x_i)$.
In view of the periodicity of $\phi_i^{(1)}$, the large-scale average of
$\nabla\phi_i^{(1)}+e_i$ is just $e_i$. Now $a(\nabla\phi_i^{(1)}+e_i)$ is the corresponding
flux. It is periodic, so its large-scale average is given by its average on the periodic cell
\begin{align}\label{Def_Abar_intro}
\bar a e_i:=\fint_{[0,L)^d}a(\nabla\phi_i^{(1)}+e_i).
\end{align}
Observe that the notation $\bar a$ without reference
to the period $L$ is legitimate since (\ref{Def_Abar_intro}) is equivalent to
$\bar a e_i$ $=\lim_{R\uparrow\infty}\fint_{[0,R)^d}a(\nabla\phi_i^{(1)}+e_i)$.
A well-known feature of homogenization is
that $\bar a$ inherits the bounds (\ref{Ellipticite}) from $a$, 
as can be derived with help of the dual problem (\ref{ao01}).
In the periodic case, (\ref{Def_Abar_intro}) in fact coincides with the homogenized coefficient $a_{\rm hom}$.

\medskip

On the contrary, in the random case which we introduce now, (\ref{Def_Abar_intro}) provides 
only a fluctuating approximation to the deterministic $a_{\rm hom}$.
Homogenization is known to take place when $a$ is sampled from a stationary and
ergodic ensemble $\langle\cdot\rangle$, see \cite{JKO,Varadhan_1979}. 
By the latter we mean a probability 
measure\footnote{We are deliberately vague on the $\sigma$-algebra, which could be taken as
generated by the Borel algebra induced by $H$-convergence as in \cite{GloriaOtto_2015},
because we will consider a very explicit class in this paper.} 
on the space of tensor fields $a$ satisfying (\ref{Ellipticite}); we use
the symbol $\langle\cdot\rangle$ to address both the ensemble and to denote 
its expectation operator.
Stationarity is the crucial structural assumption and means that the shifted
random field $x\mapsto a(z+x)$ has the same (joint) distribution
as $a$ for any shift vector $z\in\mathbb{R}^d$. Ergodicity is a qualitative 
assumption\footnote{Again, we are deliberately vague since this qualitative assumption will
be replaced by an explicit quantitative one.}
that encodes the decorrelation of the values of $a$ over large distances.

\subsection{Two strategies of periodizing}

In order to apply the RVE method in form of (\ref{Def_Abar_intro}), considered
as an approximation for $a_{\rm hom}$,
one needs to produce samples $a$ of $L$-periodic coefficient fields
connected to the given ensemble $\langle\cdot\rangle$.
The goal of this paper is to compare two strategies to procure such $L$-periodic
samples.
The first strategy relies on ``periodizing the realizations'' in its most naive form
-- we shall actually consider a seemingly less naive form of it, see Section \ref{S:heur} --
and goes as follows:
Taking a coefficient field $a$ in $\mathbb{R}^d$, we restrict it to the box $[0,L)^d$ 
and then periodically extend it.
This defines a map $a\mapsto a_L$. We then take
$\overline{a_L}$, cf.~(\ref{Def_Abar_intro}), 
as an approximation for $a_{\rm hom}$. One unfavorable aspect of this strategy is obvious: 
The push-forward of $\langle\cdot\rangle$ under this map $a\mapsto a_L$ is no longer stationary 
-- an imagined glance at a typical realization would reveal $d$ 
families of parallel artificial hypersurfaces. 

\medskip

Related variants of this strategy consist
in still restricting $a$ to $[0,L)^d$ but then imposing Dirichlet or Neumann boundary
conditions instead of periodic boundary conditions (Neumann conditions will actually be analyzed in
Section \ref{S:heur}). Both boundary conditions for the random (vector) field
$\nabla\phi^{(1)}=\nabla\phi^{(1)}(a,x)$ destroy its shift-covariance\footnote{also often called stationarity}: It is no longer
true that for any shift-vector $z\in\mathbb{R}^d$ we have $\nabla\phi^{(1)}(a,z+x)$
$=\nabla\phi^{(1)}(a(z+\cdot),x)$.

\medskip

The second strategy relies on ``periodizing the ensemble'' and is more subtle: Given
an ensemble $\langle\cdot\rangle$, 
one constructs a ``related'' stationary ensemble $\langle\cdot\rangle_L$ 
of $L$-periodic fields,
samples $a$ from $\langle\cdot\rangle_L$ and takes $\bar a$ as an approximation.
The quality of this second method was numerically explored in \cite{Gusev} for
random non-overlapping inclusions, and (next to the first strategy) in \cite{KanitRVE} for
random Voronoi tesselations\footnote{the periodization is mentioned in a 
somewhat hidden way on p.3658}; in both cases the periodization is obvious. 
Requirements on the periodization of ensembles were formulated in \cite[Section 4]{SabNedjar}, 
a general construction idea was given in \cite[Remark 5]{Gloria_Otto}.
In this paper, we advocate thinking of the map $\langle\cdot\rangle\leadsto\langle\cdot\rangle_L$
as conditioning on periodicity, and will construct it 
for a specific but relevant class of $\langle\cdot\rangle$ given in Assumption \ref{Ass:1}. 

\medskip

The second strategy obviously capitalizes on the knowledge of the ensemble $\langle\cdot\rangle$ 
and not just of a single realization (a ``snapshot''),
in the sense of ``known unknowns'' as opposed to ``unknown unknowns''.
This is in contrast with the numerical analysis on inferring $a_{\rm hom}$ in \cite{Mourrat_2019}, 
or on constructing effective boundary conditions in \cite{Lu_Otto_2018,LuOttoWang}
from a snapshot.


\subsection{Fluctuations and bias}
In this paper, we are interested in comparing these two strategies 
in terms of their bias (also called systematic error): How much do the two expected values 
$\langle\overline{a_L}\rangle$ and $\langle\bar a\rangle_L$
deviate from $a_{\rm hom}$, which by qualitative theory is their common limit for $L\uparrow\infty$ (see \cite{BourgeatPiatnitski} for $\langle\overline{a_L}\rangle$ and Corollary \ref{Cor:1} $iii)$ for $\langle\overline{a}\rangle_L$).
We shall heuristically argue that
\begin{equation}\label{ao18}
\langle\overline{a_L}\rangle-a_{\rm hom}=O(L^{-1}),
\end{equation}
see Section \ref{S:heur}, while proving 
\begin{align}\label{ao17}
\langle\bar a\rangle_L-a_{\rm hom}=O(L^{-d}),
\end{align}
see Theorems \ref{T:mainpara} and \ref{T:main}.
 Here $L$ should be thought of as the 
(non-dimensional) ratio between
the actual period $L$ and a suitably defined correlation length of 
$\langle\cdot\rangle$ set to unity. 
The quantification of the convergence in $L$ is clearly of practical interest:
After a discretization that resolves the correlation length, the number
of unknowns of the linear algebra problem (\ref{pde:9.1_quad}) scales with $L^d$ for $L\gg 1$.
Numerical experiments confirm the
$O(L^{-d})$ scaling \cite[Figure 5]{KhormoskijOtto} and the substantially worse
behavior \eqref{ao18} for the other strategy \cite{SchneiderJosienOtto}.
In this regard, result \eqref{ao18} is not unexpected either from a theoretical or 
a numerical perspective, \textit{cf.} \cite[(3.4)]{Egloffe2015}, \cite{BourgeatPiatnitski} and \cite{HMMAnalysis}, 
respectively.
Nevertheless, to the best of our knowledge, we provide here the first formal argument in favor of such a behavior.

\medskip

We note that fluctuations (which are at the origin of the random part of the error), 
as for instance measured in terms of the square root of the variance, 
are in many situations proven to be of the order (see, \textit{e.g.}, \cite[Theorem 2]{Gloria_Otto})
\begin{align}\label{ao18-bis}
\langle|\bar a-\langle\bar a\rangle_L|^2\rangle_L^\frac{1}{2}=O(L^{-\frac{d}{2}}),
\end{align}
see \textit{e.g.} \cite[Figure 6]{KhormoskijOtto} for a numerical validation,
and the same is expected to hold for the other strategy 
(\cite[Fig.3]{Yue2007} and \cite[(3.3)]{Egloffe2015})
\begin{align*}
\langle|\overline{a_L}-\langle\overline{a_L}\rangle|^2\rangle^\frac{1}{2}=O(L^{-\frac{d}{2}}).
\end{align*}
Hence the variance scales like the inverse of the volume $L^d$ of the periodic cell $[0,L)^d$,
as if we were averaging over $[0,L)^d$ some field of unit range of dependence instead of the 
long-range correlated $a(\nabla\phi_i+e_i)$.
In view of this identical fluctuation scaling for both strategies, the different bias scaling 
is significant in the most relevant dimension $d=3$, which
we mostly focus in this paper: For the first strategy, the bias dominates, 
so that taking the empirical mean of $\overline{a_L}$ over many realizations $a$
does not substantially reduce the total error. It does so in the second scenario,
which suggests to use variance reduction methods, like analyzed in 
\cite{Minvielle_2016, Fischer_ARMA_2019}.

\medskip

Theoretical results on the random error in RVE, at least for the second strategy like in (\ref{ao18-bis}), are by
now abundant, starting from \cite[Theorem 2.1]{GloriaOtto_2011}
for a discrete medium with i.~i.~d.~coefficient, 
over \cite[Theorem 1]{GloriaOttoESAIMProceedingsandSurveys2015} 
for a class of continuum media based 
on the Poisson point process,
to the leading-order identification of the variance in in \cite[Theorem 2]{DuerinckxGO_2016}.
The last result arises from the characterization of leading-order variances
in stochastic homogenization in general, starting from \cite[Theorem 2.1]{Mourrat_Otto_2016} 
for correctors, and is in the spirit of the general approach laid out in \cite{GuMourrat_2016}. 
These estimates of variances and fluctuations in homogenization rely on a functional
calculus (Malliavin derivatives, Spectral Gap inequalities).
There is an alternative approach based on a finite range assumption
(and its relaxation via mixing conditions) that was shown to yield optimal results
in \cite{Armstrong_Kuusi_Mourrat_2017, GloriaOtto_2015}, and culminated in the monograph
\cite{Armstrong_book_2018}. In this paper, we make use of the first approach.

\medskip

Theoretical results on the systematic error in RVE, again for the second strategy as in (\ref{ao17}), 
seem to have been restricted to the case of a discrete medium with i.~i.~d.~coefficients, 
see \cite[Proposition 3]{Gloria_Otto}, 
where the construction of $\langle\cdot\rangle_L$ is obvious. 
The argument 
for \cite[Proposition 3]{Gloria_Otto} is based on a 
(necessarily non-stationary) coupling
of $\langle\cdot\rangle$ and $\langle\cdot\rangle_L$, and introduces a massive
term into the corrector equation in order to screen the resulting boundary layer, 
which leads to a logarithmically worse estimate than (\ref{ao17}). Our analysis avoids this coupling
and suggests that such a logarithmic correction is artificial.
(Incidentally, the phenomenon that the bias decays to an order that is twice
the order of the fluctuation decay occurs also in the analysis of the homogenization
error $(-\nabla\cdot a\nabla)^{-1}f-(-\nabla\cdot a_{\rm hom}\nabla)^{-1}f$
itself: While the variance can be characterized to order $O(L^{-d})$,
where $L\gg 1$ now is the ratio between the scale of $f$ and the correlation length,
see \cite[Theorem 1]{DuerinckxOtto_2019},
the expectation seems to be characterized to order $O(L^{-2d})$, see \cite{Bourgain18,
LemmKim,DuerinckxGloriaLemm}.)

\medskip

%
The first strategy is appealing since it only requires a snapshot,
which could come from an actual material image, whereas the second one 
requires knowledge of the underlying ensemble, which has to be estimated or imposed as a model.
Several methods to overcome the effect of boundary layers on the first strategy
have been proposed:
%
Motivated by the treatment of periodic coefficient fields of unknown period and
the ensuing resonance error, 
oversampling \cite{HouWu1997} and filtering \cite{Blanc2010} strategies were proposed.
Until recently, in case of random media however, because of the slow decay of the boundary layer,
they were not expected to perform better than $O(L^{-1})$, see \cite[(3.4)]{Egloffe2015}.
This motivated \cite{gloria1} to screen the boundary effects by a massive term 
to the corrector equation \eqref{pde:9.1_quad}, cf.~(\ref{MassiveEquation}). However, results in preparation \cite{BellaFischerJosienRaithel} suggest that, in the case of an isotropic ensemble, oversampling strategies may give rise to an improved rate $O(L^{-d/2})$ for the systematic error. Screening strategies based on semi-group 
\cite{Abdulle_2019,Mourrat_2019} 
or wave-equation \cite{Arjmand_2016} versions of the corrector equation have 
also been analyzed.
Screening by a massive term, 
in conjunction with extrapolation in the massive parameter, has been proven to reduce
the systematic error to $O(L^{-d})$ \cite[Thm. 2]{Gloria_Otto}.
Based on screening and extrapolation,
\cite[Prop. 1.1 \& Th. 1.2]{Mourrat_2019} formulated a numerical algorithm that
extracts $a_{\rm hom}$ from a snapshot $a$ up to the optimal total error $O(L^{-\frac{d}{2}})$ 
with only $O(L^d)$ operations.

\subsection{Assumptions and formulation of rigorous result}\label{SectionDefEnsembleL}

We now introduce a class of ensembles $\langle\cdot\rangle$ of $\lambda$-uniform
coefficient fields $a$ that can be easily periodized. Loosely speaking, the natural way to periodize a general stationary ensemble
$\langle\cdot\rangle$ of coefficient fields $a$ on $\mathbb{R}^d$
is to {\it condition} on $a$ being $L$-periodic. 
Clearly, this conditioning is highly singular, and we thus shall restrict 
ourselves to stationary and centered {\it Gaussian} ensembles $\langle\cdot\rangle$. Since a realization
$g$ of such a Gaussian ensemble is obviously not ($\lambda$-uniformly) elliptic, we will
work with a (nonlinear) map $A$ and consider the pointwise transformation $a(x)=A(g(x))$.
More precisely, we will identify $\langle\cdot\rangle$ with its push-forward under
\begin{align}\label{Def_a}
g\mapsto a:=\big(x\mapsto A(g(x))\big).
\end{align}
Centered Gaussian ensembles on some (infinite-dimensional) Banach space $X$ are 
characterized through their covariance, which is a semi-definite bounded bilinear 
form on $X^*$, defining a Hilbert space\footnote{known as the Cameron-Martin space} 
$H\subset X$. When $H$ is a Hilbert space
of H\"older continuous functions on $\mathbb{R}^d$, this operator is best represented
by its kernel $c(x,y)=\langle g(x) g(y)\rangle$. Stationarity of $\langle\cdot\rangle$ 
then amounts to $c=c(x-y)$; the positive-semidefinite character of the bilinear form
translates into non-negativity of the Fourier transform ${\mathcal F}c(q)\ge 0$
for all wave vectors $q\in\mathbb{R}^d$. We now argue that periodization by conditioning 
can be characterized as follows: $\langle\cdot\rangle_L$ is the stationary centered Gaussian measure
with $L$-periodic covariance $c_L$ with Fourier coefficients given
by restricting the Fourier transform ${\mathcal F}c(q)$ to the dual lattice 
$q\in\frac{2\pi}{L}\mathbb{Z}^d$, defining the $k$-th Fourier mode of $c_L$ as :
\begin{align}\label{RestrictFourierIntro}
\frac{1}{\sqrt{L^d}}\int_{[0,L)^d}dx\, e^{-i\frac{2\pi k}{L}\cdot x}c_L(x):=\frac{1}{\sqrt{L^d}}{\mathcal F}c(\frac{2\pi k}{L})
\;\;\mbox{for all}\;k\in\mathbb{Z}^d.
\end{align}
Since loosely speaking, the contributions to $\langle\cdot\rangle$ from every
wave vector $q\in\mathbb{R}^d$ are independent, this restriction indeed corresponds to
conditioning. This definition also highlights that information is lost when
passing from $\langle\cdot\rangle$ to $\langle\cdot\rangle_L$. In terms of real space,
the passage from $c$ to $c_L$ amounts to periodization of the covariance function:
\begin{align}\label{Def_cL}
c_L(x)=\sum_{k\in\mathbb{Z}^d}c(x+Lk).
\end{align}
%
As for the whole space ensemble, we identify $\langle\cdot\rangle_L$ with its
push-forward under (\ref{Def_a}).

\medskip

We now collect the technical assumptions on $\langle\cdot\rangle$, that is,
on the covariance function $c$ and the map $A$. Loosely speaking, we need that $A$
is regular and that $c$
is regular with integrable decay, both up to second derivatives
\footnote{A subclass of these ensembles, namely those of Mat\'ern form
${\mathcal F}c(q)=(1+|q|^2)^{-\frac{d}{2}-\nu}$, is for instance used as a prior
for elastic microstructures, where the smoothness parameter $\nu$
is estimated from real material images, and the effective modulii $a_{\rm hom}$ are
computed by RVE via periodization of ensembles, see
\cite[(2.10) and Fig.~9]{Wohlmuth_2020}.}.
\begin{assumption}\label{Ass:1}
Let $\langle\cdot\rangle$ be a stationary and centered Gaussian ensemble
of scalar\footnote{For notational simplicity, we consider scalar Gaussian field. 
However, the Gaussian field $g$ may take values in any finite-dimensional linear space,
which gives a high degree of flexibility.} fields $g$ on $\mathbb{R}^d$,
as determined by the covariance function\footnote{of which we implicitly assume that $\lim_{|x|\uparrow\infty}c(x)=0$} $c(x):=\langle g(x) g(0)\rangle$.
We assume that there exists an $\alpha>0$ such that
%
%
%
\begin{align}\label{c:6_bis}
\sup_{x\in\mathbb{R}^d}(1+|x|^2)^{\frac{d+2}{2}+\alpha}|\nabla^2c(x)|<\infty.
\end{align}
We identify $\langle\cdot\rangle$ with its push-forward under the map (\ref{Def_a}),
where $A\colon\mathbb{R}\rightarrow\mathbb{R}^{d\times d}$ is such that
the coefficient field $a$ is $\lambda$-uniformly elliptic, see (\ref{Ellipticite}). 
We assume that
\begin{align}\label{Reg_A}
\sup_{g\in\mathbb{R}}|A'(g)|+|A''(g)|<\infty.
\end{align}
\end{assumption}

On the one hand, (\ref{c:6_bis}) implies by integration
$\sup_{x\in\mathbb{R}^d}(1+|x|^2)^{\frac{d}{2}+\alpha}|c(x)|<\infty$
and thus because of $\alpha>0$
\begin{align}\label{fw32}
\sup_{q\in\mathbb{R}^d}{\mathcal F}c(q)\lesssim\int dx|c(x)|<\infty.
\end{align}
Via (\ref{RestrictFourierIntro}),
(\ref{fw32}) yields that the Cameron-Martin norm of $\langle\cdot\rangle_L$ dominates
the $L^2([0,L)^d)$-norm. This implies that $\langle\cdot\rangle_L$ endowed with the 
Hilbert structure of $L^2([0,L)^d)$ has a uniform spectral gap in $L$, see 
\cite[Poincar\'e inequality (5.5.2)]{Bogachev_Gaussian}.
By (\ref{Reg_A}), this transmits to the ensemble of $a$'s, see (\ref{Def_a}), 
and will be used for the stochastic estimates.

\medskip

On the other hand, (\ref{c:6_bis}) implies $\sup_{x}|\nabla^2 c(x)|<\infty$ and thus
$\sup_{x,y}|y-x|^{-2}\langle(g(y)$ $-g(x))^2\rangle$ 
$=\sup_{z}|z|^{-2}(c(0)$ $-c(z))$ $<\infty$. Since $g$ is Gaussian, this extends
to arbitrary moments: $\sup_{x,y}|y-x|^{-1}$ $\langle|g(y)-g(x)|^p\rangle^{\frac{1}{p}}<\infty$.
Estimating the H\"older semi-norm $[g]_{\alpha,B_1}^p$ by the Besov norm
$\int_{B_1} dz|z|^{-d-p\alpha}\int_{B_1} dx|g(x+z)$ $-g(x)|^p$, 
one derives $\sup_{x}\langle [g]_{\alpha,B_1(x)}^p\rangle<\infty$ for any\footnote{this
$\alpha$ is unrelated to the one in (\ref{c:6_bis}); the argument is a
spatial version of Kolmogorov's criterion} $\alpha<1$ and $p<\infty$.
By (\ref{Reg_A}), this transmits to the coefficient field $a$:
\begin{align}\label{fw33}
\sup_{x}\langle\|a\|_{C^{0,\alpha}(B_1(x))}^p\rangle<\infty
\quad\mbox{for all}\;\alpha<1,\;p<\infty,
\end{align}
%
which will allow us to appeal to Schauder theory for local regularity. Since because of \eqref{c:6_bis} we also have $\sup_{L\geq 1}\vert\nabla^2 c_L(x)\vert<\infty$, \eqref{fw33} extends to $\langle\cdot\rangle_L$ :
\begin{equation}\label{fw33Bis}
\sup_{L\geq 1}\sup_x\langle\|a\|^p_{C^{0,\alpha}(B_1(x))}\rangle_L<\infty\quad \text{for any $\alpha<1$, $p<\infty$.}
\end{equation}
Equipped with the definition of the periodized
ensembles $\langle\cdot\rangle_L$, we can state our main result.

\begin{theorem}\label{T:mainpara}
Let $d>2$ and $A$ be symmetric. 
Under the Assumption~\ref{Ass:1} on $\langle\cdot\rangle$, for all $L$, and with $\langle\cdot\rangle_L$ defined with \eqref{RestrictFourierIntro} we have
for the expectation $\langle\bar a\rangle_L$ of $\bar a$ defined in
\eqref{Def_Abar_intro}
\begin{align}\label{fw37}
\limsup_{L\uparrow\infty}L^{d}|\langle\bar a\rangle_L-a_{\rm hom}|<\infty.
\end{align}
%
\end{theorem}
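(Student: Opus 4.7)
The plan is to represent the bias $\langle \bar a\rangle_L - a_{\rm hom}$ as a quadrature-type discrepancy between an expectation against the whole-space covariance $c$ and one against the periodized covariance $c_L$, and then extract the $L^{-d}$ rate from the Fourier-space structure of the periodization \eqref{RestrictFourierIntro}. By stationarity of both ensembles, I would start from
\begin{equation*}
a_{\rm hom} e_i = \langle a (\nabla\phi_i^{(1)} + e_i)(0)\rangle
\quad\mbox{and}\quad
\langle \bar a\rangle_L e_i = \langle a (\nabla\phi_i^{(1)} + e_i)(0)\rangle_L,
\end{equation*}
where in the first identity $\phi_i^{(1)}$ is the whole-space stationary corrector, while in the second it is the $L$-periodic corrector of \eqref{pde:9.1_quad} for a sample of $\langle\cdot\rangle_L$. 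Since both $\langle\cdot\rangle$ and $\langle\cdot\rangle_L$ are centered Gaussian, iterated application of Price's theorem (Gaussian integration by parts) would express each expectation as a multilinear functional of the corresponding covariance ($c$ or $c_L$), with kernels built from Malliavin derivatives of $a(\nabla\phi_i^{(1)} + e_i)$.

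\textbf{Two-scale expansion and symmetry cancellation.} To extract the leading behavior in $L$, I would control the Malliavin derivatives through the two-scale expansion of the heterogeneous Green function of $-\nabla\cdot a\nabla$ around the homogenized Green function $G_{\rm hom}$. A first-order expansion, using $\phi_i^{(1)}$ and the associated flux corrector, would naively produce a contribution of size $O(L^{-1})$ to the bias. However, this contribution is the integral of a function odd under the inversion $x\leftrightarrow -x$ --- a consequence of the $x\leftrightarrow y$ symmetry of $G_{\rm hom}$, enabled by the hypothesis that $A$ is symmetric --- tested against the even covariance $c$, and therefore vanishes. The genuine leading-order contribution only surfaces at second order, which is precisely why the theorem requires uniform-in-$L$ stochastic control of second-order correctors, as announced in the abstract.

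\textbf{Poisson summation and main obstacle.} After this expansion, the bias reduces to a finite combination of terms of the form $\int_{\mathbb{R}^d} F(x) c(x)\,dx - \int_{[0,L)^d} F(x) c_L(x)\,dx$, with $F$ deterministic, smooth and integrable, built from $G_{\rm hom}$ and expectations of products of second-order correctors. Substituting \eqref{Def_cL} into the second integral and rearranging produces a tail integral of $Fc$ outside $[0,L)^d$ plus aliasing terms $\int_{[0,L)^d} F(x) c(x+Lk)\,dx$ with $k\neq 0$; in dimension $d>2$, the $|x|^{2-d}$ decay of $F$ inherited from $G_{\rm hom}$ and the $|x|^{-(d+2)-\alpha}$ decay of $c$ from \eqref{c:6_bis} make each such term $O(L^{-d})$ (indeed $O(L^{-d-\alpha})$). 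The main difficulties I anticipate are (i) obtaining the required $L$-uniform stochastic moment bounds for second-order correctors on the torus under $\langle\cdot\rangle_L$, which should follow from the uniform spectral gap extracted from \eqref{fw32} combined with the Schauder estimates \eqref{fw33Bis}, and (ii) rigorously implementing the inversion-symmetry cancellation inside a stochastic higher-order two-scale expansion, where the symmetry of $A$ enters crucially to ensure that $G_{\rm hom}$ (and not merely its transpose) is even.
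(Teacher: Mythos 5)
Your proposal correctly identifies the main ingredients — Gaussian calculus in the form of Price's theorem, a two-scale expansion of the Green function, a parity cancellation of the first-order term forcing one to go to second order, and the decay of $c$ from Assumption \ref{Ass:1} — but the overall strategy you sketch is materially different from the paper's, and the specific reduction you propose has gaps that I do not see how to close.

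\textbf{Different strategy.} The paper does not compare $\langle\cdot\rangle$ with $\langle\cdot\rangle_L$ directly. Instead, it computes the derivative $\frac{d}{dL}\langle\bar a\rangle_L$ by applying the Price formula \eqref{Price} to the one-parameter family $\langle\cdot\rangle_L$, so that the $L$-dependence is concentrated in $\frac{\partial c_L}{\partial L}$. The resulting representation formula \eqref{ao04} is then shown to be $O(L^{-d-1})$, and \eqref{fw37} follows by integration in $L$. Your approach would require a rigorous version of ``iterated application of Price's theorem expresses each expectation as a multilinear functional of the corresponding covariance'', but for the nonlinear (non-polynomial) functional $g\mapsto\bar a$ this is not a finite construction; the paper avoids this entirely by working at the level of the $L$-derivative, where a single application of the Price formula already suffices.

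\textbf{The quadrature-error reduction is not available.} You propose that the bias reduces to a finite combination of $\int_{\mathbb{R}^d}Fc - \int_{[0,L)^d}Fc_L$ with $F$ deterministic. This misses the fact that the flux $q=a(\nabla\phi^{(1)}_i+e_i)(0)$ under $\langle\cdot\rangle_L$ depends on $L$ in two distinct ways: through the law of $g$, \emph{and} explicitly through the corrector equation posed on the torus $[0,L)^d$. The kernel is therefore not a fixed deterministic $F$, and after Price's formula it is a random quantity involving $\nabla\nabla G(a,\cdot,\cdot)$ whose law depends on $L$. The heart of the paper's work (Propositions \ref{P:3}, \ref{P:4}, and Corollary \ref{Cor:1}) is devoted to $L$-uniform stochastic estimates that let one replace these random kernels by their limits, and this is not captured by a covariance-aliasing argument alone.

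\textbf{Decay and convergence issues.} The kernel you denote $F$ does not decay like $|x|^{2-d}$; the relevant object, the mixed second gradient $\nabla\nabla G$, decays like $|x|^{-d}$, and third derivatives of $\bar G$ like $|x|^{-d-1}$. The central difficulty of the paper is precisely that the representation \eqref{ao04} involves the lattice sum $\sum_k k_n\nabla\nabla G(0,z+Lk)$, which is \emph{not} absolutely convergent: each summand is $O(|k|^{-d+1})$, so the sum diverges before any cancellation, and even after the parity cancellation $\sum_{k}k_n\partial_{ij}\bar G(Lk)=0$ (which is a cancellation in the lattice index $k\leadsto -k$, not quite the continuous $x\leftrightarrow-x$ symmetry you invoke), the remaining sum $\sum_k k_n\partial_{ijm}\bar G(Lk)$ is only borderline convergent, which is why the paper introduces a massive regularization $\frac{1}{T}-\nabla\cdot a\nabla$ and carries out a re-summation (Propositions \ref{P:mass} and \ref{P:2}). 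Your tail-plus-aliasing estimate, if carried out with the correct $|x|^{-d}$ decay, would not even give a convergent $\int_{[0,L)^d}|F|$ without these cancellations, so the conditional nature of the sum cannot be bypassed by pointwise bounds.
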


Let us motivate the scaling (\ref{fw37}). For fixed $i=1,\cdots,d$ we consider the
flux 
\begin{equation}\label{Defq:Intro}
q:=a(\nabla\phi^{(1)}_i+e_i),
\end{equation}
which is a random (vector) field, meaning that $q=q(g,x)$.
We note that by uniqueness for (\ref{pde:9.1_quad}), $q$ is stationary,
or rather ``shift-covariant'', meaning that for every shift vector $z\in\mathbb{R}^d$,
we have $q(g,z+x)=q(g(z+\cdot),x)$ for all points $x$ and (periodic) fields $g$. 
Hence by stationarity of $\langle\cdot\rangle_L$, we may write $\langle\bar a\rangle_L$
$=\langle q(0)\rangle_L$. Clearly, $q(0)$, as arising from the solution of the PDE 
(\ref{pde:9.1_quad}), depends via $a=A(g)$ on the value of $g$ in any point $y$, no
matter how distant from $0$. 

\medskip

Let us assume for a moment that $q$ were more {\it local},
meaning that $q(0)$ depends on $g$ only through its restriction $g_{|B_R}$ 
for some radius $R<\infty$.
Let us also assume for simplicity that $\langle\cdot\rangle$ has {\it unit range}, 
which amounts to assume that $c$ is supported in $B_1$, a sharpening of (\ref{c:6_bis}). 
We then claim that
\begin{align*}
\langle q(0)\rangle_L\quad\mbox{is independent of}\;L\ge 2R+2.
\end{align*}
Indeed, by the locality assumption and (centered) Gaussianity, 
the distribution of the value $q(0)=q(g,0)$ is determined by ${c_L}_{|B_{2R}}$.
In view of (\ref{Def_cL}) and by the finite range assumption, ${c_L}_{|B_{2R}}=c_{|B_{2R}}$
for $L\ge 1+2R+1$.

\medskip

As mentioned, our flux $q(g,0)$ does depend on $g(y)$ even for $R=|y|\gg 1$.
This dependence is described by the mixed derivative $\nabla\nabla G(a,0,y)$
of the Green function $G(a,x,y)$ for $-\nabla\cdot a\nabla$, see Subsection 
\ref{SS:asympt}. 
Stochastic estimates show that, at least on an annealed\footnote{The language of quenched and annealed
arises from metallurgy estimate and made its to model with disorder in statistical
mechanics.} level, the decay
of this variable-coefficient Green's function is no worse than of its
constant-coefficient counterpart so that $R^d|\nabla\nabla G(a,0,y)|\lesssim 1$.
Loosely speaking, it is this exponent $d$ that shows up in (\ref{fw37}).

\medskip

In the following Section \ref{S:struc}, 
we will refine Theorem \ref{T:mainpara} by characterizing the leading-order
error term in Theorem \ref{T:main}.


\section{Theorem \ref{T:mainpara}: refinement and main ideas}\label{S:struc}

The two ingredients for Theorem \ref{T:mainpara} are a suitable representation
formula for $\langle\bar a\rangle_L$, see Subsection \ref{SS:formula}, 
and its asymptotics through stochastic homogenization,
here on the level of the mixed derivatives of the Green function, see Subsection
\ref{SS:asympt}. We need the second-order version of stochastic homogenization
because of an inversion symmetry. 
We refine Theorem \ref{T:mainpara} in Subsection \ref{SS:refine}
by identifying the leading-order error term, see Theorem \ref{T:main}.
In Subsection \ref{SS:small} we will argue that the leading-order error 
typically does not vanish, by exploring the regime of small ellipticity contrast.
In Subsection \ref{SS:symm} we discuss the structure of the leading-order error term
in the case of an isotropic ensemble.

\subsection{Representation formula}\label{SS:formula}

We start with an informal, but detailed, derivation of the representation formula,
see (\ref{ao04}),
which might be the most conceptual piece of our work.

\medskip

Let us fix two vectors $\xi$ and $\xi^*$ and focus on the component
$\xi^*\cdot\bar a\xi$; 
we denote by $\phi^{(1)}$ the solution of (\ref{pde:9.1_quad}) with $e_i$
replaced by $\xi$.\footnote{By linearity and uniqueness (up to additive constants), we have $\phi^{(1)} = \sum_i\xi_i\phi^{(1)}_i$.} 
By shift-covariance\footnote{meaning that $\nabla\phi^{(1)}(a(\cdot+h),x)$
$=\nabla\phi^{(1)}(a,x+h)$ a.~s.} of $\nabla \phi^{(1)}$ and stationarity of $\langle\cdot\rangle_L$, we have
\begin{align}\label{Num::1}
\langle\xi^*\cdot\bar a \xi\rangle_L=\langle F\rangle_L\quad\mbox{where}\quad
F:=(\xi^*\cdot a(\nabla\phi^{(1)}+\xi))(0).
\end{align}
Instead of directly estimating $\langle\xi^*\cdot \bar a\xi\rangle_L
-\xi^*\cdot a_{\rm hom}\xi$, we will
estimate its derivative w.~r.~t.~$L$, that is $\frac{d}{dL}\langle\xi^*\cdot\bar a\xi\rangle_L$. The reason 
is that by general Gaussian calculus (in form of the Price formula) applied to the ensemble
$\langle\cdot\rangle_L$ of (periodic) fields $g$ that depends on a parameter $L$,
we have for any $F=F(g)$ 
%
\begin{equation}
\frac{d}{dL}\langle F\rangle_L
=\frac{1}{2}\int_{\mathbb{R}^d}dx \int_{\mathbb{R}^d}dy\Big\langle\frac{\partial^2 F}
{\partial g(-x)\partial g(-y)}\Big\rangle_L\frac{\partial c_L}{\partial L}(x-y),
\label{Price}
\end{equation}
where the two minus signs in the denominator are for later convenience.
Here $\frac{\partial^2 F}{\partial g(x)\partial g(y)}$ denotes the kernel representing
the second Fr\'echet derivative of $F$, seen as a bilinear form on the space of
functions on $\mathbb{R}^d$.  
As a derivative w.~r.~t.~the noise $g$, it can be seen as a Malliavin derivative. We refer the reader to \cite{PriceFormula} for a rigorous proof of \eqref{Price}.

\medskip

We define $F$ by \eqref{Num::1}.
By the change of variables $z\rightsquigarrow x-y$, which capitalizes on the translation invariance
of the covariance, and (more directly) by the stationarity of $\langle\cdot\rangle_L$ 
in conjunction with the stationarity of $\nabla\phi$ that leads to
$\langle\frac{\partial^2 F}{\partial g(-x)\partial g(z-x)}\rangle_L$ 
$=\langle\xi^*\cdot\frac{\partial^2 a(\nabla\phi^{(1)}+\xi)(x)}{\partial g(0)\partial g(z)}
\rangle_L$,
we obtain
%
\begin{align}\label{ao02}
\frac{d}{dL}\langle\xi^*\cdot \bar a\xi\rangle_L
=\frac{1}{2}\int_{\mathbb{R}^d}dz
\Big\langle\int_{\mathbb{R}^d}\xi^*\cdot\frac{\partial^2  a(\nabla\phi^{(1)}+\xi)}
{\partial g(0)\partial g(z)}\Big\rangle_L \frac{\partial c_L}{\partial L}(z).
\end{align}
With help of the corrector for the (pointwise) dual\footnote{While we work with
the assumption $A^*=A$ and thus have $a^*=a$, keeping primal and dual
medium apart reveals more of the structure.} coefficient field $a^*$
in direction $\xi^*$, i.~e.~the periodic solution ${\phi^*}^{(1)}$ of
\begin{align}\label{ao01}
\nabla\cdot a^*(\nabla{\phi^*}^{(1)}+\xi^*)=0,
\end{align}
the inner integral can be rewritten more symmetrically as
\begin{align*}
\int_{\mathbb{R}^d}\xi^*\cdot\frac{\partial^2 a(\nabla\phi^{(1)}+\xi)}
{\partial g(0)\partial g(z)}&=\int_{\mathbb{R}^d}(\nabla{\phi^*}^{(1)}+\xi^*)\cdot\frac{\partial^2  a(\nabla\phi^{(1)}+\xi)}
{\partial g(0)\partial g(z)}\nonumber\\
&=\int_{\mathbb{R}^d}(\nabla{\phi^*}^{(1)}+\xi^*)\cdot[\frac{\partial^2 }
{\partial g(0)\partial g(z)},a](\nabla\phi^{(1)}+\xi);\nonumber
\end{align*}
indeed, the first identity (formally) follows from applying $\frac{\partial^2}{\partial g(0)\partial g(z)}$ to~(\ref{pde:9.1_quad}) and then testing with ${\phi^*}^{(1)}$,
whereas the second identify follows from testing (\ref{ao01}) with
$\frac{\partial^2\phi^{(1)}}{\partial g(0)\partial g(z)}$. Resolving the commutator $[\frac{\partial^2 }
{\partial g(0)\partial g(z)},a]$ by Leibniz' rule we obtain
%
\begin{equation}\label{Naiveper_FormalCoputeIntro}
\begin{aligned}
\int_{\mathbb{R}^d}\xi^*\cdot\frac{\partial^2 a(\nabla\phi^{(1)}+\xi)}
{\partial g(0)\partial g(z)}=&\int_{\mathbb{R}^d}(\nabla{\phi^*}^{(1)}+\xi^*)\cdot\frac{\partial^2 a }
{\partial g(0)\partial g(z)}(\nabla\phi^{(1)}+\xi)\\
&+\int_{\mathbb{R}^d}(\nabla{\phi^*}^{(1)}+\xi^*)\cdot\frac{\partial a }
{\partial g(0)}\nabla\frac{\partial \phi^{(1)}}{\partial g(z)}
+\mbox{term with $z$, $0$ exchanged}.
\end{aligned}
\end{equation}
Denoting $a':=A'(g)$ and $a'':=A''(g)$, we remark that by (\ref{Def_a}) we have $\frac{\partial a(x)}{\partial g(z)}$ $=a'(z)\delta(x-z)$.
%
%
Applying operator $\frac{\partial}{\partial g(z)}$ on (\ref{pde:9.1_quad}), we thus obtain the representation
\begin{align}\label{Naiveper_Derivative7}
\frac{\partial \nabla \phi^{(1)}(x)}{\partial g(z)}=-\nabla\nabla G(x,z)a'(z)(\nabla\phi+e)(z)
\end{align}
%
in terms of the mixed derivatives of the \textit{non-periodic} Green function\footnote{Since we are only interested
in the mixed gradient of the Green function, the dimension $d=2$ poses no problems here.} $G=G(a,x,y)$ associated with the operator $-\nabla\cdot a\nabla$.
Hence the above turns into
\begin{align*}
\int_{\mathbb{R}^d}\xi^*\cdot\frac{\partial^2 a(\nabla\phi^{(1)}+\xi)}
{\partial g(0)\partial g(z)}
=&\delta(z)\big((\nabla{\phi^*}^{(1)}+\xi^*)\cdot a''(\nabla\phi^{(1)}+\xi)\big)(0)
\nonumber\\
&-\big(a'(\nabla{\phi^*}^{(1)}+\xi^*)\big)(0) \cdot\nabla\nabla G(0,z) \big(a'
(\nabla\phi^{(1)}+\xi)\big)(z)\nonumber\\
&-\big(a'(\nabla{\phi^*}^{(1)}+\xi^*)\big)(z) \cdot\nabla\nabla G(z,0) \big(a'
(\nabla\phi^{(1)}+\xi)\big)(0).
\end{align*}
Applying $\langle\cdot\rangle_L$ we obtain by stationarity
\begin{equation}\label{ao03}
\begin{aligned}
\big\langle\int_{\mathbb{R}^d}\xi^*\cdot\frac{\partial^2 a(\nabla\phi^{(1)}+\xi)}
	{\partial g(0)\partial g(z)}\big\rangle_L=&\delta(z)\big\langle(\nabla{\phi^*}^{(1)}+\xi^*)\cdot a''(\nabla\phi^{(1)}+\xi)\big\rangle_L
\\
&-\big\langle\big(a'(\nabla{\phi^*}^{(1)}+\xi^*)\big)(0) \cdot \nabla\nabla G(0,z) \big(a'
(\nabla\phi^{(1)}+\xi)\big)(z)\big\rangle_L\\
&-\big\langle\big(a'(\nabla{\phi^*}^{(1)}+\xi^*)\big)(0)\cdot\nabla\nabla G(0,-z) \big(a'
(\nabla\phi^{(1)}+\xi)\big)(-z)\big\rangle_L.
\end{aligned}
\end{equation}
Inserting this into (\ref{ao02}), and noting that since $\frac{\partial c_L}{\partial L}$ is even (as derivative of a covariance function), the two last terms
have the same contribution, we obtain
\begin{equation}
\label{ao35}
\begin{aligned}
\frac{d}{dL}\langle\xi^*\cdot \bar a\xi\rangle_L
=&-\int_{\mathbb{R}^d}dz\big\langle \big(a'(\nabla{\phi^*}^{(1)}+\xi^*)\big)(0) \cdot\nabla\nabla G(0,z) \big(a'
(\nabla\phi^{(1)}+\xi)\big)(z)\big\rangle_L\frac{\partial c_L}{\partial L}(z)\\
&+\frac{1}{2}\big\langle(\nabla{\phi^*}^{(1)}+\xi^*)\cdot a''(\nabla\phi^{(1)}+\xi)\big\rangle_L
\frac{\partial c_L}{\partial L}(0).
\end{aligned}
\end{equation}
We now insert (\ref{Def_cL}) in form of
\begin{align}\label{ao37}
\frac{\partial c_L}{\partial L}(z)\stackrel{(\ref{Def_cL})}{=}
\sum_{k\in\mathbb{Z}^d}k\cdot \nabla c(z+Lk).
\end{align}
This relation highlights that the $z$-integral in (\ref{ao35}) is
not absolutely convergent for $|z|\uparrow\infty$, not even borderline:
While $\nabla\nabla G(0,z)$ decays as $|z|^{-d}$, a glance at (\ref{ao37})
reveals that $\frac{\partial c_L}{\partial L}(z)$ grows as $|z|$.
%
%
Part of the
rigorous work is devoted to emulate this formal derivation of 
(\ref{ao35}) by replacing the operator $-\nabla\cdot a\nabla$
by $\frac{1}{T}-\nabla\cdot a\nabla$, see Proposition \ref{P:mass}.

\medskip

In order to access the cancellations, we will perform a re-summation.
Assuming for simplicity for this exposition that $\langle\cdot\rangle$ has
unit range of dependence,  so that $c$ is supported in the unit ball, we have that
$c_L(z=0)$ does not depend on $L\ge 2$. Hence the second r.~h.~s.~term in (\ref{ao35})
does not contribute. 
%
%
By $L$-periodicity of the correctors, (\ref{ao35}) can be re-summed to
\begin{equation}\label{ao04}
\begin{aligned}
\frac{d}{dL}\langle \xi^*\cdot \bar a\xi\rangle_L=
	\int_{\mathbb{R}^d}dz \big\langle
	\big(a'(\nabla{\phi^*}^{(1)}+\xi^*)\big)(0) \cdot\big(\sum_{k\in\mathbb{Z}^d}k_n\nabla\nabla G(0,z+Lk)\big)
\big(a'(\nabla\phi^{(1)}+\xi)\big)(z)\big\rangle_L \partial_nc(z),
\end{aligned}
\end{equation}
where from now on we use Einstein's convention of summation over repeated spatial indices, 
here $n\in\{1,\cdots,d\}$.
Formula (\ref{ao04}) is our final representation. Clearly, the sum over $k$
is still not absolutely convergent. However, as we shall see in the next subsection,
it converges after homogenization.  

\subsection{Approximation by second-order homogenization}\label{SS:asympt}

In this subsection,
we turn to the a\-sym\-pto\-tics of the representation (\ref{ao04}) for $L\uparrow\infty$.
In particular, we shall argue why first-order homogenization is not sufficient
and give an efficient introduction into second-order correctors.

\medskip

As there is no contribution from $k=0$, and since by our finite range assumption
(for the sake of this discussion), $z$ is constrained to the unit ball, the argument
$z+Lk$ of the Green function satisfies $|z+Lk|\gtrsim L$. Hence we may appeal
to homogenization to replace $G(x,y)$ by $\overline{G}(x-y)$, where $\bar G$ denotes
the fundamental solution of $-\nabla\cdot\bar a\nabla$.
This appears like periodic homogenization as long as $L$ is fixed, 
but in fact amounts to stochastic homogenization since we are interested in $L\uparrow\infty$.
Since we are interested in its gradient, we need to replace
$G$ by the two-scale expansion of $\overline{G}$. (See below for more details on the two-scale expansion.)
Since we are interested in the mixed gradient,
the two-scale expansion acts on both variables. Hence in a first Ansatz, we approximate
\begin{align}\label{ao05}
\nabla\nabla G(0,x)\approx -\partial_{ij}\overline{G}(x)
(e_i+\nabla\phi_i^{(1)})(0)\otimes (e_j+\nabla{\phi_j^*}^{(1)})(x),
\end{align}
where ${\phi_j^*}^{(1)}$ denotes the solution of (\ref{ao01}) with $\xi^*$ replaced by $e_j$.
To leading order, this yields by the periodicity of correctors
\begin{align}\label{ao09}
\nabla\nabla G(0,z+Lk)\approx
-\partial_{ij}\overline{G}(Lk)\;(e_i+\nabla\phi_i^{(1)})(0)\otimes (e_j+\nabla{\phi_j^*}^{(1)})(z).
\end{align}
Applying $\sum_{k\in\mathbb{Z}^d}k_n$ to the r.~h.~s., we see that
it vanishes by parity w.~r.~t.~inversion $k\leadsto -k$. 
This is an indication that the {\it first order} two-scale expansion (\ref{ao05})
is not sufficient and that we have to go to a second-order expansion, which we
shall describe now.

\medskip

We need to replace the first-order version of the two-scale expansion of $\overline{G}$
by its second-order version. We recall the two-scale expansion in its first-order version: 
Given an $\bar a$-harmonic function $\bar u$, one considers 
$u=(1+\phi^{(1)}_i\partial_i)\bar u$ as a good approximation to an $a$-harmonic function.
Indeed, it follows from (\ref{pde:9.1_quad}) that when $\bar u$ is a first-order polynomial, 
$u$ is exactly $a$-harmonic.
In fact, this is a characterization of the first-order correctors $\phi_i^{(1)}$.
Second-order correctors $\phi^{(2)}_{ij}$ can be characterized in a similar way: 
For every $\bar a$-harmonic second-order polynomial $\bar u$, we impose that
$u$ $=(1+\phi_i^{(1)}\partial_i+\phi_{ij}^{(2)}\partial_{ij})\bar u$
is $a$-harmonic\footnote{This does not characterize all components $\phi_{ij}^{(2)}$
separately but only the trace-free and symmetric part of this tensor, where the trace is defined
w.~r.~t.~$\bar a$. Since we apply the two scale expansion only to $\bar a$-harmonic
functions like $\overline{G}$, this is not an issue.}. It is clear from this characterization
that $\phi_{ij}^{(2)}$ depends on the choice of the additive constant in $\phi_i^{(1)}$,
which we now fix through
%
\begin{align}\label{ao08}
\fint_{[0,L)^d}\phi_i^{(1)}=0.
\end{align}
Since for our second-order polynomial $\bar u$ we have 
\begin{align}\label{ao07}
\nabla u=\partial_i\bar u(e_i+\nabla\phi_i^{(1)})+
\partial_{ij}\bar u(\phi^{(1)}_ie_j+\nabla\phi_{ij}^{(2)}),
\end{align}
so that $\nabla\cdot a\nabla u=0$ turns into
$\nabla\partial_i\bar u\cdot a(e_i+\nabla\phi_i^{(1)})$
$+\partial_{ij}\bar u\nabla\cdot a(\phi^{(1)}_ie_j$ $+\nabla\phi_{ij}^{(2)})$
$=0$, and using that $\nabla\cdot\bar a\nabla \bar u=0$, we 
obtain the following standard PDE characterization of $\phi_{ij}^{(2)}$:
\begin{align}\label{ao06}
-\nabla\cdot a(\nabla\phi_{ij}^{(2)}+\phi^{(1)}_ie_j)
=e_j\cdot(a(\nabla\phi_i^{(1)}+e_i)-\bar ae_i).
\end{align} 
Note that 
(\ref{ao06}) is uniquely solvable (up to additive constants) for a periodic $\phi_{ij}^{(2)}$ because the
r.~h.~s.~of (\ref{ao06}) has vanishing average in view of (\ref{Def_Abar_intro}). 
The definition of ${\phi_{ij}^*}^{(2)}$ for the dual medium $a^*$ is analogous.

\medskip

In view of (\ref{ao07}), we thus replace (\ref{ao05}) by
\begin{align}\label{ao16}
\begin{aligned}
\nabla\nabla G(0,x)\approx&-\partial_{ij}\overline{G}(x)
(e_i+\nabla\phi_i^{(1)})(0)\otimes (e_j+\nabla{\phi_j^*}^{(1)})(x)\\
&-\partial_{ijm}\overline{G}(x)
(\phi_i^{(1)}e_m+\nabla\phi_{im}^{(2)})(0)\otimes (e_j+\nabla{\phi_j^*}^{(1)})(x)\\
&+\partial_{ijm}\overline{G}(x)
(e_i+\nabla\phi_i^{(1)})(0)\otimes ({\phi_j^*}^{(1)}e_m+\nabla\phi_{jm}^{*(2)})(x).
\end{aligned}
\end{align}
It is here that the assumption of symmetry of $A$ is convenient: Otherwise,
the instance of $\overline{G}$ in the first r.~h.~s.~term of (\ref{ao16}) would
have to be replaced by $\overline{G}+\overline{G}^{(2)}$ where $\overline{G}^{(2)}$
is the $(1-d)$-homogeneous solution of
$\nabla\cdot(\bar a\nabla\overline{G}^{(2)}+\bar a^{(2)}_m\nabla\partial_m\overline{G})=0$,
where $\bar a^{(2)}$ is the second-order homogenized coefficient, see (\ref{ao44}).
Since $\overline{G}^{(2)}$, as a dipole, is odd w.~r.~t.~point inversion, its
contribution does not vanish as for $\overline{G}$, c.~f.~(\ref{ao09}).
For the analogue of (\ref{ao09}) we now turn to the first-order Taylor expansion (recall $k\not=0$)
\begin{align*}
\nabla\nabla G(0,z+Lk)\approx&-\big(\partial_{ij}\overline{G}(Lk)+z_m\partial_{ijm}\overline{G}(Lk)\big)
(e_i+\nabla\phi_i^{(1)})(0)\otimes (e_j+\nabla{\phi_j^*}^{(1)})(z)\nonumber\\
&-\partial_{ijm}\overline{G}(Lk)
(\phi_i^{(1)}e_m+\nabla\phi_{im}^{(2)})(0)\otimes (e_j+\nabla{\phi_j^*}^{(1)})(z)\nonumber\\
&+\partial_{ijm}\overline{G}(Lk)
(e_i+\nabla\phi_i^{(1)})(0)\otimes ({\phi_j^*}^{(1)}e_m+\nabla\phi_{jm}^{*(2)})(z).
\end{align*}
By the inversion symmetry of $\overline{G}$ and the $-d-1$-homogeneity of $\partial_{ijm}\overline{G}$, this implies
\begin{align}\label{ao19}
\begin{aligned}
\sum_{k\in\mathbb{Z}^d}k_n\nabla\nabla G(0,z+Lk)
\approx& L^{-d-1}\sum_{k\in\mathbb{Z}^d}k_n\partial_{ijm}\overline{G}(k)\Big(-z_m (e_i+\nabla\phi_i^{(1)})(0)\otimes (e_j+\nabla{\phi_j^*}^{(1)})(z)\\
&-(\phi_i^{(1)}e_m+\nabla\phi_{im}^{(2)})(0)\otimes (e_j+\nabla{\phi_j^*}^{(1)})(z)\\
&+(e_i+\nabla\phi_i^{(1)})(0)\otimes ({\phi_j^*}^{(1)}e_m+\nabla\phi_{jm}^{*(2)})(z)\Big).
\end{aligned}
\end{align}
In view of $\bar a\approx a_{\rm hom}$ we finally replace $\overline{G}$, which is still random, 
by the deterministic $G_{\rm hom}$ that may be pulled out of $\langle\cdot\rangle_L$ when
inserting (\ref{ao19}) into (\ref{ao04}). Hence we obtain the approximation
\begin{align}\label{ao12}
\frac{d}{dL}\langle \xi^*\cdot \bar a\xi\rangle_L
\approx L^{-d-1}\Gamma_{{\rm hom},ijmn}\int_{\mathbb{R}^d}dz\,\xi^*\cdot{\mathcal Q}_{Lijm}(z)\xi
\,\partial_nc(z),
\end{align}
where the five-tensor field ${\mathcal Q}_L$ is defined through a combination of three covariances
of quadratic expressions in correctors, see Definition \ref{Def:2},
and where the four-tensor $\Gamma_{\rm hom}$ is formally given by the (borderline) divergent lattice
sum $\sum_{k\in\mathbb{Z}^d}k_n\partial_{ijm} G_{T,{\rm hom}}(k)$, which in line with
the remark at the end of Subsection \ref{SS:formula} we replace by
\begin{align}\label{ao11}
\Gamma_{{\rm hom}}=\lim_{T\uparrow\infty}\Gamma_{{\rm hom},T}\;\;\mbox{where}\;\;
\Gamma_{{\rm hom},Tijmn}:=\sum_{k\in\mathbb{Z}^d}k_n\partial_{ijm} G_{T,{\rm hom}}(k),
\end{align}
with $G_{T,{\rm hom}}$ denoting the fundamental solution of
$\frac{1}{T}-\nabla\cdot a_{\rm hom}\nabla$.

\subsection{Refinement of rigorous result}\label{SS:refine}

We start with the full definition of the tensor field ${\mathcal Q}_L$ appearing in (\ref{ao12}).

\begin{definition}\label{Def:2} Recall the definitions (\ref{pde:9.1_quad})
\& (\ref{ao06}) of first and second-order correctors $\phi_i^{(1)}$ and $\phi_{ij}^{(2)}$, 
and their versions ${\phi_i^*}^{(1)}$ and $\phi_{ij}^{*(2)}$ with $a$ replaced by $a^*$. 
For given vectors $\xi$ and $\xi^*$ we continue to write $\phi^{(1)}=\xi_i\phi_i$ 
and ${\phi^*}^{(1)}=\xi_i{\phi^*}^{(1)}_i$. Consider the random tensor fields
\begin{align}\label{ao45bis}
\begin{aligned}
\xi^*\cdot Q^{(1)}_{ij}(z)\xi
:=\big((\xi^*+\nabla{\phi^*}^{(1)})\cdot a'(e_i+\nabla\phi_i^{(1)})\big)(0)\big((e_j+\nabla{\phi_j^*}^{(1)})\cdot a'
(\xi+\nabla\phi^{(1)})\big)(z),
\end{aligned}
\end{align}
\begin{align}\label{ao46}
\begin{aligned}
\xi^*\cdot Q^{(2)}_{ijm}(z)\xi
:=&
-\big((\xi^*+\nabla{\phi^*}^{(1)})\cdot a'
(\phi^{(1)}_ie_m+\nabla\phi_{im}^{(2)})\big)(0)\big((e_j+\nabla{\phi_j^*}^{(1)})\cdot a'
(\xi+\nabla\phi^{(1)})\big)(z)
\\
&+\big((\xi^*+\nabla{\phi^*}^{(1)})\cdot a'(e_i+\nabla\phi_i^{(1)})\big)(0)\big((\phi^{*(1)}_je_m+\nabla\phi_{jm}^{*(2)})
\cdot a'(\xi+\nabla\phi^{(1)})\big)(z).
\end{aligned}
\end{align}
For any $L$ we consider the ensemble
$\langle\cdot\rangle_L$ from Definition \eqref{Def_cL} and define
\begin{align}\label{ao10}
{\mathcal Q}_{Lijm}(z)
:=-z_m\langle Q_{ij}^{(1)}(z)\rangle_L+\langle Q_{ijm}^{(2)}(z)\rangle_L.
\end{align}
\end{definition}

Here comes the more precise version of Theorem \ref{T:mainpara}, which
consists in making (\ref{ao12}) rigorous:

\begin{theorem}\label{T:main}
Let $d>2$ and $A$ be symmetric. Suppose $\langle\cdot\rangle$ satisfies Assumption \ref{Ass:1} 
and let $a_{\rm hom}$ denote the homogenized coefficient . For all $L$, let
$\langle\cdot\rangle_L$ defined with \eqref{RestrictFourierIntro}, $\bar a$ be defined by
(\ref{Def_Abar_intro}), $\Gamma_{{\rm hom},T}$ defined by (\ref{ao11}), 
and ${\mathcal Q}_L$ be as in Definition \ref{Def:2}. Then the following limits exist:
\begin{align}
\Gamma_{{\rm hom},ijmn}&:=\lim_{T\uparrow\infty}\Gamma_{{\rm hom},Tijmn},\label{ao13}\\
{\mathcal Q}_{ijm}(z)&:=\lim_{L\uparrow\infty}{\mathcal Q}_{Lijm}(z)\quad\mbox{pointwise,
uniformly bounded~in}\;z,\label{ao15}
\end{align}
and the latter only depends on $\langle\cdot\rangle$ (and not the lattice).
Moreover, we have
\begin{align}\label{ao14}
\lim_{L\uparrow\infty}L^{d+1}\frac{d\langle \bar a\rangle_L}{dL}
=\Gamma_{{\rm hom},ijmn}\int_{\mathbb{R}^d}dz{\mathcal Q}_{ijm}(z)\partial_nc(z).
\end{align}
%
%
%
%
%
\end{theorem}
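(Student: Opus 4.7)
The proof follows the four-step heuristic derivation of Subsections 2.1--2.2, with each step requiring quantitative justification via the Gaussian calculus underlying Assumption \ref{Ass:1}.

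\textbf{Step 1: Rigorous massive representation formula.} The formal computation leading to \eqref{ao04} is unjustified because the $k$-sum in $\frac{\partial c_L}{\partial L}(z) = \sum_k k\cdot\nabla c(z+Lk)$ is not absolutely convergent when combined with the $|z|^{-d}$ decay of $\nabla\nabla G$. I would therefore first establish the analogous identity for the mass-regularized operator $\frac{1}{T} - \nabla\cdot a\nabla$, which exponentially damps $G$ on the scale $\sqrt{T}$. With this regularization Price's formula \eqref{Price} applies, and the representation \eqref{Naiveper_Derivative7} of the Malliavin derivative $\frac{\partial\nabla\phi^{(1)}_T}{\partial g(z)}$ in terms of $\nabla\nabla G_T$ has absolutely convergent integrals, giving an exact analogue of \eqref{ao04} for the $L$-derivative of $\langle\xi^*\cdot\bar a_T\xi\rangle_L$; this is the content of Proposition \ref{P:mass}. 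Passage $T\uparrow\infty$ to recover a representation for $\frac{d}{dL}\langle\xi^*\cdot\bar a\xi\rangle_L$ relies on qualitative stochastic homogenization applied to the corrector equation together with the uniform-in-$L$ spectral gap granted by \eqref{fw32}--\eqref{RestrictFourierIntro}.

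\textbf{Step 2: Quantified second-order two-scale expansion of $\nabla\nabla G$.} The passage $L\uparrow\infty$ requires substituting the second-order two-scale expansion \eqref{ao16} for $\nabla\nabla G(0,z+Lk)$ and then Taylor-expanding in $z$ around $Lk$ to arrive at \eqref{ao19}. The first-order expansion would be insufficient: the leading contribution, cf.~\eqref{ao09}, vanishes by the parity $k\leftrightarrow -k$ and the homogeneity of $\partial_{ij}\overline G$, so the second-order correctors $\phi^{(2)}_{ij}$ of \eqref{ao06} are responsible for the first non-trivial term. The remainder is estimated in annealed $L^p(\langle\cdot\rangle_L)$-norms, which forces us to control first and second-order correctors uniformly in $L$: stationarity of gradients, sublinear (logarithmic in $d=3$) growth, and optimal $L^p$ moments. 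These follow from the Malliavin/spectral-gap framework based on the uniform spectral gap of $\langle\cdot\rangle_L$ and the Schauder-type regularity provided by \eqref{fw33Bis}.

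\textbf{Step 3: Convergence of the deterministic lattice sum $\Gamma_{{\rm hom},T}$.} The summand $k_n\partial_{ijm}G_{T,{\rm hom}}(k)$ decays only as $|k|^{-d}$ for $|k|\ll\sqrt{T}$, so a naive estimate gives borderline $\log T$-divergence. A tensorial cancellation saves the day: since $G_{\rm hom}$ is $a_{\rm hom}$-harmonic away from the origin, so is $\partial_{ijm}G_{\rm hom}$, and a direct computation shows that the angular average of $k_n\partial_{ijm}G_{\rm hom}(k)$ over any centred sphere vanishes. The lattice sum can then be treated as a Riemann-type approximation (with cancellation of the leading spherical harmonic at each scale) to an absolutely convergent quantity, yielding uniform-in-$T$ boundedness and the existence of the limit \eqref{ao13}.

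\textbf{Step 4: Limit of $\mathcal Q_L$ and assembly.} The pointwise limit \eqref{ao15} follows from the local-in-space convergence of the periodized correctors to their whole-space counterparts (a consequence of $c_L\to c$ locally and continuity of the Gaussian construction) combined with the uniform-in-$L$ stochastic moment bounds from Step 2. Plugging Step 2 into the $T$-regularized analogue of \eqref{ao04}, collapsing the $k$-sum via the periodicity of correctors, and rescaling $\partial_{ijm}\overline{G}(Lk) = L^{-(d+1)}\partial_{ijm}\overline G(k)$ yields \eqref{ao12} up to a controlled remainder; passing first $T\uparrow\infty$ and then $L\uparrow\infty$ completes \eqref{ao14}. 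The main obstacle is to combine the two-scale expansion with the almost-borderline weight produced by $\frac{\partial c_L}{\partial L}$: the error in the expansion must be estimated one order better than the leading $L^{-(d+1)}$ scale against the slow $|k|^{-(d+1)}$ decay of $\partial_{ijm}\overline G$, which is precisely what forces the use of the second-order corrector and the control of its logarithmic growth uniformly in $L$ in the critical dimension $d=3$.
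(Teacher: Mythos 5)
Your outline matches the paper's skeleton (massive regularization, second-order two-scale expansion because the first-order contribution cancels by parity, convergence of the lattice sum $\Gamma_{{\rm hom},T}$, and the $L\uparrow\infty$ limit), but two load-bearing technical ingredients are missing. First, upgrading a weak bound on the two-scale expansion error of $\nabla\nabla G$ to the pointwise annealed estimate of Proposition~\ref{P:4} is not a routine matter of ``estimating the remainder in annealed $L^p$-norms'': because the Green function has two active variables, the expansion in the $y$-variable only delivers $W^{-2,1}_xL^p_{\langle\cdot\rangle}$-type control, and the paper converts this into $L^\infty_xL^p_{\langle\cdot\rangle}$-control via the inner-regularity duality estimate Lemma~\ref{L:1} combined with the large-scale Lipschitz estimate Lemma~\ref{L:2} (i.e.\ Corollary~\ref{Cor:2}), which the paper explicitly singles out as the crucial ingredient; without it the third and fourth right-hand-side terms of \eqref{ao41} cannot be shown to be $o(L^{-(d+1)})$. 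Second, the second-order corrector fluctuation bound \eqref{ao83} in Lemma~\ref{L:5} requires the \emph{weighted} annealed Calder\'on--Zygmund estimate of Lemma~\ref{L:4}; the unweighted spectral-gap-plus-Schauder machinery you invoke would not deliver the $\big(\int|x|^2_L|h|^2\big)^{1/2}$-scaling on the right-hand side that is needed to obtain \eqref{ao69}.

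Two smaller inaccuracies. In the critical dimension $d=3$ the centered second-order corrector grows like $\mu^{(2)}_3(r)\sim r^{1/2}$, not logarithmically (the logarithmic regime is $d=4$), cf.~\eqref{ao71}; you misstate this twice. And the passage $T\uparrow\infty$ in your Step~1 cannot be taken directly in the massive formula \eqref{ao36}, whose $z$-integral pairs the linear growth of $\frac{\partial c_L}{\partial L}$ against only $|z|^{-d}$ decay of $\nabla\nabla G$ and is not absolutely convergent once the mass is removed. The resummation that isolates absolutely convergent pieces (first inserting the two-scale expansion, then exploiting periodicity of the correctors to shift the $k$-sum onto derivatives of $\overline{G}_T$ and Taylor-expand) must precede the limit -- that is the structural content of Proposition~\ref{P:2}, and it is genuinely different from ``qualitative stochastic homogenization applied to the corrector equation together with the uniform-in-$L$ spectral gap.''
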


With the tools of this paper, the asymptotics of $\frac{d\langle \bar a\rangle_L}{dL}$
could be characterized up to order $O(L^{-d-\frac{d}{2}})$.
Let us comment on the representation of the leading error term arising from 
(\ref{ao14}), namely
\begin{align}\label{ao26}
d \lim_{L\uparrow \infty} L^{d}\big(a_{\rm hom}-\langle \bar a\rangle_L\big)
=\Gamma_{{\rm hom},ijmn}\int_{\mathbb{R}^d}dz{\mathcal Q}_{ijm}(z)\partial_nc(z).
\end{align}
This representation separates a first factor $\Gamma_{\rm hom}$, which only depends on the
type of the periodic lattice (here cubic) and the homogenized coefficient $a_{\rm hom}$,
from a second factor that only depends on the whole-space ensemble $\langle\cdot\rangle$, via its
covariance function $c$ and covariances involving its first- and second-order
correctors.

\medskip

Let us address the coordinate-free interpretation of ${\mathcal Q}_L$ (and its limit~${\mathcal Q}$)
i.~e.~its transformation behavior. We note that $\xi$, and likewise $\xi^*$, should
be seen as a linear form (rather than a vector), since it gives rise to a coordinate function:
namely affine coordinates via $\xi\cdot x$ and harmonic coordinates via $\phi(x)+\xi\cdot x$.
A glance at the first r.~h.~s.~term in (\ref{ao10}) shows that the indices $i$ and $j$
label the first-order correctors and thus take in linear forms; this is even more
obvious for the index $m$ that takes in a linear form in the $z$-variable.
The second, and likewise the third, r.~h.~s.~term in (\ref{ao10}) is of the same nature since
the second-order corrector naturally takes in a (homogeneous)
second-order polynomial, which can be identified
with linear combinations of (symmetric) tensor products of linear coordinates.
Hence in the language of differential geometry
${\mathcal Q}_L(z)$ is a five-contravariant tensor field -- as it takes in the five linear
forms. 

\medskip

The four-tensor $\Gamma_{{\rm hom},T}$ (and its limit $\Gamma_{\rm hom}$)
allows for a coordinate-free interpretation:
$\Gamma_{{\rm hom},T}$ takes in three vectors (namely the directions of the derivatives of $G_{\rm hom}$)
and renders a vector; as a form it is thus three-covariant and one-contravariant,
and in the traditional notation of differential geometry one would write $\Gamma_{{\rm hom},Tijm}^n$,
highlighting that contraction in (\ref{ao14}) with the three-contravariant
tensor field $\xi^*\cdot\mathcal{Q}^{ijm}\xi$ (with $\xi$, $\xi^*$ fixed) is natural. In view of calculus, $\Gamma_{{\rm hom},T}$
is invariant under permutation of the covariant indices.
There is an isomorphic way of seeing $\Gamma_{{\rm hom},T}$ that allows for an electrostatic
interpretation: $\Gamma_{{\rm hom},T}$ in fact takes in an endomorphism\footnote{An endomorphism is a linear combination
of tensor products of a vector (contra-variant) and a linear form (co-variant).}
and renders a (symmetric) bilinear form.
Indeed, for some endomorphism $B$ of $\mathbb{R}^d$ consider the lattice $B\mathbb{Z}^d$,
and the accordingly periodized version of $G_{T,{\rm hom}}$, that is
$G_{T,{\rm hom},B}$ $:=\sum_{k\in\mathbb{Z}^d}G_{T,{\rm hom}}(x+Bk)$.
We then have
\begin{align}\label{ao22}
\Gamma_{{\rm hom},Tijm}^n v^i v^j u^m \xi_n
=\frac{d}{dt}_{|t=0}v\cdot \nabla^2G_{T,{\rm hom},{\rm id}+t u\otimes\xi}(x=0)v.
\end{align}
Hence $\Gamma_{{\rm hom},Tijm}^{n}$ describes, on the level of the second derivatives, how
(the regular part of) the fundamental solution (infinitesimally)
depends on the lattice w.~r.~t. which one periodizes it.

\subsection{Small contrast regime and non-degeneracy}\label{SS:small}

In this subsection, we (formally) identify the leading order (\ref{ao27})
of the r.~h.~s.~of (\ref{ao26})  in the small-contrast regime. 
We then argue that this leading-order error term
typically does not vanish, even in the high-symmetry case of an isotropic ensemble.

\medskip

We start with the derivation of (\ref{ao27}):
To leading order in a small ellipticity contrast $1-\lambda$, the quantity $\nabla\phi_i^{(1)}$ may
be neglected w.~r.~t. $e_i$; likewise $\phi_i^{(1)}e_m+\nabla\phi_{im}^{(2)}$
may be neglected w.~r.~t. $e_i$. Hence to leading order, (\ref{ao10}) reduces to
\begin{align*}
\xi^*\cdot{\mathcal Q}_{ijm}(z)\xi\approx-z_m\big\langle\xi^*\cdot a'(0)e_i\;
e_j\cdot a'(z)\xi\big\rangle.
\end{align*}
Restricting to the case of scalar $A$ for convenience, the expression further simplifies to
\begin{align*}
{\mathcal Q}_{ijm}(z)
\approx-z_m\,\langle a'(0)a'(z)\rangle\,e_i\otimes e_j.
\end{align*}
Restricting ourselves w.~l.~o.~g.~to ensembles $\langle\cdot\rangle$
with $c(0)=\langle g^2(0)\rangle=\langle g^2(z)\rangle=1$, 
we see that $\langle a'(0)a'(z)\rangle$ depends on 
the Gaussian ensemble $\langle\cdot\rangle$ only through $c(z)$.
We thus write $\langle a'(0)a'(z)\rangle$ $={\mathcal A}'(c(z))$ for
some function ${\mathcal A}$, so that by the chain rule
\begin{align*}
{\mathcal Q}_{ijm}(z)\partial_nc(z)\approx-z_m
\,\partial_n{\mathcal A}(c(z))\,e_i\otimes e_j.
\end{align*}
Normalizing ${\mathcal A}$ such that ${\mathcal A}(0)=0$, we obtain by integration
by parts 
\begin{align*}
\int_{\mathbb{R}^d}dz {\mathcal Q}_{ijm}(z)\partial_nc(z)
\approx\delta_{mn}\,
\,\int_{\mathbb{R}^d}dz{\mathcal A}(c(z))\,e_i\otimes e_j.
\end{align*}
%
%
%
Hence the r.~h.~s.~of (\ref{ao26}) is given by
\begin{align}\label{ao27}
\Big(\lim_{T\uparrow\infty}\sum_{k\in\mathbb{Z}^d}k_m\partial_m\nabla^2 G_{T,{\rm hom}}(k)\Big)
\int_{\mathbb{R}^d}dz{\mathcal A}(c(z))
\end{align}
to leading order in the contrast.

\medskip

It remains to argue that the two factors in (\ref{ao27}) typically do not vanish.
The second factor in (\ref{ao27}) does not vanish in the typical case of 
$A'>0$ and $c\ge 0$. Indeed, by definition of ${\mathcal A}$, we then have
${\mathcal A}'> 0$ and thus ${\mathcal A}(c)>0$ for $c>0$, so that
$\int_{\mathbb{R}^d}dz{\mathcal A}(c(z))>0$ because of $c(0)=1$.

\medskip

For the first factor in (\ref{ao27}), we restrict ourselves to an isotropic ensemble,
namely the case where $c$ is radially symmetric, in addition to $A$ being scalar. 
In line with this, we show that the trace of the first factor in (\ref{ao27}) does not vanish:
\begin{align}\label{ao33}
\lim_{T\uparrow\infty}\sum_{k\in\mathbb{Z}^d}k_m\partial_m\Delta G_{T,{\rm hom}}(k)\not=0.
\end{align}
For our isotropic ensemble, the contravariant two-form $a$ is
invariant in law under orthogonal transformations, and so is $a_{\rm hom}$,
which thus is a multiple of the identity, so that $\triangle$ is
a multiple of $\nabla\cdot a_{\rm hom}\nabla$. Hence by definition of
$G_{T,{\rm hom}}$, (\ref{ao33}) follows from
\begin{align}\label{ao34}
\lim_{T\uparrow\infty}\frac{1}{T}\sum_{k\in\mathbb{Z}^d}k_m\partial_m G_{T,{\rm hom}}(k)
\not=0.
\end{align}
By scaling, we have $G_{T,{\rm hom}}(k)$ $=\frac{1}{\sqrt{T}^{d-2}}
G_{1,{\rm hom}}(\frac{k}{\sqrt{T}})$. Hence we see that the sum in (\ref{ao34}) 
can be interpreted as
a Riemann sum that in the limit $T\uparrow\infty$ converges to the integral
\begin{align*}
\int_{\mathbb{R}^d} dk k_m\partial_m G_{1,{\rm hom}}(k)
=-d\int_{\mathbb{R}^d} dk G_{1,{\rm hom}}(k)=-d,
\end{align*}
where the identity follows from integrating the defining equation $G_{1,{\rm hom}}-
\nabla\cdot a_{\rm hom}\nabla G_{1,{\rm hom}}=\delta$ over $\mathbb{R}^d$.
In particular, we find that $\langle\bar a\rangle_L>a_{\rm hom}$ for $L$ large enough,
which is consistent with numerical simulations in \cite[Fig.\ 7 \& 8]{KanitRVE}, \cite[Tab.\ 3]{SchneiderJosienOtto} and \cite[Tab.\ 5.2]{KhormoskijOtto},
where however types of ensembles are considered that are different from our class.

\ignore{THE SYMMETRIC CASE
Definition \ref{Def:2} may be slightly simplified in the {\it symmetric case} 
by which we mean $a^*=a$,
and which follows from $A^*=A$ for both $\langle\cdot\rangle$ and $\langle\cdot\rangle_L$.
By stationarity, the last r.~h.~s.~term in (\ref{ao10}) can be rewritten
in the same fashion as the middle one:
\begin{align*}
\Big\langle\big((e+\nabla\phi^{(1)})\cdot{a'}^*
(\phi^{*(1)}_je_m+\nabla\phi_{jm}^{*(2)})\big)(0)\nonumber\\
\big((e_i+\nabla\phi_i^{(1)})\cdot{a'}^*(e^*+\nabla{\phi^*}^{(1)})\big)(-z)\Big\rangle_L.
\end{align*}
Since $\partial_nc(z)$ is odd w.~r.~t.~to point reflection $z\leadsto -z$,
this term contributes like the same term multiplied by $(-1)$ and $-z$ replaced by $z$.
For a symmetric medium, i.~e.~when $A$ is a symmetric matrix,
we have that $\bar a$ is symmetric so that we may w.~l.~o.~g.~choose $\xi=\xi^*$, this last
term of (\ref{ao10}) assumes the same form as the middle one with the roles
of $i$ and $j$ exchanged.
%
%
However, since $e\cdot {\mathcal Q}e$ is contracted in the indices $(i,j,m)$ with $\Gamma_{\rm hom}$,
which is symmetric in these indices, only the symmetric part
of $e^*\cdot {\mathcal Q}e$ matters\footnote{in line with the fact that only the
symmetric part of $\phi^{(2)}$ should matter}. Hence in the case of a symmetric
medium, the last
term in (\ref{ao10}) may be dropped at the expense of putting a factor of two
in front of the middle one.
}

\subsection{Isotropic ensembles}\label{SS:symm}

In this subsection, we address the case of an isotropic ensemble.
The main step is to characterize the structure of $\Gamma_{\rm hom}$, see (\ref{ao28}),
which amounts to an elementary exercise in representation theory.

\medskip

We recall that by an isotropic ensemble we mean that $c$ is radially symmetric and
that $A$ is scalar. As a consequence, the law of the scalar $a$ under
$\langle\cdot\rangle_L$ is invariant under a change of variables by the octahedral group,
and its law under $\langle\cdot\rangle$ is invariant under the full orthogonal group.
As a consequence, both $\langle\bar a\rangle_L$ and $a_{\rm hom}$ are multiples
of the identity. As a consequence $G_{T,{\rm hom}}$ is radially symmetric.
Hence by definition (\ref{ao11}), 
the 3-covariant and 1-contravariant tensor $\Gamma_{{\rm hom},T}$, like its limit $\Gamma_{\rm hom}$, is
invariant under the octahedral group. Furthermore, it is obviously invariant under the
permutation of its first three (covariant) derivatives.

\medskip

We now derive the (quite restricted) form $\Gamma_{\rm hom}$ takes as a consequence of these symmetries.
We recall that the four-linear form
$\Gamma_{\rm hom}=\Gamma_{\rm hom}(v,v',u,\xi)$ takes in three vectors $v$, $v'$, $u$, and the form $\xi$.
Choosing the standard basis $\{e_m\}_m$ 
and its dual basis $\{e^n\}_n$, by linearity and invariance under the octahedral group, 
it is enough to characterize the two bilinear forms $\Gamma_{\rm hom}(v,v',e_1,e^1)$ and 
$\Gamma_{\rm hom}(v,v',e_2,e^1)$. The first form is invariant under the octahedral subgroup
that fixes $e_1$, which contains in particular reflections $x_i\leadsto
-x_i$ for $i\not=1$. Since the form is symmetric and thus diagonalizable, this first implies that
$e_1$ is an eigenvector, and then that $\{e_1\}^\perp$ is an eigenspace. 
Hence the bilinear form can be written as 
\begin{align}\label{ao23}
\Gamma_{\rm hom}(v,v',e_1,e^1)=\mu_{\perp}v\cdot v'+\mu_{||}(v\cdot e_1)(v'\cdot e_1)
\end{align}
for some constants $\mu_{\perp}$ and $\mu_{||}$.
For the second bilinear form $\Gamma_{\rm hom}(v,v',e_2,e^1)$, the same
argument yields that it has block diagonal form w.~r.~t.~ the span of $\{e_1,e_2\}$ and
its orthogonal complement. In particular we have
\begin{align*}
\Gamma_{\rm hom}(v,v',e_2,e^1)=c v\cdot v'\quad\mbox{for}\;v\cdot e_1=v\cdot e_2=0
\end{align*}
for some constant $c$, which we may recover through
$c=\Gamma_{\rm hom}(e_3,e_3,e_2,e^1)$. By invariance under the octahedral
transformation $x_2\leadsto-x_2$, this expression vanishes, so that in fact
\begin{align}\label{ao29}
\Gamma_{\rm hom}(v,v',e_2,e^1)=0\quad\mbox{for}\;v\cdot e_1=v\cdot e_2=0.
\end{align}
For the same reason, we have
\begin{align}\label{ao31}
\Gamma_{\rm hom}(e_2,e_2,e_2,e^1)=0.
\end{align}
By the permutation symmetry in the first three arguments we obtain from (\ref{ao23})
\begin{align}\label{ao30}
\Gamma_{\rm hom}(e_1,e_1,e_2,e^1)=0\;\;\mbox{and}\;\;\Gamma_{\rm hom}(e_1,e_2,e_2,e^1)
=\Gamma_{\rm hom}(e_2,e_1,e_2,e^1)=\mu_{\perp}.
\end{align}
The statements (\ref{ao29}), (\ref{ao30}), and (\ref{ao31}) combine to
\begin{align*}
\Gamma_{\rm hom}(v,v',e_2,e^1)=
\mu_{\perp}\big((v\cdot e_1)(v'\cdot e_2)+(v'\cdot e_1)(v\cdot e_2)\big).
\end{align*}
A short computation shows that the combination of this with (\ref{ao23}) yields
\begin{align}\label{ao25}
\Gamma_{\rm hom}(v,v',u,\xi)&=\xi.\Big(\mu_{\perp}\big((v\cdot v')u+(v\cdot u) v'+(v'\cdot u)v\big)+(\mu_{||}-2\mu_{\perp}) T(v,v',u)\big),
\end{align}
where we have introduced the trilinear map 
\begin{align*}
T_i(v,v',u)=v_i{v'}_iu_i\quad\mbox{(no summation)},
\end{align*}
which is invariant under permutations and octahedral transformations,
but not under all orthogonal transformations.
In terms of indices, we may rewrite (\ref{ao25}) as
\begin{align}\label{ao28}
\Gamma_{{\rm hom},ijmn}=\mu_{\perp}\big(\delta_{ij}\delta_{mn}+\delta_{im}\delta_{jn}
+\delta_{in}\delta_{jm}\big)
+(\mu_{||}-2\mu_{\perp})\delta_{ijmn}.
\end{align}
Hence in the isotropic case, $\Gamma_{\rm hom}$ is determined by just two numbers.

\medskip

We now turn to the second factor on the r.~h.~s.~of (\ref{ao26}).
As discussed after Definition \ref{Def:2}, 
$\xi^*\cdot{\mathcal Q}_{ijm}(z)\xi$ is a five-covariant tensor field, so that
$\int_{\mathbb{R}^d}dz\xi^*\cdot {\mathcal Q}_{ijm}(z)\xi\partial_nc(z)$ is a five-covariant
and one-contravariant tensor. In our case of an isotropic ensemble, 
${\mathcal Q}$ is invariant under the entire orthogonal group
(not just the discrete octahedral group) as a consequence of $L\uparrow\infty$.
Since the l.~h.~s.~of (\ref{ao26}) is a multiple of the identity,
it is enough to consider the trace of $\int_{\mathbb{R}^d}dz\xi^*\cdot 
{\mathcal Q}_{ijm}(z)\xi\partial_nc(z)$ in $\xi,\xi^*$:
\begin{align*}
Q_{ijmn}:=\int_{\mathbb{R}^d}dz\big(e_1\cdot {\mathcal Q}_{ijm}(z)e_1+\cdots
+e_d\cdot {\mathcal Q}_{ijm}(z)e_d\big)\partial_nc(z),
\end{align*}
which is a three-covariant and one-contravariant tensor, 
still invariant under the (full) orthogonal group.
Since in (\ref{ao26}), it is contracted with a tensor, namely $\Gamma_{\rm hom}$, that is symmetric under
permutation of $i,j,m$, we may pass to the orthogonal projection $Q^{sym}$ of 
$Q$ onto this subspace, which preserves invariance under the orthogonal group. 
Hence as for $\Gamma_{\rm hom}$, we obtain that
$Q^{sym}$ must be of the form (\ref{ao28}). However, while
the first three terms in (\ref{ao28}) are invariant under the entire orthogonal group,
the last is not. Hence $Q^{sym}$ must be of the more restricted form
\begin{align*}
Q_{ijmn}^{sym}=\nu_{\perp}
\big(\delta_{ij}\delta_{mn}+\delta_{im}\delta_{jn}+\delta_{in}\delta_{jm}\big)
\end{align*}
for some constant $\nu_{\perp}$. Hence for an isotropic ensemble, 
the relevant information of the entire six-tensor
$\int_{\mathbb{R}^d}dz\xi^*\cdot {\mathcal Q}_{ijm}(z)\xi\partial_nc(z)$
is the single number $\nu_{\perp}$.

\ignore{
This means this six-linear form $Q$ 
takes in five linear forms, among them $\xi$, $\xi^*$, and one vector $u$
(corresponding to the index $n$); and we shall write $Q(\xi,\xi^*,\cdot,\cdot,\cdot,u)$.
In our isotropic case, $Q$ is invariant under the entire orthogonal group 
(not just the discrete octahedral group, as a consequence of $L\uparrow\infty$). 
Moreover, it is symmetric in $\xi$ and $\xi^*$. Like for $\Gamma$, we now
derive the form of $Q$. By linearity, isotropy, and polarization in the first two
arguments, it is enough to characterize $Q(e_1,e_1,\cdot,\cdot,\cdot,e^1)$ and
$Q(e_1,e_1,\cdot,\cdot,\cdot,e^2)$. Starting with $Q(e_1,e_1,\cdot,\cdot,\cdot,e^1)$
we note that since there is no three-tensor that is invariant under rotations
and reflections (as opposed to rotations only), it only does not vanish if
one of the three remaining basis vectors is equal to $e_1$. By reflection symmetry,
the two others have to be equal. Up to permutation in these three arguments,
and relabeling of the basis index, this only leaves
\begin{align*}
Q(e_1,e_1,e_1,e_1,e_1,e^1)\quad\mbox{and}\quad Q(e_1,e_1,e_1,e_2,e_2,e^1).
\end{align*}
We now turn to $Q(e_1,e_1,\cdot,\cdot,\cdot,e^2)$; again, one of the three arguments
has to agree with $e_1$ or $e_2$. If one agrees with $e_1$, up to permutation of these 
three argument and relabeling of the basis index, this only leaves
\begin{align*}
Q(e_1,e_1,e_1,e_1,e_2,e^2).
\end{align*}
The case that none of them agrees with $e_1$ leads to
\begin{align*}
Q(e_1,e_1,e_3,e_3,e_2,e^2)\quad\mbox{and}\quad Q(e_1,e_1,e_2,e_2,e_2,e^2).
\end{align*}
}


\section{Structure of the proof of Theorem \ref{T:main}}

In this section, we formulate the main intermediate results that lead
to Theorem \ref{T:main}: In Subsection \ref{SS:mass}, we introduce the
massive approximation in order to rigorously derive the analogue of the representation
formula (\ref{ao34}) from Subsection \ref{SS:formula}, see Proposition \ref{P:mass}.
In Subsection \ref{SS:resum} we argue, following Subsection \ref{SS:formula},
that a re-summation allows for removing the massive approximation
in the representation formula, see Proposition \ref{P:2}.
It relies on second-order homogenization, as introduced in Subsection \ref{SS:asympt}.
In Subsection \ref{SS:fromreptoas} we sketch how to pass from the representation
given by Proposition \ref{P:2} to the asymptotics stated in Theorem \ref{T:main}.
This essentially relies on corrector estimates and the estimate of the homogenization error,
see Subsections \ref{SS:stochest} and \ref{SS:twoscale}.
In Subsection \ref{SS:stochest}, we formulate the uniform stochastic estimates on first
and second-order correctors
needed to capture the asymptotics $L\uparrow\infty$, see Proposition \ref{P:3}.
In Subsection \ref{SS:twoscale}, we formulate the stochastic second-order estimate
of the homogenization error, applied to the Green function, see Proposition \ref{P:4}.

\subsection{Massive approximation}\label{SS:mass}

As became apparent in Subsection \ref{SS:formula},
there is divergence in the sum over the periodic cells, see (\ref{ao35}).
We avoid it by replacing the
operator $-\nabla\cdot a\nabla$ by $\frac{1}{T}-\nabla\cdot a\nabla$
where $T<\infty$ will eventually tend to infinity.
This has the desired effect that the corresponding Green's function $G_T(a,x,y)$ and its derivatives
now decay exponentially in $\frac{|y-x|}{\sqrt{T}}$, which can be seen for instance from the homogenization result in Proposition \ref{Prop4Massive:Statement}. The language of ``massive'' approximation arises from field theory where such a zero-order term is
often introduced to suppress an infrared divergence, like here. Assimilating
$m^2$ to the inverse of a time scale $T$ however makes the connection
to stochastic processes, since $\frac{1}{T}-\nabla\cdot a\nabla$ is the generator
of a diffusion-desorption process where $T$ is the time scale of desorption,
and ultimately to parabolic intuition.
As a collateral of the massive approximation,
we have to replace the definitions (\ref{pde:9.1_quad}) and (\ref{Def_Abar_intro}) by
\begin{align}
\frac{1}{T}\phi_{Ti}^{(1)}-\nabla\cdot a(\nabla\phi_{Ti}^{(1)}+e_i)=0,
\quad\bar a_Te_i:=\fint_{[0,L)^d}a(\nabla\phi^{(1)}_{Ti}+e_i);
\label{MassiveEquation}
\end{align}
with analogous definitions for the transposed medium $a^*$. 

\medskip

We collect in the following some estimates on the massive quantities that are useful in the proofs of this section. For notational convenience, in our forecoming estimates, we do not make explicit the dependence of the constants in $d$ and $\lambda$.

\medskip

From Schauder's theory, $\phi_T^{(1)}$ belongs to $C^{1,\alpha}_{loc}(\mathbb{R}^d)$ and 
$$\|(\phi_T^{(1)},\nabla\phi_T^{(1)})\|_{C^{0,\alpha}([0,L)^d)}\leq C(L^{\alpha}[a]_{\alpha})\quad\text{and}\quad \|(\phi_T^{(1)}-\phi^{(1)},\nabla\phi_T^{(1)}-\nabla\phi^{(1)})\|_{C^{0,\alpha}([0,L)^d)}\leq C(L^{\alpha}[a]_{\alpha})T^{-1},$$
where we recall that $[a]_\alpha$ denotes the H\"older semi-norm of $a$. Knowing that $C$ grows at most polynomially in its argument $[a]_{\alpha}$, we deduce from \eqref{fw33Bis} that the estimates above can be converted into, for any $p<\infty$
\begin{equation}\label{BoundCorDependL}
\langle\|(\phi_T^{(1)},\nabla\phi_T^{(1)})\|^p_{C^{0,\alpha}([0,L)^d)}\rangle_L\lesssim_{p,L} 1\quad\text{and}\quad \langle\|(\phi_T^{(1)}-\phi^{(1)},\nabla\phi_T^{(1)}-\nabla\phi^{(1)})\|^p_{C^{0,\alpha}([0,L)^d)}\rangle_L\lesssim_{p,L}T^{-1}.
\end{equation}
Analogously, we obtain at the level of the massive Green functions :
\begin{equation}\label{ConvergenceMassiveQuantities}
\langle\vert\nabla\nabla G_T(x,y)-G(x,y)\vert^p\rangle_L\underset{T\uparrow\infty}{\rightarrow}0\quad\text{for any $x\neq y$,}
\end{equation}
as well as 
\begin{equation}\label{ConvergenceMassiveQuantitiesBis}
\langle\vert(\nabla^3\bar G_T(x),\nabla^2\bar G_T(x))-(\nabla^3\bar G(x),\nabla^2\bar G(x))\vert^p_L\rangle\underset{T\uparrow\infty}{\rightarrow} 0 \quad\text{ for any $x\neq 0$.}
\end{equation}
Finally, we have the following moment bounds on the massive Green function $G_T$
\begin{equation}\label{Eq:MomentGreenAppendix}
\langle\vert\nabla\nabla G_T(x,y)\vert^p\rangle^{\frac{1}{p}}\lesssim_{p,L}(\ln\vert x-y\vert)\vert x-y\vert^{-d-2},
\end{equation}
that we deduce from Proposition \ref{Prop4Massive:Statement} and the bound on the constant-coefficient Green function $\bar{G}_T$ and its derivatives\footnote{the estimates hold in a pathwise way with a constant $C(\|\bar{a}_T\|)$ depending polynomially on its argument and well controlled in moments thanks to the first item of \eqref{BoundCorDependL}} 
\begin{equation}\label{Eq:BoundHomogGT}
\langle\vert\nabla^2 \bar{G}_T(x)\vert^p\rangle^{\frac{1}{p}}_L+\vert x\vert\langle\vert\nabla^3\bar{G}_T(x)\vert^p\rangle^{\frac{1}{p}}_L\lesssim \vert x\vert^{-d-2}\quad\text{for any $x\neq 0$},
\end{equation}
that are uniform in $T\uparrow\infty$.

\medskip

We now can state the massive version of formula (\ref{ao35}). Its rigorous proof will be established in \cite{PriceFormula}.

\begin{proposition}\label{P:mass} It holds
\begin{equation}\label{ao36}
\begin{aligned}
\frac{d}{dL} \langle \xi^*\cdot\bar a_T\xi\rangle_L=&
-\int_{\mathbb{R}^d}dz \big\langle\big(a'
(\nabla\phi_T^{*(1)}+\xi^*)\big)(0)\nabla\nabla G_T(0,z)
\big(a'(\nabla\phi^{(1)}_T+\xi)\big)(z)\big\rangle_L \frac{\partial c_L}{\partial L}(z)\\
&+\frac{1}{2}\big\langle(\nabla\phi_T^{*(1)}+\xi^*)\cdot a''
(\nabla\phi^{(1)}_T+\xi)\big\rangle_L\frac{\partial c_L}{\partial L}(0),
\end{aligned}
\end{equation}
where we recall that $\phi^{(1)}_T=\sum_i \xi_i \phi^{(1)}_{Ti}$.
\end{proposition}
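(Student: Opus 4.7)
The plan is to rigorously emulate the formal computation of Subsection~\ref{SS:formula}, exploiting the fact that the massive operator $\frac{1}{T}-\nabla\cdot a\nabla$ produces a Green function $G_T$ with exponential decay on the scale $\sqrt{T}$, so every sum and integral that was only formally convergent in the massless case is now absolutely convergent. I would proceed in four steps.

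\textbf{Step 1: Reduction to a pointwise functional.} By shift-covariance of $\nabla\phi^{(1)}_T$ and stationarity of $\langle\cdot\rangle_L$, exactly as in (\ref{Num::1}), write $\langle\xi^*\cdot\bar a_T\xi\rangle_L=\langle F_T\rangle_L$ with
\[
F_T:=\bigl(\xi^*\cdot a(\nabla\phi^{(1)}_T+\xi)\bigr)(0).
\]
The bounds (\ref{BoundCorDependL}) and (\ref{fw33Bis}) ensure that $F_T$ and its Malliavin derivatives lie in $L^p(\langle\cdot\rangle_L)$ with enough regularity in $g$ to apply the Price formula of \cite{PriceFormula}:
\begin{equation}\label{Price_plan}
\frac{d}{dL}\langle F_T\rangle_L=\frac{1}{2}\int_{\R^d}\!dx\int_{\R^d}\!dy\,\Big\langle\frac{\partial^2 F_T}{\partial g(-x)\partial g(-y)}\Big\rangle_L\frac{\partial c_L}{\partial L}(x-y).
\end{equation}

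\textbf{Step 2: Computing the Malliavin derivative of $\nabla\phi^{(1)}_T$.} Differentiating the massive corrector equation \eqref{MassiveEquation} in $g(z)$ and using $\frac{\partial a(x)}{\partial g(z)}=a'(z)\delta(x-z)$ yields the exact representation
\[
\frac{\partial\nabla\phi^{(1)}_T(x)}{\partial g(z)}=-\nabla\nabla G_T(x,z)\,a'(z)(\nabla\phi^{(1)}_T+\xi)(z),
\]
in analogy with (\ref{Naiveper_Derivative7}). Here the massive Green function $G_T$ is used, and the pointwise identity is legitimate thanks to the moment bound \eqref{Eq:MomentGreenAppendix} together with the Schauder estimates \eqref{BoundCorDependL}, which provide the $L^p$-control needed to differentiate under the expectation.

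\textbf{Step 3: Second derivative and symmetrization.} Applying $\frac{\partial}{\partial g(z)}$ once more and Leibniz' rule, one obtains, in analogy with (\ref{Naiveper_FormalCoputeIntro}), a sum of three terms: one with $a''$ contributing as a Dirac mass at $z=0$ (since $\frac{\partial^2 a(x)}{\partial g(0)\partial g(z)}=a''(0)\delta(x)\delta(z)$), and two cross-terms involving $\nabla\nabla G_T$. Using the dual corrector $\phi^{*(1)}_T$, the $T$-analogue of \eqref{ao01}, and testing the two defining PDEs against each other, the contraction with $\xi^*$ (which was introduced as $e_i+\nabla\phi^{(1)}_{Ti}\cdot 0$) is promoted to the full flux $(\xi^*+\nabla\phi^{*(1)}_T)\cdot a'$; symmetrically the contribution at $z$ becomes $a'(\xi+\nabla\phi^{(1)}_T)$. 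Integration of the resulting Dirac masses produces precisely the $a''$-term in (\ref{ao36}).

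\textbf{Step 4: Change of variables and assembly.} Inserting the expression for $\big\langle\frac{\partial^2 F_T}{\partial g(-x)\partial g(-y)}\big\rangle_L$ into \eqref{Price_plan}, the change of variables $z:=x-y$ together with the stationarity of $\langle\cdot\rangle_L$ and of $\nabla\phi^{(1)}_T$ (shift-covariant), performed as in the passage from (\ref{Price}) to (\ref{ao02}), collapses the double integral to a single integral over $z$. The two Green-function contributions, related by swapping $0$ and $z$, are identical after relabeling thanks to the evenness of $\partial c_L/\partial L$ and the symmetry $\nabla\nabla G_T(0,z)$ vs.\ $\nabla\nabla G_T(z,0)$, yielding the factor $1$ in front of the first term of (\ref{ao36}) (absorbing the $\tfrac12$). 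All interchanges of sum, integral and expectation are justified by the exponential decay of $\nabla\nabla G_T$ encoded in \eqref{Eq:MomentGreenAppendix} together with the integrability of $\partial c_L/\partial L$ coming from \eqref{c:6_bis} and \eqref{Def_cL}.

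\textbf{Main obstacle.} The only non-routine point is the rigorous justification of the Price formula \eqref{Price_plan} and of the differentiation under the integral in Step~2: one must verify that $F_T$ is twice Malliavin differentiable with derivatives lying in the appropriate Banach-space-valued $L^p$ with respect to $\langle\cdot\rangle_L$, uniformly in the parameter $L$ so that $\tfrac{d}{dL}\langle F_T\rangle_L$ can be exchanged with the double integral. This is where the massive approximation is indispensable, as it turns all borderline-divergent expressions into absolutely convergent ones; the detailed verification is the content of \cite{PriceFormula}.
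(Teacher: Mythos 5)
The paper itself does not prove Proposition \ref{P:mass}: the sentence immediately preceding the statement reads ``Its rigorous proof will be established in \cite{PriceFormula}'', and indeed no proof appears in the Proofs section or the appendices of the present paper. Your sketch is precisely the formal derivation that the paper intends: it is the massive counterpart, step for step, of the informal computation in Subsection \ref{SS:formula} leading to (\ref{ao35}), and the paper's own Appendix \ref{SS:App3}, which derives the analogous representation (\ref{rr14}) for the heuristic section, confirms that this is the intended route (Price formula, differentiation of the corrector PDE, symmetrization via the dual corrector, collapse of the double integral using stationarity and the evenness of $\partial c_L/\partial L$). You have also correctly isolated the only genuinely hard point---the rigorous justification of the Price formula and of the exchanges of $\frac{d}{dL}$ with the expectation and the Malliavin-derivative integrals---and noted that this is exactly what is deferred to \cite{PriceFormula}. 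One small subtlety your sketch glosses over, and which also belongs to the deferred analysis: differentiating the \emph{periodic} corrector equation \eqref{MassiveEquation} in $g$ a priori produces the periodic massive Green function $G_T^{\rm per}$ on the torus (this is what Appendix \ref{SS:App3} writes for its version), whereas (\ref{ao36}) features the whole-space $G_T$ with an integral over all of $\R^d$; one passes between the two by unfolding $G_T^{\rm per}=\sum_k G_T(\cdot+Lk)$ and using the periodicity of the remaining factors, which is an absolutely convergent resummation thanks to the exponential decay of $G_T$ -- and is again part of what the massive regularization buys. In summary your sketch captures everything the present paper records about this proposition, and comparison with a paper-internal proof is moot since no such proof is given.
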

\medskip

The $z$-integral on the r.~h.~s.~of (\ref{ao36}) converges absolutely for $|z|\uparrow\infty$
since the exponential decay of $\nabla\nabla G_T(0,z)$ dominates the
linear growth of $\frac{\partial c_L}{\partial L}(z)$, cf.~(\ref{ao37}). The singularity at
$z=0$ is to be interpreted by duality, using that the other factors are locally smooth in $z$.
%

\subsection{Re-summation}\label{SS:resum}

Following Subsection \ref{SS:asympt}, we now appeal to se\-cond-order homogenization,
which allows for a re-summation. As a by-product of the re-summation,
we may pass to the limit $T\uparrow\infty$ in (\ref{ao36}).
The difficulty with passing to the limit $T\uparrow\infty$ lies in the $\{|z|\ge L\}$-part
of the integral in (\ref{ao36}).
We thus fix a smooth cut-off function $\eta$ for $B_\frac{1}{2}$
in $B_1$, rescale according to
\begin{align*}
\eta_L(z)=\eta({\textstyle\frac{z}{L}}),
\end{align*}
and to split the $z$-integral into the benign near-field part
$\int_{\mathbb{R}^d}dz \eta_L(z)$ and the delicate far-field part
$\int_{\mathbb{R}^d}dz(1-\eta_L)(z)$. On the far-field part, we
appeal to the two-scale expansion (\ref{ao16}).
Hence we have to monitor the homogenization error
\begin{equation}\label{ao47}
\begin{aligned}
\lefteqn{{\mathcal E}(x,y)}\\
&:=
\nabla\nabla G(x,y)+\partial_{ij}\overline{G}(x-y)
(e_i+\nabla\phi_i^{(1)})(x)\otimes (e_j+\nabla{\phi_j^*}^{(1)})(y)\\
&+\partial_{ijm}\overline{G}(x-y)
(\phi_i^{(1)}e_m+\nabla\phi_{im}^{(2)})(x)\otimes (e_j+\nabla{\phi_j^*}^{(1)})(y)\\
&-\partial_{ijm}\overline{G}(x-y)
(e_i+\nabla\phi_i^{(1)})(x)\otimes ({\phi_j^*}^{(1)}e_m+\nabla\phi_{jm}^{*(2)})(y),
\end{aligned}
\end{equation}
where as before $\overline{G}$ denotes the fundamental solution for the
constant-coefficient operator $-\nabla\cdot\bar a\nabla$.
%
%

\medskip

The translation invariance of $\overline{G}$ together with the periodicity of
$\phi^{(1)}$ and $\phi^{(2)}$ allows for a re-summation.
As in Subsection \ref{SS:asympt}, we feed in a zeroth- and first-order Taylor
expansion of $\overline{G}$. This gives rise to the analogue of (\ref{ao11}), namely
\begin{align}\label{ao38}
\overline{\Gamma}_{ijmn}=\lim_{T\uparrow\infty}\overline{\Gamma}_{Tijmn}\;\;\mbox{where}\;\;
\overline{\Gamma}_{Tijmn}:=\sum_{k\in\mathbb{Z}^d}k_n\partial_{ijm}\overline{G}_{T}(k),
\end{align}
where $\overline{G}_T$ denotes the fundamental solution of $\frac{1}{T}-\nabla\cdot\bar a\nabla$.
The existence of this limit follows by the same arguments given for (\ref{ao33}).
The Taylor expansion generates the additional error terms
\begin{align}
\epsilon_{Lijn}^{(1)}(z)&:=\sum_{k\in\mathbb{Z}^d}k_n\big(((1-\eta_L)\partial_{ij}\overline{G})(z+Lk)
-\partial_{ij}\overline{G}(Lk)-z_m\partial_{ijm}\overline{G}(Lk)\big),\label{ao39}\\
\epsilon_{Lijmn}^{(2)}(z)&:
=\sum_{k\in\mathbb{Z}^d}k_n\big(((1-\eta_L)\partial_{ijm}\overline{G})(z+Lk)
-\partial_{ijm}\overline{G}(Lk)\big).\label{ao40}
\end{align}
Thanks to this re-summation, the subtlety of the $T\uparrow\infty$ is limited to
the not absolutely convergent sum in (\ref{ao38}). The
sums in (\ref{ao39}) and (\ref{ao40}) are absolutely convergent since both
summands decay as $|k|^{-(d+1)}$ for $|k|\gg\frac{|z|}{L}$, see (\ref{ao49})
and (\ref{ao50}) for a more quantitative discussion.
Equipped with these definitions, we are now able to express the limit $T\uparrow\infty$
of (\ref{ao36}):

\begin{proposition}\label{P:2} Let $\bar\Gamma$ be as in (\ref{ao38}),
$\epsilon^{(1)}$ and $\epsilon^{(2)}$ as in (\ref{ao39}) \& (\ref{ao40}),
and ${\mathcal E}$ as in (\ref{ao47}).
Let $Q^{(1)}$ and $Q^{(2)}$ be defined
as in (\ref{ao45bis}) and (\ref{ao46}). Then we have
\begin{equation}\label{ao41}
\begin{aligned}
\lefteqn{\frac{d}{dL}\langle\xi^*\cdot\bar a\xi\rangle_L}\\
&=L^{-(d+1)}\int_{\mathbb{R}^d}dz
\big\langle\overline{\Gamma}_{ijmn}\big(\xi^*\cdot Q^{(2)}_{ijm}(z)\xi
-z_m \xi^*\cdot Q^{(1)}_{ij}(z)\xi\big)\big\rangle_L
\partial_nc(z)\\
&+\int_{\mathbb{R}^d}dz
\big\langle\epsilon^{(2)}_{Lijmn}(z)\xi^*\cdot Q^{(2)}_{ijm}(z)\xi
+\epsilon^{(1)}_{Lijn}(z)\xi^*\cdot Q^{(1)}_{ij}(z)\xi\big)\big\rangle_L
\partial_nc(z)\\
&-\int_{\mathbb{R}^d}dz(1-\eta_L)(z)
\big\langle\big(a'(\nabla\phi^{*(1)}+\xi^*)\big)(0) {\mathcal E}(0,z)
\big(a'(\nabla\phi^{(1)}+\xi)\big)(z)\big\rangle_L
\frac{\partial c_L}{\partial L}(z)\\
&-\int_{\mathbb{R}^d}dz\eta_L(z)
\big\langle\big(a'(\nabla\phi^{*(1)}+\xi^*)\big)(0)\nabla\nabla G(0,z)
\big(a'(\nabla\phi^{(1)}+\xi)\big)(z)\big\rangle_L
\frac{\partial c_L}{\partial L}(z)\\
&+\frac{1}{2}\big\langle(\nabla\phi^{*(1)}+\xi^*)\cdot a''
(\nabla\phi^{(1)}+\xi)\big\rangle_L\frac{\partial c_L}{\partial L}(0).
\end{aligned}
\end{equation}
\end{proposition}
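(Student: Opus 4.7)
The plan is to realize rigorously the formal derivation of Subsections~\ref{SS:formula}--\ref{SS:asympt} at finite mass $T$, where every sum and integral converges absolutely thanks to the exponential decay of $G_T$ and $\bar G_T$, and then pass $T\uparrow\infty$ at the end. The starting point is the massive identity \eqref{ao36} of Proposition~\ref{P:mass}.

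The first move splits the $z$-integral in \eqref{ao36} via $1=\eta_L+(1-\eta_L)$. The $\eta_L$-piece becomes, after $T\uparrow\infty$, the fourth line of \eqref{ao41}; the $\frac{\partial c_L}{\partial L}(0)$-boundary term of \eqref{ao36} becomes the fifth line directly via \eqref{BoundCorDependL} and pointwise convergence $\phi_T\to\phi$. On the $(1-\eta_L)$-piece, insert the massive analog of the two-scale decomposition \eqref{ao47}: write $\nabla\nabla G_T(0,z)=\mathcal E_T(0,z)$ plus three leading two-scale terms consisting of correctors at $0$ and $z$ paired with either $\partial_{ij}\bar G_T(-z)$ (one term) or $\partial_{ijm}\bar G_T(-z)$ (two terms). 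The $\mathcal E_T$-contribution becomes the third line of \eqref{ao41} after $T\uparrow\infty$.

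On the three leading two-scale terms, substitute $\frac{\partial c_L}{\partial L}(z)=\sum_k k_n\partial_nc(z+Lk)$ per \eqref{ao37}, perform the change of variable $z\mapsto z-Lk$ in each $k$-summand (followed by $k\mapsto -k$ for sign bookkeeping), and use the $\langle\cdot\rangle_L$-periodicity of $\phi_T^{(1)}$, $\nabla\phi_T^{(1)}$, $\nabla\phi_T^{(2)}$ and their duals to factor the corrector content out of the $k$-sum. The inner sum then reduces to $\sum_k k_n(1-\eta_L)(z+Lk)\partial_{ij}\bar G_T(z+Lk)$ and its $\partial_{ijm}$-analog. Taylor-expand around $Lk$ (to second, resp.\ first, order): the zeroth-order sum $\sum_k k_n\partial_{ij}\bar G_T(Lk)$ vanishes by the inversion symmetry of $\partial_{ij}\bar G_T$; by homogeneity the first-order piece contributes $z_mL^{-(d+1)}\bar\Gamma_{T/L^2,ijmn}$, and the $\partial_{ijm}$-expansion contributes $L^{-(d+1)}\bar\Gamma_{T/L^2,ijmn}$. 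Combined with the corrector pairings dictated by \eqref{ao45bis}--\eqref{ao46}, the leading contributions assemble into the first line of \eqref{ao41}, while the Taylor remainders together with the $\eta_L$-induced modifications at small $|k|$ are precisely the massive analogs of $\epsilon_{L,ijn}^{(1)}$ and $\epsilon_{L,ijmn}^{(2)}$ of \eqref{ao39}--\eqref{ao40} (the second line).

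Finally, pass $T\uparrow\infty$ in each piece. The existence of $\bar\Gamma=\lim_T\bar\Gamma_T$ follows from the Riemann-sum argument sketched in Subsection~\ref{SS:small}; the $z$-integrals are taken to the limit by dominated convergence, using \eqref{ConvergenceMassiveQuantities}--\eqref{ConvergenceMassiveQuantitiesBis} for pointwise convergence and the uniform-in-$T$ envelopes \eqref{BoundCorDependL}, \eqref{Eq:MomentGreenAppendix}, \eqref{Eq:BoundHomogGT}; the $k$-sums in $\epsilon^{(1)}_L$ and $\epsilon^{(2)}_L$ remain absolutely convergent at $T=\infty$, their summands decaying like $|k|^{-(d+1)}$ for $|k|\gg|z|/L$. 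The main obstacle is to establish $T$-uniform tail bounds in $z$: $\frac{\partial c_L}{\partial L}(z)$ grows linearly by \eqref{ao37} while the leading two-scale pieces of $\nabla\nabla G_T(0,z)$ decay only like $|z|^{-d}$, so absolute integrability is recovered only through the $k$-cancellations produced by the resummation combined with the integrability of $\partial c$ from Assumption~\ref{Ass:1}.
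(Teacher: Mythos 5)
Your proposal is correct and follows essentially the same two-step strategy as the paper: first establish the massive analogue of \eqref{ao41} by splitting the $z$-integral in \eqref{ao36} via $\eta_L$, inserting the two-scale expansion of $\nabla\nabla G_T$, resumming with \eqref{ao37} and the $\langle\cdot\rangle_L$-periodicity of the correctors, using parity to kill $\sum_k k_n\partial_{ij}\bar G_T(Lk)$, and applying the scaling $\bar G_T(Lx)=L^{2-d}\bar G_{T/L^2}(x)$; then pass $T\uparrow\infty$ by dominated convergence using \eqref{BoundCorDependL}, \eqref{ConvergenceMassiveQuantities}--\eqref{ConvergenceMassiveQuantitiesBis}, \eqref{Eq:MomentGreenAppendix} and the uniform bounds \eqref{ao49}--\eqref{ao50}. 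One small inefficiency: you invoke a massive second-order corrector $\phi_T^{(2)}$, whereas the paper deliberately builds ${\mathcal E}_T$ and $Q^{(1)}_T$, $Q^{(2)}_T$ out of the \emph{non-massive} $\phi_i^{(1)}$, $\phi_{im}^{(2)}$ (massifying only the single corrector $\phi^{(1)}=\xi_i\phi^{(1)}_i$ appearing in the flux), which avoids having to define $\phi_T^{(2)}$ and prove its $T\uparrow\infty$ convergence; also ``homogeneity'' of $\bar G_T$ is not available since $\bar G_T$ is not homogeneous — the scaling identity is the correct tool.
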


Periodic homogenization theory suffices to establish
Proposition \ref{P:2} and in particular to ensure that all five expressions
on the r.~h.~s.~of (\ref{ao41}) are well-defined, including the third one.
Indeed, 
it helps to momentarily think of having rescaled length by the fixed $L$.
This puts us into the context of a $1$-periodic coefficient field $a$,
which in addition is H\"older continuous. By periodic homogenization, we prove in Proposition \ref{Prop4Massive:Statement} (in a more general case for the massive quantity $\mathcal{E}_T$)
\begin{align}\label{ao42}
\sup_{x,y}|y-x|^{d+2}\langle|{\mathcal E}(x,y)|^p\rangle^{\frac{1}{p}}<\infty\quad\text{for any $p<\infty$.}
\end{align}
This estimate yields the absolute convergence of the third term on the r.~h.~s.~of (\ref{ao41}),
since the decay (\ref{ao42}) over-compensates the linear growth of
$\frac{\partial c_L}{\partial L}$. 

\subsection{From representation to asymptotics}\label{SS:fromreptoas}

In order to pass from the representation in Proposition \ref{P:2} to
the asymptotics in Theorem \ref{T:main},
we have to show that the first r.~h.~s.~term of (\ref{ao41}), up to the factor $L^{d+1}$,
converges to the r.~h.~s.~term of (\ref{ao14}), and that the remaining terms are $o(L^{-(d+1)})$. Note that by integration, \eqref{c:6_bis} implies
\begin{equation}\label{FromRtoABoundNabla}
\sup_{x}(1+\vert x\vert^2)^{\frac{d+1}{2}+\alpha}\vert\nabla c(x)\vert<\infty,
\end{equation}
so that from \eqref{ao37} and Proposition \ref{P:3} $i)$, the fifth term is directly of order $L^{-(d+1+2\alpha)}$ which as desired is $o(L^{-(d+1)})$. We now discuss the first four terms. Without loss of generality, we henceforth assume that the exponent $\alpha>0$ is (sufficiently) small.

\medskip

We start with the second term and estimate $\epsilon^{(1)}$,
see (\ref{ao39}): In the range $|k|\ge\frac{|z|}{L}$,
we obtain from Taylor applied to $(1-\eta_L)\partial_{ij}\bar G$
that the summand is estimated by $\vert k\vert|z|^2(L|k|)^{-(d+2)}$
$\le|z|(L|k|)^{-(d+1)}$.
Hence the contribution to the sum from this range is dominated by $\min\{|z|^2L^{-(d+2)},$
$|z|L^{-(d+1)}\}$.
In the other range $|k|\le\frac{|z|}{L}$, the contribution from the middle term
vanishes by parity, the contribution from the last term is estimated by $|z|L^{-(d+1)}$
(by a similar argument to the one that shows that the limit (\ref{ao38}) exists), and the first
term in the summand is estimated by $|k|(\vert k\vert L)^{-d}$ so that its contribution to the sum is also
dominated by $|z|L^{-(d+1)}$. Since this second range is only present for $|z|\ge L$,
we obtain in conclusion
\begin{align}\label{ao49}
|\epsilon^{(1)}_{Lijn}(z)|&\lesssim\min\{|z|^2L^{-(d+2)},|z|L^{-(d+1)}\}.
\end{align}
For the estimate of $\epsilon^{(2)}$, see (\ref{ao40}),
we proceed in a similar way and obtain the stronger estimate
\begin{align}\label{ao50}
|\epsilon^{(2)}_{Lijmn}(z)|&\lesssim |z|L^{-(d+2)}.
\end{align}
We combine the estimates (\ref{ao49}) and (\ref{ao50}) with the corrector estimates of Proposition \ref{P:3} $i)$,
which by definitions (\ref{ao45bis}) and (\ref{ao46}) yield for all $p<\infty$
\begin{align}\label{ao55bis}
\langle|Q^{(1)}_{ij}(z)|^p\rangle^{\frac{1}{p}}+\langle|Q^{(2)}_{ijm}(z)|^p\rangle^{\frac{1}{p}}\lesssim 1.
\end{align}
We now see that Assumption \ref{Ass:1} is just what we need:
By (\ref{FromRtoABoundNabla})
we obtain for the second term in (\ref{ao41})
\begin{align*}
\big|\int_{\mathbb{R}^d}dz
\big\langle\epsilon^{(2)}_{Lijmn}(z)Q^{(2)}_{ijm}(z)
+\epsilon^{(1)}_{Lijn}(z)Q^{(1)}_{ij}(z)\big)\big\rangle_L
\partial_nc(z)\big|\lesssim L^{-(d+2)}+L^{-(d+1+2\alpha)},
\end{align*}
which as desired is $o(L^{-(d+1)})$. In this subsection $\lesssim$ means $\le$ up to a
multiplicative constant that only depends on $d$, $\lambda$,
and the constants implicit in (\ref{c:6_bis}) and (\ref{Reg_A}) of Assumption \ref{Ass:1}.

\medskip

We now turn to the third term on the r.~h.~s.~of (\ref{ao41}).
It follows from Proposition \ref{P:3} $i)$
and Proposition \ref{P:4}, together with (\ref{Reg_A}) in Assumption \ref{Ass:1}, that
\begin{align*}
\big|\big\langle(\nabla\phi^{*(1)}+\xi^*)(0)\cdot a'(0){\mathcal E}(0,z)
a'(z)(\nabla\phi^{(1)}+\xi)(z)\big\rangle_L\big|\lesssim \max\{\mu^{(2)}_d(\vert z\vert),\ln \vert z\vert\}\vert z\vert^{-(d+2)}\lesssim |z|^{-(d+\frac{3}{2})}.
\end{align*}
Inserting (\ref{ao37}) we obtain the following estimate
\begin{align*}
&\big|\int_{\mathbb{R}^d}dz(1-\eta_L(z))\big\langle(\nabla\phi^{*(1)}+\xi^*)(0)\cdot a'(0){\mathcal E}(0,z)
a'(z)(\nabla\phi^{(1)}+\xi)(z)\big\rangle_L\frac{\partial c_L}{\partial L}(z)\big|\nonumber\\
&\lesssim\sum_{k}|k|\int_{\mathbb{R}^d}dz(1-\eta_L)(z)|z|^{-(d+\frac{3}{2})}|\nabla c(z+Lk)|.
\end{align*}
Using \eqref{FromRtoABoundNabla} and splitting the integral into $\{\vert z\vert\leq \frac{1}{2}L\vert k\vert\}$ and its complement we obtain that the $z$-integral is estimated by $(\vert k\vert L)^{-(d+1+2\alpha)}$, which implies that the sum converges and is estimated
by $L^{-(d+1+2\alpha)}$, which as desired is $o(L^{-(d+1)})$.

\medskip

We now address to the first term in (\ref{ao41}). The argument is based on the qualitative result of Corollary \ref{Cor:1} in the following subsection. By the first item in \eqref{ao53} we obtain, by the explicit dependence of $\bar G_T$ and thus $\bar\Gamma$
on $\bar a$,
\begin{align*}
\lim_{L\uparrow\infty}\langle|\bar\Gamma-\Gamma_{\rm hom}|\rangle_L=0.
\end{align*}
Since on the other hand, $\bar\Gamma$ is uniformly bounded (recall that $\bar a$
is confined to the set (\ref{Ellipticite})), and by (\ref{ao55bis}), the convergence of the first term in \eqref{ao41} to the r.~h.~s of \eqref{ao14} follows from the two last items in \eqref{ao53} and the definition \eqref{ao10}.

\medskip

We finally turn to the fourth r.~h.~s term of \eqref{ao41}. We first reinterpret and bound this term using the solution of a PDE : considering $u$ the decaying solution of
$$-\nabla\cdot a\nabla u=\nabla\cdot \big(\eta_L a'(\nabla \phi^{(1)}+\xi)\frac{\partial c_L}{\partial L}\big),$$
we have from Proposition \ref{P:3} $i)$ and \eqref{Reg_A}
\begin{align*}
&\big\vert\int_{\mathbb{R}^d}dz\eta_L(z)
\big\langle(\nabla\phi^{*(1)}+\xi^*)(0)\cdot a'(0)\nabla\nabla G(0,z)
a'(z)(\nabla\phi^{(1)}+\xi)(z)\big\rangle_L
\frac{\partial c_L}{\partial L}(z)\big\vert\\
&=\vert\langle \nabla\phi^{*(1)}+\xi^*)(0)\cdot a'(0)\nabla u(0)\rangle_L\vert\lesssim \langle\vert\nabla u(0)\vert^2\rangle_L^{\frac{1}{2}}.
\end{align*}
We split $u$ into the near-origin and the far-origin contribution $u=u_N+\sum_{1\leq 2^k\leq L}u_{kF}$ with
$$-\nabla\cdot a\nabla u_N=\nabla\cdot \big(\eta_1 a'(\nabla\phi^{(1)}+\xi)\frac{\partial c_L}{\partial L}\big),\quad -\nabla\cdot a\nabla u_{kF}=\nabla\cdot \big((\eta_{2^k}-\eta_{2^{k-1}}) a'(\nabla\phi^{(1)}+\xi)\frac{\partial c_L}{\partial L}\big).$$
The near-origin contribution is directly estimated using Schauder's theory. Indeed, making use of the $\alpha$-Hölder regularity \eqref{fw33Bis} and $\nabla\phi^{(1)}$ (itself a consequence of Schauder's theory applied to the equation \eqref{pde:9.1_quad}), the moment bounds Proposition \ref{P:3} $i)$ as well as \eqref{ao37}, \eqref{c:6_bis} and \eqref{FromRtoABoundNabla} imply :
$$\langle\|\eta_1 a'(\nabla\phi^{(1)}+\xi)\frac{\partial c_L}{\partial L}\|^p_{C^{0,\alpha}(B_1)}\rangle_L^{\frac{1}{p}}\lesssim \sup_{B_1}\big\vert\frac{\partial c_L}{\partial L}\big\vert+ \sup_{B_1}\big\vert\nabla\frac{\partial c_L}{\partial L}\big\vert\lesssim L^{-(d+1+2\alpha)}.$$
Therefore, from Schauder's theory and the energy estimate we deduce 
\begin{align*}
\langle\vert\nabla u_N(0)\vert^2\rangle_L^{\frac{1}{2}}\lesssim \big\langle\big(\int_{B_1}\vert \nabla u_N\vert^2\big)^2\big\rangle_L^{\frac{1}{4}}+\langle\|\eta_1 a'(\nabla\phi^{(1)}+\xi)\frac{\partial c_L}{\partial L}\|^4_{C^{0,\alpha}(B_1)}\rangle_L^{\frac{1}{4}} &\lesssim \big(\int \eta^2_1 \big\vert\frac{\partial c_L}{\partial L}\big\vert^2\big)^{\frac{1}{2}}+L^{-(d+1+2\alpha)}\\
&\lesssim L^{-(d+1+2\alpha)}.
\end{align*}
We now turn to the far-field contribution. Using the Lipschitz estimate of Lemma \ref{L:2} together with an energy estimate and Proposition \ref{P:3} $i)$ as well as \eqref{ao37}, and \eqref{FromRtoABoundNabla}, we derive 
\begin{align*}
\langle \vert \nabla u_F(0)\vert^2\rangle_L^{\frac{1}{2}}\leq \sum_{1\leq 2^k\leq L}\langle\vert\nabla u_{kF}(0)\vert^2\rangle_L^{\frac{1}{2}}\lesssim &\sum_{1\leq 2^k\leq L}\big\langle\big(\fint_{B_{2^{k-2}}}\vert \nabla u_{kF}\vert^2\big)^{2}\big\rangle_L^{\frac{1}{4}}\\
\lesssim & \sum_{1\leq 2^k\leq L} 2^{-\frac{k d}{2}}\big(\int (\eta_{2^k}-\eta_{2^{k-1}})\big\vert\frac{\partial c_L}{\partial L}\big\vert^2\big)^{\frac{1}{2}}\\
\lesssim & (\ln L)L^{-(d+1+2\alpha)}.
\end{align*}
This shows that the fourth r.~h.~s term is $o(L^{-(d+1)})$.

\subsection{Stochastic corrector estimates up to second order}\label{SS:stochest}

As just discussed, the proof of Theorem \ref{T:main} will rely on
estimates of not only the first-order
corrector $\phi^{(1)}_i$, but also its second-order version $\phi^{(2)}_{ij}$,
see part i) of Proposition \ref{P:3}.
Since the period $L$ of the ensemble $\langle\cdot\rangle_L$
tends to infinity, these have to rely on stochastic (and not periodic) homogenization. This is the reason for the restriction to $d>2$ (which is just a more telling way
of saying $d\ge 3$ since it is rather $d=2$ that is borderline): For $d=2$,
the first-order corrector in the whole-space ensemble $\langle\cdot\rangle$ is not stationary,
so that one looses (pointwise) control even of a centered second-order corrector.
Only for $d>2$ one has the middle item in (\ref{ao43}), see for instance \cite{GNO_2019_Correlatedfields}.
For the (limiting) whole-space ensemble $\langle\cdot\rangle$, such higher-order
corrector estimates have
first been established in \cite{Gu} (however sub-optimal in odd dimensions)
and \cite[Theorem 3.1]{BellaFehrmanFischerOtto} (see \cite[Proposition 2.2]{DuerinckxOtto_2019}
for a treatment of any order). These works, like ours, rely on Malliavin calculus and a suitable
spectral gap estimate, as is available under Assumption \ref{Ass:1}. (Incidentally,
the quantitative theory based on finite-range assumptions as started in
\cite{Armstrong} has also been extended to get stochastic estimates on $\phi^{(2)}$
in \cite{LuOttoWang}.) Unfortunately, we cannot simply quote \cite{BellaFehrmanFischerOtto}
since we need the estimate for the periodized ensembles $\langle\cdot\rangle_L$ (uniform
for $L\uparrow\infty$, of course).

\medskip

For Proposition \ref{P:4}, we need to also estimate the flux correctors, both
first and second-order, which we shall recall now. (We also refer to
\cite[Section 2]{DuerinckxOtto_2019} for a compact introduction into all higher-order correctors).
It follows from (\ref{pde:9.1_quad}) and (\ref{Def_Abar_intro}) that
$a(\nabla\phi^{(1)}_i+e_i)-\bar a e_i$ is divergence-free, periodic, and
of zero average. Hence it allows for, in the language of $d=2$, a periodic stream function,
or in the language of $d=3$, a periodic vector potential. For general $d$,
it can be represented in terms of a periodic tensor field $\sigma_i$ with
\begin{align}
a(\nabla\phi^{(1)}_i+e_i)=\bar a e_i+\nabla\cdot\sigma_i^{(1)}\quad
\mbox{and}\quad\sigma_{imn}^{(1)}=-\sigma_{inm}^{(1)},\label{ao55}
\end{align}
where for a (skew symmetric) tensor field $\sigma$,
we write $(\nabla\cdot\sigma)_m$ $:=\partial_n\sigma_{mn}$, as an instance of an
exterior derivative.
Observe that (\ref{ao55}) does not determine $\sigma_{i}^{(1)}$.
Indeed, $\sigma_i^{(1)}$, which can be interpreted
as an alternating $(d-2)$-form, is only determined up to a $(d-3)$-form.
For estimates like in Proposition \ref{P:3}, we choose a suitable (and simple) gauge, that is
$$-\Delta\sigma^{(1)}_{imn}=\partial_m(e_n\cdot a(e_i+\nabla\phi^{(1)}_i))-\partial_n(e_m\cdot a(e_i+\nabla\phi^{(1)}_i)).$$
%
Note also that (\ref{ao06}) can be reformulated in divergence form
\begin{align}\label{ao61}
\nabla\cdot a(\nabla\phi_{ij}^{(2)}+\phi^{(1)}_ie_j)
=(\nabla\cdot \sigma^{(1)}_i)e_j.
\end{align}
This shows that there is a second-order analogue of (\ref{ao55}):
For every coordinate direction $i$,
let the matrix $\bar a^{(2)}$ be defined through
\begin{align}\label{ao44}
\bar a^{(2)}_ie_j:=\fint_{[0,L)^d}a(\nabla\phi^{(2)}_{ij}+\phi^{(1)}_ie_j)
\end{align}
for any $j=1,\cdots,d$, and the periodic tensor field $\sigma^{(2)}_{ij}$ through
\begin{align}\label{ao63}
\begin{array}{c}
a(\nabla\phi^{(2)}_{ij}+\phi^{(1)}_ie_j)=\bar a_i^{(2)} e_j+\sigma_i^{(1)}e_j
+\nabla\cdot\sigma_{ij}^{(2)}\quad\mbox{and}\quad\sigma_{ijmn}^{(2)}=-\sigma_{ijnm}^{(2)}.
\end{array}
\end{align}
The merits of the flux correctors $\sigma_i^{(1)}$ and $\sigma_{ij}^{(2)}$ will become clear in
Subsection \ref{SS:twoscale}. In fact, in that context it will be convenient
to have yet one more object, namely the periodic solution $\omega_i$ of
\begin{align}\label{Eq:RewriteDivergence}
-\triangle\omega_i=\phi_i^{(1)}.
\end{align}

\begin{proposition}\label{P:3} Let $d>2$ and $\langle\cdot\rangle$ satisfy Assumptions \ref{Ass:1};
let $\langle\cdot\rangle_L$ be defined with \eqref{RestrictFourierIntro}. Let $p<\infty$
be arbitrary.

i) We have
\begin{align}\label{ao43}
\langle|\nabla\phi^{(1)}_i|^p\rangle_L^\frac{1}{p}
+\langle|\phi^{(1)}_i|^p\rangle_L^\frac{1}{p}
+\langle|\nabla\phi^{(2)}_{ij}|^p\rangle_L^\frac{1}{p}
\lesssim_p 1.
\end{align}
ii) The random tensor fields $\sigma^{(1)}_i$ and $\sigma^{(2)}_{ij}$
can be constructed such that
\begin{align*}
\langle|\sigma^{(1)}_i|^p\rangle_L^\frac{1}{p}
+\langle|\nabla\sigma^{(2)}_{ij}|^p\rangle_L^\frac{1}{p}
\lesssim_p 1.
\end{align*}
iii) We have for any deterministic periodic vector field $h$ and function $\eta$
\begin{align}\label{ao43Bis}
\langle|\int_{[0,L)^d} h\cdot(q_i-\langle q_i\rangle,\nabla\phi^{(1)}_i)|^p\rangle_L^\frac{1}{p}
\lesssim_p\big(\int_{[0,L)^d}|h|^2\big)^\frac{1}{2} \quad\mbox{and}\quad \langle\vert \int_{[0,L)^d} \eta\phi^{(1)}_i\vert^p\rangle_L^{\frac{1}{p}}\lesssim_p (\int_{[0,L)^d} \vert \eta\vert^{\frac{2d}{d+2}})^{\frac{d+2}{2d}},
\end{align}
where we recall the definition of the flux $q_i:=a(\nabla\phi^{(1)}_i+e_i)$.

\smallskip

iv) We have for all $z$
\begin{align}\label{ao69}
\langle|\phi_{ij}^{(2)}(z)-\phi_{ij}^{(2)}(0)|^p\rangle_L^\frac{1}{p}
+\langle|\sigma_{ij}^{(2)}(z)-\sigma_{ij}^{(2)}(0)|^p\rangle_L^\frac{1}{p}
\lesssim_p\mu_d^{(2)}(|z|),
\end{align}
where
\begin{align}\label{ao71}
\mu_d^{(2)}(r):=\left\{\begin{array}{ccccc}
r&\mbox{for}&2> r&&\\
r^\frac{1}{2}&\mbox{for}&2\leq r&\mbox{and}&d=3\\
\ln^\frac{1}{2}r&\mbox{for}&2\leq r&\mbox{and}&d=4\\
1&\mbox{for}&2\leq r&\mbox{and}&d>4
\end{array}\right\}.
\end{align}

\smallskip

v) We have for all $z$
\begin{align}\label{Eq:RewriteDivergence2}
\langle|\nabla\omega_{i}(z)-\nabla\omega_i(0)|^p\rangle_L^\frac{1}{p}\lesssim_p\mu_d^{(2)}(|z|).
\end{align}

\smallskip

Here $\lesssim$ means $\le$ up to a multiplicative constant that only depends
on $d$, $\lambda$, and the constants implicit in (\ref{c:6_bis}) and (\ref{Reg_A})
of Assumption \ref{Ass:1}. The subscript $p$ indicates an additional dependance.
\end{proposition}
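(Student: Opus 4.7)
The backbone of the proof is a uniform-in-$L$ spectral gap inequality (SGI) for $\langle\cdot\rangle_L$. By \eqref{RestrictFourierIntro}, the covariance operator of $\langle\cdot\rangle_L$ on $L^2([0,L)^d)$ has Fourier symbol $\mathcal{F}c(2\pi k/L)$, which by \eqref{fw32} is bounded uniformly in $L$ and $k$; hence the Cameron-Martin norm dominates the $L^2([0,L)^d)$-norm with a constant independent of $L$. Bogachev's Gaussian Poincar\'e inequality then yields, for any sufficiently regular random variable $F=F(g)$,
\begin{equation*}
\big\langle\big|F-\langle F\rangle_L\big|^p\big\rangle_L^{\tfrac{1}{p}}
\lesssim_p \Big\langle\Big(\int\!\!\int dx\,dy\,\tfrac{\partial F}{\partial g(x)}\,c_L(x-y)\,\tfrac{\partial F}{\partial g(y)}\Big)^{\!p/2}\Big\rangle_L^{\tfrac{1}{p}},
\end{equation*}
uniformly in $L$; via \eqref{Reg_A} this transfers to functionals of the coefficient field $a=A(g)$. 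This uniform SGI, together with the annealed estimates on the mixed gradient of $G(a,\cdot,\cdot)$ (Marahrens--Otto / Bella--Giunti--Otto type bounds, adapted to the periodic cell), is the only new input beyond the moment-generating machinery already developed in the whole-space setting.

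For items i)--iii), I would follow the classical route of Gloria--Otto--Neukamm: compute the Malliavin derivative of $\nabla\phi^{(1)}_i$ via the PDE sensitivity \eqref{Naiveper_Derivative7} and plug it into SGI; for $\phi^{(1)}_i$ itself (which requires the dimensional restriction $d>2$ to be stationary with finite second moment), use a centered representation and exploit the additional decay $|\nabla G|\lesssim|x-y|^{1-d}$. The flux corrector $\sigma^{(1)}_i$ is constructed in the gauge $-\Delta\sigma^{(1)}_{imn}=\partial_m(e_n\cdot q_i)-\partial_n(e_m\cdot q_i)$ suggested in the excerpt, and its $L^p$-bounds follow from those of $q_i$ by Calder\'on--Zygmund. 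The bound on $\nabla\phi^{(2)}_{ij}$ reduces to that on $\sigma^{(1)}_i$ via the divergence-form \eqref{ao61}, interpreted as a linear elliptic PDE with r.h.s.\ $\nabla\cdot\sigma^{(1)}_i$, and an analogous gauge gives $\nabla\sigma^{(2)}_{ij}$. Item iii) is a direct application of SGI to the centered linear statistic $\int h\cdot(q_i-\langle q_i\rangle)$ (resp.\ $\int\eta\,\phi^{(1)}_i$), where the Sobolev-dual norm on $\eta$ appears because $\phi^{(1)}$ is, modulo averaging, the Riesz potential of $\nabla\phi^{(1)}$.

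Items iv) and v), the sublinear growth estimates with the borderline-dimensional scaling $\mu_d^{(2)}$, are the main obstacle. I would apply SGI to the increment $\phi^{(2)}_{ij}(z)-\phi^{(2)}_{ij}(0)$, for which the Malliavin derivative is a convolution of the coefficient sensitivities with $\nabla\nabla G(z,\cdot)-\nabla\nabla G(0,\cdot)$ (and analogous Green-function differences at second order coming from the $\phi^{(1)}_i e_j$ source term). The annealed bound on this difference decays one power better than $|y|^{-d}$ in the far field and saturates at scale $|z|$, so that the resulting SGI integral scales like
\begin{equation*}
\int_{\mathbb{R}^d}|y|^{-2d}\min\{|z|^2,|y|^2\}\,dy,
\end{equation*}
whose leading contribution is $|z|^2$ for $d=3$, $\ln|z|$ for $d=4$, and $O(1)$ for $d>4$ --- precisely \eqref{ao71}. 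For $\sigma^{(2)}_{ij}$ one proceeds identically after the gauge fix, since the construction reduces its sensitivity to the same Green-function kernel. For item v), the equation $-\Delta\omega_i=\phi^{(1)}_i$ with stationary $\phi^{(1)}_i$ places $\nabla\omega_i$ at the same regularity and growth level as a second-order corrector, and the same dyadic argument applies.

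The delicate points I expect to spend the most work on are: (a) verifying that the annealed Green function estimates hold uniformly in $L$ for the (large-period) operator $-\nabla\cdot a\nabla$ on $\mathbb{R}^d$ with $a$ drawn from $\langle\cdot\rangle_L$ (rather than from $\langle\cdot\rangle$), which calls for a careful rerun of the Bella--Giunti--Otto argument exploiting \eqref{fw33Bis}; and (b) handling the logarithmic borderline in $d=4$ and the algebraic borderline in $d=3$, where one must avoid losing a logarithm in the SGI estimate and instead absorb the marginal divergence into the dyadic decomposition of $\{|y|\lesssim|z|\}$ vs.\ $\{|y|\gg|z|\}$.
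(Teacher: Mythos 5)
Your overall framework---uniform-in-$L$ spectral gap inequality, Malliavin derivatives of the corrector PDEs, annealed Green function bounds, dyadic decomposition---is the same framework the paper uses (the paper's proof sketch refers the reader to the strategy of Josien--Otto \cite{JosienOtto_2019} with the novel ingredient isolated in Lemma~\ref{L:5}). Your items i)--iii) are on track. However, for items iv) and v) there are two genuine problems.

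First, your heuristic scaling integral is wrong. You write that the SGI squared kernel is $|y|^{-2d}\min\{|z|^2,|y|^2\}$ and that its integral produces $\mu_d^{(2)}(|z|)^2$. With a UV cutoff at scale $1$ (which is forced since the full kernel is not integrable at the origin), the integral
\begin{equation*}
\int_{\mathbb{R}^d}|y|^{-2d}\min\{|z|^2,|y|^2\}\,dy
=\int_1^{|z|}r^{1-d}\,dr+|z|^2\int_{|z|}^{\infty}r^{-d-1}\,dr
\end{equation*}
is $O(1)$ for every $d>2$; it does not produce the crossover $|z|,\ln|z|,1$ at $d=3,4,>4$. (Moreover, the kernel you wrote down is the one associated with $\nabla G$-differences, yet in the preceding sentence you attribute it to $\nabla\nabla G(z,\cdot)-\nabla\nabla G(0,\cdot)$; these two kernels have different far-field decay and neither produces the right scaling.) The Malliavin derivative of $\phi^{(2)}_{ij}(z)-\phi^{(2)}_{ij}(0)$ is also not simply a Green-function difference convolved with coefficient sensitivities: the source of $\phi^{(2)}_{ij}$ involves $\phi^{(1)}_i$ and $\sigma^{(1)}_i$ which carry their own sensitivities, and it is precisely these lower-order contributions that drive the $\mu_d^{(2)}$ growth.

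Second, you miss the technical lever the paper uses to organize all of this. The paper first proves a second-order fluctuation estimate, Lemma~\ref{L:5}, estimate (\ref{ao83}): $\langle|\int h\cdot\nabla\phi^{(2)}_{ij}|^p\rangle_L^{1/p}\lesssim(\int|x|_L^2|h|^2)^{1/2}$, and the key point is that this \emph{weighted} test-function norm is reachable without the buckling needed at first order, because one can invoke the \emph{weighted} annealed Calder\'on--Zygmund estimate of Lemma~\ref{L:4} (together with an annealed Hardy inequality). The increment bound (\ref{ao69}) then follows by testing (\ref{ao83}) with $h=\nabla\bar G(\cdot-z)-\nabla\bar G$: now $|x|^2|h(x)|^2\sim|x|^{4-2d}$ for $1\ll|x|\ll|z|$, so the radial integrand is $r^{3-d}$ and $\int_1^{|z|}r^{3-d}\,dr$ is exactly $|z|$, $\ln|z|$, $O(1)$ in $d=3,4,>4$, i.e.\ $\mu_d^{(2)}(|z|)^2$. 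This is the mechanism behind the dimensional crossovers; it is not visible in a naive squared Green-function difference. Without this reorganization and the weighted annealed regularity input, the attempt to directly estimate the increment would have to separately control every term of the second-order Malliavin derivative, and the calculation you propose does not close.
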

While part i) of Proposition \ref{P:3} is explicitly used in Section \ref{SS:fromreptoas},
the usage of the other parts is more indirect: Part ii) is used in
Corollary \ref{Cor:1}, part iii) is used
to estimate the second-order homogenization error in Lemma \ref{L:3};
and part iv) and v) are used to apply this to the Green function, see Proposition \ref{P:4}.

\medskip

The proof of Proposition \ref{P:3} essentially follows the strategy of
\cite[Section 4]{JosienOtto_2019}\footnote{where we deduce the second second estimate of \eqref{ao43Bis} as follows: Given a deterministic periodic function $\eta$ (where w.~l.~o.~g we may assume that $\int_{[0,L)^d}\eta=0$ since $\int_{[0,L)^d}\phi^{(1)}=0$), we consider
the solution of $-\triangle\zeta=\eta$,
and set $h=\nabla\zeta$. By Sobolev's embedding and 
maximal regularity for the Laplacian in $L^\frac{2d}{d+2}([0,L)^d)$, this follows from the first estimate of \eqref{ao43Bis}} and extends it from first-order to second-order correctors;
the passage from $\langle\cdot\rangle$ to $\langle\cdot\rangle_L$ is only a minor change.
In this paper, we will only establish the most important ingredient for Proposition \ref{P:3},
namely the characterization of stochastic cancellations of the gradient of the
correctors in Lemma \ref{L:5} below. While (\ref{ao82}) reproduces \cite[Proposition 4.1]{JosienOtto_2019},
the new element is its second-order counterpart (\ref{ao83}). The first item of (\ref{ao69})
is a consequence of (\ref{ao83}), adapting
\cite[Proposition 4.1, Part 1, Step 5]{JosienOtto_2019}. The second item of (\ref{ao69})
follows from the analogue of (\ref{ao83}) on the level of the second-order flux (\ref{ao63}),
adapting \cite[Proposition 4.1, Part 2]{JosienOtto_2019}.
\begin{lemma}\label{L:5} Let $d>2$ and $\langle\cdot\rangle$ satisfy Assumptions \ref{Ass:1};
let $\langle\cdot\rangle_L$ be defined with \eqref{RestrictFourierIntro}. For any deterministic
periodic vector field $h$ and any $p<\infty$ we have
\begin{align}
\big\langle\big|\int_{[0,L)^d}h\cdot\nabla\phi^{(1)}_{i}\big|^p\big\rangle_L^\frac{1}{p}
&\lesssim_p\big(\int_{[0,L)^d}|h|^2\big)^\frac{1}{2},\label{ao82}\\
\big\langle\big|\int_{[0,L)^d}h\cdot\nabla\phi^{(2)}_{ij}\big|^p\big\rangle_L^\frac{1}{p}
&\lesssim_p\big(\int_{[0,L)^d}|x|^2_L|h|^2\big)^\frac{1}{2}\label{ao83},
\end{align}
where $\vert x\vert_L:=\inf_{k\in\mathbb{Z}^d}\vert x+kL\vert$.
\end{lemma}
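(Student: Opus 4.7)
The plan is to combine the spectral gap (SG) inequality for $\langle\cdot\rangle_L$ with explicit formulas for the Malliavin derivatives of $F_1:=\int_{[0,L)^d}h\cdot\nabla\phi^{(1)}_i$ and $F_2:=\int_{[0,L)^d}h\cdot\nabla\phi^{(2)}_{ij}$, following and extending the strategy of \cite[Proposition~4.1]{JosienOtto_2019} from first to second order and from the whole space to the torus. The Cameron--Martin description \eqref{RestrictFourierIntro}, together with the uniform bound \eqref{fw32} on $\mathcal{F}c$, delivers SG for $\langle\cdot\rangle_L$ with constant independent of $L$:
\[
\big\langle|F-\langle F\rangle_L|^2\big\rangle_L\;\lesssim\;\Big\langle\int_{[0,L)^d}\Big|\frac{\partial F}{\partial g(y)}\Big|^2\,dy\Big\rangle_L.
\]
Gaussian hypercontractivity (equivalently, iterating SG on $|F|^p$) will upgrade this $L^2$ form to the $L^p$ statements \eqref{ao82}--\eqref{ao83}. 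By stationarity and periodicity, $\langle F_1\rangle_L=\langle F_2\rangle_L=0$, so centering absorbs into the constant.

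For \eqref{ao82}, I would introduce the periodic dual $v$ solving $-\nabla\cdot a^*\nabla v=-\nabla\cdot h$ (using $A^*=A$). Differentiating \eqref{pde:9.1_quad} in $g(y)$ via $\partial a(x)/\partial g(y)=a'(x)\delta(x-y)$ (from \eqref{Def_a}) and pairing with $v$ yields
\[
\frac{\partial F_1}{\partial g(y)}=\nabla v(y)\cdot a'(y)\big(\nabla\phi^{(1)}_i+e_i\big)(y).
\]
Inserting into SG, using the deterministic energy bound $\int|\nabla v|^2\lesssim\int|h|^2$, the pointwise bound $|a'|\lesssim 1$ from \eqref{Reg_A}, and uniform-in-$L$ pointwise moments on $\nabla\phi^{(1)}_i$ (obtained by a standard bootstrap in $p$ exploiting a Meyers exponent), will deliver the $L^2$ version of \eqref{ao82}.

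For \eqref{ao83}, the new structural feature is the inhomogeneity $\phi^{(1)}_ie_j$ on the LHS of \eqref{ao06}. Differentiating \eqref{ao06} in $g(y)$, I would decompose $\Psi:=\partial\phi^{(2)}_{ij}/\partial g(y)$ into three contributions: a local $\delta(\cdot-y)$ contribution from $\partial a/\partial g(y)$; an averaged $\bar a$-correction, harmless under $\nabla$; and a propagated nonlocal contribution from $\partial\phi^{(1)}_i/\partial g(y)$, whose explicit form (the periodic analogue of \eqref{Naiveper_Derivative7}) involves the gradient of the periodic Green's function $G_L$. Pairing $\int h\cdot\nabla\Psi$ with the dual $v$, integrating by parts, and rewriting the propagated contribution via the divergence-form identities \eqref{ao55} and \eqref{ao63} for the flux correctors $\sigma^{(1)}_i$, $\sigma^{(2)}_{ij}$ and the scalar potential $\omega_i$ from \eqref{Eq:RewriteDivergence}, should yield a pointwise bound of schematic form
\[
\Big|\frac{\partial F_2}{\partial g(y)}\Big|\;\lesssim\;\big(1+|y|_L\big)\cdot\big[\text{bounded local quantity in }\nabla v,\phi^{(1)}_i,\sigma^{(1)}_i,\omega_i\big](y).
\]
Inserting in SG, applying the weighted energy bound $\int|y|_L^2|\nabla v|^2\lesssim\int|x|_L^2|h|^2$ (via Meyers and a cutoff argument), and invoking uniform moments on correctors and flux correctors from parts i)--ii) of Proposition \ref{P:3}, will close the $L^2$ version of \eqref{ao83}.

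The hard part will be exposing the $|y|_L$ factor: the naive bound on $\partial\phi^{(1)}_i/\partial g(y)$ via $\nabla_yG_L(\cdot,y)$ produces a borderline non-integrable kernel that would give a divergent estimate, so a cancellation is required. The cancellation will come from integrating by parts against $\sigma^{(1)}_i$ and $\omega_i$, converting the seemingly divergent dipole-like pairing into a convergent one of size $|y|_L$; it is precisely to enable this step that the auxiliary objects $\sigma^{(1)}_i$ and $\omega_i$ appear in Proposition \ref{P:3}. Once this cancellation is made explicit, the remaining ingredients --- the weighted Meyers estimate, Gaussian hypercontractivity, and the transfer of the whole-space argument of \cite{JosienOtto_2019} to the torus --- are routine adaptations enabled by the uniform-in-$L$ SG.
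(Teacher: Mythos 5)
Your overall framework --- uniform-in-$L$ spectral gap combined with an explicit formula for the Malliavin derivative, in the spirit of \cite[Proposition 4.1]{JosienOtto_2019} --- matches the paper, and your formula for $\partial F_1/\partial g(y)$ is correct. The gap is in how you propose to expose the $\vert x\vert_L$ weight in \eqref{ao83}. You aim for a \emph{pointwise} bound $\vert\partial F_2/\partial g(y)\vert\lesssim(1+\vert y\vert_L)\cdot[\text{local quantity}]$ produced by cancellations against $\sigma^{(1)}_i$ and $\omega_i$, and claim these objects appear in Proposition \ref{P:3} for this purpose. Neither is the case. If you carry out your Green-function route carefully, the propagated contribution $\delta\phi^{(1)}_i=-\nabla_z G_L(\cdot,z)\vert_{z=y}\cdot a'(y)(\nabla\phi^{(1)}_i+e_i)(y)$, once paired against $v$ and $\nabla v$ and reorganized, yields $-(\nabla w_j+ve_j)(y)\cdot a'(y)(\nabla\phi^{(1)}_i+e_i)(y)$, where $w_j$ is a \emph{secondary} auxiliary periodic function solving $-\nabla\cdot a^*(\nabla w_j+ve_j)=Pe_j\cdot a^*\nabla v$. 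This quantity at $y$ depends on $v$ (hence on $h$) over the whole torus; there is no intrinsic $(1+\vert y\vert_L)$ factor, and no amount of integration by parts against $\sigma^{(1)}_i$ or $\omega_i$ produces one. Those objects serve the homogenization-error estimates of Lemma \ref{L:3} and Proposition \ref{P:4}, not this lemma.

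In the paper, the weight enters through function-space estimates for the pair $(v,w_j)$, not through a pointwise structure of the Malliavin derivative. After deriving
\[
\frac{\partial F_2}{\partial g}=\nabla v\cdot a'(\nabla\phi^{(2)}_{ij}+\phi^{(1)}_ie_j)-(\nabla w_j+ve_j)\cdot a'(\nabla\phi^{(1)}_i+e_i)
\]
and inserting it into the $L^p$ spectral-gap inequality, the crux is the bound
$\big(\int_{[0,L)^d}\langle\vert\nabla w_j+ve_j\vert^{2p}\rangle_L^{1/p}\big)^{1/2}\lesssim\big(\int_{[0,L)^d}\vert x\vert^2_L\vert h\vert^2\big)^{1/2}$.
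This rests on two nonlocal ingredients that your proposal omits: the \emph{weighted annealed} Calder\'on--Zygmund estimate of Lemma \ref{L:4} (the operator $\nabla(-\nabla\cdot a\nabla)^{-1}\nabla\cdot$ bounded on the annealed $L^2$ space with Muckenhoupt weight $\vert x\vert_L^2$), and an annealed Hardy inequality $\big(\int\langle\vert v\vert^{2p}\rangle_L^{1/p}\big)^{1/2}\lesssim\big(\int\vert x\vert_L^2\langle\vert\nabla v\vert^{2p}\rangle_L^{1/p}\big)^{1/2}$. You dismiss the former as a ``routine weighted Meyers estimate,'' but the annealed, Muckenhoupt-weighted version is the decisive new tool: it is what lets the second-order estimate avoid the buckling you correctly identify as necessary for \eqref{ao82}. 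Without Lemma \ref{L:4} and the annealed Hardy inequality --- and without the auxiliary $w_j$ appearing explicitly --- the argument does not close.
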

The choice of the origin of the weight in (\ref{ao83}) is of course arbitrary.
We note that (\ref{ao83}) also holds with the weighted $L^2$-norm
$\big(\int_{[0,L)^d}|x|^2_L|h|^2\big)^\frac{1}{2}$ replaced by the $L^q$-norm of the
same scaling, namely $\big(\int_{[0,L)^d}|h|^q\big)^\frac{1}{q}$ with
$q=\frac{2d}{d+2}$. However when passing from (\ref{ao83}) to (\ref{ao69}),
we essentially choose $h=\nabla\bar G(\cdot-z)-\nabla\bar G$,
and in the critical dimension $d=4$, we thus would have
$|f|^q=O(|x|^{-4})$ for $1\ll|x|\ll|z|$ and thus would obtain a power $\frac{3}{4}$ on the
logarithm $\ln|z|$ instead of the optimal power $\frac{1}{2}$. 

\medskip

In establishing (\ref{ao83}), we use the same approach as \cite[Proposition 4.1]{JosienOtto_2019}
for (\ref{ao82}), namely we identify and estimate the Malliavin derivative of
the l.~h.~s.~and then appeal to the spectral gap estimate. However, while
for the first-order result (\ref{ao82}), a buckling is required, it is not necessary
for its second-order counterpart (\ref{ao83}). One can avoid it by appealing to
the quenched Calder\'on-Zygmund estimate, see \cite{DuerinckxOtto_2019} and
\cite[Proposition 7.1 ii)]{JosienOtto_2019}, albeit in the weighted form of Lemma \ref{L:4}:
\begin{lemma}\label{L:4}
Let $d>2$ and let
$\langle\cdot\rangle_L$ be an ensemble of $\lambda$-uniformly elliptic coefficient fields
that are $L$-periodic. Let the random periodic fields $f$ and $u$ be related by
\begin{align*}
\nabla\cdot(a\nabla u+f)=0.
\end{align*}
Let $1<p<p'<\infty$ and $1<q<\infty$. Suppose that
$w$ is arbitrary $L$-periodic function in Muckenhoupt class $A_q$, \footnote{We say that $w$ is in Muckenhoupt class $A_q$, if it satisfies
\begin{equation*}
\big(\fint_{Q} w\big)
\big(\fint_{Q}w^{\frac{-1}{q-1}}\big)^{q-1}
\leq C_1
\end{equation*}
for any cubes $Q$ in $\mathbb{R}^d$,
where $C_1$ is independent of $Q$.}
then the weighted annealed Calder\'on-Zygmund estimates hold, i.~e
\begin{align}\label{ao90-1}
\bigg(\int_{[-\small\frac{L}{2},\small\frac{L}{2})^d}
\big<|\nabla u|^{p}\big>^{\frac{q}{p}}_L
w dx\bigg)^{1/q}
\lesssim \bigg(\int_{[-\small\frac{L}{2},\small\frac{L}{2})^d}
\big<|f|^{p'}\big>^{\frac{q}{p'}}_L
w dx\bigg)^{1/q},
\end{align}
where the implicit multiplicative constant depends on $d,\lambda,p,p^\prime,q$
and the Muckenhoupt norm of $w$.
In particular, for $w=|x|_L^2$, we obtain
\begin{align}\label{ao90}
\bigg(\int_{[-\small\frac{L}{2},\small\frac{L}{2})^d}
\big<|\nabla u|^{p}\big>^{\frac{2}{p}}_L
|x|_L^2 dx\bigg)^{1/2}
\lesssim \bigg(\int_{[-\small\frac{L}{2},\small\frac{L}{2})^d}
\big<|f|^{p'}\big>^{\frac{2}{p'}}_L
|x|_L^2 dx\bigg)^{1/2}.
\end{align}
\end{lemma}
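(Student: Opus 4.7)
The strategy is to reduce \eqref{ao90-1} to the unweighted annealed Calder\'on--Zygmund estimate of \cite[Prop.~7.1 ii)]{JosienOtto_2019} by introducing a Muckenhoupt weight in the spatial variable. This is done in two steps: a quenched weighted CZ bound with good (polynomial) dependence on $[a]_\alpha$, followed by stochastic integration using \eqref{fw33Bis}.

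\textbf{Step 1: Quenched weighted CZ.} Almost every realization $a$ is $\alpha$-H\"older continuous on $[-L/2,L/2)^d$ by \eqref{fw33Bis}. For such coefficients, classical weighted singular-integral theory---either via a Shen-type $L^\infty$-perturbation of the constant-coefficient operator combined with the Coifman--Fefferman good-$\lambda$ inequality, or via Rubio de Francia extrapolation from the unweighted $L^s$-theory for continuous coefficients---yields, for every $1 < s < \infty$ and $w\in A_s$,
$$\Bigl(\int_{[-L/2,L/2)^d}|\nabla u|^s\, w\, dx\Bigr)^{1/s}\leq K(a)\,\Bigl(\int_{[-L/2,L/2)^d}|f|^s\, w\, dx\Bigr)^{1/s},$$
where $K(a)$ depends polynomially on $[a]_\alpha$ (and on $[w]_{A_s}$, $\lambda$, $d$, $s$, but not on $L$).

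\textbf{Step 2: Annealed upgrade.} Taking the quenched bound with $s=p$ and applying H\"older's inequality in $\omega$ to exploit the gap $p<p'$, together with Minkowski's inequality to exchange spatial and stochastic norms, gives
$$\Bigl(\int\langle|\nabla u|^p\rangle_L^{q/p}\, w\, dx\Bigr)^{1/q}\leq \langle K(a)^r\rangle_L^{1/r}\,\Bigl(\int\langle|f|^{p'}\rangle_L^{q/p'}\, w\, dx\Bigr)^{1/q}$$
for a suitable $r=r(p,p',q)<\infty$, where $\langle K(a)^r\rangle_L^{1/r}$ is finite uniformly in $L$ by \eqref{fw33Bis}. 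The argument is identical to the unweighted case in \cite[Prop.~7.1 ii)]{JosienOtto_2019}: the positive weight $w$ is carried through all H\"older manipulations without modification.

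\textbf{Step 3: Specialization and main obstacle.} The function $|x|^\alpha$ belongs to $A_q(\mathbb{R}^d)$ iff $-d<\alpha<d(q-1)$; in particular $|x|^2\in A_q$ for every $q>1+2/d$, which for $d>2$ includes $q=2$. A short check on cubes contained in, or straddling, the fundamental cell $[-L/2,L/2)^d$ shows that the periodization $|x|_L^2$ inherits the $A_q$ condition with Muckenhoupt constant uniform in $L\geq 1$; substituting into \eqref{ao90-1} yields \eqref{ao90}. The main obstacle is Step~1: tracking the \emph{polynomial} dependence of the quenched weighted CZ constant on $[a]_\alpha$. While standard Shen-type arguments do give this, the weighted bookkeeping is more delicate than the Lebesgue case because the Muckenhoupt norm of $w$ enters multiplicatively with the continuity modulus of $a$. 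An alternative route that bypasses the weighted quenched estimate is to apply Rubio de Francia extrapolation directly to the annealed linear operator $f\mapsto\langle|\nabla u|^p\rangle_L^{1/p}$, using its unweighted $L^q$-boundedness from \cite{JosienOtto_2019} together with a CZ-type kernel estimate for the associated annealed Green's function, for which \eqref{Eq:MomentGreenAppendix} supplies the required pointwise bound.
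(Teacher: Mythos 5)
Your proposal is a genuinely different route from the paper's, and it has a substantive gap in Step~2 that makes the primary argument fail.

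\textbf{What the paper does.} The paper does not pass through a quenched weighted Calder\'on--Zygmund estimate. Instead, it observes that the proof of the annealed (unweighted) estimate in \cite[Proposition 7.1~ii)]{JosienOtto_2019}---a perturbation argument around the constant-coefficient operator, performed \emph{directly at the annealed level}---carries a Muckenhoupt weight with essentially no changes. The only new ingredient is the weighted annealed estimate for the Laplacian (the analogue of \cite[Lemma 7.4]{JosienOtto_2019}), which is supplied by \cite{Rubio_de_Francia} or \cite{EmielLorist2020}. The paper also notes a second route via the dualized Lipschitz estimate of \cite{Gloria_Neukamm_Otto_2019}. Neither route ever touches a quenched CZ estimate with an $a$-dependent constant.

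\textbf{The gap in your Step~2.} A pathwise bound $\|\nabla u(\omega,\cdot)\|_{L^p_x(w)}\le K(\omega)\|f(\omega,\cdot)\|_{L^p_x(w)}$ is an estimate in the scale $L^p_\omega L^p_x(w)$ (after H\"older in $\omega$), whereas the target \eqref{ao90-1} is a statement in the mixed scale $L^q_x(w)L^p_\omega$ with the stochastic norm \emph{inside}. For $q\neq p$, Fubini no longer identifies these, and the Minkowski inequality only moves the stochastic norm outward in one direction depending on the sign of $q-p$; even with the more natural choice $s=q$ (rather than $s=p$ as you write), the resulting chain of inequalities
\[
\|\nabla u\|_{L^q_x(w)L^p_\omega}\le\|\nabla u\|_{L^p_\omega L^q_x(w)}\le
\langle K^{p}\|f\|_{L^q_x(w)}^{p}\rangle_L^{1/p}
\lesssim\langle K^{r}\rangle_L^{1/r}\,\|f\|_{L^{p'}_\omega L^q_x(w)}
\le\langle K^{r}\rangle_L^{1/r}\,\|f\|_{L^q_x(w)L^{p'}_\omega}
\]
requires $p\le q$ for the first Minkowski and $q\le p'$ for the last, so the argument is confined to $p\le q\le p'$, which is not the range claimed by the lemma. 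With $s=p$, the exponents simply do not interlock at all. This is precisely why the annealed CZ theory in \cite{JosienOtto_2019} is a real-variable argument performed at the annealed level rather than an average of quenched bounds: the mixed norm $L^q_xL^p_\omega$ is not accessible from the pathwise $L^p_x$ estimate.

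\textbf{Secondary issues.} Your Step~1 asks for a polynomial-in-$[a]_\alpha$ weighted quenched CZ constant; you flag this yourself, but it is worth noting that the paper's route avoids the question entirely, which is one of its virtues. Your alternative route via Rubio de Francia extrapolation of the annealed operator is closer in spirit, but as written it is also incomplete: extrapolation requires a weighted estimate at \emph{some} exponent to start from (not the unweighted one), or else a CZ kernel analysis, and the pointwise moment bound \eqref{Eq:MomentGreenAppendix} provides neither a H\"ormander-type cancellation condition nor uniformity in $L$ (the implicit constant there is explicitly allowed to depend on $L$). Your Step~3 ($|x|^2\in A_2$ for $d>2$ and uniformity of the $A_2$ constant of $|x|_L^2$ in $L$) is correct.
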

An inspection of the proof of \cite[Proposition 7.1 ii)]{JosienOtto_2019} shows that the
argument extends to the case with a weight in the corresponding
Muckenhoupt class. Indeed, the only essential new ingredient is that this weighted annealed estimate
holds for the constant coefficient operator, i.~e.~the analogue of \cite[Lemma 7.4]{JosienOtto_2019}.
This in turn follows from \cite[Theorem 5, p.219]{Rubio_de_Francia}
or \cite[Theorem 7.1]{EmielLorist2020}. Alternatively, one can derive the weighted
estimate from the unweighted one and the dualized Lipschitz estimate Lemma \ref{L:2},
following the strategy of \cite[Corollary 5]{Gloria_Neukamm_Otto_2019}. We finally mention \cite[Theorem 4.4]{duerinckx2022sedimentation} where such annealed regularity estimates are stated and where the proof will be displayed in \cite{duerinckx2022sedimentationQuantitative}.

\medskip

The limit $L\uparrow\infty$ for the first r.~h.~s.~term in (\ref{ao41}) relies
on the following purely qualitative consequence of Proposition \ref{P:3}.


\begin{corollary}\label{Cor:1}
Let $d>2$ and $\langle\cdot\rangle$ satisfy Assumptions \ref{Ass:1};
let $\langle\cdot\rangle_L$ be defined with \eqref{RestrictFourierIntro}.

i) For $i=1,\cdots,d$ there exists a unique stationary\footnote{By saying that a random field 
$\phi$, i.~e.~a function $\phi=\phi(g,x)$, is stationary
we mean that it is shift-covariant in the sense 
that for all shift vectors $z\in\mathbb{R}^d$, we have
$\phi(g,z+x)=\phi(g(z+\cdot),x)$ for all points $x$ and 
$\langle\cdot\rangle$-almost all fields $g$.} 
random field $\phi_i^{(1)}$ with\footnote{always with the understanding that
the statement at hand holds for all $p<\infty$}
$\langle|\phi_i^{(1)}|^p+|\nabla\phi_i^{(1)}|^p\rangle\lessim_p 1$,
which decays\footnote{which because of $\frac{2d}{d+2}>1$ implies
$\langle\phi_i^{(1)}\rangle=0$} in the sense of
\begin{align}\label{fw30}
\langle|\int\eta\phi_i^{(1)}|^p\rangle^\frac{1}{p}\lesssim_p
\big(\int|\eta|^\frac{2d}{d+2}\big)^\frac{d+2}{2d}
\quad\mbox{for all deterministic functions}\;\eta,
\end{align}
and which satisfies
%
\begin{equation}\label{EquationCor1Corrector}
\nabla\cdot a(\nabla\phi_i^{(1)}+e_i)=0\quad\mbox{a.~s.}.
\end{equation}
ii) For $i,j=1,\cdots,d$ there exists a unique random field $\phi^{(2)}_{ij}$ such that 
$\nabla\phi_{ij}^{(2)}$ is stationary and satisfies 
$\langle|\nabla\phi_{ij}^{(2)}|^p\rangle\lesssim_p 1$,
such that $\phi^{(2)}_{ij}$ has moderate growth\footnote{which implies that 
$\langle\nabla\phi_{ij}^{(2)}\rangle=0$} in the sense\footnote{which by definition
(\ref{ao71}) implies that $\phi^{(2)}_{ij}(0)=0$} of
\begin{align}\label{fw35}
\langle\vert\phi^{(2)}_{ij}(z)\vert^p\rangle^{\frac{1}{p}}\lesssim_p\mu^{(2)}_d(\vert z\vert)
\quad\mbox{for all}\;z,
\end{align}
and which satisfies
\begin{align}\label{ao54}
-\nabla\cdot a(\nabla\phi_{ij}^{(2)}+\phi_i^{(1)}e_j)
=e_j\cdot(a(\nabla\phi^{(1)}_i+e_i)-a_{\rm hom}e_i)
\quad\mbox{a.~s.}.
\end{align}
iii) We have
\begin{align}\label{ao53}
\lim_{L\uparrow \infty}\langle\vert \overline{a}-a_{\text{hom}}\vert\rangle_L=0, \quad\lim_{L\uparrow\infty}\langle Q^{(1)}_{ij}(z)\rangle_L=\langle Q^{(1)}_{ij}(z)\rangle
\quad\mbox{and}\quad
\lim_{L\uparrow\infty}\langle Q^{(2)}_{ijm}(z)\rangle_L=\langle Q^{(2)}_{ijm}(z)\rangle
\quad\mbox{for all}\;z,
\end{align}
where also the r.~h.~s.~integrands are defined by the formulas (\ref{ao45bis}) and (\ref{ao46}).
\end{corollary}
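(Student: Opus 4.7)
The plan is to construct the whole-space correctors of (i) and (ii) as $L\uparrow\infty$ limits of the periodic correctors supplied by Proposition \ref{P:3}, and to deduce (iii) from pointwise convergence, uniform integrability, and the variance estimate of Proposition \ref{P:3}(iii).

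First, I would set up a coupling of $(\langle\cdot\rangle_L)_{L\ge 1}$ with $\langle\cdot\rangle$ on a single probability space by constructing the underlying Gaussian fields $g_L$ and $g$ from the same white noise through the Fourier representation encoded in (\ref{RestrictFourierIntro}); from (\ref{Def_cL}) and (\ref{c:6_bis}) one has $c_L\to c$ locally uniformly, and the joint construction yields $g_L\to g$ in $C^{0,\alpha}_{\mathrm{loc}}(\mathbb{R}^d)$ almost surely for every $\alpha<1$. Composition with $A$ and the uniform H\"older control (\ref{fw33Bis}) give $a_L\to a$ in $C^{0,\alpha}_{\mathrm{loc}}$ with uniform $L^p$ bounds. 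Under this coupling, Proposition \ref{P:3}(i) together with Schauder theory applied to (\ref{pde:9.1_quad}) yields uniform $C^{1,\alpha}_{\mathrm{loc}}$ bounds (in $L^p$) on $\phi^{(1)}_{i,L}$ normalized to zero torus mean; the H\"older-type estimate (\ref{ao69}) does the same for $\phi^{(2)}_{ij,L}$ once I impose $\phi^{(2)}_{ij,L}(0)=0$. I would then extract a joint subsequential limit of the primal and dual, first- and second-order correctors and verify that every required property is preserved: the PDEs (\ref{EquationCor1Corrector}) and (\ref{ao54}) by passage to the weak form in the limit (with $a_{\mathrm{hom}}e_i:=\langle a(\nabla\phi^{(1)}_i+e_i)(0)\rangle$); the moment bounds and moderate growth (\ref{fw35}) by Fatou; the shift-covariance because the defining PDEs together with their normalizations are translation-equivariant on the torus and the coupling is compatible with whole-space translations; the decay (\ref{fw30}) by Fatou in the second estimate of (\ref{ao43Bis}) for compactly supported $\eta$ and $L$ large.

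For uniqueness, the difference $\psi$ of two candidate first-order correctors is a stationary $L^p$ solution of $-\nabla\cdot a\nabla\psi=0$: stationarity of $\nabla\psi$, the vanishing mean $\langle\nabla\psi\rangle=0$, and the energy identity obtained by testing the PDE against a smooth cutoff and passing to the limit via the ergodic theorem force $\nabla\psi=0$ a.s., hence $\psi$ is a random constant, which the decay (\ref{fw30}) applied to $\eta=|B_R|^{-1}\mathbf{1}_{B_R}$ kills (using $d>2$). Uniqueness of $\phi^{(2)}_{ij}$ is analogous on $\nabla\phi^{(2)}_{ij}$, with the additive constant pinned down by the normalization $\phi^{(2)}_{ij}(0)=0$ implicit in (\ref{fw35}).

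Finally, for (iii): for the first limit I split $\bar a-a_{\mathrm{hom}}=(\bar a-\langle\bar a\rangle_L)+(\langle\bar a\rangle_L-a_{\mathrm{hom}})$; the former is $O_{L^2(\langle\cdot\rangle_L)}(L^{-d/2})$ by the first estimate of (\ref{ao43Bis}) with constant $h$, and the latter converges to zero because $\langle\bar a\rangle_Le_i=\langle a(\nabla\phi^{(1)}_{i,L}+e_i)(0)\rangle_L$ by stationarity, and the coupling combined with uniform integrability identifies the limit as $a_{\mathrm{hom}}e_i$. For the two remaining limits, the integrands $Q^{(1)}_{ij}(z)$ and $Q^{(2)}_{ijm}(z)$ are pointwise continuous functionals of $(a,\nabla\phi^{(1)},\nabla{\phi^*}^{(1)},\nabla\phi^{(2)}_{im},\nabla\phi^{*(2)}_{jm})$ evaluated at $\{0,z\}$, so they converge a.s.\ along the coupling, and uniform integrability from Proposition \ref{P:3}(i) and (\ref{fw33Bis}) yields convergence of expectations. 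The main obstacle I expect is organizing the coupling so that all four correctors (primal and dual, first and second order) converge \emph{jointly} while preserving the normalizations and shift-covariances needed for the weak-form limit of the PDEs, with the sub-linear growth of $\phi^{(2)}_{ij}$ requiring extra care in both the compactness and the Liouville-uniqueness steps.
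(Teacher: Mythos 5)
Your global architecture --- construct the whole-space correctors as large-$L$ limits of the periodic ones, normalize by zero torus mean for $\phi^{(1)}_L$ and by $\phi^{(2)}_{ij,L}(0)=0$, pass to the limit in weighted local H\"older spaces, invoke Caccioppoli plus the decay/growth estimates for uniqueness, and deduce (iii) from the variance estimate in (\ref{ao43Bis}) together with convergence of expectations --- matches the paper's. The decisive divergence is the mode of convergence. You propose an almost-sure coupling $g_L \to g$ in $C^{0,\alpha}_{\mathrm{loc}}$ built from a common white noise, followed by a pathwise subsequential limit of the correctors. The paper instead works with the push-forward laws $\langle\cdot\rangle_{L,\mathrm{ext}}$ of the triple $(g,\phi^{(1)},\phi^{(2)})$: it establishes tightness in the weighted spaces $\mathcal{C}^{n,\alpha,\beta}$ via Prohorov, identifies the Gaussian marginal by a Riemann-sum verification of $\langle\ell^2\rangle_L\to\langle\ell^2\rangle$, and then constructs $\phi^{(1)}(g)$ and $\phi^{(2)}(g)$ by \emph{disintegration} (conditional expectation given $g$), closing the loop with a Liouville argument showing the conditional variance vanishes.

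Two steps in your route have genuine gaps. First, the assertion that "the joint construction yields $g_L\to g$ a.s.\ in $C^{0,\alpha}_{\mathrm{loc}}$" is not a consequence of $c_L\to c$ locally uniformly: a Fourier coupling (defining the periodic modes as cell averages of a common white noise) leaves an $L^2$-error per mode controlled by the modulus of continuity of $\sqrt{\mathcal{F}c}$ on the dual-lattice scale, and under Assumption~\ref{Ass:1} one only has \emph{continuity} of $\mathcal{F}c$, not any quantitative modulus. The paper's Riemann-sum identification of the Gaussian marginal needs only that continuity, which is exactly why the measure-level argument is the one that closes under (\ref{c:6_bis}). Second, even granting such a coupling, "extracting a joint subsequential limit" of $(\phi^{(1)}_{i,L},\phi^{(2)}_{ij,L})$ is circular: the uniform moment bounds plus pathwise Arzel\`a--Ascoli give a $g$-dependent subsequence, so the putative limit is a priori neither measurable in $g$, nor shift-covariant, nor unique --- precisely the properties you would need already in hand to pin down a single limit along a deterministic subsequence. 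The paper's conditional-expectation device cuts this knot: linearity of $\langle\,\cdot\,|\,g\rangle_{\mathrm{ext}}$ immediately transports the PDE constraints and, via Jensen, the moment and decay bounds; the Liouville step then shows $\phi=\langle\phi|g\rangle_{\mathrm{ext}}$ a.s., so the conditional expectation is deterministic given $g$. To rescue your pathwise scheme you would instead need to show directly that $\phi^{(1)}_{i,L}$ is Cauchy in some $L^p(\langle\cdot\rangle;C^{1,\alpha'}_{\mathrm{loc}})$, which requires a quantitative comparison of correctors at two different periods and is not contained in Proposition~\ref{P:3}.
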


The important element of part i) of Corollary \ref{Cor:1} is the stationarity
of $\phi_i^{(1)}$ itself, not just of $\nabla\phi_i^{(1)}$. Such a result
was first established in \cite[Proposition 2.1]{GloriaOtto_2011} in the case of a discrete medium,
see \cite[Proposition 1]{GloriaOttoESAIMProceedingsandSurveys2015}
for the first result for a continuum medium. For part ii) we note that we cannot
expect $\phi^{(2)}_{ij}$ to be stationary unless $d>4$.
Part iii) is new and relies on a soft
argument based on the uniform bounds of Proposition \ref{P:3}.

\subsection{Estimate of homogenization error to second order,
application to the Green function}\label{SS:twoscale}

A second main role of the corrector estimates of Proposition \ref{P:3}, in particular
the estimate of the flux correctors, is to provide an estimate of the homogenization
error. On our second-order level, this connection relies on identity (\ref{ao62})
involving the two-scale expansion (\ref{ao65}), which we recall now.
Suppose that $u$ and $\bar u$ are related via
\begin{align*}
\nabla\cdot a\nabla u=\nabla\cdot \bar a\nabla\bar u
\end{align*}
and that $\bar u^{(2)}$ is related to $\bar u$ via
\begin{align}\label{ao60}
\nabla\cdot(\bar a\nabla \bar u^{(2)}+\overline{a}^{(2)}_{i}\nabla\partial_{i}\bar u)=0.
\end{align}
Consider the error in the second-order two-scale expansion
\begin{align}\label{ao65}
w:=u-(1+\phi^{(1)}_i\partial_i+\phi^{(2)}_{ij}\partial_{ij})(\bar u+\bar u^{(2)}).
\end{align}
Then $\sigma^{(2)}_{ij}$ allows to write the residuum in divergence form:
\begin{align}\label{ao62}
-\nabla\cdot a\nabla w=\nabla\cdot\big((\phi_{ij}^{(2)}a-\sigma_{ij}^{(2)})
\nabla\partial_{ij}(\bar u+\bar u^{(2)})+\bar a_{i}^{(2)}\nabla\partial_i\bar u^{(2)}\big).
\end{align}
Now the advantage of $A$ and thus $a$ being symmetric becomes apparent:
It implies that the symmetric part of the three-tensor with entries $\bar a^{(2)}_{imn}$
vanishes (see e.~g.~\cite[Lemma 2.4]{DuerinckxOtto_2019}). Since (\ref{ao60})
may be rewritten as $-\nabla\cdot\bar a\nabla \bar u^{(2)}=\bar{a}^{(2)}_{imn}\partial_{imn}\bar u$,
we may assume $\bar u^{(2)}=0$ under our symmetry assumption. Hence (\ref{ao65}) simplifies to
\begin{align}\label{ao68}
w:=u-\big(1+\phi^{(1)}_i\partial_i+(\phi^{(2)}_{ij}-\phi^{(2)}_{ij}(0))\partial_{ij}\big)\bar u
\end{align}
and (\ref{ao62}) may be rewritten as
\begin{align}\label{ao66}
-\nabla\cdot a\nabla w=\nabla\cdot\big((\phi_{ij}^{(2)}-\phi_{ij}^{(2)}(0))a
-(\sigma_{ij}^{(2)}-\sigma_{ij}^{(2)}(0))\big)\nabla\partial_{ij}\bar u.
\end{align}
We are allowed to pass to the centered versions of the second order
(flux) corrector, by which we mean that
$(\phi_{ij}^{(2)},\sigma_{ij}^{(2)})$ is replaced by
$(\phi_{ij}^{(2)}-\phi_{ij}^{(2)}(0),\sigma_{ij}^{(2)}-\sigma_{ij}^{(2)}(0))$,
which we do with (\ref{ao69}) in mind,
since a change by an additive constant does not affect anything
stated so far, and in particular not formula (\ref{ao63}), on which (\ref{ao62})
solely relies.
The upcoming lemma provides an estimate of the second-order
stochastic homogenization error $w$; (\ref{ao67}) is optimal since the rate is governed
by the dimension-dependent expression $\mu_{d}^{(2)}$ with its argument
given by the scale of the r.~h.~s.~$h$, which here is expressed by the diameter $2R$
of its support. Since the estimate is pointwise in the gradient, a logarithm is 
unavoidable\footnote{which however is over-shadowed by $\mu_{d}^{(2)}$ for $d=3$},
and a r.~h.~s.~norm marginally stronger than $\sup|\nabla^2h|$ has to be used\footnote{we pass
to the scalingwise identical norm $R\sup|\nabla^3h|$ for convenience}.
\begin{lemma}\label{L:3}
Let $d>2$ and $\langle\cdot\rangle$ satisfy Assumptions \ref{Ass:1} with symmetric $A$;
let $\langle\cdot\rangle_L$ be defined as in Section \ref{SectionDefEnsembleL}.
Given a deterministic and smooth function $f$ supported in $B_R(y)$ with $y\in\mathbb{R}^d$ and some $R<\infty$,
let $u$ and $\bar u$ be the decaying solutions of
\begin{align}\label{ao70}
-\nabla\cdot a\nabla u=f=-\nabla\cdot \bar a\nabla\bar u.
\end{align}
Then $w$ defined in (\ref{ao68}) satisfies for all $p<\infty$
\begin{align}\label{ao67}
\langle|\nabla w(0)|^p\rangle_L^\frac{1}{p}
\lesssim_p \max\{\mu_{d}^{(2)}(R),\ln(R)\}R\sup|\nabla^2 f|.
\end{align}
Here $\lesssim_{p}$ has the same meaning as in Proposition \ref{P:3}.
\end{lemma}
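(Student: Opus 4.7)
The plan is to exploit the divergence-form identity (\ref{ao66}),
\[
-\nabla\cdot a\nabla w = \nabla\cdot\big[F_{ij}\nabla\partial_{ij}\bar u\big], \qquad F_{ij} := (\phi_{ij}^{(2)} - \phi_{ij}^{(2)}(0))a - (\sigma_{ij}^{(2)} - \sigma_{ij}^{(2)}(0)),
\]
combined with the centered corrector estimate (\ref{ao69}), which provides the annealed pointwise bound $\langle |F_{ij}(x)|^p\rangle_L^{1/p} \lesssim_p \mu_d^{(2)}(|x|)$. Since this bound degenerates away from $0$, I decompose the source dyadically around the origin: with smooth cutoffs $\eta_r$ for $B_{r/2}$ in $B_r$, split $w = w_N + \sum_{k\ge 0} w_{kF}$, where $w_N$ has source $\eta_1 F_{ij}\nabla\partial_{ij}\bar u$ and $w_{kF}$ has source $(\eta_{2^{k+1}} - \eta_{2^k})F_{ij}\nabla\partial_{ij}\bar u$, supported in the annulus $A_k := B_{2^{k+1}}\setminus B_{2^{k-1}}$.

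A preliminary pointwise bound on the deterministic factor $\nabla^3\bar u$ is needed. Integrating by parts in $\bar u = -\bar G * f$ yields $\nabla^3 \bar u(x) = -\int\nabla\bar G(x-z)\nabla^2 f(z)\,dz$, and then using $|\nabla\bar G(x)| \lesssim |x|^{1-d}$ and the support of $f$, one obtains the two-regime bound
\[
|\nabla^3 \bar u(x)| \lesssim \sup|\nabla^2 f|\cdot\min\big\{R,\; R^{d+2}|x-y|^{-(d+1)}\big\}.
\]
For the near-origin piece $w_N$, interior Schauder regularity on $B_1$ combined with the $C^{0,\alpha}$-bounds (\ref{fw33Bis}) on $a$ and moment bounds on the first- and second-order correctors (whose local Hölder regularity follows by bootstrapping Schauder theory through (\ref{pde:9.1_quad}) and the divergence form (\ref{ao61})) yields $\langle|\nabla w_N(0)|^p\rangle_L^{1/p} \lesssim R\sup|\nabla^2 f|$, as required.

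For each far-field piece $w_{kF}$, the source is supported outside $B_{2^{k-1}}$, so $w_{kF}$ is $a$-harmonic in $B_{2^{k-1}}$. The Lipschitz estimate (Lemma \ref{L:2}) at scale $2^{k-2}$ followed by the energy estimate gives
\[
|\nabla w_{kF}(0)|^2 \lesssim \fint_{B_{2^{k-2}}}|\nabla w_{kF}|^2 \lesssim 2^{-kd}\int_{A_k}|F|^2|\nabla^3\bar u|^2.
\]
Taking $p$-th moments and invoking Minkowski's integral inequality together with (\ref{ao69}) delivers
\[
\langle|\nabla w_{kF}(0)|^p\rangle_L^{2/p} \lesssim 2^{-kd}\big(\mu_d^{(2)}(2^k)\big)^2\int_{A_k}|\nabla^3\bar u|^2.
\]

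Summing via the triangle inequality $\langle|\nabla w(0)|^p\rangle_L^{1/p} \le \langle|\nabla w_N(0)|^p\rangle_L^{1/p} + \sum_k \langle|\nabla w_{kF}(0)|^p\rangle_L^{1/p}$ and inserting the bound on $\nabla^3\bar u$ completes the estimate: scales $2^k \lesssim R$ contribute terms of order $\mu_d^{(2)}(2^k)R\sup|\nabla^2 f|$ that sum to $\max\{\mu_d^{(2)}(R),\ln R\}R\sup|\nabla^2 f|$ (with the logarithm arising in dimensions $d\ge 4$ from the slow growth of $\mu_d^{(2)}$), while scales $2^k \gtrsim R$ give a geometrically convergent tail bounded by $R\sup|\nabla^2 f|$ thanks to the $(R/2^k)^{d+1}$ decay of $\nabla^3\bar u$ far from $y$. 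The main obstacle I anticipate is achieving the sharp logarithmic factor in the critical dimension $d=4$: the naive triangle-inequality sum yields an extra $\sqrt{\ln R}$, which must be absorbed either by a weighted annealed Calderón–Zygmund argument through Lemma \ref{L:4} or by replacing the $L^2$-based energy step with a duality argument in weighted Sobolev spaces.
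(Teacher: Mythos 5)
Your proposal follows essentially the same route as the paper's proof: decompose the divergence-form source in \eqref{ao66} into dyadic annuli centered at the origin, estimate each piece via the Lipschitz estimate of Lemma~\ref{L:2} followed by the energy estimate and Minkowski's inequality, and close with the annealed pointwise bound $\langle|\nabla^3\bar u(x)|^p\rangle_L^{1/p}\lesssim R\sup|\nabla^2 f|\big(R/(R+|x-y|)\big)^d$ and the centered second-order corrector bound \eqref{ao69}. Your worry about an extra $\sqrt{\ln R}$ in the critical dimension $d=4$ is in fact well-founded and applies equally to the paper's own proof: the near-field dyadic sum $\sum_{2^k\le R}\mu_4^{(2)}(2^k)\sim(\ln R)^{3/2}$ exceeds the stated $\max\{\mu_4^{(2)}(R),\ln R\}=\ln R$, a point the paper does not address; however this discrepancy is harmless downstream, since Proposition~\ref{P:4} and Subsection~\ref{SS:fromreptoas} only invoke $\max\{\mu_d^{(2)}(r),\ln r\}\lesssim r^{1/2}$, and $(\ln r)^{3/2}\lesssim r^{1/2}$ holds just as well, so none of the refinements you sketch (weighted Calder\'on--Zygmund or weighted duality) are actually needed.
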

This pointwise estimate (\ref{ao67}) relies on
a decomposition of the r.~h.~s.~of (\ref{ao66}) into pieces supported on dyadic annuli.
For each piece, we first apply Lemma \ref{L:2} combined with the energy estimate, into which we feed (\ref{ao69}) and
a pointwise bound on $\nabla^3\bar u$ relying on the bounds on the Green function of the constant coefficient operator $\nabla\cdot\overline{a}\nabla$, see (\ref{ao70}).

\medskip

The main goal of this subsection is to estimate the homogenization error
on the level of the Green function, see Proposition \ref{P:4}.
This type of homogenization result with singular r.~h.~s.~has been worked out on the
level of the first-order approximation in \cite[Corollary 3]{Bella_Giunti_Otto_2017}
and extended to second-order in \cite[Theorem 1]{BGO_Multipoles_2017},
where these estimates are derived from estimates on
$(\phi^{(1)}_i,\sigma_i^{(1)})$ and $(\phi^{(2)}_{ij},\sigma^{(2)}_{ij})$
of the type of Proposition \ref{P:3}, however in a pathwise way,
see \cite[Proposition 1]{BGO_Multipoles_2017}.
While equipped with Proposition \ref{P:3}, we could post-process
\cite[Theorem 1]{BGO_Multipoles_2017} to obtain Proposition \ref{P:4},
we take a different, and shorter, route in this paper.
Note that \cite[Theorem 1]{BGO_Multipoles_2017} is not formulated in terms
of the Green function $G$, but in terms of decaying $a$-harmonic functions
in exterior domains. Recovering a statement on the Green function would require
\cite[Lemma 4]{Bella_Giunti_Otto_2017}, which we restate as Lemma \ref{L:1}
below for the convenience of the reader.

\medskip

\begin{proposition}\label{P:4}. Let $d>2$ and $\langle\cdot\rangle$
satisfy Assumptions \ref{Ass:1} with symmetric $A$;
let $\langle\cdot\rangle_L$ be defined as in Section \ref{SectionDefEnsembleL}.
Then we have for ${\mathcal E}$ defined in (\ref{ao47})
\begin{align}\label{ao73}
|y-x|^{d+2}\langle|{\mathcal E}(x,y)|^p\rangle_L^\frac{1}{p}
\lesssim_p\max\{\mu_d^{(2)}(|y-x|),\ln \vert x-y\vert\}
\end{align}
provided $|y-x|\geq 2$ and for all $p<\infty$.
Here $\lesssim_{p}$ has the same meaning as in Proposition \ref{P:3}.

\end{proposition}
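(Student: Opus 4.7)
By the shift-covariance of all fields appearing in (\ref{ao47}) and the stationarity of $\langle\cdot\rangle_L$, I may translate to $x=0$ and write $r:=|y|\ge 2$. Following the ``shorter route'' announced in the text, my plan is to combine the pointwise second-order homogenization error estimate Lemma \ref{L:3} with the Bella-Giunti-Otto transfer principle Lemma \ref{L:1}, which bridges estimates for PDE solutions with smooth compactly supported right-hand side and estimates for the Green function.

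Concretely, I would fix $R:=r/4$ and a deterministic smooth bump $\chi_R$ supported in $B_R(y)$ with $\int\chi_R=1$ and $\sup|\nabla^k\chi_R|\lesssim R^{-d-k}$. For each constant test direction $M\in\mathbb{R}^d$, set $f:=-\nabla\cdot(M\chi_R(\cdot-y))$, which is deterministic, supported in $B_R(y)$, with $\sup|\nabla^2 f|\lesssim R^{-d-3}$, and let $u,\bar u$ solve $-\nabla\cdot a\nabla u=f=-\nabla\cdot\bar a\nabla\bar u$. Lemma \ref{L:3} immediately yields
\begin{equation*}
\langle|\nabla w(0)|^p\rangle_L^{1/p}\lesssim_p\max\{\mu^{(2)}_d(R),\ln R\}\,R\sup|\nabla^2 f|\lesssim_p\max\{\mu^{(2)}_d(r),\ln r\}\,r^{-(d+2)},
\end{equation*}
which already displays the desired scaling. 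By Green's identity, $\nabla u(0)=\int\nabla_z\nabla_{z'}G(0,z')\,(M\chi_R)(z'-y)\,dz'$ and analogously for $\bar u$ with $\bar G$; hence $\nabla u(0)$ minus its second-order two-scale expansion at $0$ equals, up to this $\chi_R$-average in the second variable, the ``one-sided'' two-scale error carrying $a$-correctors at $x=0$ only.

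The role of Lemma \ref{L:1} is then to upgrade this averaged one-sided bound into the symmetric pointwise quantity $\mathcal{E}(0,y)$ of (\ref{ao47}). It rewrites (\ref{ao47}) as the suitable averaged PDE-level error plus correction terms that are products of first- and second-order correctors $\nabla\phi^{(1)},\phi^{(2)},\nabla\phi^{*(1)},\phi^{*(2)}$ evaluated at $0$ and $y$. These extra corrector factors are controlled in $L^p$ by Proposition \ref{P:3} (iv), (v), whose bounds are precisely responsible for the factor $\mu^{(2)}_d(r)$ in (\ref{ao73}). The passage from the $\chi_R$-averaged bound to a genuinely pointwise bound at $y$ is handled by a Schauder-type argument based on the local H\"older regularity (\ref{fw33Bis}) of $a$, which transfers to the corrector fields and to $\nabla\nabla G(0,\cdot)$ away from $0$; this step is the source of the $\ln r$ factor in (\ref{ao73}).

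The main obstacle is thus the identification carried out by Lemma \ref{L:1}: Lemma \ref{L:3} alone only produces the $a$-two-scale expansion at $x=0$, while $\mathcal{E}$ in (\ref{ao47}) is the symmetric expansion with $a^*$-correctors also at $y$. Once this identification is in place and the additional $y$-corrector factors are absorbed via the moment bounds of Proposition \ref{P:3}, the proof reduces to the quantitative Lemma \ref{L:3} application performed above, together with the standard local regularity input (\ref{fw33Bis}).
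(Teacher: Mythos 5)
Your plan correctly identifies the two main ingredients (Lemma~\ref{L:3} and the weak-to-strong conversion of Lemma~\ref{L:1}/Corollary~\ref{Cor:2}), and you rightly flag the passage from a one-sided to the symmetric two-scale expansion as the crux. But the description of how that passage works has a genuine gap, and the role you assign to Lemma~\ref{L:1} is not what it does.

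Lemma~\ref{L:1} (and Corollary~\ref{Cor:2}) is an inner regularity estimate that applies only to \emph{$a$-harmonic} functions: it trades a negative-Sobolev-type norm of $\nabla u$ for $L^2$ or pointwise control, provided $u$ is $a$-harmonic in $B_R$. It does not ``rewrite (\ref{ao47}) as an averaged PDE-level error plus correction terms''; and the passage to a pointwise estimate is not a Schauder argument --- Schauder (inside Lemma~\ref{L:2}) is only the very last $L^2_{loc}\to L^\infty$ step, while the essential weak-to-$L^2$ step is the improved Caccioppoli mechanism of Lemma~\ref{L:1}. More importantly, the object you want to upgrade, $y\mapsto\mathcal{E}(0,y)$, is \emph{not} (a gradient of) an $a^*$-harmonic function, so you cannot apply Lemma~\ref{L:1} or Corollary~\ref{Cor:2} to it directly. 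This is the real obstruction, and your proposal does not address it.

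What is actually needed (and what the paper does) is a second, structurally distinct use of the divergence-form representation (\ref{ao66}), this time in the $y$-variable for the dual medium $a^*$: after applying Lemma~\ref{L:3} in $x$ and expanding $\bar G$ in $y$ up to second order, one obtains a \emph{weak} (tested against $h$ with $\sup|\nabla^3 h|$) bound of the form (\ref{ao72}) on $\int h\cdot\mathcal{E}_m(x_0,\cdot)$. One then observes that, up to third-order corrector terms, $y\mapsto\mathcal{E}_m(x_0,y)$ is a linear combination of $\nabla_y\partial_{x_m}G(x_0,y)$, which \emph{is} the gradient of an $a^*$-harmonic function, and of $y$-two-scale expansions of $\bar a^*$-harmonic functions, whose expansion errors satisfy the divergence-form equation (\ref{ao66}). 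This allows a dyadic near-/far-field decomposition in $y$: the near-field pieces are handled by energy estimates plus Lemma~\ref{L:2}, and only the far-field piece --- which is now genuinely $a^*$-harmonic on a ball of radius $\sim R$ around $y_0$ --- is fed into Corollary~\ref{Cor:2} together with the weak bound (\ref{ao72}). Without this decomposition and second application of (\ref{ao66}), there is no way to ``absorb the $y$-corrector factors via the moment bounds of Proposition~\ref{P:3}'' as you claim; several of them require the growth estimates (\ref{ao69}) and integrations by parts in $y$, not just pointwise moment bounds, and the whole quantity is not yet in a form where Lemma~\ref{L:1} applies.
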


Here comes the crucial Lemma that converts weak into strong control.

\begin{lemma}\label{L:1}
Let $\langle\cdot\rangle$ be an ensemble of $\lambda$-uniformly elliptic coefficient
fields\footnote{We will apply it to $\langle\cdot\rangle_L$}.
Let the random function $u$ be $a$-harmonic in the ball $B_R$
of radius $R$. Then we have for all $p<\infty$
\begin{align}\label{ao57}
\big\langle\big(\fint_{B_\frac{R}{2}}|\nabla u|^2\big)^\frac{p}{2}\big\rangle^\frac{1}{p}
\lesssim_{p}\sup_{h\in C^\infty_0(B_R)}\frac{\big\langle|\fint_{B_R}h\cdot\nabla u|^p
\big\rangle^\frac{1}{p}}{R^3\sup|\nabla^3 h|}.
\end{align}
Here $\lesssim_{p}$ has the same meaning as in Proposition \ref{P:3}.
\end{lemma}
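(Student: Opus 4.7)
The plan is to reduce to $R=1$ by scaling, then combine a convolution-based reconstruction of $\nabla u$ by admissible test functions with interior Schauder regularity for $a$-harmonic functions. Set $M:=\sup_h\langle|\fint_{B_1}h\cdot\nabla u|^p\rangle^{1/p}/\sup|\nabla^3 h|$, so that the goal is to show $F:=\langle(\fint_{B_{1/2}}|\nabla u|^2)^{p/2}\rangle^{1/p}\lesssim_p M$. For each $x_0\in B_{1/2}$, each unit vector $\xi$, and each small $\delta>0$, I would introduce the test function $h_{x_0,\xi,\delta}(y):=|B_1|\,\rho_\delta(y-x_0)\,\xi$, where $\rho_\delta(z)=\delta^{-d}\rho(z/\delta)$ is a standard smooth mollifier supported in $B_\delta$. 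Then $h\in C_0^\infty(B_1)$ provided $\delta<1/2$, $\sup|\nabla^3 h|\lesssim\delta^{-d-3}$, and the pairing reads $\fint_{B_1}h_{x_0,\xi,\delta}\cdot\nabla u=\xi\cdot(\rho_\delta*\nabla u)(x_0)$; the defining bound of $M$ therefore yields
\begin{equation*}
\big\langle|(\rho_\delta*\nabla u)(x_0)|^p\big\rangle^{1/p}\lesssim_p \delta^{-d-3}\,M
\end{equation*}
uniformly in $x_0\in B_{1/2}$.

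Next I would control the mollification error $|(\rho_\delta*\nabla u-\nabla u)(x_0)|\le C\delta^\alpha[\nabla u]_{C^{0,\alpha}(B_\delta(x_0))}$ by exploiting interior Schauder regularity. Pathwise, for $B_{2\delta}(x_0)\subset B_1$ one has
\begin{equation*}
[\nabla u]_{C^{0,\alpha}(B_\delta(x_0))}\lesssim \mathcal C\bigl(\|a\|_{C^{0,\alpha}(B_{2\delta}(x_0))}\bigr)\,\Big(\fint_{B_{2\delta}(x_0)}|\nabla u|^2\Big)^{1/2},
\end{equation*}
with $\mathcal C$ of polynomial growth. Assumption~\ref{Ass:1} via \eqref{fw33} provides uniform moment control on $\|a\|_{C^{0,\alpha}}$, which by H\"older's inequality in $L^p(\Omega)$ decouples from the $L^2$-average of $\nabla u$ at the price of promoting the stochastic exponent from $p$ to, say, $2p$.

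Combining the two steps pointwise in $x_0$, summing over a basis of directions $\xi$, integrating over $x_0\in B_{1/2}$, and invoking Minkowski's integral inequality (valid for $p\ge 2$; lower $p$ follows a posteriori by Jensen), I obtain an estimate of the shape
\begin{equation*}
F\lesssim_p \delta^{-d-3}\,M + C\,\delta^\alpha\,\widetilde F,
\end{equation*}
where $\widetilde F$ is the analogous quantity at the slightly larger radius $1/2+2\delta$ and at a larger moment exponent, and $C$ absorbs the $\|a\|_{C^{0,\alpha}}$-moments.

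The main obstacle is closing this buckling, since $\widetilde F$ is a priori not smaller than $F$. I would resolve it by iterating the estimate along a finite arithmetic sequence of radii $r_n\in[1/2,3/4]$ with step $2\delta$, so that after $N\sim\delta^{-1}$ steps one reaches radius $3/4$. Choosing $\delta$ small enough depending only on $d,\lambda,p,\alpha$ so that $C\delta^\alpha<\tfrac{1}{2}$ makes the geometric sum of $M$-contributions convergent, while the tail factor $(C\delta^\alpha)^N$ decays faster than any polynomial in $\delta^{-1}$; hence any qualitative a priori finiteness of $F$ at radius $3/4$---implicit in the assumption that $u$ is $a$-harmonic with locally square-integrable gradient possessing all stochastic moments---is quenched in the limit, leaving $F\lesssim_p M$. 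If the moment doubling is to be avoided, one can instead iterate the same estimate twice with mollification scales $\delta$ and $\delta^{1/2}$ and interpolate, absorbing the $L^{2p}$-norm into the $L^p$-norm by a standard Kolmogorov-type bound.
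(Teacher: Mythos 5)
Your strategy is genuinely different from the paper's, and it does not close. The paper's proof is a purely deterministic $L^2$-theory argument: a Caccioppoli estimate, the localized interpolation inequality from Bella--Giunti--Otto, and then the key observation that the Dirichlet operator $\triangle^{2n}$ on $H^{2n}_0(B_2)$ has finite trace once $2n>d$. Expanding $u$ in the eigenbasis of $\triangle^{2n}$ lets one trade $\int w^2$ (where $\triangle^{2n}w=u$) for weighted sums of dual pairings $\int u w_k$, which are exactly the quantities that the right-hand side controls. The argument works with only $\lambda$-uniform ellipticity of $a$, no regularity at all, and produces a single quantitative inequality with no buckling.

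Your proposal instead tries to use Schauder theory to control the mollification error. This has two structural problems. First, Schauder requires $a\in C^{0,\alpha}$, hence Assumption~\ref{Ass:1} and the moment bound (\ref{fw33}); Lemma~\ref{L:1} is deliberately stated and proved without any regularity on $a$ (it is precisely Lemma~\ref{L:2} that imports Assumption~\ref{Ass:1}, and the two are combined only in Corollary~\ref{Cor:2}). Second, and more fatally, the H\"older inequality you invoke to decouple $\mathcal C\bigl(\|a\|_{C^{0,\alpha}}\bigr)$ from $\bigl(\fint_{B_{2\delta}}|\nabla u|^2\bigr)^{1/2}$ forces an increase of the stochastic exponent from $p$ to some $qp>p$ at every step. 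Iterating $N\sim\delta^{-1}$ times to reach radius $3/4$ therefore requires control of $F_{q^Np}$ and of $M_{q^Np}$, with $q^Np\uparrow\infty$; the lemma's claim $F_p\lesssim_p M_p$ is not of this form, the constants do not collapse to a $p$-dependent one, and you additionally need a priori finiteness of $F_{q^Np}$ at radius $3/4$, which the hypotheses do not supply (nothing is assumed about stochastic integrability of $\nabla u$ beyond what the $M$-functional itself encodes). The suggested repair---``iterate twice at scales $\delta,\delta^{1/2}$ and interpolate via a Kolmogorov-type bound''---is not a concrete argument: two iterations do not close the buckling, and a Kolmogorov bound would itself need increment moments that are only available after the lemma is proved. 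In short, the mollification/Schauder route tries to fold a pathwise Lipschitz-type estimate into the weak-to-strong conversion, but for rough coefficients no such pathwise estimate exists at scale $O(1)$ (that is exactly why the paper introduces the random radius $r_*$ in Lemma~\ref{L:2}); keeping the two steps separate, as the paper does, is what makes the argument go through.
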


Lemma \ref{L:1} amounts to an inner regularity estimate for $a$-harmonic functions $u$,
in terms of the norms $L^p_{\langle\cdot\rangle}L^2_x$ and $W^{-2,1}_xL^p_{\langle\cdot\rangle}$
on the level of the gradient $\nabla u$.
As \cite[Lemma 4]{Bella_Giunti_Otto_2017}, estimate \eqref{ao57} is a consequence of an inner regularity estimate, uniform in $a$, with respect to norms $L^2_x$ and $H^{-n}_x$ (the case $W^{-2,1}_x$ of \eqref{ao57} is obtained for $n>\frac{d}{2}+2$).
However, it strengthens \cite[Lemma 4]{Bella_Giunti_Otto_2017} by restricting the r.~h.~s.\ functional to smooth functions $g$ with \textit{compact} support, \textit{i.e.}, functions that vanish to appropriate order at the boundary.
\\
Nevertheless, it requires only a minor modification of the proof.
It is obtained as a combination of two ingredients.
First, by the Caccioppoli estimate and by an $L^2_x$ interpolation estimate, we may estimate the l.~h.~s.\ of \eqref{ao57} by the $L^2_x$ norm of $w$ for $\Delta^{2n} w = u$.
Second, appealing to the fact that the Dirichlet operator $\Delta^{2n}$ has finite trace for $2n>d$, we may obtain \eqref{ao57}.
This second step differs from \cite[Lemma 4]{Bella_Giunti_Otto_2017}, where the Fourier decomposition was explicitly used to solve $\Delta^{2n} w = u$ (thus, losing the property of compact support).
This argument also shows that the second derivative on $g$, that we need here for our second-order homogenization, could be replaced by any order (properly non-dimensionalized).
\medskip

\medskip

We use Lemma \ref{L:1} only in combination with a second inner regularity estimate,
Lemma \ref{L:2},
which amounts to a Lipschitz estimate. Lipschitz estimates are central in the large-scale
regularity theory in homogenization as initiated by Avellaneda and Lin in the
periodic context, and as introduced by Armstrong and Smart \cite{Armstrong}
to the random context.
\begin{lemma}\label{L:2}
Let $d>2$ and $\langle\cdot\rangle$ satisfy Assumptions \ref{Ass:1};
let $\langle\cdot\rangle_L$ be defined as in Section \ref{SectionDefEnsembleL}.
Let the random function $u$ be $a$-harmonic in the ball $B_R$
of radius $R$. Then we have for all $p',p<\infty$
\begin{align}\label{ao59}
\langle|\nabla u(0)|^{p'}\rangle_L^\frac{1}{p'}
\lesssim_{p',p}\big\langle\big(\fint_{B_R}|\nabla u|^2
\big)^\frac{p}{2}\big\rangle_L^\frac{1}{p}\quad\mbox{provided}\;p'<p.
\end{align}
Here $\lesssim_{p,p'}$ has the same meaning as in Proposition \ref{P:3}.
\end{lemma}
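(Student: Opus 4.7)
The plan is to combine a large-scale Lipschitz estimate coming from the quantitative theory of stochastic homogenization with small-scale Schauder regularity, then exploit the gap $p'<p$ via H\"older's inequality to absorb the resulting random prefactor.

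First, I would appeal to the Avellaneda--Lin-type large-scale Lipschitz estimate in the random setting (in the spirit of Armstrong--Smart and Gloria--Neukamm--Otto): under Assumption \ref{Ass:1}, the spectral gap inequality ensured by \eqref{fw32} (uniformly in $L$, cf.\ the paragraph after \eqref{fw32}) together with the corrector estimates of Proposition \ref{P:3} produce, for every $L$, a stationary random minimal radius $r_*=r_*(a)$ with uniform moment bounds $\langle r_*^q\rangle_L\lesssim_q 1$ for every $q<\infty$, such that for every $a$-harmonic function $v$ on $B_\rho$ and every $r$ with $r_*\leq r\leq\rho$,
$$\bigl(\fint_{B_r}|\nabla v|^2\bigr)^{1/2}\;\lesssim\;\bigl(\fint_{B_\rho}|\nabla v|^2\bigr)^{1/2}.$$

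Second, I would pass from $L^2$-averages to the pointwise value $|\nabla u(0)|$ by means of classical Schauder theory on the ball $B_{r_*\vee 1}$. By \eqref{fw33Bis}, the H\"older seminorm $[a]_{C^{0,\alpha}(B_{r_*\vee 1})}$ has all stochastic moments under $\langle\cdot\rangle_L$ uniformly in $L$ (via stationarity and a union bound over unit balls covering $B_{r_*\vee 1}$, combined with the moment bounds on $r_*$). Schauder applied to $-\nabla\cdot a\nabla u=0$ on $B_{r_*\vee 1}$ then yields
$$|\nabla u(0)|\;\lesssim\;Z\,\bigl(\fint_{B_{r_*\vee 1}}|\nabla u|^2\bigr)^{1/2}$$
with a random prefactor $Z=Z(a)$ depending polynomially on $r_*\vee 1$ and on $[a]_{C^{0,\alpha}(B_{r_*\vee 1})}$; in particular, $Z$ has all moments uniformly in $L$.

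Concatenating the two estimates at radius $r=r_*\vee 1$ (on the event $r_*\vee 1\leq R$, to which we reduce; otherwise Schauder is applied directly on $B_R$) gives almost surely
$$|\nabla u(0)|\;\lesssim\;Z\,\bigl(\fint_{B_R}|\nabla u|^2\bigr)^{1/2}.$$
Taking $\langle\cdot\rangle_L^{1/p'}$ and applying H\"older's inequality with exponents $p/p'>1$ and its conjugate $s$ defined by $\tfrac{1}{s}=\tfrac{1}{p'}-\tfrac{1}{p}>0$, I conclude
$$\langle|\nabla u(0)|^{p'}\rangle_L^{1/p'}\;\lesssim\;\langle Z^s\rangle_L^{1/s}\,\Bigl\langle\bigl(\fint_{B_R}|\nabla u|^2\bigr)^{p/2}\Bigr\rangle_L^{1/p},$$
and $\langle Z^s\rangle_L^{1/s}$ is bounded by a constant depending only on $d,\lambda,p,p'$ and the constants in Assumption \ref{Ass:1}.

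The main obstacle is the large-scale Lipschitz step: it rests on the random radius $r_*$ having all stochastic moments \emph{uniformly in $L$} under the periodized ensemble $\langle\cdot\rangle_L$. This is ensured by the $L$-uniform spectral gap and the corrector estimates of Proposition \ref{P:3}, but transferring the by-now classical whole-space arguments to the periodized setting $\langle\cdot\rangle_L$ is the key technical point; once that is in place, the rest of the argument is a routine combination of Schauder theory and H\"older's inequality.
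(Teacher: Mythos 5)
Your proposal is correct and follows essentially the same route as the paper: the pathwise Lipschitz estimate of Gloria--Neukamm--Otto with the random minimal radius $r_*$ (whose moments under $\langle\cdot\rangle_L$ are controlled via Proposition \ref{P:3}), a local Schauder step to reach the pointwise gradient, and H\"older's inequality with the gap $p'<p$ to absorb the random prefactors. The only cosmetic difference is that the paper runs the Lipschitz estimate from $B_R$ all the way down to the \emph{fixed} ball $B_1$, picking up the factor $r_*^{d/2}$, and applies Schauder on $B_1$ with a constant controlled by $[a]_{\alpha,B_1}$ via \eqref{fw33Bis}, whereas you stop at the random ball $B_{r_*\vee 1}$ and apply Schauder there, which forces the slightly more involved covering/union-bound argument to get moments of your prefactor $Z$ over a region of random size.
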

Lemma \ref{L:2} is an easy consequence of the pathwise Lipschitz estimate
\cite[Theorem 1]{Gloria_Neukamm_Otto_2019}. More precisely, we refer to
\cite[(16)]{Gloria_Neukamm_Otto_2019}, which takes the form of
\begin{align*}
\big(\fint_{B_1}|\nabla u|^2\big)^\frac{1}{2}
\lesssim_{d,\lambda}r_*^\frac{d}{2}\big(\fint_{B_R}|\nabla u|^2\big)^\frac{1}{2},
\end{align*}
with the random radius $r_*$ defined in \cite[(12)]{Gloria_Neukamm_Otto_2019}.
It easily follows from the estimates on $(\phi_i^{(1)},\sigma_i^{(1)})$ in Proposition
\ref{P:3} that $\langle r_*^p\rangle_L^\frac{1}{p}$ $\lesssim_{p}1$ for all $p<\infty$.
On the other hand, by standard Schauder theory in $C^{\alpha'}$ we have
\begin{align*}
|\nabla u(0)|\le C(a) \big(\fint_{B_1}|\nabla u|^2\big)^\frac{1}{2},
\end{align*}
where $C$ depends at most polynomially on the local H\"older norm $[a]_{\alpha,B_1}$ (recalling that it satisfies \eqref{fw33Bis}).
Now (\ref{ao59}) follows from combining both estimates;
note that the loss in stochastic integrability
is unavoidable, since it compensates the fact that both $r_*$ and $[a]_{\alpha,B_1}$
are not uniformly bounded.

\medskip

As mentioned, we use Lemma \ref{L:1} only in its form combined with Lemma \ref{L:2}
\begin{corollary}\label{Cor:2}
Let $d>2$ and $\langle\cdot\rangle$ satisfy Assumptions \ref{Ass:1};
let $\langle\cdot\rangle_L$ be defined as in Section \ref{SectionDefEnsembleL}.
Let the random function $u$ be $a$-harmonic in the ball $B_R$
of radius $R$. Then we have for all $p',p<\infty$
\begin{align}\label{ao58}
\langle|\nabla u(0)|^{p'}\rangle_L^\frac{1}{p'}
\lesssim_{p',p}\sup_{h\in C^\infty_0(B_R)}\frac{\big\langle|\fint_{B_R}h\cdot\nabla u|^p
\big\rangle_L^\frac{1}{p}}{R^3\sup|\nabla^3 h|}\quad\mbox{provided}\;p'<p.
\end{align}
Here $\lesssim_{p,p'}$ has the same meaning as in Proposition \ref{P:3}.
\end{corollary}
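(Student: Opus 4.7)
The plan is to simply compose the two inner regularity estimates already at our disposal: Lemma \ref{L:2}, which converts an $L^p_{\langle\cdot\rangle_L} L^2_x$ bound on $\nabla u$ over the ball into a pointwise $L^{p'}_{\langle\cdot\rangle_L}$ bound at the origin, and Lemma \ref{L:1}, which in turn controls the $L^p_{\langle\cdot\rangle_L} L^2_x$ norm of $\nabla u$ on a slightly smaller ball by the $W^{-3,1}_x L^p_{\langle\cdot\rangle_L}$ type norm appearing on the right-hand side of \eqref{ao58}. This gives the advertised bound without any further analytic input.

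More concretely, I first fix exponents $p' < p'' < p$ and apply Lemma \ref{L:2} to the $a$-harmonic function $u$ on the ball $B_{R/2} \subset B_R$ (the lemma applies verbatim upon rescaling the radius, noting that the $a$-harmonicity in $B_R$ implies $a$-harmonicity in $B_{R/2}$):
\begin{equation*}
\langle|\nabla u(0)|^{p'}\rangle_L^{1/p'} \lesssim_{p',p''}
\Big\langle \Big(\fint_{B_{R/2}} |\nabla u|^2\Big)^{p''/2}\Big\rangle_L^{1/p''}.
\end{equation*}
Next I apply Lemma \ref{L:1}, which is stated for a generic $\lambda$-uniformly elliptic ensemble and thus applies to $\langle\cdot\rangle_L$, to estimate the right-hand side by
\begin{equation*}
\Big\langle \Big(\fint_{B_{R/2}} |\nabla u|^2\Big)^{p''/2}\Big\rangle_L^{1/p''}
\lesssim_{p''} \sup_{h \in C^\infty_0(B_R)}
\frac{\langle |\fint_{B_R} h \cdot \nabla u|^{p''}\rangle_L^{1/p''}}{R^3 \sup|\nabla^3 h|}.
\end{equation*}
Finally, I use Jensen's inequality (or monotonicity of $L^q_{\langle\cdot\rangle_L}$-norms) to upgrade the exponent $p''$ on the right to $p$, since $p'' < p$. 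Chaining these three inequalities yields \eqref{ao58}; the implicit constant depends only on $p', p$ (through the two invocations of the lemmas above) and on the parameters declared in Proposition \ref{P:3}.

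There is no real obstacle: the only point that requires a glance is to verify that Lemma \ref{L:1}, whose statement is displayed for a generic ensemble $\langle\cdot\rangle$, is applicable to our periodized Gaussian ensemble $\langle\cdot\rangle_L$ — but this is explicitly flagged in the footnote attached to the lemma. The loss of stochastic integrability from $p$ to $p'$ is entirely inherited from Lemma \ref{L:2}, reflecting the large-scale Lipschitz step that is not uniformly bounded in $a$.
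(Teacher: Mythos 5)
Your proof is correct and is exactly what the paper has in mind: the text preceding the corollary explicitly says that Lemma \ref{L:1} is ``used only in its form combined with Lemma \ref{L:2},'' i.e.\ Corollary \ref{Cor:2} is precisely the composition of these two inner regularity estimates, applying Lemma \ref{L:2} on $B_{R/2}$ and then Lemma \ref{L:1} to pass from $\fint_{B_{R/2}}$ to the $W^{-3,1}_xL^p_{\langle\cdot\rangle_L}$ functional over $B_R$. (As a minor simplification, the intermediate exponent $p''$ and the final Jensen step are not needed: one can apply Lemma \ref{L:2} directly with the pair $p'<p$ and then Lemma \ref{L:1} with exponent $p$.)
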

Corollary \ref{Cor:2} amounts to an inner regularity estimate for $a$-harmonic functions $u$,
in terms of the norms $L^\infty_xL^p_{\langle\cdot\rangle}$
and $W^{-2,1}_xL^p_{\langle\cdot\rangle}$ on the level of the gradient $\nabla u$.
We call this estimate an annealed estimate, since now on both sides of (\ref{ao58}), the probabilistic norm is inside.

\medskip

In this paper, we use Lemma \ref{L:1}, or rather Corollary \ref{Cor:2},
in a more substantial way than it is used in \cite[Corollary 3]{Bella_Giunti_Otto_2017}.
Here comes an outline of the argument for Proposition \ref{P:4}:
We apply Lemma \ref{L:3} with the origin
replaced by a general point $x_0$. Writing
$u(x)=\int_{\mathbb{R}^d}dy\,h(y)\cdot \nabla_yG(x,y)$
and $\bar u(x)=\int_{\mathbb{R}^d}dy\,h(y)\cdot\nabla_y\overline{G}(x,y)$,
this provides control of
\begin{align*}
\int_{\mathbb{R}^d}\,dy (\nabla\cdot h)(y)\big(\nabla_xG(x_0,y)
&-\partial_i\overline{G}(x_0-y)(e_i+\nabla\phi^{(1)}_i(x_0))\nonumber\\
&-\partial_{ij}\overline{G}(x_0-y)(\phi_i^{(1)}(x_0)e_j+\nabla\phi^{(2)}_{ij}(x_0))\big),
\end{align*}
in terms of $\mu_3^{(2)}(R)\sup|\nabla^2h|$ with $2R$ the diameter of $\text{supp }g$;
here we used the centering of $\phi^{(2)}_{ij}$ in $x_0$.
We now fix a point $y_0$ with $|y_0-x_0|\ge 4$ and replace both instances of $\overline{G}(x_0-y)$
by what we obtain from applying the two-scale expansion operator in the $y$-variable
\begin{align*}
1+\phi_m^{*(1)}(y)\frac{\partial}{\partial y_m}
+(\phi^{*(2)}_{mn}(y)-\phi^{*(2)}_{mn}(y_0))\frac{\partial^2}{\partial y_m\partial y_n}.
\end{align*}
Provided $h$ is supported in $B_R(y_0)$ with
$R:=\frac{1}{2}|y_0-x_0|\ge 2$, this preserves the estimate:
While for three out of the four extra terms,
this follows directly from parts i) through iii) of Proposition \ref{P:3},
we need part iv) and an integration by parts in $y$
for the contribution coming from $\phi_i^{*(1)}(y)\partial_{im}\overline{G}(x_0-y)$
$(e_i+\nabla\phi_i^{(1)}(x_0))$.
Keeping only first and second-order terms and recalling
the definition (\ref{ao47}), this yields
\begin{align}\label{ao72}
\big\langle\big|\int_{\mathbb{R}^d}dy\,h(y)
\cdot{\mathcal E}_m(x_0,y)\big|^p\big\rangle_L^\frac{1}{p}\lesssim
\mu_3^{(2)}(R)\sup|\nabla^2 h|
\end{align}
for any $h$ supported in $B_R(y_0)$.
By construction, up to third-order terms, $\mathbb{R}^d-\{x_0\}\ni y\mapsto{\mathcal E}_m(x_0,y)$
is a linear combination of a gradient of an $a^*$-harmonic function,
namely $\frac{\partial G}{\partial x_m}(x_0,y)$,
and gradients of two-scale expansions of $\bar a^*$-harmonic functions,
namely of $\bar u(y)$ $=\partial_i\overline{G}(x_0-y)(\delta_{im}+\partial_m\phi^{(1)}_i(x_0))$
and of $\bar u(y)$ $=\partial_{ij}\overline{G}(x_0-y)$
$(\phi_i^{(1)}(x_0)\delta_{jm}$ $+\partial_m\phi^{(2)}_{ij}(x_0))$.
Hence we may appeal once more to (\ref{ao66}), this time in the $y$-variable
and thus for the dual medium, and with the origin replaced by $y_0$.
We decompose the r.~h.~s.~of (\ref{ao66}) into a far-field
supported on $\mathbb{R}^d-B_R(y_0)$ and a near-field supported
on dyadic annuli centered at $y_0$ of radii $R,\frac{R}{2},\frac{R}{4},\cdots$.
For the near-field contributions, we appeal to the energy estimate followed by Lemma \ref{L:2}.
For the far-field contribution, we use (\ref{ao72}) (in conjunction with the estimate
of the near-field part) by appealing to Corollary \ref{Cor:2},
both with the origin replaced by $y_0$.
%
It is thus Corollary \ref{Cor:2} that
converts the weak control (\ref{ao72}) into pointwise control (\ref{ao73}).


\section{Heuristic result}\label{S:heur}

In this section, we heuristically argue that the strategy of
``periodizing the realizations'' 
leads to a bias that is of order $O(L^{-1})$, 
as announced in (\ref{ao18}). 
We argue that this is the case even for an isotropic\footnote{cf.~Subsection \ref{SS:symm}} 
range-one medium in the small contrast regime\footnote{cf.~Subsection \ref{SS:small}}.
However, rather than extending the $a$ restricted to the RVE periodically,
we extend it by even reflection; this amounts to imposing flux boundary instead
of periodic boundary conditions. More precisely, fixing a direction
$\xi=e_1$, we impose flux
boundary conditions in just one of the directions orthogonal to $e_1$, say $e_d$, and resort 
to the strategy of ``periodizing the ensemble'' in the other $d-1$ directions.
Hence we give the naive strategy a pole position: 
We implement it in the less intrusive
form of reflection rather than periodization -- less intrusive
because it does not create discontinuities in the coefficient field.
Nonetheless, treating just one of the directions in this naive way increases the
bias scaling from $O(L^{-d})$ to $O(L^{-1})$.
Admittedly, the heuristic analysis also becomes simpler by considering the
reflective version, and by implementing it in just one direction.

\medskip

We now make this more precise: Periodizing our stationary centered Gaussian ensemble
in directions $i=1,\cdots,d-1$ on the level of the covariance function amounts to
\begin{align}\label{rr17}
c'_L(z):=\sum_{k'\in\mathbb{Z}^{d-1}\times\{0\}}c(z+Lk'),
\end{align}
cf.~(\ref{Def_cL}). We pick a realization 
according to this $\langle\cdot\rangle_L'$, restrict it to the stripe
$\mathbb{R}^{d-1}\times[0,\frac{L}{2}]$, extend it by even reflection to
$\mathbb{R}^{d-1}\times[-\frac{L}{2},\frac{L}{2}]$ and then extend it $L$-periodically 
in the $e_d$-direction to all of $\mathbb{R}^d$. This defines a (non-stationary) centered
Gaussian ensemble $\langle\cdot\rangle_{L}^{sym}$, 
as such determined by
its covariance function $c^{sym}_L(y,x)=c^{sym}_L(x,y):=\langle g(x) g(y)\rangle_L^{sym}$.
The covariance function is characterized by its connection to $c'_L$ via
\begin{align}\label{rr07}
c^{sym}_L(x,y)=c_L'(x-y)\quad\mbox{provided}\;x,y\in\mathbb{R}^{d-1}\times[0,\tfrac{L}{2}]
\end{align}
and its reflection and translation symmetries\footnote{by $c^{sym}_L(y,x)=c^{sym}_L(x,y)$,
it is enough to state it for the $y$-variable}
\begin{align}
c^{sym}_L(x,y)&=c^{sym}_L(x,y-2y_de_d),\label{rr06}\\
c^{sym}_L(x,y)&=c^{sym}_L(x,y+Le_d),\label{rr09}
\end{align}
see Figure \ref{cLsym}.
\begin{figure}[h]
\includegraphics[scale=0.5]{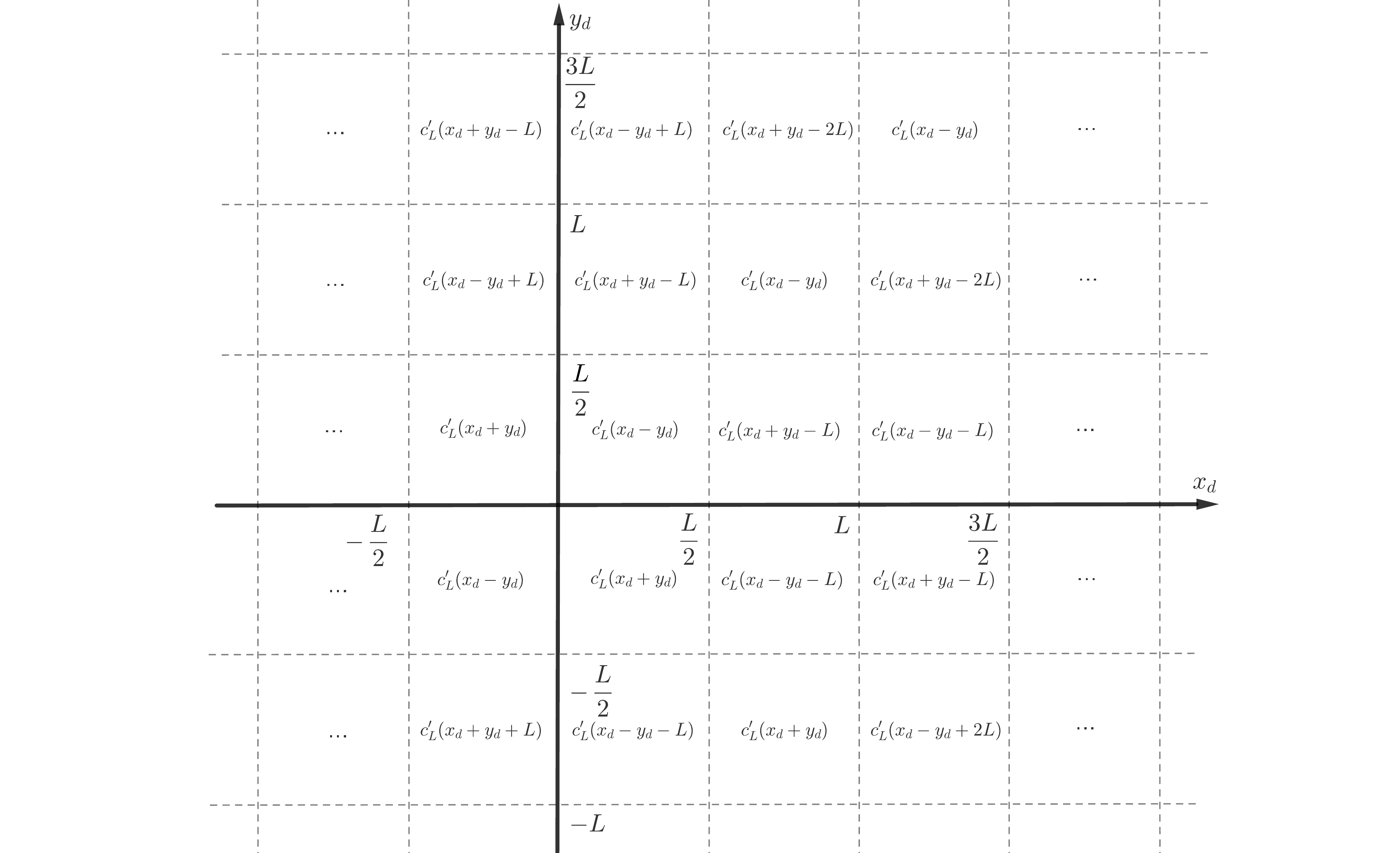}
\caption{Piecewise definition of $c^{sym}_L(x,y)$ as a function of $x_d, y_d$ (with the argument $x'-y'$ of $c'_L$ suppressed)}
\label{cLsym}
\end{figure}
\newline
It obviously inherits stationarity and periodicity in directions $i=1,\cdots,d-1$ from $c'_L$
so that
\begin{align}
c^{sym}_L(x,y)&=c^{sym}_L(x+z',y+z')\quad\mbox{for}\;z'\in\mathbb{R}^{d-1}\times\{0\},\nonumber\\
c^{sym}_L(x,y)&=c^{sym}_L(x,y+Le_i)\quad\mbox{for}\;i=1,\cdots,d.\label{rr04}
\end{align}
Hence comparing $\langle\cdot\rangle_L$ to $\langle\cdot\rangle_L^{sym}$,
we keep (full) periodicity, loose stationarity in direction $e_d$ but gain reflection symmetry
in that direction.
There are two derived symmetries that will play a role, namely
\begin{align}
c^{sym}_L(x,y)&=c^{sym}_L(x,y+(L-2y_d)e_d),\label{rr10}\\
c^{sym}_L(x,y)&=c^{sym}_L(x+\tfrac{L}{2}e_d,y+\tfrac{L}{2}e_d).\label{rr08}
\end{align}
While (\ref{rr10}) is an obvious combination of (\ref{rr06}) and (\ref{rr09}),
(\ref{rr08}) requires an argument, see Appendix \ref{SS:App2}.

\medskip

As for (\ref{Def_a}), we think of $\langle\cdot\rangle_L^{sym}$ as denoting also
the push-forward of the Gaussian ensemble under $a=A(g)$. We now sample $a$
from $\langle\cdot\rangle_L^{sym}$. By construction, 
the scalar $a$ is not only $L$-periodic in every direction, cf.~(\ref{rr04}), but in addition
even under reflection along the hyper planes $\{x_d=0\}$ and $\{x_d=\frac{L}{2}\}$,
cf.~(\ref{rr06}), (\ref{rr10}) and Figure \ref{cLsym}.
These invariances are transmitted to the solution $\phi_i^{(1)}$ of (\ref{pde:9.1_quad}),
and to the flux components $e_i\cdot a(\nabla\phi_i^{(1)}+e_i)$ for any $i\neq d$. On the other
hand, the flux component $e_d\cdot a(\nabla\phi_1^{(1)}+e_1)$ is odd w.~r.~t.~ these reflections and thus vanishes along these two hyper planes. Hence when it
comes to $\phi^{(1)}_1$, the box
$[-\frac{L}{2},\frac{L}{2}]^{d-1}\times[0,\frac{L}{2}]$ can be seen as an RVE with
a flux boundary conditions in direction $e_d$ and periodic boundary conditions
in directions $i=1,\cdots,d-1$. By the above reflection symmetry we have
\begin{align*}
\fint_{[-\frac{L}{2},\frac{L}{2}]^{d-1}\times[0,\frac{L}{2}]}
e_1\cdot a(\nabla\phi_1^{(1)}+e_1)=e_1\cdot \bar a e_1,
\end{align*}
where $\bar a$ is defined as in (\ref{Def_Abar_intro}). We shall heuristically
establish (\ref{ao18}) in the form of
\begin{align*}
e_1\cdot(\langle\bar a\rangle_L^{sym}-a_{\rm hom})e_1=O(L^{-1}).
\end{align*}
More precisely, we shall show that to leading order in $1-\lambda\ll 1$ and $L\gg 1$,
\begin{align}\label{rr11}
e_1\cdot(\langle\bar a\rangle_L^{sym}-a_{\rm hom})e_1\approx -L^{-1}I
\quad\mbox{with}\;I>0.
\end{align}
The sign of the leading-order correction $I$ is consistent with the following heuristics:
The no-flux boundary conditions means that the current is restricted to the stripe $\mathbb{R}^{d-1}\times[0,\frac{L}{2}]$; for $d=2$ and as $L\downarrow 0$, the medium
thus is close to a one-dimensional medium, for which one has the effective conductivity
$\langle a^{-1}\rangle^{-1}\le a_{\rm hom}$ (note that by statistical isotropy, also
$a_{\rm hom}$ is scalar). This is also coherent with the usual observation that RVE approximations with Neumann boundary conditions estimate the actual homogenized coefficient by below, see \cite{kanit2003determination}.

  
\medskip

As for $\langle \bar a\rangle_L$, we shall establish (\ref{rr11})
by monitoring its $L$-derivative.
More precisely, appealing to (\ref{ao14}), we shall establish (\ref{rr11}) in form of
\begin{align}\label{rr12}
\frac{d}{dL}e_1\cdot(\langle\bar a \rangle_L^{sym}
-\langle\bar a\rangle_L)e_1\approx L^{-2} I.
\end{align}
The advantage of monitoring the difference between two ensembles is that we may
use Price's formula in a different, much less subtle, way than in Subsection \ref{SS:formula}.
More precisely, in Appendix \ref{SS:App3}, for a general $[0,L)^d$-periodic centered 
Gaussian ensemble $\langle\cdot\rangle_{L}$ we shall establish the formula
\begin{align}\label{rr14}
\lefteqn{\frac{d}{dL}\langle e_1\cdot\bar a e_1\rangle_L
=-L^{-d}\int_{[-\frac{L}{2},\frac{L}{2})^d}dx\int_{[-\frac{L}{2},\frac{L}{2})^d}dy}\nonumber\\
&\times\big\langle\big(a'(\nabla\phi_1^{(1)}+e_1)\big)(x)\cdot\nabla\nabla G^{per}(x,y)
\cdot\big( a'(\nabla\phi_1^{(1)}+e_1)\big)(y)\big\rangle_L\frac{Dc_L}{\partial L}(x,y),
\end{align}
where the ``material'' derivative of the covariance function $c_L(x,y)$ is defined via
\begin{align}\label{rr13}
\frac{Dc_L}{\partial L}(L\hat x,L\hat y)=\frac{d}{dL}\big(c_L(L\hat x,L\hat y)\big),
\end{align}
and where $G^{per}$ denotes the
Green function associated with the operator $-\nabla\cdot a\nabla$ on the torus $[0,L)^d$, which is unambiguously defined in terms
of its first and mixed derivatives. The present version of (\ref{rr14}) also relies on
\begin{align}\label{rr21}
\frac{Dc_L}{\partial L}(x,y)=0\quad\mbox{for}\;x=y.
\end{align}

\medskip

Note that definition (\ref{rr13}) implies
\begin{align}
\frac{Dc_L}{\partial L}\quad\mbox{is $[0,L)^d$-periodic in both arguments},\label{rr20}\\
\frac{Dc_L}{\partial L}=\frac{\partial c_L}{\partial L}+L^{-1}(x\cdot\nabla_x
+y\cdot\nabla_y)c_L.\label{rr15}
\end{align}
Let us compare formula (\ref{rr14}), which in the presence of stationarity simplifies 
(in the sense that $L^{-d}\int_{[-\frac{L}{2},\frac{L}{2})^d}dy$ 
is replaced by the evaluation at $y=0$), to (\ref{ao35}). The main difference does not lie
in the periodic setting (in view of (\ref{rr20}), 
$\int_{[-\frac{L}{2},\frac{L}{2})^d}dx$ and $G^{per}$ 
can formally be replaced by $\int_{\mathbb{R}^d}dx$ and $G$, respectively),
but in the convective contribution to (\ref{rr15}), which is the generator of rescaling
the space variables of $c$, and thus describes a rescaling of $a$. Indeed, this contribution
vanishes after applying $\int_{\mathbb{R}^d}dx$ (only formally, since 
the integral does not converge absolutely) because the space average $\bar a e_1$ 
$=\lim_{R\uparrow\infty}\fint_{[0,R)^d}a(\nabla\phi_1^{(1)}+e_1)$ is
invariant under rescaling of $a$. 

\medskip

As in Subsection \ref{SS:small},
in the small contrast regime, we have
\begin{align*}
&\langle\big(a'(\nabla\phi_1^{(1)}+e_1)\big)(x)\cdot\nabla\nabla G^{per}(x,y)
\cdot\big( a'(\nabla\phi_1^{(1)}+e_1)\big)(y)\big\rangle_L\approx-\partial_1^2G^{per}_{\rm hom}(x-y)
\langle a'(x) a'(y)\rangle_L,
\end{align*}
so that (\ref{rr14}) simplifies to
\begin{align}\label{rr16}
\frac{d}{dL}\langle e_1\cdot\bar a e_1\rangle_L
&\approx L^{-d}\int_{[-\frac{L}{2},\frac{L}{2})^d}dx\int_{[-\frac{L}{2},\frac{L}{2})^d}dy\partial_1^2 G^{per}_{\rm hom}(x-y)\langle a'(x) a'(y)\rangle_L
\frac{Dc_L}{\partial L}(x,y).
\end{align}
We apply (\ref{rr16}) to both $[0,L)^d$-periodic ensembles, 
$\langle\cdot\rangle_L^{sym}$
and $\langle\cdot\rangle_L$, which we may since
(\ref{rr21}) is satisfied (almost everywhere) for both: Indeed, by reflection
symmetry (\ref{rr06}) and periodicity (\ref{rr09}), it is enough to consider
$x\in\mathbb{R}^{d-1}\times [0,\frac{L}{2})$. Then for $|y-x|$ sufficiently small 
we have $c_L^{sym}(x,y)=c_L(x-y)$ by (\ref{rr07}).
By the finite-range assumption on $c$, this yields
\begin{align*}
c_L^{sym}(x,y)=c_L(x-y)=c(x-y)\quad\mbox{provided}\;|x-y|\;\mbox{is sufficiently small}\;
\mbox{and}\;L\gg 1.
\end{align*}
Hence we have for $y=x$ that
$\frac{\partial}{\partial L}c_L^{sym}(x,y)$ $=\frac{\partial}{\partial L}c_L(0)$ $=0$ 
and $-\nabla_yc^{sym}_L(x,y)$ $=\nabla_xc^{sym}_L(x,y)$ $=\nabla c_L(0)$ $=0$. Introducing in addition the function ${\mathcal A}$ as in Subsection \ref{SS:small} for both
$\langle a'(x) a'(y)\rangle_L$ $={\mathcal A}'(c_L(x,y))$
and $\langle a'(x) a'(y)\rangle_L^{sym}$ $={\mathcal A}'(c_L^{sym}(x,y))$, we obtain from (\ref{rr16})
\begin{align}\label{rr24}
&\frac{d}{dL}e_1\cdot(\langle\bar a\rangle_L^{sym}-\langle\bar a\rangle_L)e_1
\approx L^{-d}\int_{[-\frac{L}{2},\frac{L}{2})^d}dx\int_{[-\frac{L}{2},\frac{L}{2})^d}dy\partial_1^2 G^{per}_{\rm hom}(x-y)\frac{D}{\partial L}\big({\mathcal A}(c_L^{sym})
-{\mathcal A}(c_L)\big)(x,y).
\end{align}

\medskip

There is a cancellation when considering the difference in (\ref{rr24}):
Indeed, by (\ref{rr07}) and (\ref{rr06}) (see also Figure \ref{cLsym}) we have
\begin{align*}
c_L^{sym}(x,y)=c_L'(x-y)\quad\mbox{provided}\;(x_d,y_d)\in[-\frac{L}{2},0]^2\cup[0,\frac{L}{2}]^2.
\end{align*}
Likewise, we obtain from (\ref{Def_cL}), (\ref{rr17}) and the finite range of dependence assumption
\begin{align*}
c_L(x-y)=c_L'(x-y)\quad\mbox{provided}\;(x_d,y_d)\in[-\frac{L}{2},0]^2\cup[0,\frac{L}{2}]^2.
\end{align*}
Hence the integral in (\ref{rr24}) 
reduces to $(x_d,y_d)$ $\in([-\tfrac{L}{2},0]\times[0,\tfrac{L}{2}])$
$\cup([0,\tfrac{L}{2}]\times[\tfrac{L}{2},0])$.
Moreover, since the integrand in (\ref{rr16}) is invariant under permuting
$x$ and $y$, we obtain 
\begin{align}\label{rr18}
\frac{d}{dL}&e_1\cdot(\langle\bar a\rangle_L^{sym}
-\langle\bar a\rangle_L)e_1\approx 2L^{-d}\int_{[-\frac{L}{2},\frac{L}{2})^{d-1}\times[0,\frac{L}{2}]}dx
\int_{[-\frac{L}{2},\frac{L}{2})^{d-1}\times[-\frac{L}{2},0]}dy\nonumber\\
&\times\partial_1^2 G^{per}_{\rm hom}(x-y)
\frac{D}{\partial L}\big({\mathcal A}(c_L^{sym})-{\mathcal A}(c_L)\big)(x,y).
\end{align}
It follows from a combination of symmetries (\ref{rr06})\&(\ref{rr09})\&(\ref{rr08})
that $c_L^{sym}$ is invariant under the inversion at $(\frac{L}{4},-\frac{L}{4})$
in the $(x_d,y_d)$ plane, that is,
\begin{align}\label{rr22}
(x,y)\mapsto ((x',\tfrac{L}{2}-x_d),(y',-\tfrac{L}{2}-y_d)).
\end{align}
By stationarity, periodicity, and (\ref{rr33}), $c_L$ has the same symmetry (\ref{rr22}).
By periodicity and radial symmetry, $(x,y)\mapsto \partial_1^2 G^{per}_{\rm hom}(x-y)$
also has symmetry (\ref{rr22}). Since the triangle in the $(x_d,y_d)$ plane
\begin{align}\label{rr28}
\Delta:=\big\{x_d\ge 0,\quad y_d\le 0,\quad x_d-y_d\le\tfrac{L}{2}\big\}
\end{align}
is such that its (disjoint) union with its image under (\ref{rr22}) renders the rectangle
$[0,\frac{L}{2}]\times[-\frac{L}{2},0]$, (\ref{rr18}) may be rewritten as
\begin{align}\label{rr25}
\frac{d}{dL}&e_1\cdot(\langle\bar a\rangle_L^{sym}
-\langle\bar a\rangle_L)e_1\approx 4L^{-d}\int_{[-\frac{L}{2},\frac{L}{2})^{d-1}}dx'
\int_{[-\frac{L}{2},\frac{L}{2})^{d-1}}dy'\int_{\Delta}dx_ddy_d\nonumber\\
&\times\partial_1^2 G^{per}_{\rm hom}(x-y)
\frac{D}{\partial L}\big({\mathcal A}(c_L^{sym})-{\mathcal A}(c_L)\big)(x,y).
\end{align}
By the finite range assumption and for $L\gg 1$, we have
(where for a stationary ensemble we identify $c(x,y)=c(x-y)$)
\begin{align*}
c_L(x,y)=c_L'(x,y)\quad\mbox{provided}\;(x_d,y_d)\in\Delta,
\end{align*} 
so that in (\ref{rr25}), we may replace $c_L$ by $c_L'$. Likewise, by definition
(\ref{rr07}) and (\ref{rr06}) (see also Figure \ref{cLsym}) we have
\begin{align*}
c_L^{sym}(x,y)=c_L'(x,y-2y_de_d)\quad\mbox{provided}\;(x_d,y_d)\in\Delta,
\end{align*}
so that in (\ref{rr25}), we may express $c_L^{sym}$ in terms of $c_L'$.
Since both
ensembles $\langle\cdot\rangle_L'$ and $\langle\cdot\rangle_L^{sym}$ are
stationary in directions $i=1,\cdots,d-1$, (\ref{rr25}) may
be rewritten as
\begin{align}\label{rr26}
\frac{d}{dL}&e_1\cdot(\langle\bar a\rangle_L^{sym}
-\langle\bar a\rangle_L)e_1\approx 4L^{-1}\int_{[-\frac{L}{2},\frac{L}{2}]^{d-1}}dx'\int_{\Delta}dx_ddy_d\,
\partial_1^2 G^{per}_{\rm hom}(x-(0,y_d))\nonumber\\
&\times
\big(\frac{D}{\partial L}{\mathcal A}(c_L')(x,(0,-y_d))
-\frac{D}{\partial L}{\mathcal A}(c_L')(x,(0,y_d))\big).
\end{align}
By the finite range assumption and for $L\gg 1$, we have
\begin{align*}
c_L'(x,(0,y_d))=c(x,(0,y_d))\quad\mbox{provided}\;x'\in[-\tfrac{L}{2},\tfrac{L}{2}]^{d-1},
\end{align*}
so that the material derivative in (\ref{rr26}) reduces to the convective derivative;
on stationary $c$'s, the convective derivative 
$L^{-1}(x\cdot\nabla_x+y\cdot\nabla_y)$ acts as $L^{-1}z\cdot\nabla$ with $z=x-y$, and
thus as the radial derivative $L^{-1}r\partial_r$ with $R=|z|$. Hence (\ref{rr26}) takes the form
\begin{align}\label{rr35}
\frac{d}{dL}&e_1\cdot(\langle\bar a\rangle_L^{sym}
-\langle\bar a\rangle_L)e_1\approx
4L^{-2}\int_{[-\frac{L}{2},\frac{L}{2}]^{d-1}}dx'\int_{\Delta}dx_ddy_d\,
\partial_1^2 G^{per}_{\rm hom}(x',x_d-y_d)\nonumber\\
&\times
\big((r\partial_r){\mathcal A}(c)(x',x_d+y_d)
-(r\partial_r){\mathcal A}(c)(x',x_d-y_d)\big).
\end{align}

\medskip

We now proceed to a second (and last) approximation. Recall our assumption that $c$
is supported on the unit ball, hence the effective domain of integration
in (\ref{rr35}) is $|x'|\le 1$, next to $0\le x_d-y_d\le\frac{L}{2}$, cf.~(\ref{rr28}).
In this range, we may approximate $G_{\rm hom}^{per}(x',x_d-y_d)$ by $G_{\rm hom}(x',x_d-y_d)$.
After this substitution, we may neglect the restriction $x_d-y_d\le\frac{L}{2}$.
Hence (\ref{rr35}) implies \eqref{rr12} where the $L$-independent quantity $I$ is defined via
\begin{align}\label{rr30}
I&:=4\int_{\mathbb{R}^{d-1}}dx'\int_{0}^{\infty}dx_d\int_{-\infty}^{0}dy_d\,
\partial_1^2 G_{\rm hom}(x',x_d-y_d)\nonumber\\
&\times
\big((r\partial_r){\mathcal A}(c)(x',x_d+y_d)
-(r\partial_r){\mathcal A}(c)(x',x_d-y_d)\big).
\end{align}
In Appendix \ref{SS:App1}, we compute this integral:
\begin{align}\label{rr31}
I=\frac{32}{(d+1)(d-1)}\frac{|B_1'|}{|\partial B_1|}
\int_0^\infty dr{\mathcal A}(c),
\end{align}
where $B_1'$ is the unit ball in $\R^{d-1}$. In particular, we have  $I>0$, see Subsection \ref{SS:small} 
for the explanation why $\mathcal A$ is a nonnegative function (different from 0).


\section{Proofs}



\subsection{Proof of Proposition \ref{P:2}: Limit $T\uparrow \infty$}

The strategy of proof is as follows. First, we reorder the terms of the derivative of $\frac{\dd}{\dd L}\langle \xi^*\cdot\bar{a}_T\xi\rangle_L$ in order to make  appear the ``massive''  analogue (that is, involving the massive operator $\frac{1}{T}-\nabla \cdot a\nabla$) of the r.~h.~s. of \eqref{ao41}.  For this first step, we essentially make rigorous the computations done in Section \ref{SS:asympt}. Second,  we systematically make use of the dominated convergence theorem to obtain the convergence of each term to its massless counterpart, yielding the formula \eqref{ao41}.

\medskip
{\sc Step 1. Formula for $\frac{d}{dL}\langle\xi^*\cdot \bar{a}_T\xi\rangle_L$.}
We establish the ``massive analogue'' of \eqref{ao41}, namely
\begin{equation}\label{ao41bis}
\begin{aligned}
\lefteqn{\frac{d}{dL}\langle\xi^*\cdot\bar {a}_T\xi\rangle_L}\\
&=L^{-(d+1)}\int dz\xi^*\cdot
\big\langle\overline{\Gamma}_{T/L^2ijmn}\,\big(-z_m Q^{(1)}_{Tij}(z)
+Q^{(2)}_{Tijm}(z)\big)\big\rangle_L\xi
\partial_nc(z)\\
&+\int dz
\xi^*\cdot\big\langle\epsilon^{(1)}_{TLijn}(z) Q^{(1)}_{Tij}(z)
+\epsilon^{(2)}_{TLijmn}(z) Q^{(2)}_{Tijm}(z)\big\rangle_L\xi
\partial_nc(z)\\
&-\int dz(1-\eta_L)(z)
\big\langle(\nabla\phi^{*(1)}_T+\xi^*)(0)\cdot a'(0){\mathcal E}_T(0,z)
a'(z)(\nabla\phi^{(1)}_T+\xi)(z)\big\rangle_L
\frac{\partial c_L}{\partial L}(z)\\
&-\int dz\eta_L(z)
\big\langle(\nabla\phi^{*(1)}_T+\xi^*)(0)\cdot a'(0)\nabla\nabla G_T(0,z)
a'(z)(\nabla\phi^{(1)}_T+\xi)(z)\big\rangle_L
\frac{\partial c_L}{\partial L}(z)\\
&+\frac{1}{2}\big\langle(\nabla\phi^{*(1)}_T+\xi^*)\cdot a''
(\nabla\phi^{(1)}_T+\xi)\big\rangle_L\frac{\partial c_L}{\partial L}(0),
\end{aligned}
\end{equation}
where $\epsilon^{(1)}_{TLijn}$ \& $\epsilon^{(2)}_{TLijlm}$
are defined as in \eqref{ao39} \& \eqref{ao40} with $\bar{G}$ replaced
by $\bar{G}_T$, where ${\mathcal E}_T$ is defined like in (\ref{ao47}) with
$G$ \& $\bar{G}$ replaced by $G_T$ \& $\bar{G}_T$ (but with non-massive
first and second-order correctors), and where $Q^{(1)}_T$ \& $Q^{(2)}_T$, which are quartic
expressions in the correctors, are defined like in (\ref{ao45bis}) \& (\ref{ao46}) 
with the first-order correctors $\phi^{(1)}$ \& $\phi^{*(1)}$ (those linear in $\xi$ \& $\xi^*$)
replaced by their massive counterparts $\phi_T^{(1)}$ \& $\phi_T^{*(1)}$ 
(but keeping the non-massive first and second order correctors
$\phi_i^{(1)}$, $\phi_j^{*(1)}$, $\phi_{im}^{(2)}$, $\phi_{jm}^{*(2)}$).
Recall that $\overline{\Gamma}_{T/L^2ijmn}$ is defined in \eqref{ao38}.

\medskip

The starting point is (\ref{ao36}); the second r.~h.~s.~term remains untouched and reappears
as the last term in (\ref{ao41bis}). Writing $\int dz$
$=\int dz(1-\eta_L)(z)$ $+\int dz\eta_L(z)$, we split the
first r.~h.~s.~term in (\ref{ao36}) into a far- and near-field part. The near-field
part remains untouched and reappears as the previous to last term in (\ref{ao41bis}).
In the far-field part, we replace $\nabla\nabla G_T(0,z)$ according to the massive
version of (\ref{ao47}) by ${\mathcal E}_T(0,z)$ and terms involving $\bar{G}_T$.
The contribution with ${\mathcal E}_T(0,z)$ reappears as the third r.~h.~s.~term
in (\ref{ao41bis}). By the massive version of the definition (\ref{ao45bis}) \& (\ref{ao46})
specified above, the terms involving $\bar{G}_T$ give rise to
\begin{align*}
-\int dz (1-\eta_L)(z)\xi^*\cdot\big(
-\big\langle \partial_{ij}\bar{G}_T(z) Q^{(1)}_{Tij}(z)\big\rangle_L
+\big\langle\partial_{ijm}\bar{G}_T(z) Q^{(2)}_{Tijm}(z)\big\rangle_L\big)\xi
\frac{\partial c_L}{\partial L}(z).
\end{align*}
We now insert (\ref{ao37}) and perform the resummation at the end of Subsection 
\ref{SS:formula}, which is based on the periodicity of $Q^{(1)}$ and $Q^{(2)}$
under $\langle\cdot\rangle_L$,
and now is legitimate in view of the good decay properties of $\nabla\nabla G_T$ (see \eqref{Eq:MomentGreenAppendix}) :
\begin{align*}
\int dz\sum_{k}k_n(1-\eta_L)(z)\xi^*\cdot\big(
-\big\langle\partial_{ij }\bar{G}_T(z+Lk) Q^{(1)}_{Tij }(z)\big\rangle_L
+\big\langle\partial_{ijm}\bar{G}_T(z+Lk) Q^{(2)}_{Tijm}(z)\big\rangle_L\big)\xi
\partial_nc(z).
\end{align*}
Using that by parity, $\sum_{k}k_n\partial_{ij}\bar{G}_T(Lk)$ $=0$, we now
appeal to the massive version of the definitions (\ref{ao39}) \& (\ref{ao40}).
This gives rise to the second r.~h.~s.~of (\ref{ao41bis}) and the leading order term
\begin{align*}
\int dz\xi^*\cdot\Big\langle\sum_{k}k_n\partial_{ijm}\bar{G}_T(Lk)
\,\big(-z_m Q^{(1)}_{Tij}(z)+Q^{(2)}_{Tijm}(z)\big)\Big\rangle_L\xi
\partial_n c(z).
\end{align*}
It remains to insert the definition (\ref{ao38}) and appeal to the scaling
$\bar{G}_T(Lx)=L^{2-d}\bar{G}_{T/L^2}(x)$.

\ignore{
The proof of \eqref{ao41bis} follows from straightforward computations on \eqref{ao36}, which are all legitimate on account of the exponential decay of the massive Green's function, see \eqref{DecayMassiveGreen}. First of all, recall Formula \eqref{ao36} established in Proposition \ref{P:mass},
which directly provides the fifth term of \eqref{ao41bis}.
Thus, we only have to study the r.~h.~s. integral in \eqref{ao36}. As explained in the section \ref{SS:resum}, we split the integral into the near-field contribution $\int d z\eta_L(z)$ (which gives the fourth r.~h.~s term of \eqref{ao41bis}) and the far-field contribution 
$$\Pi_{L,T}:=\int dz (1-\eta_L)(z)\langle(\nabla\phi^{*(1)}_T+\xi^*)(0) \cdot a'(0)\nabla\nabla G_{T}(0,z)a'(z) (\nabla\phi^{(1)}_T+\xi)(z)\rangle_L\frac{\partial c_L}{\partial L}(z).$$  
On the far-field part, we appeal to the two-scale expansion of $\nabla\nabla G_T$ and, by the definition of the homogenization error $\mathcal{E}_T$ (defined as the massive analogue of \eqref{ao47}), the tensors $Q^{(1)}_T$ and $Q^{(2)}_T$ naturally appear in 
\begin{align}\label{2sc_G}
  \begin{aligned}
  &(\nabla\phi^{*(1)}_T+\xi^*)(0) \cdot a'(0)\nabla\nabla G_{T}(0,z)a'(z) (\nabla\phi^{(1)}_T+\xi)(z)\\
  &=\partial_{ij}\bar{G}_T(z)\xi^*\cdot Q^{(1)}_{Tij}(z)\xi +\partial_{ijm}\bar{G}_T(z)\xi^*\cdot Q^{(2)}_{Tijm}(z)\xi\\
  &+ (\nabla\phi^{*(1)}_T+\xi^*)(0) \cdot a'(0)\mathcal{E}_T(0,z) a'(z) (\nabla\phi^{(1)}_T+\xi)(z),
  \end{aligned}
\end{align}
which we may insert in $\Pi_{L,T}$ to the effect of
\begin{equation}
\begin{aligned}
  \Pi_{L,T}
  =&-\int  \dd z (1-\eta_L(z))\big\langle \partial_{ij}\bar{G}_T(z)\xi^*\cdot Q^{(1)}_{Tij}(z)\xi\big\rangle_L\frac{\partial c_L}{\partial L}(z) 
  &&\quad&\big(=:\Pi_{L,T}^{(1)}\big)
  \\
  &+\int \dd z(1-\eta_L(z))\big\langle\partial_{ijm}\bar{G}_T(z)\xi^*\cdot Q^{(2)}_{Tijm}(z)\xi\big\rangle_L\frac{\partial c_L}{\partial L}(z)  
  & &\quad&\big(=:\Pi_{L,T}^{(2)}\big)\\
  &+\int (1-\eta_L(z))(\nabla\phi^{*(1)}_T+\xi^*)(0)\cdot\mathcal{E}_T(0,z)(\nabla\phi^{(1)}_T+\xi)(0)\rangl_L\frac{\partial c_L}{\partial L}(z)  \dd z,
\end{aligned}
\label{PiLT:2}
\end{equation}
where we identify the third above r.~h.~s. term as the third r.~h.~s.\ term of \eqref{ao41bis}.
Then, we shall treat separately $\Pi^{(1)}_{L,T}$ and $\Pi^{(2)}_{L,T}$.
\newline
\newline
On the one hand, using \eqref{ao37}, we may expand $\frac{\partial c_L}{\partial L}$ in such a way that the sum on $k \neq 0$ may be transfered to the Green function.
Namely, we obtain from the change of variables $z \rightsquigarrow z+kL$ and the fact that $Q^{(1)}_T$ is $L$-periodic: 
\begin{equation}\label{PiLT:3}
\begin{aligned}
  \Pi_{L,T}^{(1)} 
  \stackrel{\eqref{ao37}}{=}& \sum_{k \neq 0} \int\dd z (1-\eta_L(z))\big\langle  \partial_{ij}\bar{G}(z)\xi^*\cdot Q^{(1)}_{Tij}(z)\xi\big\rangle_Lk_n\partial_n c(z - k L) 
  \\
  =& \int\dd z\Big\langle\xi^*\cdot Q^{(1)}_{Tij}(z)\xi\Big(\sum_{k \neq 0} ((1-\eta_L) \partial_{ij}\bar{G}_T)(z+kL)k_n\Big)\Big\rangle_L \partial_n c(z).
\end{aligned}
\end{equation}
Since $\partial_{ij} \bar{G}_T$ is an even function,  the following identity holds:
\begin{equation*}\begin{aligned}
  \sum_{k \neq 0} \partial_{ij} \bar{G}_T(kL)k_n=0.
\end{aligned}\end{equation*}
Therefore, we deduce from \eqref{PiLT:3} that
\begin{align}\label{PiLT:4}
  \Pi_{L,T}^{(1)}
  =&  \int\dd z\Big\langle\xi^*\cdot Q^{(1)}_{Tij}(z)\xi\Big(\sum_{k \neq 0} (((1-\eta_L) \partial_{ij}\bar{G}_T)(z+kL)-\partial_{ij}\bar{G}_T(kL))k_n\Big)\Big\rangle_L \partial_n c(z).
\end{align}
Next, we reorganize the above r.~h.~s.\ term into brackets by means of a Taylor-like expansion
\begin{equation*}\begin{aligned}
  &\sum_{k \neq 0} (((1-\eta_L) \partial_{ij}\bar{G}_T)(z+kL)-\partial_{ij}\bar{G}_T(kL))k_n= z_m\sum_{k\neq 0} \partial_{ijm}\bar{G}_T(kL) k_n + \epsilon^{(1)}_{TLijmn}(z),
\end{aligned}\end{equation*}
where we recall that $\epsilon^{(1)}_{TLijmn}$ is a Taylor expansion error defined as the massive analogue of \eqref{ao39}, so that \eqref{PiLT:4} reads
\begin{equation}
  \Pi_{L,T}^{(1)}
  = \int\dd z\Big\langle\xi^*\cdot Q^{(1)}_{Tij}(z)\xi\Big(z_m\sum_{k\neq 0} \partial_{ijm}\bar{G}_T(kL) k_n + \epsilon^{(1)}_{TLijmn}(z)\Big)\Big\rangle_L \partial_n c(z).
  \label{PiLT:5}
\end{equation} 

  On the other hand, by \eqref{ao37} and by a change of variables, we similarly rewrite $\Pi^{(2)}_{L,T}$ as
  \begin{equation}\label{PiLT:6}
    \begin{aligned}
    \Pi^{(2)}_{L,T}
    &=-\int\dd z\Big\langle\xi\cdot Q^{(2)}_{Tijm}(z)\xi \sum_{k \neq 0}((1-\eta_L)\partial_{ijm}\bar{G}_T)(z+kL) k_n\Big\rangle_L  \partial_n c(z)  
    \\
    &=-\int \dd z\Big\langl \xi^*\cdot Q^{(2)}_{Tijm}(z)\xi\Big(\sum_{k\neq 0}\partial_{ijm}\bar{G}_T(kL)k_n+\epsilon^{(2)}_{TLijmn}\Big)\Big\rangl _L \partial_n c(z),
    \end{aligned}
  \end{equation}
where we recall that $\epsilon^{(2)}_{TLijmn}$ is a Taylor expansion error defined as the massive analogue of \eqref{ao40}. We finally remark that, in \eqref{PiLT:5} and  \eqref{PiLT:6}, we may use a scaling argument and replace
  \begin{align}
    \label{PiLT:7}
    \partial_{ijm} \bar{G}_{T}(kL)=L^{-d-1}\partial_{ijm} \bar{G}_{T/L^2}(k).
  \end{align}

To conclude, the combination of \eqref{ao36},  \eqref{PiLT:2}, \eqref{PiLT:5}, \eqref{PiLT:6} and \eqref{PiLT:7} yields the formula for the far-field contribution and therefore \eqref{ao41bis}.
}

\medskip

{\sc Step 2. Limit $T\uparrow \infty$. }
We now show that each term in \eqref{ao41bis} passes to the limit as $T\uparrow \infty$ 
and converges to its massless counterpart. To do so, we need to establish that this limit 
makes sense for each of the five r.~h.~s.\ terms of \eqref{ao41bis}; in this task, 
the dominated convergence theorem is our main tool. Note that \eqref{ao49} and \eqref{ao50} hold uniformly in $T$, 
at the level of the massive quantities. Therefore, combined with the bounds and convergences of the massive quantities \eqref{BoundCorDependL}, \eqref{ConvergenceMassiveQuantities}, \eqref{ConvergenceMassiveQuantitiesBis}, \eqref{Eq:MomentGreenAppendix} and Proposition \ref{Prop4Massive:Statement}, 
the second, third, fourth and fifth r.~h.~s.~terms of \eqref{ao41bis} converge to their 
massless counterparts as $T\uparrow \infty$.  Consequently, the subtle part is in the first 
r.~h.~s.~term of \eqref{ao41bis}, that we treat in details.
  
\smallskip
  
In the sequel, $L \geq 1$ is fixed.  We  prove that
  \begin{align}\label{Step:1.1}
  \begin{aligned}
    &\lim_{T \uparrow \infty} \int dz
\big\langle\overline{\Gamma}_{T/L^2 ijmn}\big(\xi^*\cdot Q^{(2)}_{Tijm}(z)\xi
-z_m \xi^*\cdot Q^{(1)}_{Tij}(z)\xi\big)\big\rangle_L
\partial_nc(z)\\
    &= \int dz
\big\langle\overline{\Gamma}_{ijmn}\big(\xi^*\cdot Q^{(2)}_{ijm}(z)\xi
-z_m \xi^*\cdot Q^{(1)}_{ij}(z)\xi\big)\big\rangle_L
\partial_nc(z).
  \end{aligned}    
  \end{align}
We claim that the only additional ingredient is the well-posedeness of $\overline{\Gamma}_{ijmn}:=\lim_{T\uparrow \infty}\overline{\Gamma}_{Tijmn}$ along with the bound 
  \begin{align}\label{Lim_GammaLT}
    \lt|\overline{\Gamma}_{ijmn}\rt| \leq \sup_{T \geq 1} |\overline{\Gamma}_{Tijmn}| \lesssim 1 \text{ $\langle\cdot\rangle_L$-almost-surely},
  \end{align}
 where we recall \eqref{ao38}
$$\overline{\Gamma}_{Tijmn}:=\sum_{k\in\mathbb{Z}^d} k_n\partial_{ijm}\bar{G}_T(k).$$
Indeed, thanks to the assumption \eqref{c:6_bis} on $c$,  the bounds on the correctors \eqref{BoundCorDependL}, and \eqref{Lim_GammaLT}, the integrand of the l.~h.~s integral in \eqref{Step:1.1} is bounded (uniformly in $T$) by $(1+\vert z\vert)^{-d-2\alpha}$. We then conclude using the convergences \eqref{ConvergenceMassiveQuantities} together with the Lebesgue convergence theorem.

\smallskip

Here comes the argument for \eqref{Lim_GammaLT}. 
We fix a smooth compactly supported $\eta$ with $\eta=1$ on the unit cube
$(-\frac{1}{2},\frac{1}{2})^d$ that we use it to split the lattice, which we interpret
as a Riemann sum:
\begin{align}\label{bb01}
\lefteqn{\sum_{k\not=0}k_n\partial_{ijm}\bar{G}_T(k)}\nonumber\\
&=\int_{\mathbb{R}^d\backslash(-\frac{1}{2},\frac{1}{2})^d}dx\eta(x)x_n\partial_{ijm}\bar{G}_T(x)
+\int dx (1-\eta)(x)x_n\partial_{ijm}\bar{G}_T(x)\nonumber\\
&+\sum_{k\not=0}\big(k_n\partial_{ijm}\bar{G}_T(k)
-\int_{k+(-\frac{1}{2},\frac{1}{2})^d}dx x_n\partial_{ijm}\bar{G}_T(x)\big).
\end{align}
The first r.~h.~s.~integral effectively extends over a compact subset of $\mathbb{R}^d\backslash\{0\}$ 
and thus obviously converges for $T\uparrow\infty$, thanks to \eqref{ConvergenceMassiveQuantities}. On the second r.~h.~s.~integral in (\ref{bb01})
we perform two integrations by parts:
\begin{align*}
\int dx (1-\eta)(x)x_n\partial_{ijm}\bar{G}_T(x)=\int dx\big(-\partial_j\eta(x)\delta_{mn}\partial_i\bar{G}_T(x)
+\partial_m\eta(x)x_n\partial_{ij}\bar{G}_T(x)\big).
\end{align*}
Again, the r.~h.~s.~integral effectively extends over a compact subset of $\mathbb{R}^d\backslash\{0\}$
and converges for $T\uparrow\infty$, thanks to \eqref{ConvergenceMassiveQuantities}. We finally turn to the last contribution 
in (\ref{bb01}) where each summand has a limit $T\uparrow\infty$. This extends to
the sum because of dominated convergence: Each summand is dominated, in absolute value, 
by the Lipschitz
norm of $x\mapsto x_n\partial_{ijm}\bar{G}_T(x)$ on the translated cube
$k+(-\frac{1}{2},\frac{1}{2})^d$, which by the uniform-in-$T$ decay of the derivatives of $\bar{G}_T$ (see \eqref{Eq:BoundHomogGT}),  gives an expression that is summable in $k\in\mathbb{Z}^d\backslash\{0\}$.
%


\subsection{Proof of Lemma \ref{L:5}: Fluctuation estimates}

As announced above, we show only \eqref{ao83} by closely following \cite{JosienOtto_2019}.
The only difference is that we appeal not only to the annealed Calder\'on-Zygmund estimates as  in \cite{JosienOtto_2019}, but also to the annealed weighted estimates contained in Lemma \ref{L:4}.
\\
%
%
%
\newline
For a deterministic and periodic vector field $h$, we consider the random variable of zero average
\begin{align*}
F:=\int_{[0,L)^d} h\cdot\nabla\phi_{ij}^{(2)}.
\end{align*}
We employ on it the spectral gap inequality (\textit{cf.} \cite[Lem.\ 3.1]{JosienOtto_2019}), which, combined with the Bochner estimate
(assuming that $p \geq 2$), reads
\begin{equation}\label{SG}
\langle |F|^{p}\rangle_L^{\frac{1}{p}} 
\lesssim
\Big( \int_{[0,L)^d} \big\langle \big| \frac{\partial F}{\partial g}\big|^p\big\rangle_L^{\frac{2}{p}} \Big)^{\frac{1}{2}},
\end{equation}
where $\frac{\partial F}{\partial g}=\frac{\partial F(g)}{\partial g(x)}$ is the Fr\'echet (or functional or vertical or Malliavin) derivative on $\LL^2([0,L)^d)$ of $F$ w.~r.~t.\ $g$ defined by, for all periodic perturbation $\delta g\in \LL^2([0,L)^d)$
$$\lim_{\varepsilon\downarrow 0}\frac{F(g+\varepsilon\delta g)-F(g)}{\varepsilon}:=\int_{[0,L)^d} dx\,\delta g(x)\frac{\partial F(g)}{\partial g(x)}.$$
(Since $\LL^2([0,L)^d)$ is a Hilbert space, this Fréchet derivative is actually a gradient.)
We split the proof into three steps. First, we establish that the Fr\'echet derivative of $F$ is given by
\begin{align}\label{ao80}
\frac{\partial F}{\partial g}
=\nabla v\cdot a'(\nabla\phi^{(2)}_{ij}+\phi^{(1)}_ie_j)
-(\nabla w_j+ve_j)\cdot a'(\nabla\phi_i^{(1)}+e_i),
\end{align}
where $v$ and $w_j$ are defined through (\ref{ao78}) and (\ref{ao79}) below.
Next, we show that the annealed estimates of Lemma~\ref{L:4} imply 
\begin{align}
\label{ao81-1}
&\big(\int_{[0,L)^d}\langle|\nabla v|^{2p}\rangle_L^\frac{1}{p}\big)^\frac{1}{2}
\lesssim\big(\int_{[0,L)^d}|h|^2\big)^\frac{1}{2},
\\
\label{ao81}
&\big(\int_{[0,L)^d}\langle|\nabla w_j+ve_j|^{2p}\rangle_L^\frac{1}{p}\big)^\frac{1}{2}
\lesssim\big(\int_{[0,L)^d}|x|^2_L|h|^2\big)^\frac{1}{2},
\end{align}
where we recall that $\vert x\vert_L=\inf_{k\in\mathbb{Z}^d}\vert x+kL\vert$. Last, we insert \eqref{ao80} into \eqref{SG} and we appeal to the Cauchy-Schwarz inequality (we also employ \eqref{Reg_A}), to the effect of
\begin{align*}
	\langle |F|^{p}\rangle^{\frac{1}{p}}
	&\lesssim
	\Big( \int_{[0,L)^d} 
	\big(\langle |\nabla v ^{2p}|\rangle^{\frac{1}{p}}
	\langle |\nabla\phi^{(2)}_{ij}+\phi^{(1)}_ie_j|^{2p}\rangle^{\frac{1}{p}}
	\\
	&\qquad \qquad + 
	\langle |\nabla w_j+ve_j|^{2p}\rangle^{\frac{1}{p}}
	\langle |\nabla\phi_i^{(1)}+e_i|^{2p}\rangle^{\frac{1}{p}}\big)
	\Big)^{\frac{1}{2}}.
\end{align*}
Invoking \eqref{ao43} and recalling \eqref{ao81-1} and \eqref{ao81} finally yields the desired estimate \eqref{ao83}.

\medskip

{\sc Step 1. Argument for \eqref{ao80}.}
We give ourselves infinitesimal (periodic) perturbation $\delta g \in \LL^2([0,L)^d)$ of $g$. 
In view of (\ref{pde:9.1_quad}) and (\ref{ao08}), it generates a perturbation $\delta\phi_i^{(1)}$ 
characterized by
\begin{align}\label{ao76}
\nabla\cdot\big(a\nabla\delta\phi_i^{(1)}+\delta g a'(\nabla\phi_i^{(1)}+e_i)\big)=0
\quad\mbox{and}\quad\fint_{[0,L)^d}\delta\phi_i^{(1)}=0.
\end{align}
In view of (\ref{ao06}), this in turn generates the perturbation $\nabla\delta\phi_{ij}^{(2)}$ 
characterized by
\begin{align}\label{ao75}
\begin{aligned}
-\nabla\cdot\big(a(\nabla\delta\phi_{ij}^{(2)}+\delta\phi_i^{(1)}e_j)
+\delta g a'(\nabla\phi^{(2)}_{ij}+\phi^{(1)}_ie_j)\big)
=Pe_j\cdot\big(a\nabla\delta\phi_i^{(1)}+\delta g a'(\nabla\phi_i^{(1)}+e_i)\big),
\end{aligned}
\end{align}
where $P$ denotes the ($L^2$-orthogonal) projection onto functions of vanishing
spatial mean, i.~e.~$Ph=h-\fint_{[0,L)^d}h$. The form of (\ref{ao75}) motivates
the introduction of the periodic function $v$ defined through
\begin{align}\label{ao78}
\nabla\cdot(a^*\nabla v+h)=0\quad\mbox{and}\quad\fint_{[0,L)^d}v=0,
\end{align}
so that, by testing  \eqref{ao78} with $\delta\phi^{(2)}_{ij}$  and \eqref{ao75} with $v$ , we obtain the representation for $\delta F:=\int_{[0,L)^d}h\cdot\nabla\delta\phi^{(2)}_{ij}$:
\begin{align}\label{ao77}
\begin{aligned}
\delta F=\int_{[0,L)^d}\Big(\nabla v\cdot\big(a \delta\phi_i^{(1)}e_j
+\delta g a'(\nabla\phi^{(2)}_{ij}+\phi^{(1)}_ie_j)\big) -ve_j\cdot\big(a\nabla\delta\phi_i^{(1)}+\delta g a'(\nabla\phi_i^{(1)}+e_i)\big)\Big).
\end{aligned}
\end{align}
This in turn prompts the introduction of a second auxiliary periodic function $w_j$ of zero mean
\begin{align}\label{ao79}
-\nabla\cdot a^*(\nabla w_j+ve_j)=Pe_j\cdot a^*\nabla v,
\end{align}
so that by testing (\ref{ao76}) with $w_j$ and \eqref{ao79} with $\delta\phi_i^{(1)}$,  we may eliminate $\delta\phi_i^{(1)}$ in (\ref{ao77}) and recover \eqref{ao80} in form of
\begin{align*}
\delta F
=\int_{[0,L)^d}\delta g\Big(
\nabla v\cdot a'(\nabla\phi^{(2)}_{ij}+\phi^{(1)}_ie_j)
-(ve_j + \nabla w_j)\cdot a'(\nabla\phi_i^{(1)}+e_i) \Big).
\end{align*}
%

%
\medskip

{\sc Step 2. Argument for \eqref{ao81-1} and \eqref{ao81}.}
Notice first that \eqref{ao81-1} is a direct consequence of Lemma \ref{L:4} with weight $w=1$ applied to $v$ satisfying \eqref{ao78}. Therefore, it remains to establish \eqref{ao81}.
In this perspective, we introduce the (gradient) field $h_j$ such that the r.~h.~s.\ of \eqref{ao79} reads
$Pe_j\cdot a^*\nabla v$ $=\nabla\cdot h_j$.
As a consequence of annealed unweighted estimates on $\nabla(-\nabla\cdot a^*\nabla)^{-1}\nabla\cdot$ (namely, Lemma \ref{L:4} with weight $w=1$), we get
\begin{align}\label{Num::5}
\big(\int_{[0,L)^d}\langle|\nabla w_j|^{2p}\rangle_L^\frac{1}{p}\big)^\frac{1}{2}
\lesssim
\big(\int_{[0,L)^d}\langle|h_j|^{2p} + |v|^{2p}\rangle_L^\frac{1}{p}\big)^\frac{1}{2}.
\end{align}
We now claim the following annealed Hardy inequality:
\begin{equation}\label{Num::3}
\big(\int_{[0,L)^d}\langle|v|^{2p}\rangle_L^\frac{1}{p}\big)^\frac{1}{2}
\lesssim\big(\int_{[0,L)^d}|x|^2_L\langle|\nabla v|^{2p}\rangle_L^\frac{1}{p}\big)^\frac{1}{2}.
\end{equation}
As a consequence of annealed weighted estimates on $\nabla(-\nabla\cdot a^*\nabla)^{-1}\nabla\cdot$ (namely, Lemma \ref{L:4} with weight $w=\vert \cdot\vert^2_L$) applied to \eqref{ao78}, we have
\begin{equation}
\label{Num::2}
\big(\int_{[0,L)^d}|x|^2_L\langle|\nabla v|^{2p}\rangle_L^\frac{1}{p}\big)^\frac{1}{2}
\lesssim\big(\int_{[0,L)^d}|x|^2_L|h|^{2}\big)^\frac{1}{2},
\end{equation}
and therefore, by \eqref{Num::3}, there holds
\begin{equation}\label{Num::6}
\big(\int_{[0,L)^d}\langle|v|^{2p}\rangle_L^\frac{1}{p}\big)^\frac{1}{2}
\lesssim\big(\int_{[0,L)^d}|x|^2_L|h|^{2}\big)^\frac{1}{2}.
\end{equation}
Moreover, by the annealed weighted estimates on $\nabla^2(-\triangle)^{-1}$  \cite[Theorem 7.1]{EmielLorist2020}, we obtain 
%
\begin{align*}
\big(\int_{[0,L)^d}|x|^2_L\langle|\nabla h_j|^{2p}\rangle_L^\frac{1}{p}\big)^\frac{1}{2}
\lesssim\big(\int_{[0,L)^d}|x|^2_L\langle|\nabla v|^{2p}\rangle_L^\frac{1}{p}\big)^\frac{1}{2}.
\end{align*}
Combining it with the Hardy inequality \eqref{Num::3} for $h_j$ and with \eqref{Num::2} yields
\begin{align*}
\big(\int_{[0,L)^d}\langle|h_j|^{2p}\rangle_L^\frac{1}{p}\big)^\frac{1}{2}
\lesssim\big(\int_{[0,L)^d}|x|^2_L|h|^{2}\big)^\frac{1}{2}
\end{align*}
Inserting this and \eqref{Num::6} into \eqref{Num::5}, and employing once more \eqref{Num::6} in the triangle inequality gives \eqref{ao81}.

\medskip

{\sc Step 3. Argument for \eqref{Num::3}.}
W.~l.~o.~g.\ we may assume that $L=1$, and we consider random periodic functions of vanishing average $v$.
The annealed Hardy inequality \eqref{Num::3} relies on three ingredients.
First, if $\langle|u|^{2p}\rangle^{\frac{1}{p}}$ is compactly supported, we have
\begin{align}\label{Num::15}
	\int \langle|u|^{2p}\rangle^{\frac{1}{p}} \lesssim \int  |x|^2 \langle|\nabla u|^{2p}\rangle^{\frac{1}{p}}.
\end{align}
Next, for $\Omega := [-\frac{1}{2},\frac{1}{2})^d \backslash [-\frac{1}{4},\frac{1}{4})^d$, the following annealed Poincaré estimate holds:
\begin{align}\label{Num::16}
	\Big(\int_{\Omega} \langle|v|^{2p}\rangle^{\frac{1}{p}}\Big)^{\frac{1}{2}} 
	\lesssim 
	\Big(\int_{\Omega} \langle|\nabla v|^{2p}\rangle^{\frac{1}{p}}\Big)^{\frac{1}{2}} 
	+
	\int_{\Omega} \langle|v|^{2p}\rangle^{\frac{1}{2p}}.
\end{align}
Last, we make use of an annealed Poincaré-Wirtinger estimate
\begin{align}
	\label{Num::17}
	\int_{[-\small\frac{1}{2},\small\frac{1}{2})^d} \langle|v|^{2p}\rangle^{\frac{1}{2p}}
	\lesssim
	\int_{[-\small\frac{1}{2},\small\frac{1}{2})^d} \langle|\nabla v|^{2p}\rangle^{\frac{1}{2p}}.
\end{align}
Using \eqref{Num::15} for $u:=\eta v$ where $\eta$ is a cut-off function of $[-\small\frac{1}{2},\small\frac{1}{2})^d$ into $[-\small\frac{3}{4},\small\frac{3}{4})^d$, we have by periodicity of $v$ 
\begin{align*}
	\int_{[-\small\frac{1}{2},\small\frac{1}{2})^d}\langle|v|^{2p}\rangle^{\frac{1}{p}}
	\lesssim \int _{[-\small\frac{1}{2},\small\frac{1}{2})^d} |x|^2_1 \langle|\nabla v|^{2p}\rangle^{\frac{1}{p}}
	+
	\int_{\Omega} \langle|v|^{2p}\rangle^{\frac{1}{p}},
\end{align*}
Inserting \eqref{Num::16} and then \eqref{Num::17} into the above estimate yields
\begin{equation}\label{Num::18}
\begin{aligned}
\int_{[-\small\frac{1}{2},\small\frac{1}{2})^d}\langle|v|^{2p}\rangle^{\frac{1}{p}}
&\lesssim 
\int_{[-\small\frac{1}{2},\small\frac{1}{2})^d} |x|^2_1 \langle|\nabla v|^{2p}\rangle^{\frac{1}{p}}
+
\Big(\int_{[-\small\frac{1}{2},\small\frac{1}{2})^d} \langle|\nabla v|^{2p}\rangle^{\frac{1}{2p}}\Big)^{2}.
\end{aligned}
\end{equation}
Since $d>2$, we may employ the Cauchy-Schwarz inequality to get
\begin{align*}
	\Big(\int_{[-\small\frac{1}{2},\small\frac{1}{2})^d} \langle|\nabla v|^{2p}\rangle^{\frac{1}{2p}}\Big)^{2}
	&\leq
	\int_{[-\small\frac{1}{2},\small\frac{1}{2})^d} |x|^2_1 \langle|\nabla v|^{2p}\rangle^{\frac{1}{p}}
	\int_{[-\small\frac{1}{2},\small\frac{1}{2})^d} |x|^{-2}_1
	\\
	&\lesssim
	\int_{[-\small\frac{1}{2},\small\frac{1}{2})^d} |x|^2_1 \langle|\nabla v|^{2p}\rangle^{\frac{1}{p}}.
\end{align*}
Inserting this into \eqref{Num::18} yields the desired \eqref{Num::3} (noting that by periodicity, we can replace $[-\tfrac{1}{2},\tfrac{1}{2})$ by $[0,1)^d$).

We now establish successively \eqref{Num::15}, \eqref{Num::16}, and \eqref{Num::17}.
First, \eqref{Num::15} comes by applying the following Hardy inequality for compactly supported functions $\phi$ (see \cite[Theorem 1.2.8]{Book_Hardy} with $p=2$ and $V=|x|^2$):
\begin{align*}
\int_{\R^d} |\phi|^2 \lesssim \int_{\R^d} |x|^2 |\nabla \phi|^2,
\end{align*}
to $\phi \rightsquigarrow \langle|u|^{2p}\rangle^{\frac{1}{2p}}$, and noticing that by the Hölder inequality with exponents $(\frac{2p}{2p-1},2p)$
\begin{align*}
	\vert\nabla \langle|u|^{2p}\rangle^{\frac{1}{2p}}\vert
	= \vert\langle|u|^{2p}\rangle^{\frac{1}{2p}-1}  \langle|u|^{2p-1} \nabla |u|\rangle\vert
	\leq \langle|\nabla u|^{2p}\rangle^{\frac{1}{2p}}.
\end{align*}
Similarly, we get \eqref{Num::16} from the usual Poincaré inequality applied to the function $\langle|v|^{2p}\rangle^{\frac{1}{2p}}$.
Last, we get \eqref{Num::17} by recalling that $v$ is periodic of vanishing average in $[-\small\frac{1}{2},\small\frac{1}{2})^d$, so that
\begin{align*}
&\int_{[-\small\frac{1}{2},\small\frac{1}{2})^d} \langle|v|^{2p}\rangle^{\frac{1}{2p}}
=
\int_{[-\small\frac{1}{2},\small\frac{1}{2})^d} \big\langle\big|v -\int_{[-\small\frac{1}{2},\small\frac{1}{2})^d} v \big|^{2p}\big\rangle^{\frac{1}{2p}}
\\
&\qquad \leq
\int_{[-\small\frac{1}{2},\small\frac{1}{2})^d} dx \int_{[-\small\frac{1}{2},\small\frac{1}{2})^d} dy \langle|v(x) - v(x+y) |^{2p}\rangle^{\frac{1}{2p}}
\\
&\qquad
\leq
\int_{[-\small\frac{1}{2},\small\frac{1}{2})^d} dx \int_{[-\small\frac{1}{2},\small\frac{1}{2})^d} dy \big\langle\big(|y|\int_0^1 d s |\nabla v(x+sy)|\big)^{2p}\big\rangle^{\frac{1}{2p}}
\\
&\qquad \lesssim \int_{[-\small\frac{1}{2},\small\frac{1}{2})^d}dx \int_{[-\small\frac{1}{2},\small\frac{1}{2})^d} dz \langle| \nabla v(z)|^{2p}\rangle^{\frac{1}{2p}} 
=\int_{[-\small\frac{1}{2},\small\frac{1}{2})^d} \langle| \nabla v|^{2p}\rangle^{\frac{1}{2p}}.
\end{align*}

\subsection{Proof of Corollary \ref{Cor:1}: Limit $L\uparrow\infty$}

In view of (\ref{pde:9.1_quad}) and (\ref{ao08}), 
we may consider the field $\phi^{(1)}$ as a function of $g$, provided the latter is periodic.
The same applies to $\phi^{(2)}$, cf.~(\ref{ao06}), provided we make it
unique through
\begin{equation}\label{AnchorQualitativeNew}
\phi^{(2)}_{ij}(0)=0.
\end{equation}
We will monitor the joint distribution of the triplet of fields
$(g,\phi^{(1)}_i(g),\phi^{(2)}_{ij}(g))$ under $\langle\cdot\rangle_L$. 
This amounts to the push-forward $\langle\cdot\rangle_{L,ext}$ of $\langle\cdot\rangle_L$ 
under the map $g\mapsto(g,\phi^{(1)}_i(g),\phi^{(2)}_{ij}(g))$,
which we denote by $(\text{Id},\phi^{(1)}_i,\phi^{(2)}_{ij})$:
\begin{equation}\label{QualitativeNew:Eq8}
\langle\cdot\rangle_{L,ext}:=(\text{Id},\phi^{(1)}_i,\phi^{(2)}_{ij})\#\langle\cdot\rangle_L;
\end{equation}
the index ``ext'' hints to the fact that $\langle\cdot\rangle_{L,ext}$ is an extension of
$\langle\cdot\rangle_L$ in the sense that the latter is the marginal of the former
w.~r.~t.~the first component. 
As will become apparent in Step 1, $\langle\cdot\rangle_{L,ext}$
is a probability measure on the product space 
$\mathcal{C}^{0,\alpha,\beta}\times\mathcal{C}^{1,\alpha,\beta}\times 
\mathcal{C}^{1,\alpha,\beta}$, provided $\alpha\in(0,1)$ and $\beta\in(\frac{1}{2},\infty)$,
where $\mathcal{C}^{n,\alpha,\beta}$ denotes the space of functions that are locally
in $\mathcal{C}^{n,\alpha}$ but are allowed to grow at rate $\beta$:
\begin{align*}
\mathcal{C}^{n,\alpha,\beta}:=\{\,\phi\colon\mathbb{R}^d\rightarrow\mathbb{R}
\,\vert\,\|\phi\|_{n,\alpha,\beta}:=\sup_{x\in\mathbb{R}^d}(1+\vert x\vert)^{-\beta}
\|\phi\|_{C^{n,\alpha}(B_1(x))}<\infty\}.
\end{align*}
%
%
%
On the one hand, this norm is weak enough so that
Proposition \ref{P:3} implies that the family
$\{\langle\cdot\rangle_{L,ext}\}_{L\uparrow\infty}$ is tight,
see Step 1.
On the other hand, it is (obviously) strong enough so that $q$, $Q^{(1)}_{ij}(z)$ 
and $Q^{(2)}_{ijm}(z)$ are continuous functions of $g$, cf.~\eqref{Defq:Intro}, (\ref{ao45bis}) and (\ref{ao46}), which will imply part $iii)$ of the corollary. 
In Step 2, we show that any (weak) limit\footnote{We use here the notion of tight 
convergence of measures, namely for all $p<\infty$ and for all continuous function 
$F$ on $\mathcal{C}^{0}_{\alpha,\beta}\times \mathcal{C}^1_{\alpha,\beta}\times 
\mathcal{C}^{1}_{\alpha,\beta}$ such that 
$\vert F(g,\phi,\psi)\vert\lesssim 1+\|g\|^p_{0,\alpha,\beta}+\|\phi,\psi\|^p_{1,\alpha,\beta}$ there holds $\langle F\rangle_{L,ext}\underset{L\uparrow +\infty}{\rightarrow}\langle F\rangle_{ext}$.} 
$\langle\cdot\rangle_{ext}$ is stationary, satisfies the moment
bounds of Corollary \ref{Cor:1}, and is supported on fields that satisfy the relations
of Corollary \ref{Cor:1}.  
In Step 3, we identify the first marginal of $\langle\cdot\rangle_{ext}$ with our
whole-space ensemble $\langle\cdot\rangle$. In Step 4, we construct $\phi^{(1)}_i$ and
$\phi^{(2)}_{ij}$, now only defined almost surely, 
satisfying the requirements of Corollary \ref{Cor:1}.
Finally, in Step 5, we argue that $\langle\cdot\rangle_{ext}$ is the push-forward
of the whole-space ensemble $\langle\cdot\rangle$ under the
map $(\text{Id},\phi^{(1)}_i,\phi^{(2)}_{ij})$.
In the following, we drop the indices $i$ and $j$.

\medskip

{\sc Step 1. Compactness result.}
We show that Proposition \ref{P:3} implies for $\alpha\in (0,1)$, $\beta\in(\frac{1}{2},\infty)$, and $p<\infty$
\begin{equation}\label{QualitativeLNew:Eq1}
\sup_{L\geq 1}\langle (\|\text{Id}\|_{0,\alpha,\beta}+\|(\phi^{(1)},\phi^{(2)})\|_{1,\alpha,\beta})^p\rangle_L<\infty.
\end{equation}
We combine this with the compact embedding 
$\mathcal{C}^{n,\alpha',\beta'}\subset\mathcal{C}^{n,\alpha,\beta}$ for $\alpha<\alpha'$ 
and $\beta'<\beta$, which is a consequence of Arzel\`a-Ascoli's theorem. This implies by
Prohorov's theorem \cite[Theorem 3.8.4]{bogachev1998gaussian} that there exists a 
probability measure $\langle\cdot \rangle_{ext}$ on 
$\mathcal{C}^{0,\alpha',\beta'}\times\mathcal{C}^{1,\alpha',\beta'}
\times\mathcal{C}^{1,\alpha',\beta'}$ such that, up to a subsequence that we do not relabel,
\begin{equation}\label{QualitativeNew:Eq6}
\langle\cdot\rangle_{L,ext}\underset{L\uparrow \infty}{\rightharpoonup}\langle\cdot\rangle_{ext}.
\end{equation}
We argue for \eqref{QualitativeLNew:Eq1}.
Since the balls $\{B_1(x)\}_{x\in\frac{1}{\sqrt{d}}\mathbb{Z}^d}$ cover
$\mathbb{R}^d$, we have by a union bound argument
%
\begin{align}\label{fw26}
\big\langle\big(\|g\|_{0,\alpha,\beta}+\|(\phi^{(1)},\phi^{(2)})\|_{1,\alpha,\beta}
\big)^p\big\rangle_L 
&\lesssim \big\langle\big(\sup_{x\in\frac{1}{\sqrt{d}}\mathbb{Z}^d}(1+\vert x\vert)^{-\beta}
(\|g\|_{C^{0,\alpha}(B_1(x))}
+\|(\phi^{(1)},\phi^{(2)})\|_{C^{1,\alpha}(B_1(x))})\big)^p\big\rangle_L\nonumber\\
&\le\sum_{x\in\frac{1}{\sqrt{d}}\mathbb{Z}^d} (1+\vert x\vert)^{-p\beta} 
\langle(\|g\|_{C^{0,\alpha}(B_1(x))}
+\|(\phi^{(1)},\phi^{(2)})\|_{C^{1,\alpha}(B_1(x))})^p\rangle_L.
\end{align}
By local Schauder theory we obtain from (\ref{pde:9.1_quad}) and (\ref{ao06}) 
\begin{align*}
\|\phi^{(1)}\|_{C^{1,\alpha}(B_1(x))}&\lesssim C(\|a\|_{C^{0,\alpha}(B_2(x))})
\Big(1+\Big(\int_{B_2(x)}|\phi^{(1)}|^2\Big)^{\frac{1}{2}}\Big),\nonumber\\
\|\phi^{(2)}\|_{C^{1,\alpha}(B_1(x))}&\lesssim C(\|a\|_{C^{0,\alpha}(B_2(x))})
\Big(\|\phi^{(1)}\|_{C^{1,\alpha}(B_2(x))}+\Big(\int_{B_2(x)}|\phi^{(2)}|^2\Big)^{\frac{1}{2}}\Big)
\end{align*}
with an at most polynomial dependence of the constant on $\|a\|_{C^{0,\alpha}(B_2(x))}$.
By (\ref{fw33})
this yields
\begin{align}\label{fw25}
\big\langle\big(\|g\|_{C^{0,\alpha}(B_1(x))}
+\|(\phi^{(1)},\phi^{(2)})\|_{C^{1,\alpha}(B_1(x))}\big)^p\big\rangle_L^\frac{1}{p}
\lesssim_{p,p'}1+
\int_{B_2(x)}\langle|(\phi^{(1)},\phi^{(2)})|^{p'}\rangle_L^\frac{1}{p'},
\end{align}
provided $p<p'$. By Proposition \ref{P:3} $i)$ for $\phi^{(1)}$ and by
$iv)$ for $\phi^{(2)}$ together with (\ref{AnchorQualitativeNew}), we have
that the r.~h.~s.~of (\ref{fw25}) is estimated by $(1+|x|)^\frac{1}{2}$.
Hence the summand on the r.~h.~s.~of (\ref{fw26}) is estimated by 
$(1+|x|)^{-p(\beta-\frac{1}{2})}$, which is summable provided
$p>\frac{d}{\beta-\frac{1}{2}}$. 
The remaining range is then obtained by Jensen's inequality.

\medskip

{\sc Step 2. Stationarity, moment bounds, and PDE-constrained support of 
$\langle\cdot\rangle_{ext}$.}
Here and in the sequel, we denote by $(g,\phi,\psi)\in{\mathcal C}_{\alpha,\beta}^0\times
{\mathcal C}_{\alpha,\beta}^1\times{\mathcal C}_{\alpha,\beta}^1$ the integration variables
of $\langle\cdot\rangle_{L,ext}$ and its limit $\langle\cdot\rangle_{ext}$.
First, because ${\mathcal C}_{\alpha,\beta}^1\ni\psi\mapsto\psi^2(0)$
is continuous, \eqref{AnchorQualitativeNew} is preserved under \eqref{QualitativeNew:Eq6}:
\begin{equation}\label{QualitativeNew:Eq16}
\psi(0)=0\quad\text{for $\langle\cdot\rangle_{ext}$-a.~e. $\psi$}.
\end{equation}
Next, note that $\phi^{(1)}$ and $\nabla\phi^{(2)}$ are shift-covariant\footnote{We recall that
a random field $\phi=\phi(g,x)$ is called shift-covariant iff $\phi(g,z+x)=\phi(g(z+\cdot),x)$.} 
(the anchoring \eqref{AnchorQualitativeNew} of $\phi^{(2)}$ does not admit shift-covariance,
but does not affect the shift-covariance of $\nabla\phi^{(2)}$) and $\langle\cdot \rangle_L$ 
is stationary\footnote{We recall that a measure on a function space is called stationary
iff it is invariant under shift of the functions.}. Hence, we may introduce the push-forward
of $\langle\cdot\rangle_{L,ext}$ under the map $(g,\phi,\psi)\mapsto (g,\phi,\nabla\psi)$,
which we call $(\text{Id},\text{Id},\nabla)$ and which is stationary,
a linear constraint that is preserved under the weak convergence \eqref{QualitativeNew:Eq6}:
\begin{equation}\label{QualitativeNew:Eq10}
(\text{Id},\text{Id},\nabla)\#\langle\cdot\rangle_{ext}  \text{ is stationary.}
\end{equation}
We now turn to the estimates of Proposition \ref{P:3}. Clearly, the bounds (\ref{ao43}), the second bound in \eqref{ao43Bis} for any compactly supported function $\eta$ and (\ref{ao69})
are preserved under \eqref{QualitativeNew:Eq6}:
\begin{equation}\label{QualitativeNew:Eq11}
\langle\vert\nabla\phi\vert^p+\vert\phi\vert^p+\vert\nabla\psi\vert^p\rangle_{ext}\lesssim_p 1,
\quad\big\langle\big|\int\eta\phi\big|^p\big\rangle_{ext}^\frac{1}{p}\lesssim_p
\big(\int|\eta|^\frac{2d}{d+2}\big)^\frac{d+2}{2d},\quad
\mbox{and}\quad\langle\vert\psi(z)\vert^p\rangle_{ext}^\frac{1}{p}
\lesssim_p \mu^{(2)}_d(\vert z\vert).
\end{equation}

\medskip

Finally,  from the weak convergence \eqref{QualitativeNew:Eq6}, the definition of $\overline{a}$ in \eqref{Def_Abar_intro} together with the stationarity of $\langle\cdot\rangle_L$ in form of $\langle\overline{a}\rangle_L=\langle a(\nabla\phi^{(1)}+e_i)(0)\rangle_L$ and the decay of averages \eqref{ao43Bis} for the flux (applied with $\eta=L^{-d}\mathds{1}_{[0,L)^d}$), one has 
$$\langle \overline{a}\rangle_L\underset{L\uparrow \infty}{\rightarrow}\langle a(\nabla\phi+e_i)(0)\rangle_{ext}\quad\mbox{and}\quad\langle\vert \overline{a}-\langle\overline{a}\rangle_L\vert\rangle_L\lesssim L^{-\frac{d}{2}},$$
so that $\langle\vert \overline{a}-\langle a(\nabla\phi+e_i)\rangle_{ext}\vert\rangle_L\underset{L\uparrow\infty}{\rightarrow} 0$. Therefore, introducing the notation $a_{\text{hom},ext}:=\langle a(\nabla\phi+e_i)(0)\rangle_{ext}$, we obtain under the weak convergence
(\ref{QualitativeNew:Eq6}) that the equations (\ref{pde:9.1_quad}) and (\ref{ao06}), 
when tested against smooth compactly supported functions, are preserved
in the sense that
\begin{equation}\label{QualitativeNew:Eq14}
\nabla\cdot a(\nabla\phi+e_i)=0\quad\text{and}\quad
-\nabla\cdot a(\nabla\psi+\phi e_j)=e_j\cdot (a(\nabla\phi+e_i)-a_{\text{hom},ext}e_i)
\quad\mbox{for}\;\langle\cdot\rangle_{ext}\mbox{-a.~e.}\;(g,\phi,\psi).
\end{equation}
%
%
%
%

\medskip

{\sc Step 3. Identification of the first marginal of $\langle\cdot\rangle_{ext}$.}
More precisely, we show that is given by $\langle\cdot\rangle$. 
By the definition \eqref{QualitativeNew:Eq8}, 
the first marginal of $\langle\cdot\rangle_{L,ext}$ is the Gaussian measure 
$\langle\cdot\rangle_L$. By \eqref{QualitativeNew:Eq6}, the sequence 
$\{\langle\cdot\rangle_L\}_{L\uparrow \infty}$ of Gaussian measures is tight on 
$\mathcal{C}^{0,\alpha,\beta}$. By \cite[Corollary 3.8.5]{bogachev1998gaussian},
it is thus enough to prove the weak convergence of 
$\langle\cdot\rangle_L$ to $\langle\cdot\rangle$ on squares of bounded linear forms:
\begin{equation}\label{QualitativeNew:Eq18}
\lim_{L\uparrow\infty}\langle\ell^2\rangle_L=\langle\ell^2\rangle
\quad\text{for all $\ell\in (\mathcal{C}^{0,\alpha,\beta})^{*}$.}
\end{equation}
By density and tightness, it is enough to check \eqref{QualitativeNew:Eq18} for linear forms
$\ell$ of the form $g\mapsto\int \zeta g$ for an arbitrary Schwartz function $\zeta$. 
The definition (\ref{Def_cL}) of $c_L$ can also be stated in terms of 
the (distributional) Fourier transform of the (periodic) $c_L$ as
\begin{align*}
{\mathcal F}c_L=\big(\frac{2\pi}{L}\big)^d
\sum_{q\in\frac{2\pi}{L}\mathbb{Z}^d}{\mathcal F}c(q)\delta(\cdot-q).
\end{align*}
Hence the l.~h.~s.~of (\ref{QualitativeNew:Eq18}) assumes the form of a Riemann sum:
\begin{align*}
\langle\ell^2\rangle_L=\big(\frac{2\pi}{L}\big)^{d}
\sum_{q\in \frac{2\pi}{L}\mathbb{Z}^d}\mathcal{F}c(q)\vert\mathcal{F}\zeta(q)\vert^2.
\end{align*}
Since by Assumption \ref{Ass:1} in form of the integrability of $c$, see (\ref{fw32}), 
$\mathcal{F}c$ is continuous, we obtain (\ref{QualitativeNew:Eq18}): 
\begin{align*}
\lim_{L\uparrow\infty}\langle\ell^2\rangle_L
=\int \mathcal{F}c\vert\mathcal{F}\zeta\vert^2=\langle\ell^2\rangle.
\end{align*}

\medskip

{\sc Step 4. Construction of $\phi^{(1)}$ and $\phi^{(2)}$. }
We show that there exist 
$\phi^{(1)}$ and $\phi^{(2)}$ satisfying Corollary \ref{Cor:1} $i)$ and $ii)$ respectively. 
%
%
We construct these random variables via disintegration 
of the measure $\langle\cdot\rangle_{ext}$ with respect to its first marginal 
$\langle\cdot\rangle$, which amounts to conditional expectation w.~r.~t.~$g$.
By \cite[Theorem 6.4]{kolmogorov2018foundations} there exists a family of 
measures $\{\langle\cdot|g\rangle_{ext}\}_{g\in \mathcal{C}^{0}_{\alpha,\beta}}$ 
on $\mathcal{C}^1_{\alpha,\beta}\times\mathcal{C}^1_{\alpha,\beta}$ such that for all
$\langle\cdot\rangle_{ext}$-integrable functions $F$,
\begin{align*}
\langle F\rangle_{ext}=\langle\langle F|g\rangle_{ext}\rangle.
\end{align*}
We now define the $\langle\cdot\rangle$-integrable functions $\phi^{(1)}$ and $\phi^{(2)}$ through
conditional expectation
\begin{equation}\label{QualitativeNew:Eq12}
\phi^{(1)}(g):=\langle\phi|g\rangle_{ext}\quad\mbox{and}\quad
%
%
\phi^{(2)}(g):=\langle\psi|g\rangle_{ext}\quad\mbox{for all}\;g\in \mathcal{C}^0_{\alpha,\beta}
\end{equation}
and verify that they satisfy all the requirements of Corollary \ref{Cor:1} $i)$ and $ii)$. 
%
%
Since the conditioning w.~r.~t.~$g$ commutes with the multiplication by $a=A(g)$,
the linear equations \eqref{QualitativeNew:Eq14} are preserved and give rise to
\eqref{EquationCor1Corrector} and \eqref{ao54}.
It is easy to check that the stationarity \eqref{QualitativeNew:Eq10}
translates into shift-covariance of $\phi^{(1)}$ and $\nabla\phi^{(2)}$.
It follows from \eqref{QualitativeNew:Eq16}, via Jensen's inequality applied 
to the conditional expectation, that $\phi^{(1)}$ and $\phi^{(2)}$ satisfy 
the moment bounds of Corollary \ref{Cor:1} 
$i)$ and $ii)$, and also the decay bound (\ref{fw30}) and the growth bound (\ref{fw35}).

\medskip

{\sc Step 5. Identification of $\langle\cdot\rangle_{ext}$. }
We now establish
\begin{align*}
\langle\cdot\rangle_{ext}=(\text{Id},\phi^{(1)},\phi^{(2)})\#\langle\cdot\rangle.
\end{align*}
by showing 
\begin{equation}\label{QualitativeNew:Eq15}
u^{(1)}:=\phi-\phi^{(1)}(g)=0\quad\mbox{and}\quad
u^{(2)}:=\psi-\phi^{(2)}(g)=0\quad\mbox{for}\;\langle\cdot\rangle_{ext}-
\mbox{a.~e.}\;(g, \phi,\psi).
\end{equation}
%
By \eqref{EquationCor1Corrector} and \eqref{QualitativeNew:Eq14}, 
we have $-\nabla\cdot a\nabla u^{(1)}=0$
$\langle\cdot\rangle_{ext}$-a.~s..
By Caccioppoli's inequality we thus obtain
$$
\fint_{B_R}\vert\nabla u^{(1)}\vert^2\lesssim 
\frac{1}{R^2}\fint_{B_{2R}}\vert u^{(1)}\vert^2\quad\mbox{for all}\;R<\infty.
$$
Taking the expectation $\langle\cdot\rangle_{ext}$ and using the shift-covariance of 
$\phi^{(1)}$ and \eqref{QualitativeNew:Eq10} this implies
\begin{align*}
\langle \vert\nabla u^{(1)}\vert^2\rangle_{ext}
\lesssim \frac{1}{R^2}\langle \vert u^{(1)}\vert^2\rangle_{ext}\quad\mbox{for all}\;R<\infty.
\end{align*}
Letting $R\uparrow \infty$ while appealing to (\ref{ao43}) and
\eqref{QualitativeNew:Eq11} yields $\nabla u^{(1)}=0$.
We now use (\ref{fw30}) and the associated property in \eqref{QualitativeNew:Eq11},
for the averaging function $\eta=R^{-d}\mathds{1}_{B_R}$.
Because of $\frac{2d}{d+2}>1$ this yields
$\lim_{R\uparrow\infty}\langle|\fint_{B_R}u^{(1)}|^2\rangle_{ext}=0$.
Hence we obtain the first claim of \eqref{QualitativeNew:Eq15}.  Note that this implies in particular $a_{\text{hom},ext}=a_{\text{hom}}$, namely the first claim of Corollary \ref{Cor:1} $iii)$.

\medskip

It now follows from (\ref{ao54}) and \eqref{QualitativeNew:Eq14} 
that $-\nabla\cdot a\nabla u^{(2)}=0$ $\langle\cdot\rangle_{ext}$-a.~s..
Starting again with Caccioppoli's inequality, followed by the combination
of shift-covariance of $\nabla\phi^{(2)}$ and \eqref{QualitativeNew:Eq10}, 
finally followed by the combination of (\ref{fw35}) and \eqref{QualitativeNew:Eq11}, we obtain
$$\langle\vert\nabla u^{(2)}\vert^2\rangle_{ext}
\lesssim \frac{1}{R^2}\fint_{B_{2R}}dz\langle\vert u^{(2)}(z)\vert^2\rangle_{ext}
\lesssim \big(\frac{\mu^{(2)}_d(R)}{R}\big)^2\quad\mbox{for all}\;R<\infty.$$
Letting $R\uparrow \infty$ we conclude $\nabla u^{(2)}=0$. 
Using once more the combination of (\ref{fw35}) and
\eqref{QualitativeNew:Eq11}, this time for $z=0$, we find $u^{(2)}(0)=0$,
which gives the second claim of \eqref{QualitativeNew:Eq15}. 
The same argument shows that there is at most one pair $(\phi^{(1)},\phi^{(2)})$ 
of random variables satisfying the properties of Corollary \ref{Cor:1} $i)$ and $ii)$. 
Therefore, $\langle\cdot\rangle_{ext}$
is unique and thus the limit of $\langle\cdot\rangle_{L,ext}$ for the entire
sequence $L\uparrow\infty$.


\subsection{Proof of Lemma $\ref{L:1}$: Improved Caccioppoli inequality}

By scaling, it is enough to consider $R=2$; we fix a smooth cut-off function $\eta$ for $B_1$ in $B_2$. Our starting point is the following localized version
of a standard $L^2$-based interpolation estimate, with $n\in\mathbb{N}$ to be fixed later,
which we take from the proof of \cite[Lemma 4]{Bella_Giunti_Otto_2017}:
\begin{align}\label{fs02}
\int(\eta^{4n}\triangle^{2n}w)^2
\lesssim \big(\int|\eta^{4n+1}\nabla\triangle^{2n}w|^2\big)^\frac{4n}{4n+1}
\big(\int w^2\big)^\frac{1}{4n+1}+\int w^2.
\end{align}
We apply it to $w\in H^{2n}_0(B_2)$ (as usual, $H^{2n}_0(B_2)$ denotes the closure of $C^\infty_0(B_2)$
w.~r.~t.~the $H^{2n}(B_2)$-norm) that (weakly) solves
\begin{align}\label{fs01}
\triangle^{2n}w=u\quad\mbox{in}\quad B_2.
\end{align}
We construct $w$ with help of the Riesz representation theorem, so that we automatically have
\begin{align}\label{fs03}
\int u w=\int(\triangle^nw)^2\sim\int|\nabla^{2n}w|^2\gtrsim\int w^2,
\end{align}
where we used higher-order $L^2$-regularity and a higher-order Poincar\'e estimate.
We obtain from inserting (\ref{fs01}) into (\ref{fs02})
\begin{align*}
\int(\eta^{4n}u)^2
\lesssim\big(\int|\eta^{4n+1}\nabla u|^2\big)^\frac{4n}{4n+1}
\big(\int w^2\big)^\frac{1}{4n+1}+\int w^2.
\end{align*}
Combining this with Caccioppoli's estimate
$\int|\eta^{4n+1}\nabla u|^2\lesssim\int(\eta^{4n}u)^2$ and Young's inequality, we obtain
$$\int (\eta^{4n}u)^2\lesssim \int w^2,$$
so that, by the choice of $\eta$, we deduce
\begin{align}\label{fs09}
\int_{B_1}|\nabla u|^2\le\int(\eta^{4n}u)^2\lesssim\int w^2.
\end{align}
It remains to post-process this inner regularity estimate for an $a$-harmonic function $u$.

\medskip

In route to an annealed estimate,
we express the r.~h.~s.~of (\ref{fs09}) in terms of $u$, which is conveniently done
in terms of the complete orthonormal system of eigenfunctions
$\{w_k\}_{k\in\mathbb{N}}\subset H_0^{2n}(B_2)$
and eigenvalues $\{\lambda_k\}_k\subset (0,\infty)$ of the Dirichlet-$\triangle^{2n}$,
which is a positive operator with compact inverse:
\begin{align*}
\int w^2=\sum_{k}\frac{1}{\lambda_k^2}\big(\int u w_k\big)^{2}
=\sum_{k}\frac{1}{\lambda_k}\frac{\big(\int u w_k\big)^{2}}{\int(\triangle^{n}w_k)^2}
\stackrel{(\ref{fs03})}{\lesssim}
\sum_{k}\frac{1}{\lambda_k}\frac{\big(\int u w_k\big)^{2}}{\int|\nabla^{2n}w_k|^2}.
\end{align*}
We insert this into (\ref{fs09})
\begin{align*}
\int_{B_1}|\nabla u|^2
\lesssim\sum_{k}\frac{1}{\lambda_k} \frac{\big(\int uw_k\big)^2}{\int|\nabla^{2n}w_k|^2}
\end{align*}
and apply $\langle(\cdot)^\frac{p}{2}\rangle$. By H\"older's inequality in $k$ we obtain
\begin{align*}
\big\langle\big(\int_{B_1}|\nabla u|^2\big)^\frac{p}{2}\big\rangle
\lesssim\big(\sum_{k'}\frac{1}{\lambda_{k'}}\big)^{\frac{p}{2}-1} \sum_{k}\frac{1}{\lambda_k}
\frac{\big\langle\big|\int uw_k\big|^{p}\big\rangle}{\big(\int|\nabla^{2n}w_k|^2\big)^\frac{p}{2}}.
\end{align*}
In order to proceed, we need $\sum_{k}\frac{1}{\lambda_k}<\infty$, which means that
the inverse of the Dirichlet-$\triangle^{2n}$ has finite trace, which in turn follows
from the finiteness of the corresponding
Green function along the diagonal, which requires that Dirac distributions
are in $H^{-2n}(B_2)$, which amounts to the Sobolev embedding $H^{2n}_0(B_2)\subset C^0_0(B_2)$,
and thus holds provided $2n>d$, which we henceforth assume.
Hence by the density of $C^\infty_0(B_2)$ in $H^{2m}_0(B_2)\ni w_k$ we obtain
the annealed inner regularity estimate
\begin{align}\label{fs06}
\big\langle\big(\int_{B_1}|\nabla u|^2\big)^\frac{p}{2}\big\rangle
\lesssim\sup_{w\in C^\infty_0(B_2)}
\frac{\big\langle\big|\int uw\big|^{p}\big\rangle}{\big(\int|\nabla^{2n}w|^2\big)^\frac{p}{2}}.
\end{align}

\medskip

It remains to post-process (\ref{fs06}).
Provided $2n>\frac{d}{2}+3$ we may appeal to Sobolev's embedding applied to $\nabla^3 w$
in order to upgrade (\ref{fs06}) to
%
%
%
\begin{align}\label{fs07}
\big\langle\big(\int_{B_1}|\nabla u|^2\big)^\frac{p}{2}\big\rangle^\frac{1}{p}
\lesssim\sup_{w\in C^\infty_0(B_2)}
\frac{\big\langle\big|\int uw\big|^{p}\big\rangle^\frac{1}{p}}
{\sup|\nabla^{3}w|}.
\end{align}
Since we may w.~l.~o.~g.~assume $\int_{B_2}u=0$, we may restrict to $w$ with $\int w=0$. A standard argument\footnote{We give the formula for $d=2$: Fix a smooth $\eta_2=\eta(x_2)$ supported in
$(-2,2)$ and of integral 1; then $h(x):=(h_1(x_1)\eta(x_2),h_2(x))$
with $h_1(x_1):=\int_{-\infty}^{x_1}dx_1'\int dx_2'w(x_1',x_2')$ and
$h_2(x_1,x_2):=\int_{-\infty}^{x_2}dx_2'(w(x_1,x_2')-\frac{dh_1}{dx_1}(x_1)\eta(x_2'))$
has the desired support properties. By definition
$\frac{dh_1}{dx_1}(x_1)=\int dx_2'w(x_1,x_2')$,
$\partial_2h_2(x)=w(x)-\frac{dh_1}{dx_1}(x_1)\eta_2(x_2)$,
and thus $\nabla\cdot h=w$.} in the theory of distributions yields the existence of a vector field
$g\in C^\infty_0((-2,2)^d)\subset C^\infty_0(B_{2\sqrt{d}})$ such that
\begin{align*}
\nabla\cdot g=w\quad\mbox{and}\quad\sup|\nabla^3 g|\lesssim\sup|\nabla^3 w|.
\end{align*}
Hence (\ref{fs07}) may be upgraded to the desired
\begin{align}\label{fs08}
\big\langle\big(\int_{B_1}|\nabla u|^2\big)^\frac{p}{2}\big\rangle^\frac{1}{p}
\lesssim\sup_{g\in C^\infty_0(B_{2\sqrt{d}})}
\frac{\big\langle\big|\int g\cdot\nabla u\big|^p\big\rangle^\frac{1}{p}}{\sup|\nabla^3g|}.
\end{align}
\qed

\subsection{Proof of Lemma \ref{L:3}: Annealed estimate on the two-scale expansion error}

In Step 1, we establish a pointwise bound on $\nabla^3\overline{u}$,
which in Step 2 we combine with a dyadic decomposition argument.

\medskip

{\sc Step 1. Pointwise bound on $\nabla^3\bar u$. }We claim that
\begin{equation}\label{Lem3:Eq1}
\langle\vert\nabla^3\bar u (x)\vert^p\rangle_L^{\frac{1}{p}}
\lesssim R\sup \vert\nabla^2 f\vert \big(\frac{R}{R+\vert x-y\vert}\big)^d.
\end{equation}
Indeed, by \eqref{ao70} we have
$\bar u(x)=\int_{B_R(y)}dz\,f(z) \bar G(x-z)$.
From the bounds on the constant-coefficient Green function $\bar G$,
which are uniform 
in the random coefficient $\lambda{\rm id}\le\bar a\le{\rm id}$, we obtain
$$\langle\vert\nabla^3\bar u(x)\vert^p\rangle_L^{\frac{1}{p}}\lesssim
 \left\{
    \begin{array}{lll}\mbox{for }|x-y|\le 2R:&
\int_{B_R(y)}dz\,\vert\nabla^2 h(z)\vert
\langle\vert\nabla\bar G(x-z)\vert^p\rangle^{\frac{1}{p}}&
\lesssim\sup|\nabla^2 f|R\\
\mbox{for }|x-y|\ge 2R:&
\int_{B_R(y)}dz\,\vert\nabla f(z)\vert
\langle\vert\nabla^2\bar G(x-z)\vert^p\rangle^{\frac{1}{p}}&
\lesssim\sup|\nabla f|(\frac{R}{|x-y|})^d
    \end{array}\right\}.
$$
It remains to appeal to $\sup|\nabla f|\le R\sup|\nabla^2 f|$.

\medskip

{\sc Step 2. Dyadic decomposition . }We restrict the r.~h.~s.~of \eqref{ao66} 
to dyadic annuli:
    \begin{equation}\label{Def_g}
    \begin{aligned}
      h_k:= \mathds{1}_{B_{2^k}\setminus B_{2^{k-1}}} \big((\phi_{ij}^{(2)}-\phi_{ij}^{(2)}(0))a
-(\sigma_{ij}^{(2)}-\sigma_{ij}^{(2)}(0))\big) \nabla\partial_{ij}  \bar{u};
    \end{aligned}
    \end{equation}
this induces the decomposition $\nabla w=\sum_{k\in\mathbb{Z}} \nabla w_k$, 
where $\nabla w_k$ is the square-integrable solution of
    \begin{equation*}
      -\nabla\cdot a\nabla w_k = \nabla\cdot h_k.
    \end{equation*}
    We observe from $\eqref{Def_g}$ that $w_k$ is $a$-harmonic in $B_{2^{k-1}}$.

    \medskip

    Due to the above decomposition, the desired estimate
    $\eqref{ao67}$ is reduced to estimating $\nabla w_k(0)$ for each $k$.
    Using first Lemma~\ref{L:2}, 
then the energy estimate, and finally Minkowski's inequality, we obtain provided $p'>p\ge 2$
    \begin{equation}\label{Num:220-8}
 \begin{aligned}
    \langle |\nabla w_k(0)|^{p} \rangle_L^{\frac{1}{p}}
    &\stackrel{\eqref{ao59}}{\lesssim}
    \big\langle \big(\fint_{\Boule_{2^{k-1}}} |\nabla w_k|^2 \big)^{\frac{p'}{2}} \big\rangle_L^{\frac{1}{p'}}
    \lesssim
    \big\langle \big(\fint_{\Boule_{2^{k-1}}} |h_k|^2 \big)^{\frac{p'}{2}} \big\rangle_L^{\frac{1}{p'}}\lesssim
    \big(\fint_{\Boule_{2^{k}}} \big\langle |h_k|^{p'}
    \big\rangle_L^{\frac{2}{p'}}\big)^{\frac{1}{2}}.
    \end{aligned}
    \end{equation}
    In view of the definition \eqref{Def_g} of $h_k$, \eqref{ao69} in Proposition \ref{P:3}, 
and \eqref{Lem3:Eq1}, we have for $p''>p'$
    \begin{equation}\label{Num:220-9}
    \begin{aligned}
    \big(\fint_{\Boule_{2^{k}}} \big\langle |h_k|^{p'}
    \big\rangle_L^{\frac{2}{p'}}\big)^{\frac{1}{2}}
    \overset{\eqref{Def_g},\eqref{ao69}}{\lesssim}&\mu_d^{(2)}(2^k)
    \big(\fint_{B_{2^k}}
    \big\langle|\nabla^3 \bar{u}|^{p''}\big\rangle_L^{\frac{2}{p''}}
    \big)^{\frac{1}{2}}\\
    \lesssim& \mu_d^{(2)}(2^k)R\sup\vert\nabla^2 f\vert
    \big(\fint_{B_{2^k}}dx\,\big(\frac{R}{R+\vert x-y\vert}\big)^{2d}
    \big)^{\frac{1}{2}}.
    \end{aligned}
    \end{equation}
We now distinguish the two cases of $2^k\le R$, where we use
$ \fint_{B_{2^k}}dx\,\big(\frac{R}{R+\vert x-y\vert}\big)^{2d}\leq 1$,
and of $2^k>R$, where we use
%
    \begin{equation*}
     \fint_{B_{2^k}}dx\,\big(\frac{R}{R+\vert x-y\vert}\big)^{2d}
   \lesssim 2^{-kd} \int dx\,\big(\frac{R}{R+\vert x-y\vert}\big)^{2d}\lesssim \big(\frac{R}{2^k}\big)^{d}.
    \end{equation*}

    The combination of $\eqref{Num:220-8}$, $\eqref{Num:220-9}$ and the two last estimates yields
    \begin{equation}\label{f:4.26_bis}
    \big\langle |\nabla w(0)|^{p} \big\rangle_L^{\frac{1}{p}}
    \lesssim\big(\sum_{2^k\le R}\mu_d^{(2)}(2^k)+\sum_{2^k>R}(\frac{R}{2^k})^{\frac{d}{2}}
\mu_d^{(2)}(2^k)\big)
    R\sup|\nabla^2 h|.
    \end{equation}
%
%
Since for any $d>2$, $\mu_d^{(2)}(r)$ is non-decreasing in $r$, linear for $r \le 1$, 
and not increasing faster than $r^\frac{1}{2}$ for $r\ge 1$, we recover \eqref{ao67}.

\subsection{Proof of Proposition \ref{P:4}: Annealed error estimate on the expansion of the Green function}\label{ProofOfProp4Random}

\bigskip

Throughout the proof, we fix two ``base points'' $x_0,y_0\in\mathbb{R}^d$ with $|x_0-y_0|\ge 2$.

\medskip

{\sc Step 1. Passage to the full error in the two-scale expansion.}
Recall that ${\mathcal E}$, cf.~(\ref{ao47}) for its definition, is the {\it truncated}
version, on the level of the mixed derivatives, of the full error of the second-order
two-scale expansion of the constant-coefficient fundamental solution $\bar G$,
which is given by\footnote{note the change of sign that is due to
the fact that $\partial_j$ acts on the argument of $\bar G$ and not on $y$}
    \begin{equation}\label{Def_V}
    \begin{aligned}
      w_{x_0,y_0}(x,y) := G(x,y)-\big(1&+\phi^{(1)}_{i}(x)\partial_i
      +(\phi^{(2)}_{im}-\phi^{(2)}_{im}(x_0))(x)\partial_{im}\big)\\
      &\times\big(1-\phi^{*(1)}_j(y)\partial_j+(\phi_{jn}^{*(2)}
      -\phi_{jn}^{*(2)}(y_0))(y)
      \partial_{jn}\big)\bar{G}(x-y).
    \end{aligned}
    \end{equation}
We now find the mixed derivative:
\begin{equation}\label{G_expan}
\begin{aligned}
&\nabla \nabla w_{x_0,y_0}(x,y)
=\nabla\nabla G(x,y) \\
&\qquad-\big[(e_i+\nabla\phi_i^{(1)})(x)\partial_i
+(\nabla\phi_{im}^{(2)}+\phi_{i}^{(1)}e_m)(x)\partial_{im}
+(\phi_{im}^{(2)}-\phi_{im}^{(2)}(x_0))(x)e_k\partial_{imk}\big]\\
&\qquad\otimes\big[-(e_j+\nabla\phi_{j}^{*(1)})(y)\partial_j
+ (\nabla\phi_{jn}^{*(2)}
+\phi_{j}^{*(1)}e_n)(y)\partial_{jn}
-(\phi_{jn}^{*(2)}-\phi_{jn}^{*(2)}(y_0))(y)e_l\partial_{jnl}
\big]\bar{G}(x-y).
\end{aligned}
\end{equation}
We further consider the difference between the mixed derivative of full error and its truncated version $\mathcal{E}$ by setting $x=x_0$ and $y=y_0$, 
which gives, together with Proposition $\ref{P:3}$,
 \begin{equation}\label{Num:252}
      \big\langl \lt|\nabla\nabla w_{x_0,y_0}(x_0,y_0) - \mathcal{E}(x_0,y_0)\rt|^p \big\rangl_L^{\frac{1}{p}} \lesssim  \mu^{(2)}_d(\vert x_0-y_0\vert)|x_0-y_0|^{-d-2}\text{\quad for any $p<\infty$.}
    \end{equation}
Therefore, to obtain the desire estimate $\eqref{ao73}$
it suffices to show
\begin{equation}\label{Num:2370}
  \big\langle |\nabla \nabla w_{x_0,y_0}(x_0,y_0)|^{p} \big \rangle_L^{\frac{1}{p}}
  \lesssim \max\{\mu^{(2)}_d(|x_0-y_0|),\ln \vert x_0-y_0\vert \} |x_0-y_0|^{-d-2}.
\end{equation}

\medskip

{\sc Step 2. A decomposition of the full error $\nabla\nabla w_{x_0,y_0}(x_0,y_0)$.}
In this step, we shall
derive a characterizing PDE (\ref{Eq_sur_V}) of the full error \eqref{Def_V} in order to split
it into a far-field part $w_{x_0,y_0,\infty}$ and dyadic near-field parts $w_{x_0,y_0,k}(x,\cdot)$, which will be 
explicitly given later on. The distinction between
far and near fields refers to the scale $R:=|x_0-y_0|/2$.
Recall that $w_{x_0,y_0}(x,y)$ involves the two-scale expansion in both the $x$
and $y$ variables; we now freeze $x=x_0$ and consider $y$ as the ``active'' variable. For the ease of the statement, we make use of the notation
\begin{equation}\label{NotationuBarExpG}
\bar u_{x_0}(x,\cdot):=
\big(1+\phi_i^{(1)}(x)\partial_i+(\phi_{im}^{(2)}(x)-\phi_{im}^{(2)}(x_0))\partial_{im}\big)
\bar G(x-\cdot).
\end{equation}
This amounts to rewriting $ w_{x_0,y_0}(x_0,\cdot)$ as follows:
  \begin{equation}\label{ExpandV}
  \begin{aligned}
     w_{x_0,y_0}(x_0,\cdot)=
    & G(x_0,\cdot)-
    \big(1-\phi^{*(1)}_{j}\partial_j+
    (\phi_{jn}^{*(2)}-\phi_{jn}^{*(2)}(y_0))
    \partial_{jn}\big)
     \bar{u}_{x_0}(x_0,\cdot).
  \end{aligned}
  \end{equation}
We note that $G(x_0,\cdot)$ is $a^{*}$-harmonic whereas $\overline{u}_{x_0}(x_0,\cdot)$ is $\overline{a}^{*}$-harmonic in $\mathbb{R}^d\backslash \{x_0\}$.
Hence, the representation of the error in the second order two-scale expansion introduced in Subsection $\ref{SS:twoscale}$, we thus have
\begin{equation}\label{Eq_sur_V}
  -\nabla  \cdot  a^* \nabla  w_{x_0,y_0}(x_0,\cdot)= \nabla \cdot h_{x_0,y_0}(x_0,\cdot)  \quad
  \text{in}\quad \R^d \setminus \{x_0\},
\end{equation}
where the vector field $h_{x_0,y_0}$ is given by
\begin{equation}\label{Def_h}
  h_{x_0,y_0}(x,\cdot) := \big((\phi^{*(2)}_{jn}-\phi^{*(2)}_{jn}(y_0))a^* -(\sigma^{*(2)}_{jn}-\sigma_{jn}^{*(2)}(y_0))\big) \nabla\partial_{jn}\bar{u}_{x_0}(x,\cdot).
\end{equation}
Next we define the dyadic near-field (scalar) functions:
\begin{equation}\label{Num:220}
\begin{aligned}
-\nabla\cdot a^*\nabla w_{x_0,y_0,k}(x,\cdot)
=\nabla\cdot
\mathds{1}_{B_{2^k}(y_0) \backslash B_{2^{k-1}}(y_0)}h_{x_0,y_0}(x,\cdot),
\end{aligned}
\end{equation}
and the far-field function:
\begin{equation}\label{Num:220-1}
\begin{aligned}
w_{x_0,y_0,\infty}:=w_{x_0,y_0}-\sum_{2^k\le R}w_{x_0,y_0,k},
\end{aligned}
\end{equation}
In fact, we
are interested in the quantities
$\nabla_{x} w_{x_0,y_0,k}(x_0,\cdot)$ and
$\nabla_{x}  w_{x_0,y_0,\infty}(x_0,\cdot)$, which we address in two steps.

\medskip

{\sc Step 3. Estimate of the near-field parts
$\nabla\nabla_{x} w_{x_0,y_0,k}(x_0,\cdot)$. }
Note that applying $\nabla_x$ and evaluating at $x=x_0$ commutes with the differential operator $\nabla\cdot a^*\nabla
$.
For the ease of notation, we fix an arbitrary coordinate direction $i=1,\cdots,d$ and introduce the abbreviation\footnote{Throughout the proof,
we use $\partial_{x_i}$ (or $\nabla_x$) if
the partial derivative (or the gradient) is
taken w.r.t. the first variable.
But we may write $\nabla$ for $\nabla_y$
since $y$ here is the ``active'' variable.}
$w_{k,i}(y):=\partial_{x_i} w_{x_0,y_0,k}(x_0,y)$. We start by estimating its constitutive element $\bar u_{x_0}$ (see \eqref{NotationuBarExpG}) and we obtain from
Proposition $\ref{P:3}$ and the $-d$-homogeneity of $\bar G$
  \begin{equation}\label{Num:2401}
    \sup_{y\in B_{R}(y_0)}\big\langle\lt|\nabla^j
    \partial_{x_i}\bar{u}_{x_0}(x_0,y)\rt|^{p}\big\rangle_L^{\frac{1}{p}}
    \lesssim R^{-j-d+1} \big(1+ \big \langle \big\vert \big(\nabla\phi^{(1)}(x_0),\frac{\phi^{(1)}(x_0)}{R},
    \frac{\nabla\phi^{(2)}(x_0)}{R}\big) \big\vert^p \big \rangle_L^{\frac{1}{p}}\big)
    \lesssim R^{-j-d+1}
  \end{equation}
for any $j\geq 0$ and $p<\infty$ (we also used $R\geq 1$ in the last estimate).
As in the proof of Lemma \ref{L:3}, $2<p<p'$ denote generic exponents for stochastic integrability. Thus, using \eqref{ao59} in Lemma~\ref{L:2} and the energy estimate, we have
\begin{equation}\label{num:220-7}
  \begin{aligned}
  	\sum_{2^k\le R}
    \big\langle
    \vert \nabla
    w_{k,i}(y_0) \vert^{p}
    \big\rangle_L^{\frac{1}{p}}
    \overset{\eqref{ao59}}{\lesssim} \sum_{2^k\le R}\big\langle \big(\fint_{B_{2^{k-1}}(y_0)}\vert\nabla w_{k,i} \vert^2\big)^{\frac{p'}{2}} \big\rangle_L^{\frac{1}{p'}}
    \lesssim \sum_{2^k\le R}\big\langle \big(\fint_{B_{2^{k}}(y_0)}\vert
    \partial_{x_i} h_{x_0,y_0}(x_0,\cdot)\vert^2\big)^{\frac{p'}{2}} \big\rangle_L^{\frac{1}{p'}}.
  \end{aligned}
\end{equation}
By Minkowski's inequality and Proposition \ref{P:3}, we also have
\begin{equation}\label{num:220-4}
\begin{aligned}
&\sum_{2^k\le R}\big\langle \big(\fint_{B_{2^{k}}(y_0)}\vert \partial_{x_i} h_{x_0,y_0}(x_0,\cdot)\vert^2\big)^{\frac{p'}{2}}
\big\rangle_L^{\frac{1}{p'}}
\leq
\sum_{2^k\le R}
\bigg(\fint_{B_{2^{k}}(y_0)}\big\langle\vert \partial_{x_i} h_{x_0,y_0}(x_0,\cdot)\vert^{p'}\big\rangle_L^{\frac{2}{p'}}\bigg)^{\frac{1}{2}}\\
&\qquad\overset{\eqref{Def_h}}{\leq}
\sum_{2^k\le R}
\bigg( \fint_{B_{2^{k}}(y_0)}\big\langle\big\vert \big(\phi^{*(2)}_{jn}-\phi^{*(2)}_{jn}(y_0),
\sigma^{*(2)}_{jn}-\sigma^{*(2)}_{jn}(y_0)\big)\big\vert^{2p'}
\big\rangle^{\frac{2}{p'}}_L
\bigg)^{\frac{1}{4}}\\
&\qquad\qquad\times\bigg(\fint_{B_{2^{k}}(y_0)}
\big\langle\vert \nabla \partial_{jn}\partial_{x_i}\bar{u}_{x_0}(x_0,\cdot)\vert^{2p'}\big
\rangle_L^{\frac{2}{p'}}
\bigg)^{\frac{1}{4}}\\
&\overset{\eqref{ao69},\eqref{Num:2401}}{\lesssim} \sum_{2^k\le R} \mu_d^{(2)}(2^k)R^{-2-d}
\lesssim \max\{\mu_d^{(2)}(R),\ln R\} R^{-2-d}.
\end{aligned}
\end{equation}
The combination of $\eqref{num:220-7}$ and
$\eqref{num:220-4}$ leads to
\begin{equation}\label{pri:4.23}
\sum_{2^k\le R}
    \big\langle
    \vert\nabla w_{k,i}(y_0) \vert^{p}
    \big\rangle_L^{\frac{1}{p}}
    \lesssim \max\{\mu_d^{(2)}(R),\ln(R)\} R^{-2-d}.
\end{equation}
\medskip

{\sc Step 4. Estimate of the near-field parts $\nabla\nabla_{x} w_{x_0,y_0,k}(x_0,\cdot)$ in a weak norm. }Let $p<p'<p''$ be three stochastic exponents. In the sequel, $h=h(y)$ always denotes an
arbitrary smooth vector field compactly supported in $B_R(y_0)$.
We now justify a weak control on $\nabla\nabla_{x}  w_{x_0,y_0,\infty}(x_0,\cdot)$ that will appear useful in Step $5$ when appealing to Corollary \ref{Cor:2}:
\begin{equation}\label{Num:220-3}
\sum_{2^k\le R}\big\langle\big|
\int h\cdot\nabla w_{k,i}\big|^{p}\big\rangle^\frac{1}{p}_L \lesssim \mu_{d}^{(2)}(R)R\sup|\nabla^3 h|.
\end{equation}
We start with
a strong estimate of $w_{k,i}$ on this set. As opposed to (\ref{num:220-7}),
we use Jensen's inequality
to pass to the spatial $L^2$-norm and then replace the energy estimate by the annealed Calder\'on-Zygmund estimate
\cite[Proposition 7.1]{JosienOtto_2019},
\begin{equation*}
\begin{aligned}
 \sum_{2^k\le R}\big\langle\big(\fint_{B_R(y_0)}|\nabla w_{k,i}|\big)^{p}\big\rangle^{\frac{1}{p}}_L
& \leq \sum_{2^k\le R}\fint_{B_R(y_0)}
 \big\langle|\nabla w_{k,i}|^{p}\big\rangle^{\frac{1}{p}}_L
 \leq \sum_{2^k\le R}\big(\fint_{B_R(y_0)}
 \big\langle|\nabla w_{k,i}|^{p}\big\rangle^{\frac{2}{p}}_L\big)^{\frac{1}{2}}\\
& \lesssim
R^{-\frac{d}{2}}
\sum_{2^k\le R}
2^{\frac{kd}{2}}\big(\fint_{B_{2^{k}(y_0)}}
 \big\langle|\partial_{x_i} h_{x_0,y_0}(x_0,\cdot)|^{p'}\big\rangle^{\frac{2}{p'}}_L
 \big)^{\frac{1}{2}}.
\end{aligned}
\end{equation*}
Then, to bound the r.~h.~s., we appeal to the definition of $h_{x_0,y_0}$ in
$\eqref{Def_h}$, Proposition $\ref{P:3}$
and $\eqref{Num:2401}$ to get the following estimate
similar to  $\eqref{num:220-4}$,
\begin{equation*}
\begin{aligned}
 \sum_{2^k\le R}\big\langle\big(\fint_{B_R(y_0)}|\nabla w_{k,i}|\big)^{p}\big\rangle^{\frac{1}{p}}_L
&\lesssim R^{-\frac{d}{2}}
\sum_{2^k\le R}
\mu^{(2)}_d(2^k)2^{\frac{kd}{2}}
\sup_{y\in B_R(y_0)}\big\langle|\nabla^3 \partial_{x_i}\bar{u}_{x_0}(x_0,y)|^{p''}\big\rangle^{\frac{1}{p''}}_L
\overset{\eqref{Num:2401}}{\lesssim}\mu_d^{(2)}(R) R^{-2-d}.
\end{aligned}
\end{equation*}
This shows \eqref{Num:220-3} in form of
\begin{equation*}
\sum_{2^k\le R}\big\langle\big|
\int h\cdot\nabla w_{k,i}\big|^{p'}\big\rangle^\frac{1}{p'}_L
\leq \sup|h|
\sum_{2^k\le R}\big\langle
\big(\int_{B_R(y_0)}|\nabla w_{k,i}|\big)^{p}\big\rangle^{\frac{1}{p}}_L
\lesssim \mu_{d}^{(2)}(R)R\sup|\nabla^3 h|.
\end{equation*}

\medskip

{\sc Step 5. Estimate of the far-field part $\nabla\nabla_{x}  w_{x_0,y_0,\infty}(x_0,\cdot)$ by a duality argument. }Again,
for the ease of notation we introduce the abbreviation
$w_{\infty,i}(y):=\partial_{x_i}w_{x_0,y_0,\infty}(x_0,y)$
with an arbitrary coordinate direction
$i=1,\cdots,d$,
which is $a^*$-harmonic on $B_{2^{k_0}}(y_0)$.
While in the previous
two steps (mostly) relied on homogenization in the $y$-variable in form of control of
$(\phi^{*(2)},\sigma^{*(2)})$, we now (primarily) need homogenization in the $x$-variable,
in form of Lemma \ref{L:3}, next to control of $\phi^{*(2)}$. We start with
an application of Corollary \ref{Cor:2}:
there holds
\begin{equation}\label{Num:235}
\begin{aligned}
  \big\langle |\nabla  w_{\infty,i}(y_0)|^{p} \big \rangle_L^{\frac{1}{p}}\overset{\eqref{ao58}}{\lesssim}&
 \sup_{h\in C^\infty_0(B_{R}(y_0))}
\frac{\big\langle\big|
\fint_{B_R(y_0)}h\cdot \nabla w_{\infty,i} \big|^{p'}\big\rangle^\frac{1}{p'}_L}{R^{3}\sup|\nabla^3 h|}\\
 \overset{\eqref{Num:220-1}}{\leq} &
\sup_{h\in C^\infty_0(B_{R}(y_0))}
\frac{\big\langle\big|\int
h\cdot \nabla\partial_{x_i} w_{x_0,y_0}(x_0,\cdot)\big|^{p'}\big\rangle^\frac{1}{p'}_L}{R^{d+3}
\sup|\nabla^3 h|}\\
+&
\sup_{h\in C^\infty_0(B_{R}(y_0))}
\frac{\sum_{2^k\le R}\big\langle\big|
\int h\cdot\nabla w_{k,i} \big|^{p'}\big\rangle^\frac{1}{p'}_L}{R^{d+3}\sup|\nabla^3 h|}.
\end{aligned}
\end{equation}
While the second contribution has been estimated in $\eqref{Num:220-3}$, we now need a similar estimate on the first contribution, namely,
    \begin{equation}\label{pri:4.22}
    \begin{aligned}
    \big\langle \big|
    \int h\cdot \nabla \partial_{x_i} w_{x_0,y_0}(x_0,\cdot) \big|^p  \big\rangle_L^{\frac{1}{p}}
    \lesssim_{p}
    \max\{\mu^{(2)}_3(R),\ln R\}R\sup |\nabla^3 h|\quad \text{for any $p<\infty$.}
    \end{aligned}
    \end{equation}
Equipped with \eqref{pri:4.22}, \eqref{Num:2370} follows from the combination of \eqref{Num:235}, \eqref{Num:220-3}, \eqref{pri:4.23} and \eqref{Num:220-1}.

\medskip

Now, we focus on the
argument for $\eqref{pri:4.22}$.
Let $h\in C_0^\infty(B_R(y_0))$ be arbitrary, with $u$ and  $\bar{u}$ satisfying $\eqref{ao70}$. We recall the definition of the error in the two-scale expansion that we express in terms of the Green functions $G,\bar G$ using \eqref{ao70}:
  \begin{equation*}
  \begin{aligned}
   w_{x_0}(x)    &:= u(x) - \big(1+\phi_{i}^{(1)}(x)\partial_i+
   (\phi_{im}^{(2)}-\phi_{im}^{(2)}(x_0))(x)
   \partial_{im}
   \big)
   \bar{u}(x)\\
   &\overset{\eqref{ao70}}{=} \int
   (\nabla\cdot h)(G(x,\cdot) - \overline{u}_{x_0}(x,\cdot)),
  \end{aligned}
  \end{equation*}
  where we recall that $\bar u_{x_0}$ is defined in \eqref{NotationuBarExpG}.
 Then, by taking derivatives on the both sides of
  the above equation with respect to the $x$-variable and by integrating by parts with respect to the $y$-variable lead to
  \begin{equation}\label{Num:220-6}
  \begin{aligned}
  \partial_{x_i} w_{x_0}(x)
   =\int
  (\nabla\cdot h)
  \big(\partial_{x_i} G(x,\cdot) - \partial_{x_i}\bar{u}_{x_0}(x,\cdot)\big)
   = - \int
   h\cdot
   \nabla\big(
   \partial_{x_i} G(x,\cdot) - \partial_{x_i}\bar{u}_{x_0}(x,\cdot)\big).
  \end{aligned}
  \end{equation}
We now express the integral in the l.~h.~s. of \eqref{pri:4.22} with help of \eqref{Num:220-6}. First, by applying $\nabla$ to $\eqref{ExpandV}$, we obtain
\begin{equation}\label{Num:220-5}
\begin{aligned}
\nabla  \partial_{x_i}  w_{x_0,y_0}(x_0,\cdot)
&=\nabla\partial_{x_i}  G(x_0,\cdot)-\nabla \partial_{x_i} \bar{u}_{x_0}(x_0,\cdot) 
+\nabla\big[\big(\phi^{*(1)}_{j}\partial_j
-(\phi_{jn}^{*(2)}-\phi_{jn}^{*(2)}(y_0))\partial_{jn}
\big)\partial_{x_i} \bar{u}_{x_0}(x_0,\cdot)\big].
\end{aligned}
\end{equation}
Second, we split the integral in the l.~h.~s. of \eqref{pri:4.22} as follows:
%
  \begin{equation}\label{Split_v}
  \begin{aligned}
  &\int h\cdot
  \nabla \partial_{x_i}  w_{x_0,y_0}(x_0,\cdot)\\
  &\overset{\eqref{Num:220-5}}{=}\int
  h\cdot\nabla
  \big(\partial_{x_i} G(x_0,\cdot)-
  \partial_{x_i}\bar{u}_{x_0}(x_0,\cdot) \big)
  + \int
  h\cdot\nabla \big[\big(\phi^{*(1)}_{j}\partial_j-(\phi_{jn}^{*(2)}
  -\phi_{jn}^{*(2)}(y_0))
  \partial_{jn}\big)\partial_{x_i}\bar{u}_{x_0}(x_0,\cdot)\big]
  \\
  &\overset{\eqref{Num:220-6}}{=}
  -\partial_{x_i} w_{x_0}(x_0)- \int
   (\nabla\cdot h)
  \phi^{*(1)}_j \partial_j \partial_{x_i}\bar{u}_{x_0}(x_0,\cdot)
  +
  \int
  (\nabla\cdot h)
  (\phi_{jn}^{*(2)} -\phi_{jn}^{*(2)}(y_0))\partial_{jn}
  \partial_{x_i}\bar{u}_{x_0}(x_0,\cdot).
  \end{aligned}
  \end{equation}
  For the first term r.~h.~s. term of $\eqref{Split_v}$,
  it follows from Lemma $\ref{L:3}$ applied with $f=\nabla\cdot h$ that
  \begin{equation}\label{pri:4.24}
  \big\langle |\nabla_x w_{x_0}(x_0) |^p\rangle^{\frac{1}{p}}_L
  \lesssim \max\{\mu_{3}^{(2)}(R),\ln(R)\}R\sup|\nabla^3 h|.
  \end{equation}
  For the second term r.~h.~s.\ term of \eqref{Split_v}, we exploit the structure \eqref{NotationuBarExpG} of $\partial_{x_i}\bar{u}_{x_0}(x_0,\cdot)$, the random part of which is independent of the integration variable $y$.
  Hence, we may use the Cauchy-Schwarz inequality, and then appealing to the definition \eqref{Eq:RewriteDivergence} of $\omega^*_j$ (with $\phi^{(1)}_j$ replaced by $\phi^{*(1)}$) together with \eqref{Eq:RewriteDivergence2} and \eqref{ao43}, and finally recall that $h$ is supported in $\Boule_R$, to the effect of
  \begin{equation}
  \label{Num:99001}
  \begin{aligned}
  	&\big\langle\big|\int
  	(\nabla\cdot h)
  	\phi^{*(1)}_j \partial_j \partial_{x_i}\bar{u}_{x_0}(x_0,\cdot) \big|^p\rangle^{\frac{1}{p}}_L
  	\\
  	&\leq
  	\big\langle\big|\int
  	(\nabla\cdot h)
  	\phi^{*(1)}_j 
  	\partial_{jk} \bar{G}(x_0-\cdot)
  	\big|^{2p} \big\rangle^{\frac{1}{2p}}_L
  	\big\langle |\delta_{ik}+\partial_i \phi^{(1)}_k(x_0)|^{2p} \big \rangle^{\frac{1}{2p}}_L
  	\\
  	&\quad+
  	\big\langle\big|\int
  	(\nabla\cdot h)
  	\phi^{*(1)}_j 
  	\partial_{jkm} \bar{G}(x_0-\cdot)
  	\big|^{2p} \big\rangle^{\frac{1}{2p}}_L
  	\big\langle \big|(\phi_k^{(1)} \delta_{im} + \partial_i \phi^{(2)}_{km})(x_0)  \big|^{2p} \big\rangle^{\frac{1}{2p}}_L
  	\\
  	&\lesssim 	\Big(\big\langle\big|\int
  	\nabla(\nabla\cdot h)\cdot(\nabla\omega^*_j-\nabla\omega^*_j(x_0))
  	\partial_{jk} \bar{G}(x_0-\cdot)
  	\big|^{2p} \big\rangle^{\frac{1}{2p}}_L\\
  	&\quad+\big\langle\big|\int
  	(\nabla\cdot h)(\nabla\omega^*_j-\nabla\omega^*_j(x_0))\cdot
  	\nabla\partial_{jk} \bar{G}(x_0-\cdot)
  	\big|^{2p} \big\rangle^{\frac{1}{2p}}_L\Big)
  	\big\langle |\delta_{ik}+\partial_i \phi^{(1)}_k(x_0)|^{2p} \big \rangle^{\frac{1}{2p}}_L
  	\\
  	&\quad+
  	\big\langle\big|\int
  	(\nabla\cdot h)
  	\phi^{*(1)}_j 
  	\partial_{jkm} \bar{G}(x_0-\cdot)
  	\big|^{2p} \big\rangle^{\frac{1}{2p}}_L
  	\big\langle \big|(\phi_k^{(1)} \delta_{im} + \partial_i \phi^{(2)}_{km})(x_0)  \big|^{2p} \big\rangle^{\frac{1}{2p}}_L\\
  	&\lesssim
  	\sup |\nabla^2 h| \mu^{(2)}_{d}(R)+\sup\vert\nabla h\vert(\mu^{(2)}_d(R)+1)R^{-1} 
	\lesssim \mu_d^{(2)}(R)R\sup|\nabla^3h|.
  \end{aligned}
  \end{equation}
  The third r.~h.~s.\ term in \eqref{Split_v} is easily dealt with by recalling that $h$ is of compact support, using Jensen's inequality and the Cauchy-Schwarz inequality, and then a combination of \eqref{ao69} and \eqref{Num:2401}
  \begin{equation}
  \label{Num:99002}
  \begin{aligned}
  	&\big\langle \big| \int
  	(\nabla\cdot h)
  	(\phi_{jn}^{*(2)} -\phi_{jn}^{*(2)}(y_0))\partial_{jn}
  	\partial_{x_i}\bar{u}_{x_0}(x_0,\cdot)
  	\big|^{p} \big\rangle^{\frac{1}{p}}_L
  	\\
  	&\lesssim
  	\sup |\nabla h|
  	\int_{\Boule_R}\langle |\phi_{jn}^{*(2)} -\phi_{jn}^{*(2)}(y_0)|^{2p} \rangle^{\frac{1}{2p}}
  	\langle | \partial_{jn}
  	\partial_{x_i}\bar{u}_{x_0}(x_0,\cdot)|^{2p}\rangle^{\frac{1}{2p}}_L
  	\\
  	&\lesssim
  	R\sup|\nabla^2h| R^d \mu_d^{(2)}(R)R^{-d-1}
  	\\
  	&\lesssim \mu_d^{(2)}(R)R\sup|\nabla^3 h|.
  \end{aligned}
  \end{equation}
  Inserting the estimates \eqref{pri:4.24}, \eqref{Num:99001}, and \eqref{Num:99002}, into \eqref{Split_v} entails \eqref{pri:4.22}. 
\qed

\appendix

\section{$2^{nd}$-order two-scale expansion of the mixed derivative of the massive Green function}
We present in this section the proof of the periodic second order two-scale expansion on the mixed derivative of the massive Green function \eqref{ao42} that we restate in the following proposition.
\begin{proposition}\label{Prop4Massive:Statement}
Let $d>2$ and $a$ being $L$-periodic and H\"older-continuous, for some $L\geq 1$. We define the homogenization error
\begin{equation}\label{Prop4Massive:Eq2}
\begin{aligned}
\lefteqn{{\mathcal E_T}(x,y)}\\
&:=
\nabla\nabla G_T(x,y)+\partial_{ij}\bar G_T(x-y)
(e_i+\nabla\phi_i^{(1)})(x)\otimes (e_j+\nabla{\phi_j^*}^{(1)})(y)\\
&+\partial_{ijm}\bar G_T(x-y)
(\phi_i^{(1)}e_m+\nabla\phi_{im}^{(2)})(x)\otimes (e_j+\nabla{\phi_j^*}^{(1)})(y)\\
&-\partial_{ijm}\bar G_T(x-y)
(e_i+\nabla\phi_i^{(1)})(x)\otimes ({\phi_j^*}^{(1)}e_m+\nabla\phi_{jm}^{*(2)})(y).
\end{aligned}
\end{equation}
There exists a constant $C$ depending on $d$ and $\lambda$ such that 
\begin{equation}\label{Prop4Massive:Eq1}
\left\langle\vert\mathcal{E}_T(x,y)\vert^p\right\rangle_L^{\frac{1}{p}}\lesssim_{p,L} (\ln \vert x-y\vert)\vert x-y\vert^{-d-2}\exp(-\frac{L^{-1}\vert x-y\vert}{C\sqrt{T}})
\end{equation}
provided $T\geq L^2$, $\frac{L}{2}\leq \vert x-y\vert<\infty$ and for all $p<\infty$.
\end{proposition}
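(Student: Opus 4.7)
I would essentially repeat the architecture of the proof of Proposition~\ref{P:4}, but specialized to the deterministic periodic setting. The main structural simplification is that the correctors $\phi^{(1)}, \phi^{(2)}, \sigma^{(1)}, \sigma^{(2)}$ (and their duals for $a^*$) are now $L$-periodic, hence uniformly bounded, with $C^{1,\alpha}$-norms controlled by polynomial functions of $[a]_{C^{0,\alpha}}$ through Schauder theory applied to \eqref{pde:9.1_quad} and \eqref{ao06}. The plan is therefore to establish \eqref{Prop4Massive:Eq1} pathwise, with a constant depending polynomially on these H\"older norms, and then to conclude by taking expectation via \eqref{fw33Bis}.

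The first step is to introduce, as in \eqref{Def_V}, the full second-order two-scale expansion $w_{T,x_0,y_0}(x,y)$ of $G_T$ in both variables, anchoring $\phi^{(2)}$ at $x_0$ and $\phi^{*(2)}$ at $y_0$, and with $\bar G$ replaced by the massive $\bar G_T$. A direct computation analogous to \eqref{G_expan} shows that the difference $\mathcal{E}_T(x_0,y_0) - \nabla_x\nabla_y w_{T,x_0,y_0}(x_0,y_0)$ consists of terms in which second-order correctors are evaluated at the base points (and hence vanish), together with fourth-derivative terms $\nabla^4\bar G_T(x_0-y_0)$ paired with bounded periodic correctors; since $|\nabla^4\bar G_T(z)| \lesssim |z|^{-d-2}\exp(-|z|/C\sqrt{T})$, this contribution is admissible, and the task reduces to bounding $\nabla_x\nabla_y w_{T,x_0,y_0}(x_0,y_0)$.

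Fixing $x=x_0$ and viewing $y$ as the active variable, $w_{T,x_0,y_0}(x_0,\cdot)$ satisfies in $\mathbb{R}^d\setminus\{x_0\}$ a massive equation of the form
\begin{equation*}
\tfrac{1}{T}w - \nabla\cdot a^*\nabla w = \nabla\cdot h_{T,x_0,y_0}(x_0,\cdot) + \tfrac{1}{T} r_{T,x_0,y_0}(x_0,\cdot),
\end{equation*}
where $h$ is the massive analog of \eqref{Def_h} involving $(\phi^{*(2)}-\phi^{*(2)}(y_0),\sigma^{*(2)}-\sigma^{*(2)}(y_0))\,\nabla^3\bar u_{T,x_0}$ (with $\bar u_{T,x_0}$ the two-scale expansion of $\bar G_T(x_0-\cdot)$ in the $x$-variable at $x_0$), and $r$ collects the lower-order terms produced by the mass hitting the two-scale ansatz. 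I would then decompose the forcing into dyadic annuli around $y_0$ at scales $2^k\lesssim R:=|x_0-y_0|$, apply the massive energy estimate (which yields the exponential factor $\exp(-R/C\sqrt{T})$) together with the pathwise Lipschitz/Schauder estimate at $y_0$, and exploit the Lipschitz bound $|\phi^{*(2)}(y)-\phi^{*(2)}(y_0)|\lesssim |y-y_0|$ from periodic Schauder theory. Each annulus then contributes at most $2^k R^{-d-1}\exp(-R/C\sqrt{T})$, and summation over $k$ with $2^k\lesssim R$ produces the logarithmic factor $\ln R$.

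The outer derivative $\nabla_x$ is absorbed by a duality argument in the spirit of Step~5 of the proof of Proposition~\ref{P:4}: testing against compactly supported smooth fields near $x_0$, one combines interior Schauder regularity for the periodic $a^*$-harmonic equation with the previous bound on the tested quantity. The main obstacle will be the bookkeeping of the lower-order mass remainder $\tfrac{1}{T}r$: it must be shown negligible by using the hypothesis $T\geq L^2$, so that $\tfrac{1}{T}\leq L^{-2}$, which ensures that the mass contributes only on scales $\gtrsim L$ where it provides \emph{additional} exponential decay rather than harming the estimate; this is precisely why the exponential in \eqref{Prop4Massive:Eq1} comes with the non-dimensionalized rate $L^{-1}|x-y|/\sqrt{T}$ rather than $|x-y|/\sqrt{T}$. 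Once the pathwise bound is established with a constant polynomial in the H\"older norms of $a$ and of the correctors, the $L^p$-estimate \eqref{Prop4Massive:Eq1} follows by H\"older's inequality together with \eqref{fw33Bis} and the Schauder control of periodic correctors.
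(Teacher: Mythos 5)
Your overall plan correctly mirrors the paper's proof: pathwise bounds with a constant polynomial in $[a]_\alpha$, reduction to the full two-scale expansion error, a dyadic near-field/far-field split around $y_0$, near-field Lipschitz estimates, and a duality argument for the far field. But the dyadic arithmetic as you state it does not produce the claimed estimate. You write that the Lipschitz bound $|\phi^{*(2)}(y)-\phi^{*(2)}(y_0)|\lesssim|y-y_0|$ gives a per-annulus contribution $2^kR^{-d-1}\exp(-R/C\sqrt{T})$, and that summing over $2^k\lesssim R$ yields a $\ln R$ factor; however $\sum_{2^k\lesssim R}2^kR^{-d-1}\sim R^{-d}$, which is both too large and has no logarithm. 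What periodicity actually gives is $|\phi^{*(2)}(y)-\phi^{*(2)}(y_0)|\lesssim\min\{|y-y_0|,1\}$ --- Lipschitz for $|y-y_0|\leq 1$, uniformly bounded beyond. Combined with $|\nabla^3\bar u_{T,x_0}|\lesssim R^{-d-2}$ on the near field (you wrote $R^{-d-1}$), each annulus contributes $\min\{2^k,1\}R^{-d-2}\exp(-R/C\sqrt{T})$, and the dyadic sum then correctly gives $(1+\ln R)R^{-d-2}\exp(-R/C\sqrt{T})$ as in \eqref{num:220-7Bis} and \eqref{pri:4.23Bis}. Without the saturation of the Lipschitz bound by the uniform bound, the argument fails.

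Two further omissions are worth flagging. First, the mass-induced term $\tfrac{1}{T}\phi^{(1)}_i\partial_i\bar u$ in the residuum of the two-scale ansatz is of non-divergence form and cannot simply be ``shown negligible''; the paper introduces a periodic field $h^{(1)}_i$ with $\nabla\cdot h^{(1)}_i=\phi^{(1)}_i$ (see \eqref{PeriodicFieldLemma3Massive}) to move this term into divergence form plus a genuinely lower-order piece, and then carries both along in $h_{x_0,y_0,T}$ and $f_{x_0,y_0,T}$ with a factor $\tfrac{R^2}{T}$ that is absorbed into the exponential decay in the non-perturbative regime $R\geq\sqrt{T}$, and treated perturbatively when $R\leq 2\sqrt{T}$; your proposal does not distinguish these two regimes, whereas the paper handles them separately. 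Second, the near-field Lipschitz estimate you invoke is for the massive operator $\tfrac{1}{T}-\nabla\cdot a\nabla$; it is not standard periodic Schauder theory but is established by the paper via a buckling argument in the perturbative range $R\ll\sqrt{T}$ (Lemma~\ref{LipschitzMassive}), and your proof would need to supply it. Finally, a small slip: the duality test functions are supported near $y_0$, the active variable, not near $x_0$.
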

The proof of Proposition \ref{Prop4Massive:Statement} closely follows the strategy of Proposition \ref{P:4} with some changes due to the presence of the massive term.  To begin with, thanks to the moment bounds \eqref{fw33}, \eqref{Prop4Massive:Eq1} will follow from the deterministic estimate
\begin{align}\label{fw56}
L^d|{\mathcal E}_T(x,y)|\le C(L^\alpha[a]_{\alpha})(\ln\frac{|x-y|}{L})L^{d+2}|x-y|^{-d-2}\exp(-\frac{L^{-1}\vert x-y\vert}{C\sqrt{T}}),
\end{align}
which here is written in a scale-invariant way so that w.~l.~o.~g.~we may consider $L=1$.
In order to use \eqref{fw33}, we need that $C$ grows at most polynomially in its argument 
$[a]_\alpha$.
Hence what we need to establish is a (fine) result on homogenization of a 1-periodic
H\"older-continuous coefficient field $a$. The proof of Proposition \ref{Prop4Massive:Statement} relies on the periodic and massive counterpart of the Lipschitz estimate of Lemma \ref{L:2} and the bound of the homogenization error of Lemma \ref{L:3}. We start with the Lipschitz estimate.
\begin{lemma}\label{LipschitzMassive}
Let the function $u$ satisfy $\frac{1}{T}u-\nabla\cdot a\nabla u=0$ in the ball $B_R$ of radius $R$ with $R\leq \sqrt{T}$. Then we have 
\begin{equation}\label{cw'02}
\vert\nabla u(0)\vert\leq C([a]_\alpha) \big(\fint_{B_R}\vert(\nabla u,\tfrac{1}{\sqrt{T}}u)\vert^2\big)^{\frac{1}{2}},
\end{equation}
where the constant $C([a]_\alpha)$ depends polynomially on the Hölder norm $[a]_{\alpha}$ next to $d$ and $\lambda$.
\end{lemma}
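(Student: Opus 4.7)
This is the deterministic Lipschitz estimate for the massive periodic operator $\frac{1}{T}-\nabla\cdot a\nabla$, in the spirit of the Avellaneda--Lin large-scale regularity theory. The hypothesis $R\leq\sqrt T$ ensures that the massive term is subdominant on the scale of $B_R$, so that the natural energy $\fint_{B_R}(|\nabla u|^2+u^2/T)$ (rather than $\fint_{B_R}|\nabla u|^2$ alone) controls the pointwise gradient at the origin. The proof combines a unit-scale interior Schauder estimate (after a constant-mean subtraction) with a decomposition of $u$ into an $a$-harmonic piece (treated by classical Avellaneda--Lin) and a correction (controlled by energy estimates).

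\textbf{Step 1 (unit scale).} Rescaling so that the period is $L=1$, I set $v(x):=u(Rx)$, $\tilde a(x):=a(Rx)$, $\sigma:=R/\sqrt T\in[0,1]$, so that $\sigma^2 v-\nabla\cdot\tilde a\nabla v=0$ on $B_1$, and the claim reduces to $|\nabla v(0)|\lesssim\sqrt{\fint_{B_1}(|\nabla v|^2+\sigma^2 v^2)}$. The key observation is that the shifted function $\tilde v:=v-\bar v$, with $\bar v:=\fint_{B_1}v$, satisfies $\sigma^2\tilde v-\nabla\cdot\tilde a\nabla\tilde v=-\sigma^2\bar v$, a divergence-form equation with bounded zero-order term and bounded (constant) right-hand side. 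Classical interior Schauder (whose constant is polynomial in $[\tilde a]_\alpha$), together with Moser's $L^\infty$ bound and Poincar\'e--Wirtinger, then gives
\begin{equation*}
|\nabla v(0)|=|\nabla\tilde v(0)|\lesssim\|\nabla v\|_{L^2(B_1)}+\sigma|\bar v|\lesssim\sqrt{\fint_{B_1}(|\nabla v|^2+\sigma^2 v^2)},
\end{equation*}
where the last inequality uses $\sigma\leq 1$ and Cauchy--Schwarz. For $R$ of order one this already closes the argument.

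\textbf{Step 2 and main obstacle.} For $R\gg 1$, na\"\i ve rescaling inflates $[\tilde a]_\alpha$ by $R^\alpha$, so the constant is no longer polynomial in $[a]_\alpha$ alone, and the periodicity of $a$ must be exploited. I would decompose $u=u_h+u_r$, where $u_h$ is the $a$-harmonic function on $B_R$ matching $u$ on $\partial B_R$, and $u_r=u-u_h$ vanishes on $\partial B_R$ and solves $\tfrac{1}{T}u_r-\nabla\cdot a\nabla u_r=-\tfrac{1}{T}u_h$. The $a$-harmonic piece is handled by the classical Avellaneda--Lin large-scale Lipschitz estimate for periodic H\"older-continuous $a$: $|\nabla u_h(0)|\lesssim\sqrt{\fint_{B_R}|\nabla u_h|^2}\lesssim\sqrt{\fint_{B_R}|\nabla u|^2}$, the last inequality by energy minimization. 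For $u_r$, testing with $u_r$ and using Poincar\'e on $B_R$ together with $R^2/T\leq 1$ yields $\fint_{B_R}(|\nabla u_r|^2+u_r^2/T)\lesssim\fint_{B_R}u^2/T$; combining this with Step 1 applied to $u_r$ at unit scale produces $|\nabla u_r(0)|\lesssim\sqrt{\fint_{B_R}u^2/T}$, and summing the two contributions gives the stated estimate. The main technical obstacle is the last step of passing from the $B_R$-averaged energy of $u_r$ to its pointwise gradient at the origin: a na\"\i ve Moser bound at unit scale would introduce an unwanted factor of $R^{d/2}$, and removing this requires the subsolution property $\nabla\cdot a\nabla|u|^2\geq 0$ for the homogeneous massive equation, which transfers $L^2$-averages from the unit ball to the macroscopic ball $B_R$.
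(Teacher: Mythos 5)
Your Step 1 is fine and indeed matches what is needed at unit scale. The problem is Step 2, and specifically your treatment of $u_r$, where there is a genuine gap.

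You write $u=u_h+u_r$, control $\nabla u_h(0)$ by Avellaneda--Lin, obtain the energy bound $\fint_{B_R}(|\nabla u_r|^2+u_r^2/T)\lesssim\fint_{B_R}u^2/T$, and then claim that ``Step 1 applied to $u_r$ at unit scale'' produces $|\nabla u_r(0)|\lesssim\sqrt{\fint_{B_R}u^2/T}$. This does not follow. First, $u_r$ is not a solution of the homogeneous massive equation: it satisfies $\tfrac1T u_r-\nabla\cdot a\nabla u_r=-\tfrac1T u_h$, so Step 1 simply does not apply to it, and one would need a Schauder estimate with a forcing term $\tfrac1T u_h$ that in turn needs pointwise control of $u_h$. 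Second, and more fundamentally, even granting some Schauder estimate at unit scale, the input would be $\fint_{B_1}|\nabla u_r|^2$, and the only way you propose to pass from $\fint_{B_1}$ to $\fint_{B_R}$ is ``the subsolution property $\nabla\cdot a\nabla|u|^2\ge 0$''. That property holds for the original solution $u$ (so that $|u|$ enjoys a mean-value-type inequality), not for the correction $u_r$, whose equation has a sign-indefinite right-hand side; and even if it did hold, it controls $L^2$ averages of the function, not of its gradient, so it does not rule out concentration of $\nabla u_r$ near the origin. So the precise step you flag as ``the main technical obstacle'' is in fact still open in your argument.

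The paper resolves exactly this obstacle by a different decomposition. Instead of $u_h+u_r$, it splits the mass term $-\tfrac1T u$ dyadically: for $k\ge1$ it defines $u_k$ as the Lax--Milgram solution of $-\nabla\cdot a\nabla u_k=-\tfrac1T\mathds{1}_{B_{2^{-k+1}R}\setminus B_{2^{-k}R}}u$ and sets $u_0:=u-\sum_{k\ge1}u_k$. The point is that each $u_k$ is genuinely $a$-harmonic on the inner ball $B_{2^{-k}R}$ (and $u_0$ on $B_R$), so the Avellaneda--Lin large-scale Lipschitz estimate applies to every piece without any Schauder step for a forced equation. The energy estimate on each annulus (using $d>2$ to pass the non-divergence r.~h.~s.\ into divergence form) then bounds $\fint_{B_r}|\nabla u_k|^2$ by $(2^{-k}R/T)^2\fint_{B_{2^{-k+1}R}}u^2$, a Poincar\'e inequality relates the successive averages of $u$, and summing the geometric series produces a self-improving inequality that buckles because $R^2/T\ll1$. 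This is what actually converts the $B_R$-averaged energy into a pointwise gradient bound at $0$ without an $R^{d/2}$ loss. If you want to salvage your harmonic/remainder split, you would essentially need to reproduce this dyadic argument for $u_r$; as written, the proposal does not close.
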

\begin{proof}
W.~l.~o.~g. we may assume $R\ll\sqrt{T}$. Second, we decompose $B_R$ into annuli $B_{2^{-k+1}R}\backslash B_{2^{-k}R}$, for $k\in\mathbb{N}$ and we
define $u_k$ as the Lax-Milgram solution of
\begin{align*}
-\nabla\cdot a\nabla u_k=-\frac{1}{T}\mathds{1}_{B_{2^{-k+1}R}\backslash B_{2^{-k}R}}u
\end{align*}
and set $u_0:=u-\sum_{k\ge 1}u_k$. Since $u_k$ is $a$-harmonic in $B_{2^{-k}R}$ we have
by the standard Lipschitz estimate \cite[Theorem 4.1.1]{Shen} for $k\ge 0$
\begin{align}\label{cw05}
\fint_{B_r}|\nabla u_k|^2\lesssim \fint_{B_{2^{-k}R}}|\nabla u_k|^2\quad\mbox{for}\;r\le 2^{-k}R,
\end{align}
where $\lesssim$ means $\leq$ up to a multiplicative constant that only depends polynomially on $[a]_{\alpha}$ next to $d$ and $\lambda$. For $k\ge 1$ we upgrade \eqref{cw05} by the energy estimate (where we appeal to $d>2$ to
bring the non-divergence form r.~h.~s.~into divergence form)
\begin{align*}
\fint_{B_r}|\nabla u_k|^2\stackrel{(\ref{cw05})}{\lesssim}
\frac{1}{(2^{-k}R)^d}\int|\nabla u_k|^2\lesssim\big(\frac{2^{-k}R}{T}\big)^2
\fint_{B_{2^{-k+1}R}}u^2,
\end{align*}
which trivially also holds for $r\ge 2^{-k}R$. Hence we obtain by the triangle inequality
\begin{align}\label{cw01}
\big(\fint_{B_r}|\nabla u|^2\big)^\frac{1}{2}
\lesssim\big(\fint_{B_R}|\nabla u|^2\big)^\frac{1}{2}+\sum_{k\ge 1}
\frac{2^{-k}R}{T}\big(\fint_{B_{2^{-k+1}R}}u^2\big)^\frac{1}{2}
\end{align}
for all $r\le R$. By Poincar\'e's inequality (with mean-value zero) we have
\begin{align*}
\Big|\big(\fint_{B_{2^{-k}R}}u^2\big)^\frac{1}{2}
-\big(\fint_{B_{2^{-k+1}R}}u^2\big)^\frac{1}{2}\Big|
\lesssim 2^{-k}R\big(\fint_{B_{2^{-k+1}R}}|\nabla u|^2\big)^\frac{1}{2}.
\end{align*}
By convergence of the geometric series, this allows us to upgrade (\ref{cw01}) to
\begin{align*}
\sup_{r\le R}\big(\fint_{B_r}|\nabla u|^2\big)^\frac{1}{2}\lesssim\big(\fint_{B_R}|\nabla u|^2\big)^\frac{1}{2}
+\frac{R}{T}\big(\fint_{B_R} u^2\big)^\frac{1}{2}
+\frac{R^2}{T}
\sup_{r\le R}\big(\fint_{B_r}|\nabla u|^2\big)^\frac{1}{2}.
\end{align*}
Since we are in the perturbative regime $R\ll\sqrt{T}$, we may buckle
to obtain for any $r\leq R$
$$\big(\fint_{B_r}\vert\nabla u\vert^2\big)^{\frac{1}{2}}\lesssim \big(\fint_{B_R}\vert \nabla u\vert^2\big)^{\frac{1}{2}}+\frac{1}{\sqrt{T}}\big(\fint_{B_R} u^2\big)^{\frac{1}{2}},$$
which turns into \eqref{cw'02} by letting $r\downarrow 0$.
\end{proof}
Next, we prove an estimate of the second-order stochastic homogenisation error.
\begin{lemma}\label{Lemma3MassiveVersion}
Given a
deterministic and smooth function $f$ supported in $B_R(y)$ with $\vert y\vert=2R$ and some $R < \infty$, let u and $\bar u$ be the decaying solutions of
\begin{equation}\label{Lemma3Massive:Eq1'}
\frac{1}{T}u-\nabla\cdot a\nabla u=f=\frac{1}{T}\bar u-\nabla\cdot \bar a \nabla \bar u.
\end{equation}
Then $w:=u-(1+\phi^{(1)}_i\partial_i+(\phi^{(2)}_{ij}-\phi^{(2)}_{ij}(0))\partial_{ij})\bar{u}$ satisfies for some constant $C$ depending on $d$ and $\lambda$
\begin{equation}\label{Prop4Massive:Eq6}
\vert\nabla w(0)\vert\leq C(a)(\ln R) \exp(-\frac{R}{C\sqrt{T}})R\sup\vert\nabla^2 f\vert,
\end{equation}
where the constant $C(a)$ has the same meaning as in Lemma \ref{LipschitzMassive}.
\end{lemma}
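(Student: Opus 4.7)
The plan is to mimic the architecture of Lemma \ref{L:3}, replacing the Lipschitz estimate of Lemma \ref{L:2} by its massive analogue Lemma \ref{LipschitzMassive}, and adding one new source term that arises from the mass. First I would derive the PDE for $w$. Inserting $u=w+(1+\phi^{(1)}_i\partial_i+(\phi^{(2)}_{ij}-\phi^{(2)}_{ij}(0))\partial_{ij})\bar u$ into $\frac{1}{T}u-\nabla\cdot a\nabla u=f$, using the massive equation for $\bar u$, the flux corrector identity \eqref{ao63}, and the symmetry assumption on $A$ (which allows us to drop $\bar u^{(2)}$ exactly as in \eqref{ao66}), I get
\begin{equation*}
\tfrac{1}{T}w-\nabla\cdot a\nabla w=\nabla\cdot h+g,\qquad
h:=\bigl[(\phi^{(2)}_{ij}-\phi^{(2)}_{ij}(0))a-(\sigma^{(2)}_{ij}-\sigma^{(2)}_{ij}(0))\bigr]\nabla\partial_{ij}\bar u,
\end{equation*}
\begin{equation*}
g:=-\tfrac{1}{T}\bigl[\phi^{(1)}_i\partial_i\bar u+(\phi^{(2)}_{ij}-\phi^{(2)}_{ij}(0))\partial_{ij}\bar u\bigr].
\end{equation*}
The divergence-form source $h$ is exactly the source of Lemma \ref{L:3}; the genuinely new ingredient is the non-divergence source $g$, which carries a prefactor $\tfrac{1}{T}$ but only $1$ or $2$ derivatives of $\bar u$.

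Next I would derive pointwise bounds on $\nabla^j\bar u$. Writing $\bar u=\bar G_T*f$ and integrating by parts, the standard deterministic bounds on the massive constant-coefficient Green function $|\nabla^{j}\bar G_T(x)|\lesssim C(\bar a)|x|^{-d-j+2}\exp(-|x|/(C\sqrt T))$ yield
\begin{equation*}
|\nabla^j\bar u(x)|\lesssim C(\bar a)R^{3-j}\sup|\nabla^2 f|\,\min\!\Bigl\{1,\bigl(\tfrac{R}{|x-y|}\bigr)^{d+2-j}\Bigr\}\exp\bigl(-|x-y|/(C\sqrt T)\bigr),\quad j=1,2,3,
\end{equation*}
the exponential factor being the entire source of the decay in \eqref{Prop4Massive:Eq6}. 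Since $|y|=2R$, on any annulus $A_k:=B_{2^k}\setminus B_{2^{k-1}}$ around the origin we have $|x-y|\sim\max(R,2^k)$ (the transitional range $R/4<2^k<4R$ is handled trivially by volume).

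Then I would decompose dyadically: set $h_k:=\mathds1_{A_k}h$, $g_k:=\mathds1_{A_k}g$, and let $w_k$ be the decaying solution of $\tfrac1Tw_k-\nabla\cdot a\nabla w_k=\nabla\cdot h_k+g_k$, so that $w=\sum_k w_k$ and $w_k$ is ``massively $a$-harmonic'' on $B_{2^{k-1}}$. Applying Lemma \ref{LipschitzMassive} at the origin with radius $r_k:=\min(2^{k-2},\sqrt T)$ and then the massive energy estimate (test with $w_k$ and Cauchy-Schwarz on the $g_k w_k$ term to absorb $\tfrac1{2T}\int w_k^2$) yields
\begin{equation*}
|\nabla w_k(0)|^2\lesssim C([a]_\alpha)^2 r_k^{-d}\Bigl(\int|h_k|^2+T\int|g_k|^2\Bigr).
\end{equation*}
Using the pointwise bounds of the previous step together with the periodic $L^\infty$ boundedness of $\phi^{(1)}$, $\phi^{(2)}-\phi^{(2)}(0)$, $\sigma^{(2)}-\sigma^{(2)}(0)$ (which grow at most like a constant $C([a]_\alpha)$ on each $A_k$, thanks to periodicity, Schauder theory, and $R\geq 1$), one obtains $|\nabla w_k(0)|\lesssim C(a)R\sup|\nabla^2 f|\exp(-\max(R,2^k)/(C\sqrt T))$ up to polynomial prefactors in $R/\sqrt T$ and $2^k/\sqrt T$ which are absorbed into the exponential. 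Summing over $k$ gives two contributions: the dyadic range $2^k\leq R$ produces the logarithmic factor $\ln R$; the range $2^k>R$ sums geometrically to a bounded quantity — both dominated by $\exp(-R/(C\sqrt T))$, which yields \eqref{Prop4Massive:Eq6}.

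The main technical subtlety I expect is the treatment of $g$ in the regime $R\gg\sqrt T$: a naive bound $T\int|g_k|^2\lesssim T^{-1}\int(|\nabla\bar u|^2+|\nabla^2\bar u|^2)$ produces polynomial factors in $R/\sqrt T$. Two devices dispose of this: either absorb such factors into the exponential via $x^p e^{-x/C}\lesssim e^{-x/2C}$, or (more structurally) use that on any region disjoint from $\mathrm{supp}\,f$ the massive equation for $\bar u$ implies $\tfrac1T|\nabla^j\bar u|\lesssim|\nabla^{j+2}\bar u|$, which converts $g$ into an expression of the same dimensional weight as $h$. Either way one recovers the bound $\exp(-R/(C\sqrt T))$ with a possibly enlarged constant $C$, and the polynomial dependence of all constants on $[a]_\alpha$ is preserved throughout because only Schauder-type elliptic regularity (entering through Lemma \ref{LipschitzMassive}) is used.
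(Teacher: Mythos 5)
Your general architecture --- dyadic decomposition around the origin, the massive Lipschitz estimate of Lemma~\ref{LipschitzMassive}, the massive energy estimate, absorption of $R/\sqrt T$ powers into the exponential, and a two-regime split --- matches the paper's proof, and your two-scale residuum equation for $w$ is correct. There is however a genuine gap in your treatment of the non-divergence source: you keep $g=-\frac1T\phi^{(1)}_i\partial_i\bar u-\frac1T(\phi^{(2)}_{ij}-\phi^{(2)}_{ij}(0))\partial_{ij}\bar u$, whose first term does \emph{not} vanish at $x=0$, unlike $\phi^{(2)}_{ij}-\phi^{(2)}_{ij}(0)$ and the flux corrector in $h$, which vanish to first order there. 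As a result your bound $|\nabla w_k(0)|\lesssim 2^{-kd/2}\bigl(\int_{A_k}(|h_k|^2+T|g_k|^2)\bigr)^{1/2}$ loses the factor $\min\{2^k,1\}$ that makes the near-origin part of the dyadic sum geometric: for $2^k\le1$, the term $2^{-kd/2}\bigl(T\int_{A_k}|g_k|^2\bigr)^{1/2}\sim T^{-1/2}|\phi^{(1)}(0)|\,|\nabla\bar u(0)|$ is independent of $k$, so the sum of your estimates over $2^k\le1$ diverges. Neither of your two devices cures this: absorbing $(R/\sqrt T)^p$ into the exponential does not supply the missing spatial vanishing, and $\frac1T|\nabla^j\bar u|\lesssim|\nabla^{j+2}\bar u|$ changes derivative count but not the behaviour in $|x|$ near the origin. (A second, milder defect: since $\phi^{(1)}_i\partial_i\bar u$ sits in the non-divergence source, the energy estimate multiplies it by $T$, producing a prefactor $R^2/\sqrt T$ rather than $R^2/T$; $\sqrt T\exp(-R/(C\sqrt T))$ cannot be dominated by $\exp(-R/(C'\sqrt T))$ when, say, $R\sim\sqrt T\to\infty$.)

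The missing idea is one further integration by parts, exploiting that $\phi^{(1)}_i$ has zero periodic mean by \eqref{ao08}: introduce the periodic vector field $h^{(1)}_i$ with $\nabla\cdot h^{(1)}_i=\phi^{(1)}_i$ (as in \eqref{PeriodicFieldLemma3Massive}), write $\phi^{(1)}_i=\nabla\cdot(h^{(1)}_i-h^{(1)}_i(0))$, and observe
\begin{equation*}
-\tfrac1T\phi^{(1)}_i\partial_i\bar u
=-\tfrac1T\nabla\cdot\bigl((h^{(1)}_i-h^{(1)}_i(0))\,\partial_i\bar u\bigr)
+\tfrac1T\,(h^{(1)}_i-h^{(1)}_i(0))\cdot\nabla\partial_i\bar u .
\end{equation*}
The first piece moves into the divergence source $h$ (so it is not multiplied by $T$ in the energy estimate), the second into $f'$ with one more derivative on $\bar u$, and both now carry the \emph{centered} factor $h^{(1)}_i-h^{(1)}_i(0)$, which by periodic $C^1$ regularity (\eqref{Prop4Massive:Eq3}) is $\lesssim\min\{|x|,1\}$. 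This yields the bound \eqref{Lemma3Massive:Eq3}, $|(h,\sqrt T f')|\lesssim\frac{R^2}{T}\exp(\cdots)\min\{|x|,1\}\,R\sup|\nabla^2 f|$, whose dyadic sum is geometric near the origin (giving the $\ln R$) and whose prefactor $R^2/T$ is harmlessly absorbed into the exponential. With this modification the remainder of your plan carries through.
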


\begin{proof}
We split the proof into three steps. In the following, $\lesssim$ has the same meaning as in the proof of Lemma \ref{LipschitzMassive} and the constant $C$ denotes a general constant depending on $d$, $\lambda$ and $[a]_{\alpha}$ which may change from line to line.

\medskip

We split the arguments between the non-perturbative regime $R\geq 2\sqrt{T}$, that we address in the three first steps, and the perturbative regime $R\leq 2\sqrt{T}$ that we address in the last step. 

\medskip

{\sc Step 1. Pointwise bounds on $\bar u$ and its derivatives. }We claim that 
\begin{equation}\label{Lemma3Massive:Eq2}
\vert (\nabla^3\bar u(x),\tfrac{1}{R}\nabla^2\bar u(x),\tfrac{1}{R^2}\nabla\bar u(x))\vert\lesssim  \exp(-\frac{(\vert x-y\vert-R)_+}{C\sqrt{T}})R\sup\vert\nabla^2 f\vert.
\end{equation}
The estimate \eqref{Lemma3Massive:Eq2} follows from the representation formula $\bar u(x)=\int_{B_R(y)}dz\, f(z)\bar G_T(x-z) $ and the bounds on the constant-coefficient Green function $\bar G_T$
\begin{equation}\label{BoundsBarGT}
\vert\nabla^j\bar G_T(x)\vert\lesssim \vert x\vert^{-j-d+2}\exp(-\frac{\vert x\vert}{C\sqrt{T}})\quad \text{for any $j\geq 0$},
\end{equation}
in form of
\begin{align*}
\vert (\nabla^3\bar u(x),\tfrac{1}{R}\nabla^2\bar u(x),\tfrac{1}{R^2}\nabla\bar u(x))\vert\lesssim & \sup \vert\nabla^2 f\vert\int_{B_R(y)}dz\,(\vert\nabla \bar G_T(x-z)\vert+\frac{1}{R}\vert\bar G_T(x-z)\vert)\\
&+\frac{1}{R^2}\sup \vert \nabla f\vert\int_{B_R(y)}\dd z\,\vert \bar G_T(x-z)\vert,
\end{align*}
which turns into \eqref{Lemma3Massive:Eq2} by appealing to $\sup\vert \nabla f\vert\leq R\sup\vert \nabla^2 f\vert$.

\medskip

{\sc Step 2. The two-scale expansion error. }Following the same computations as for \eqref{ao66}, the error $w$ satisfies
\begin{equation}\label{Lemma3Massive:Eq4}
\frac{1}{T}w-\nabla\cdot a\nabla w=\nabla\cdot h+f',
\end{equation}
where the non-divergence form r.~h.~s term $f'$ comes from the massive term and reads
$$f':=\frac{1}{T}((h^{(1)}_i-h^{(1)}_i(0)) e_j-(\phi^{(2)}_{ij}-\phi^{(2)}_{ij}(0)))\partial_{ij}\bar u,$$
where we introduce the periodic vector field (appealing to \eqref{ao08})
\begin{equation}\label{PeriodicFieldLemma3Massive}
\nabla\cdot h^{(1)}_i=\phi^{(1)}_i.
\end{equation}
The divergence form r.~h.~s term contains an additional term coming from the massive term and reads
$$h:=((\phi^{(2)}_{ij}-\phi^{(2)}_{ij}(0))a-(\sigma_{ij}^{(2)}-\sigma_{ij}^{(2)}(0)))\nabla\partial_{ij}\bar u-\frac{1}{T}(h^{(1)}_i-h^{(1)}_i(0))\partial_i\bar u.$$
We now give an estimate on $h$ and $f'$ that will be useful in the next steps. By Schauder's theory we have
\begin{equation}\label{Prop4Massive:Eq3}
\|(\phi^{(1)},\phi^{(2)},\sigma^{(2)},h^{(1)})\|_{C^{1}([0,1)^d)}\lesssim 1,
\end{equation}
and the estimate \eqref{Lemma3Massive:Eq2} combined with \eqref{Prop4Massive:Eq3} leads to
\begin{equation}\label{Lemma3Massive:Eq3}
\vert(h(x),\sqrt{T}f'(x))\vert\lesssim \frac{R^2}{T}\exp(-\frac{(\vert x-y\vert-R)_+}{C\sqrt{T}}) \min\{\vert x\vert,1\}R\sup\vert\nabla^2 f\vert,
\end{equation}
where we used $R\geq 2\sqrt{T}$.
\medskip

{\sc Step 3. Dyadic decomposition argument. } 
We restrict the r.~h.~s. of \eqref{Lemma3Massive:Eq4} to dyadic annuli:
\begin{equation}\label{Lemma3Massive:Eq5}
h_k:=\mathds{1}_{B_{2^k}\backslash B_{2^{k-1}}}h, \quad f'_k:=\mathds{1}_{B_{2^k}\backslash B_{2^{k-1}}}f'\quad \text{for $2^{k}\leq\sqrt{T}$,}
\end{equation}
and set
\begin{equation}\label{Lemma3Massive:Eq6}
h_\infty:=\mathds{1}_{\mathbb{R}^d\backslash B_{\sqrt{T}}}h,\quad f'_\infty:=\mathds{1}_{\mathbb{R}^d\backslash B_{\sqrt{T}}}f'.
\end{equation}
This induces the decomposition $w=\sum_{2^k\leq \sqrt{T}} w_k+ w_\infty$, where $w_k$ and $w_{\infty}$ are the bounded solutions of
$$\frac{1}{T}w_k-\nabla\cdot a\nabla w_k=\nabla\cdot h_k+f'_k, \quad  \frac{1}{T}w_{\infty}-\nabla\cdot a\nabla w_{\infty}=\nabla\cdot h_{\infty}+f'_{\infty}.$$
We now treat separately the near-field part and the far-field part. For the near-field part, we apply the Lipschitz estimate of Lemma \ref{LipschitzMassive} up to the scale $2^{k-1}$ which we combine with an energy estimate to obtain
\begin{align*}
\vert \nabla w_k(0)\vert \lesssim \big(\fint_{B_{2^{k-1}}}\vert(\nabla w_k,\tfrac{1}{\sqrt{T}} w_k)\vert^2\big)^{\frac{1}{2}}\lesssim  \big(\fint_{B_{2^{k}}}\vert( h,\sqrt{T}f')\vert^2\big)^{\frac{1}{2}}.
\end{align*}
Since we are in the regime $R\geq 2\sqrt{T}$, \eqref{Lemma3Massive:Eq3} implies
$$\big(\fint_{B_{2^{k}}}\vert(h,\sqrt{T}f')\vert^2\big)^{\frac{1}{2}}\lesssim  \exp(-\frac{R}{C\sqrt{T}})\min\{2^{ k},1\}R\sup\vert\nabla^2 f\vert,$$
which provides
$$\vert\sum_{2^k\leq \sqrt{T}}\nabla w_k(0)\vert\lesssim (\ln R)\exp(-\frac{R}{C\sqrt{T}})R\sup\vert\nabla^2 f\vert.$$
For the far-field part, we apply Lemma \ref{LipschitzMassive} up to the scale $\sqrt{T}$ which we combine with the localized energy estimate \cite[(169)]{Gloria_Neukamm_Otto_2019} in form of : there exists a constant $C$ depending on $d$ and $\lambda$ such that for $\eta_{\sqrt{T}}:=\sqrt{T}^{-d}\exp(-\frac{\vert\cdot\vert}{C\sqrt{T}})$,
\begin{align*}
\vert\nabla w_{\infty}(0)\vert\stackrel{\eqref{cw'02}}{\lesssim}\big(\fint_{B_{\sqrt{T}}}\vert(\nabla w_{\infty},\tfrac{1}{\sqrt{T}}w_{\infty})\vert^2\big)^{\frac{1}{2}}\lesssim \big(\int \eta_{\sqrt{T}}\vert(\nabla w_{\infty},\tfrac{1}{\sqrt{T}}w_{\infty})\vert^2\big)^{\frac{1}{2}}\lesssim\big(\int \eta_{\sqrt{T}} \vert ( h,\sqrt{T}f')\vert^2\big)^{\frac{1}{2}}.
\end{align*}
We then split the integral into :
\begin{equation}\label{Lemma3Massive:Eq7}
\big(\int \eta_{\sqrt{T}} \vert (h,\sqrt{T}f')\vert^2\big)^{\frac{1}{2}}\leq \big(\int_{B_{\frac{3}{2}R}(y)} \eta_{\sqrt{T}} \vert (h,\sqrt{T}f')\vert^2\big)^{\frac{1}{2}}+\big(\int_{\mathbb{R}^d\backslash B_{\frac{3}{2}R}(y)} \eta_{\sqrt{T}} \vert (h,\sqrt{T}f')\vert^2\big)^{\frac{1}{2}}.
\end{equation}
Using \eqref{Lemma3Massive:Eq3} and $\eta_{\sqrt{T}}(x)\lesssim \sqrt{T}^{-d}\exp(-\frac{R}{C\sqrt{T}})$ for any $x\in B_{\frac{3}{2}R}(y)$ and $\exp(-\frac{(\vert x-y\vert-R)_+}{C\sqrt{T}})\lesssim \exp(-\frac{R}{C\sqrt{T}})$ for any $x\in \mathbb{R}^d\backslash B_{\frac{3}{2}R}(y)$, we obtain
$$\big(\int \eta_{\sqrt{T}} \vert (h,\sqrt{T}f')\vert^2\big)^{\frac{1}{2}}\lesssim (1+\big(\frac{R}{\sqrt{T}}\big)^{\frac{d}{2}})\frac{R^2}{T}\exp(-\frac{R}{C\sqrt{T}})R\sup\vert\nabla^2 f\vert\lesssim\exp(-\frac{R}{C\sqrt{T}})R\sup\vert\nabla^2 f\vert,$$
where we absorbed the ratio $\big(\frac{R}{\sqrt{T}}\big)^{\frac{d}{2}+2}$ into the exponential. This leads to
$$\vert\nabla w_{\infty}(0)\vert\lesssim   \exp(-\frac{R}{C\sqrt{T}})R\sup\vert \nabla^2 f\vert,$$
and concludes the proof for the regime $R\geq 2\sqrt{T}$.

\medskip

{\sc Step 4. The perturbative regime $R\leq 2\sqrt{T}$.}
For this regime, we proceed in the vein of the proof of Lemma \ref{L:3}. First, by absorbing the ratios $\frac{\vert x\vert}{\sqrt{T}}$ into the exponential, we deduce from \eqref{BoundsBarGT}
$$
\vert\nabla^2 \bar G_T(x)\vert+
\big(\frac{1}{\vert x\vert}+\frac{1}{\sqrt{T}}\big)\vert\nabla \bar G_T(x)\vert+\big(\frac{1}{\vert x\vert^2}+\frac{1}{T}\big)\vert\bar G_T(x)\vert\lesssim \vert x\vert^{-d}.$$
This estimate together with the same computations done for \eqref{Lem3:Eq1} leads to
\begin{equation}\label{Lemma3Massive:Eq2Bis}
\vert(\nabla^3\bar u(x),\tfrac{1}{\sqrt{T}}\nabla^2\bar u(x),\tfrac{1}{T}\nabla\bar u(x))\vert\lesssim \big(\frac{R}{R+\vert x-y\vert}\big)^d R\sup\vert\nabla^2 f\vert.
\end{equation}
We then proceed as in Step 2 of this proof and from \eqref{Lemma3Massive:Eq2Bis} and \eqref{Prop4Massive:Eq3} we have
\begin{equation}\label{Lemma3Massive:Eq3BisBis}
\vert(h(x),\sqrt{T}f'(x))\vert\lesssim \big(\frac{R}{R+\vert x-y\vert}\big)^d\min\{\vert x\vert,1\}R\sup\vert\nabla^2 f\vert.
\end{equation}
Finally, we do the dyadic decomposition of Step 3 up to the scale $\frac{R}{2}$ where the near-field part is controlled using Lemma \ref{LipschitzMassive}, \eqref{Lemma3Massive:Eq3BisBis} and the energy estimate by $\vert\sum_{2^k\leq \frac{R}{2}}\nabla w_k(0)\vert\lesssim (\ln R)R\sup\vert\nabla^2 f\vert$, whereas the far-field part is estimated as follows
\begin{align*}
\vert\nabla w_\infty(0)\vert\stackrel{\eqref{cw'02}}{\lesssim} \big(\fint_{B_{\frac{R}{2}}}\vert(\nabla w_\infty,\tfrac{1}{\sqrt{T}} w_\infty)\vert^2\big)^{\frac{1}{2}} \lesssim & R^{-\frac{d}{2}}\big(\int \vert (h_\infty,\sqrt{T}f'_\infty)\vert^2\big)^{\frac{1}{2}}\\
\stackrel{\eqref{Lemma3Massive:Eq3BisBis}}{\lesssim}& R^{-\frac{d}{2}}R\sup\vert\nabla^2 f\vert\big(\int dx\,\big(\frac{R}{R+\vert x-y\vert}\big)^{2d}\big)^{\frac{1}{2}}\\
\lesssim & R\sup\vert\nabla^2 f\vert,
\end{align*}
which concludes the proof of \eqref{Prop4Massive:Eq6} in the regime $R\leq 2\sqrt{T}$.
\end{proof}
We now turn to the proof of Proposition \ref{Prop4Massive:Statement}.
\begin{proof}[Proof of Proposition \ref{Prop4Massive:Statement}]In the following $\lesssim$ has the same meaning as in the proof of Lemma \ref{LipschitzMassive} and the constant $C$ denotes a general constant depending on $d$ and $\lambda$ which may change from line to line.

\medskip

The proof relies on a weaker version of Lemma \ref{L:1} for the operator $\frac{1}{T}-\nabla\cdot a\nabla$ : for any $u$ such that $\frac{1}{T}u-\nabla\cdot a\nabla u=0$ in $B_R(y)$ for some $y\in\mathbb{R}^d$ and $R\leq \sqrt{T}$, we have 
\begin{equation}\label{BellaMassive}
\vert\nabla u(y)\vert\lesssim\sup_{f\in C^{\infty}_{0}(B_R(y))}\frac{\big\vert\fint_{B_R(y)}fu\big\vert}{R^3\sup\vert\nabla^2 f\vert}.
\end{equation}
The proof of \eqref{BellaMassive} is more elementary, since we ask for a strong control of $\nabla u$ in terms of weak-norms of $u$ itself and relies only on Caccippoli's inequality and an interpolation estimate. In the following, we display the proof of \eqref{BellaMassive}.

\medskip
First, since the constant in \eqref{BellaMassive} depends on $a$ only through $d$ and $\lambda$,
we may rescale and w.~l.~o.~g.~assume $R=1$, $T=1$ and $y=0$. Second, we fix a cut-off function $\eta$ for
$B_\frac{1}{2}$ in $B_\frac{3}{4}$. Using the Lipschitz estimate in Lemma \ref{LipschitzMassive}, \eqref{BellaMassive} will follow from
\begin{align}\label{as01}
\big(\int\eta^6|\nabla u|^2\big)^\frac{1}{2}\lesssim
\sup_{f\in C^{\infty}_{0}(B_1)}\frac{\int f u}
{\big(\int|\nabla^2 f|^2\big)^\frac{1}{2}}.
\end{align}
The starting point is the standard
Caccioppoli estimate
\begin{align*}
\big(\int\eta^6|\nabla u|^2\big)^\frac{1}{2}
=\big(\int(\eta^3|\nabla u|)^2\big)^\frac{1}{2}\lesssim 
\big(\int (u|\nabla \eta^3|)^2\big)^\frac{1}{2}
\lesssim\big(\int \eta^4 u^2\big)^\frac{1}{2}.
\end{align*}
The main ingredient is the interpolation estimate
\begin{align}\label{as03}
\int\eta^4v^2
\lesssim\big(\int\eta^6|\nabla v|^2\big)^\frac{2}{3}
\big(\int((1-\triangle)^{-1}v)^2\big)^\frac{1}{3}
+\int((1-\triangle)^{-1}v)^2,
\end{align}
which we apply to $v=\eta_0u$, where we fix a cut-off function $\eta_0$ for $B_\frac{3}{4}$
in $B_1$, to the effect of $\eta^4 u^2$ $=\eta^4 v^2$
and $\eta^6|\nabla v|^2=\eta^6|\nabla u|^2$. 
The remaining ingredient is
\begin{align}\label{as02}
\big(\int((1-\triangle)^{-1}\eta_0u)^2\big)^\frac{1}{2}
\lesssim \sup_{f\in C^{\infty}_{0}(B_R)}\frac{\int fu}
{\big(\int|\nabla^2 f|^2\big)^\frac{1}{2}}.
\end{align}

\medskip

The argument for (\ref{as02}) is straightforward: As can be easily seen by means
of the Fourier transform, the l.~h.~s.~is identical to
\begin{align*}
\sup_{ f}\frac{\int f \eta_0u}
{\big(\int( f^2+2|\nabla f|^2+|\nabla^2 f|^2)\big)^\frac{1}{2}},
\end{align*}
so that (\ref{as02}) follows from Leibniz' rule in form of
\begin{align*}
\int|\nabla^2\eta_0 f|^2\lesssim \int( f^2+2|\nabla f|^2+|\nabla^2 f|^2).
\end{align*}

\medskip

Using the abbreviation $w=(1-\triangle)^{-1}v$, the interpolation estimate (\ref{as03})
follows from the two interpolation estimates
%
%
\begin{align}
\int\eta^4v^2
\lesssim\big(\int\eta^6|\nabla v|^2\big)^\frac{1}{2}
\big(\int\eta^2|\nabla w|^2\big)^\frac{1}{2}+\int\eta^2(w^2+|\nabla w|^2),\label{as04}\\
\int\eta^2|\nabla w|^2
\lesssim\big(\int\eta^4v^2\big)^\frac{1}{2}
\big(\int w^2\big)^\frac{1}{2}+\int w^2,\label{as05}
\end{align}
namely by inserting (\ref{as05}) into (\ref{as04}) and appealing to Young's inequality.
For (\ref{as04}), we write the l.~h.~s.~as $\int\eta^4v(1-\triangle)w$, hence by 
integration by parts
\begin{align*}
\int\eta^4v^2=\int\eta^4\nabla v\cdot\nabla w+3\int\eta^3v\nabla\eta\cdot\nabla w+\int\eta^4vw,
\end{align*}
so that (\ref{as04}) follows from Cauchy-Schwarz and Young.
For (\ref{as05}), we use integration by parts to rewrite the l.~h.~s.~as
$\int\eta^2 w(-\triangle) w+\int w^2\triangle\frac{1}{2}\eta ^2$.
Hence we obtain
$\int\eta^2|\nabla w|^2$ $=\int\eta^2 w v+\int w^2(\triangle\frac{1}{2}\eta ^2-\eta^2)$,
so that (\ref{as05}) reduces to Cauchy-Schwarz.

\medskip

As for Lemma \ref{Lemma3MassiveVersion}, we split the argument between the non-perturbative regime $R:=\frac{\vert x_0-y_0\vert}{2}\geq \sqrt{T}$, that we address in the five first steps, and the perturbative regime $R\leq \sqrt{T}$ that we address in the last step.

\medskip

{\sc Step 1. Passage to the full error in the two-scale expansion.}
The computations done in the Step 1 of Section \ref{ProofOfProp4Random} extend in a straightforward way to the present setting : defining the full error of the second-order
two-scale expansion of the constant-coefficient fundamental solution $\bar G_T$,
which is given by
    \begin{equation}\label{Def_VBis}
    \begin{aligned}
      w_{x_0,y_0,T}(x,y) := G_T(x,y)-&\big(1+\phi^{(1)}_{i}(x)\partial_i
      +(\phi^{(2)}_{im}(x)-\phi^{(2)}_{im}(x_0))\partial_{im}\big)\\
      &\times\big(1-\phi^{*(1)}_j(y)\partial_j+(\phi^{*(2)}_{jn}(y)
      -\phi_{jn}^{*(2)}(y_0))(y)
      \partial_{jn}\big)\bar{G}_T(x-y),
    \end{aligned}
    \end{equation}
we have, from \eqref{Prop4Massive:Eq3} and the bounds on $\bar G_T$ \eqref{BoundsBarGT}
 \begin{equation}\label{Num:252Bis}
      |\nabla\nabla w_{x_0,y_0,T}(x_0,y_0) - \mathcal{E}_T(x_0,y_0)| \lesssim |x_0-y_0|^{-d-2}\exp(-\frac{\vert x_0-y_0\vert}{C\sqrt{T}}).
    \end{equation}
Therefore, to obtain the desired estimate $\eqref{fw56}$ for $L=1$
it suffices to show
\begin{equation}\label{Num:2370Bis}
   |\nabla \nabla w_{x_0,y_0,T}(x_0,y_0)|
  \lesssim (\ln \vert x_0-y_0\vert)|x_0-y_0|^{-d-2} \exp(-\frac{\vert x_0-y_0\vert}{C\sqrt{T}}).
\end{equation}

\medskip

{\sc Step 2. A decomposition of the full error $\nabla\nabla w_{x_0,y_0,T}(x_0,y_0)$.}
In this step, we shall
derive a characterizing PDE (\ref{Eq_sur_VBis}) of the full error \eqref{Def_VBis} in order to split
it into a far-field part $w_{x_0,y_0,T,\infty}$ and dyadic near-field parts $w_{x_0,y_0,T,k}(x,\cdot)$, which will be 
explicitly given later on. The distinction between
far and near fields refers to the scale $R=|x_0-y_0|/2$.
Recall that $w_{x_0,y_0,T}(x,y)$ involves the two-scale expansion in both the $x$
and $y$ variables; we now freeze $x=x_0$ and consider $y$ as the ``active'' variable. For the ease of the statement, we make use of the notation
\begin{equation}\label{NotationuBarExpGBis}
\bar u_{x_0,T}(x,\cdot):=
\big(1+\phi_i^{(1)}(x)\partial_i+(\phi^{(2)}_{im}(x)-\phi_{im}^{(2)}(x_0))\partial_{im}\big)
\bar G_T(x-\cdot).
\end{equation}
This amounts to rewriting $ w_{x_0,y_0,T}(x_0,\cdot)$ as follows:
  \begin{equation*}\label{ExpandVBis}
  \begin{aligned}
      w_{x_0,y_0,T}(x_0,\cdot)=
    & G_T(x_0,\cdot)-
    \big(1-\phi^{*(1)}_{j}\partial_j+
    (\phi_{jn}^{*(2)}-\phi_{jn}^{*(2)}(y_0))
    \partial_{jn}\big)
    \bar{u}_{x_0,T}(x_0,\cdot).
  \end{aligned}
  \end{equation*}
We note that $(\frac{1}{T}-\nabla\cdot a^*\nabla)G_T(x_0,\cdot)=(\frac{1}{T}-\nabla\cdot \bar a^*\nabla)\bar u_{x_0,T}(x_0,\cdot)=0$ in $\mathbb{R}^d\backslash \{x_0\}$.
Hence, as in \eqref{Lemma3Massive:Eq4}, we obtain the representation of the error in the second order two-scale expansion 
\begin{equation}\label{Eq_sur_VBis}
 (\frac{1}{T} -\nabla  \cdot  a^* \nabla) w_{x_0,y_0,T}(x_0,\cdot)= \nabla \cdot h_{x_0,y_0,T}(x_0,\cdot)+f_{x_0,y_0,T}  \quad
  \text{in}\quad \R^d \setminus \{x_0\},
\end{equation}
where
\begin{equation}\label{Def_hBis}
\begin{aligned}
  h_{x_0,y_0,T}(x,\cdot) := &\big((\phi^{*(2)}_{jn}-\phi^{*(2)}_{jn}(y_0))a^* -(\sigma^{*(2)}_{jn}-\sigma_{jn}^{*(2)}(y_0))\big) \nabla\partial_{jn}\bar{u}_{x_0,T}(x,\cdot)\\
  &-\frac{1}{T}(h^{(1)}_i-h^{(1)}_i(y_0))\partial_i\bar u_{x_0,T}(x,\cdot)
  \end{aligned}
\end{equation}
and 
\begin{equation}\label{Def_HBis}
f_{x_0,y_0,T}(x,\cdot):=\frac{1}{T}\big((h^{(1)}_i-h^{(1)}_i(y_0))e_j-(\phi^{(2)}_{ij}-\phi^{(2)}_{ij}(y_0))\big)\partial_{ij}\bar u_{x_0,T}(x,\cdot),
\end{equation}
with $h^{(1)}_i$ given by \eqref{PeriodicFieldLemma3Massive}.
Next, we define the dyadic near-field (scalar) functions :
\begin{equation}\label{Num:220Bis}
\begin{aligned}
(\frac{1}{T}-\nabla\cdot a^*\nabla) w_{x_0,y_0,T,k}(x,\cdot)
=&\nabla\cdot
\mathds{1}_{B_{2^k}(y_0) \backslash B_{2^{k-1}}(y_0)}h_{x_0,y_0,T}(x,\cdot)\\
&+\mathds{1}_{B_{2^k}(y_0) \backslash B_{2^{k-1}}(y_0)}f_{x_0,y_0,T}(x,\cdot),
\end{aligned}
\end{equation}
and the far-field function:
\begin{equation}\label{Num:220-1Bis}
\begin{aligned}
w_{x_0,y_0,T,\infty}:=w_{x_0,y_0,T}-\sum_{2^k\le \sqrt{T}}w_{x_0,y_0,T,k},
\end{aligned}
\end{equation}
In fact, we
are interested in the quantities
$\nabla_{x} w_{x_0,y_0,T,k}(x_0,\cdot)$ and
$\nabla_{x}  w_{x_0,y_0,T,\infty}(x_0,\cdot)$, which we address in two steps.

\medskip

We finally give an estimate on $h_{x_0,y_0,T}$ and $f_{x_0,y_0,T}$ that will be useful in the next steps : We start by estimating its constitutive element $\bar u_{x_0,T}$ (see \eqref{NotationuBarExpGBis}) and we obtain from \eqref{Prop4Massive:Eq3}, \eqref{BoundsBarGT} and $R\geq 1$
  \begin{equation}\label{Num:2401Bis}
    \sup_{y\in B_{\sqrt{T}}(y_0)}|\nabla^j
    \partial_{x_i}\bar{u}_{x_0,T}(x_0,y)|
    \lesssim R^{-j-d+1} \exp(-\frac{R}{C\sqrt{T}})\quad \text{for any $j\geq 0$,}
  \end{equation}
so that applying once more \eqref{Prop4Massive:Eq3} and absorbing the ratios $\frac{R}{\sqrt{T}}$ into the exponential leads to 
\begin{equation}\label{Lemma3Massive:Eq3Bis}
\vert(\partial_{x_i}h_{x_0,y_0,T}(x,\cdot),\sqrt{T}\partial_{x_i}f_{x_0,y_0,T}(x,\cdot))\vert\lesssim R^{-d-2}\exp(-\frac{R}{C\sqrt{T}})\min\{\vert \cdot-y_0\vert,1\}.
\end{equation}

{\sc Step 3. Estimate of the near-field parts
$\nabla\nabla w_{x_0,y_0,T,k}(x_0,\cdot)$. }
Note that applying $\nabla_x$ and evaluating at $x=x_0$ commutes with the differential operator $\frac{1}{T}-\nabla\cdot a^*\nabla
$.
For the ease of notation, we fix an arbitrary coordinate direction $i=1,\cdots,d$ and introduce the abbreviation\footnote{Throughout the proof,
we use $\partial_{x_i}$ (or $\nabla_x$) if
the partial derivative (or the gradient) is
taken w.r.t. the first variable.
But we may write $\nabla$ for $\nabla_y$
since $y$ here is the ``active'' variable.} 
$w_{T,k,i}(y):=\partial_{x_i} w_{x_0,y_0,T,k}(x_0,y)$.  By applying the Lipschitz estimate of Lemma \ref{LipschitzMassive} combined with the energy estimate and \eqref{Lemma3Massive:Eq3Bis}, we have
\begin{equation}\label{num:220-7Bis}
  \begin{aligned}
    \vert \nabla
    w_{T,k,i}(y_0) \vert
    \stackrel{\eqref{cw'02}}{\lesssim} &\big(\fint_{B_{2^{k-1}}(y_0)}\vert(\nabla w_{T,k,i},\tfrac{1}{\sqrt{T}}w_{T,k,i}) \vert^2\big)^{\frac{1}{2}}\\
    \stackrel{\eqref{Num:220Bis}}{\lesssim}&  \big(\fint_{B_{2^{k}}(y_0)}\vert
    (\partial_{x_i} h_{x_0,y_0,T}(x_0,\cdot),\sqrt{T}\partial_{x_i}f_{x_0,y_0,T})\vert^2\big)^{\frac{1}{2}}\\
  \stackrel{\eqref{Lemma3Massive:Eq3Bis}}{\lesssim} & R^{-d-2}\exp(-\frac{R}{C\sqrt{T}})\min\{2^{k},1\}.
  \end{aligned}
\end{equation}
Then, summing up yields
\begin{equation}\label{pri:4.23Bis}
\sum_{2^k\le \sqrt{T}}
    \vert\nabla w_{T,k,i}(y_0) \vert
    \lesssim (\ln R)R^{-d-2} \exp(-\frac{R}{C\sqrt{T}}).
\end{equation}
\medskip

{\sc Step 4. Estimate of the near-field parts $\nabla_{x} w_{x_0,y_0,T,k}(x_0,\cdot)$ in a weak norm. }
In the sequel, $f=f(y)$ always denotes an
arbitrary smooth function compactly supported in $B_{\sqrt{T}}(y_0)$.
We now justify a weak control on $\nabla_{x}  w_{x_0,y_0,\infty}(x_0,\cdot)$ that will be useful in Step $5$ when appealing to \eqref{BellaMassive} :
\begin{equation}\label{Num:220-3Bis}
\sum_{2^k\le \sqrt{T}}\big|
\int fw_{T,k,i}\big| \lesssim \exp(-\frac{R}{C\sqrt{T}})R^{-1}\sup|f|,
\end{equation}
where we recall that $w_{T,k,i}=\partial_{x_i}w_{x_0,y_0,T,k}$ where the latter is defined in \eqref{Num:220Bis}. The estimate \eqref{Num:220-3Bis} is a consequence of Cauchy-Schwarz' inequality followed by the energy estimate and \eqref{Lemma3Massive:Eq3Bis} : 
\begin{align*}
\big\vert\int fw_{T,k,i}\big\vert\leq& \big(\int\vert f\vert^2\big)^{\frac{1}{2}}\big(\int \vert w_{T,k,i}\vert^{2}\big)^{\frac{1}{2}}\\
\lesssim &  \sqrt{T}^{\frac{d}{2}+1}\sup\vert f\vert\big(\int \vert \frac{1}{\sqrt{T}}  w_{T,k,i}\vert^2\big)^{\frac{1}{2}}\\
\stackrel{\eqref{Num:220Bis}}{\lesssim}& \sqrt{T}^{\frac{d}{2}+1}\sup\vert f\vert\big(\int_{B_{2^k}(y_0)} \vert (\partial_{x_i}h_{x_0,y_0,T}(x_0,\cdot),\sqrt{T} \partial_{x_i} f_{x_0,y_0,T})\vert^2\big)^{\frac{1}{2}}\\
\stackrel{\eqref{Lemma3Massive:Eq3Bis}}{\lesssim} &2^{\frac{kd}{2}}\sqrt{T}^{\frac{d}{2}+1} R^{-d-2} \exp(-\frac{R}{C\sqrt{T}})\sup \vert f\vert.
\end{align*}
Thus, summing up yields \eqref{Num:220-3Bis}, by appealing to $R\geq \sqrt{T}$.

\medskip

{\sc Step 5. Estimate of the far-field part $\nabla\nabla_{x}  w_{x_0,y_0,T,\infty}(x_0,\cdot)$ by a duality argument. }Again,
for the ease of notation we introduce the abbreviation
$w_{T,\infty,i}(y):=\partial_{x_i}w_{x_0,y_0,T,\infty}(x_0,y)$
with an arbitrary coordinate direction
$i=1,\cdots,d$,
which solves $(\frac{1}{T}-\nabla\cdot a^*\nabla)w_{T,\infty,i}(y)=0$ on $B_{2^{k_0}}(y_0)$.
While in the previous
two steps (mostly) relied on homogenization in the $y$-variable, we now (primarily) need homogenization in the $x$-variable,
in form of Lemma \ref{Lemma3MassiveVersion}. We start with
an application of \eqref{BellaMassive} which we combine with \eqref{Num:220-1Bis} and the triangle inequality :
\begin{equation}\label{Num:235Bis}
\begin{aligned}
 |\nabla  w_{T,\infty,i}(y_0)|
\stackrel{\eqref{BellaMassive}}{\lesssim}&
 \sup_{f\in C^\infty_0(B_{\sqrt{T}}(y_0))}
\frac{\big|
\fint_{B_{\sqrt{T}}(y_0)}f  w_{T,\infty,i} \big|}{\sqrt{T}^{3}\sup|\nabla^2 f|}\\
 \overset{\eqref{Num:220-1Bis}}{\lesssim}&
\sup_{f\in C^\infty_0(B_{\sqrt{T}}(y_0))}
\frac{\big|\int
f\partial_{x_i} w_{x_0,y_0,T}(x_0,\cdot)\big|}{\sqrt{T}^{d+3}
\sup|\nabla^2 f|}+
\sup_{f\in C^\infty_0(B_{\sqrt{T}}(y_0))}
\frac{\sum_{2^k\leq \sqrt{T}}\big|
\int f w_{T,k,i} \big|}{\sqrt{T}^{d+3}\sup|\nabla^2 f|}.
\end{aligned}
\end{equation}
The second contribution is a direct consequence of $\eqref{Num:220-3Bis}$, 
$$\sup_{f\in C^\infty_0(B_{\sqrt{T}}(y_0))}
\frac{\sum_{2^k\leq \sqrt{T}}\big|
\int f w_{T,k,i} \big|}{\sqrt{T}^{d+3}\sup|\nabla^2 f|}\lesssim \Big(\frac{R}{\sqrt{T}}\Big)^{d+3}R^{-d-2}\exp(-\frac{R}{C\sqrt{T}})\lesssim R^{-d-2}\exp(-\frac{R}{C\sqrt{T}}),$$
where we absorbed the ration $(\frac{R}{\sqrt{T}})^{d+3}$ into the exponential. We now need a similar estimate on the first contribution, namely,
    \begin{equation}\label{pri:4.22Bis}
    \begin{aligned}
    \big|
    \int f \partial_{x_i} w_{x_0,y_0,T}(x_0,\cdot) \big|
    \lesssim 
(\ln R)\exp(-\frac{R}{C\sqrt{T}})R\sup |\nabla^2 f|.
    \end{aligned}
    \end{equation}
Equipped with \eqref{pri:4.22Bis}, \eqref{Num:2370Bis} follows from applying the triangle inequality to \eqref{Num:220-1Bis}, into which we insert \eqref{pri:4.23Bis} and \eqref{Num:235Bis}, where we use \eqref{pri:4.22Bis} and \eqref{Num:220-3Bis}.

\medskip

We now focus on the
argument for $\eqref{pri:4.22Bis}$.
Let $f\in C_0^\infty(B_{\sqrt{T}}(y_0))$ be arbitrary, and let $u$, $\bar u$ as in \eqref{Lemma3Massive:Eq1'}. We recall the definition of the error in the two-scale expansion that we express in terms of the Green functions $G_T,\bar G_T$ using \eqref{Lemma3Massive:Eq1'}:
  \begin{equation*}
  \begin{aligned}
   w_{x_0}(x)    &:= u(x) - \big(1+\phi_{i}^{(1)}(x)\partial_i+
   (\phi_{im}^{(2)}-\phi_{im}^{(2)}(x_0))(x)
   \partial_{im}
   \big)
   \bar{u}(x)\\
   &\overset{\eqref{NotationuBarExpGBis}}{=} \int
   f(G_T(x,\cdot) - \bar u_{x_0,T}(x,\cdot)),
  \end{aligned}
  \end{equation*}
 Then,  taking derivatives on the both sides with respect to the $x$-variable leads to
  \begin{equation}\label{Num:220-6Bis}
  \begin{aligned}
  \partial_{x_i} w_{x_0}(x)
   =\int
  f
  \big(\partial_{x_i} G_T(x,\cdot) - \partial_{x_i}\bar{u}_{x_0,T}(x,\cdot)\big).
  \end{aligned}
  \end{equation}
We now express the integral on the l.~h.~s. of \eqref{pri:4.22Bis} with help of \eqref{Num:220-6Bis}. We split the integral on the l.~h.~s. of \eqref{pri:4.22Bis} as follows:
%
  \begin{equation}\label{Split_vBis}
  \begin{aligned}
  \lefteqn{\int f \partial_{x_i}  w_{x_0,y_0,T}(x_0,\cdot)}\\
  &\overset{\eqref{Def_VBis},\eqref{NotationuBarExpGBis}}{=}\int
f\big(\partial_{x_i} G_T(x_0,\cdot)-
  \partial_{x_i}\bar{u}_{x_0,T}(x_0,\cdot) \big)\\
  &\quad\quad+
  \int
 f \big((\phi^{*(1)}_{j}\partial_j-(\phi_{jn}^{*(2)}
  -\phi_{jn}^{*(2)}(y_0))
  \partial_{jn})\partial_{x_i}\bar{u}_{x_0,T}(x_0,\cdot)\big)
  \\
  &\overset{\eqref{Num:220-6Bis}}{=}
  \partial_{x_i} w_{x_0}(x_0)+ \int
 f \big((\phi^{*(1)}_{j}\partial_j-(\phi_{jn}^{*(2)}
  -\phi_{jn}^{*(2)}(y_0))
  \partial_{jn})\partial_{x_i}\bar{u}_{x_0,T}(x_0,\cdot)\big).
  \end{aligned}
  \end{equation}
  For the first r.~h.~s. term of $\eqref{Split_vBis}$,
  it follows from \eqref{Prop4Massive:Eq6} that
  \begin{equation}\label{pri:4.24Bis}
|\nabla w_{x_0}(x_0) |
  \lesssim (\ln R)\exp(-\frac{R}{C\sqrt{T}}) R\sup|\nabla^2 f|.
  \end{equation}
  For the second r.~h.~s.\ term, we exploit the periodic vector field $h^{(1)}_j$ defined in \eqref{PeriodicFieldLemma3Massive} to obtain
\begin{align*}
\int
& f \big((\phi^{*(1)}_{j}\partial_j-(\phi_{jn}^{*(2)}
  -\phi_{jn}^{*(2)}(y_0))
  \partial_{jn})\partial_{x_i}\bar{u}_{x_0,T}(x_0,\cdot)\big)\\
  \stackrel{\eqref{PeriodicFieldLemma3Massive}}{=}&\int
 f \big((\nabla\cdot h^{(1)}_j\partial_j-(\phi_{jn}^{*(2)}
  -\phi_{jn}^{*(2)}(y_0))
  \partial_{jn})\partial_{x_i}\bar{u}_{x_0,T}(x_0,\cdot)\big)\\
  =&-\int
 f \big((h^{(1)}_{jn}-(\phi_{jn}^{*(2)}
  -\phi_{jn}^{*(2)}(y_0))
  )\partial_{jn}\partial_{x_i}\bar{u}_{x_0,T}(x_0,\cdot)\big)\\
  &-\int \nabla f\cdot h^{(1)}_j\partial_j\partial_{x_i}\bar u_{x_0,T}(x_0,\cdot)
\end{align*}
Thus, using \eqref{Num:2401Bis} and \eqref{Prop4Massive:Eq3}, we deduce
\begin{align*}
\big\vert \int f \big((\phi^{*(1)}_{j}\partial_j-(\phi_{jn}^{*(2)}
  -\phi_{jn}^{*(2)}(y_0))
  \partial_{jn})\partial_{x_i}\bar{u}_{x_0,T}(x_0,\cdot)\big)\big\vert\lesssim& \exp(-\frac{R}{C\sqrt{T}})(R^{-1}\sup\vert f\vert+\sup\vert\nabla f\vert)\\
  \lesssim&  \exp(-\frac{R}{C\sqrt{T}})R\sup \vert\nabla^2 f\vert,
\end{align*}
which concludes the proof of \eqref{pri:4.22Bis}.

\medskip

{\sc Step 6. The perturbative regime $R\leq \sqrt{T}$.} In this regime, we change Step 2 by stopping the dyadic decomposition at scale $R$ and replacing \eqref{Num:2401Bis} by
$$\sup_{y\in B_R(y_0)}\vert\nabla^j\partial_{x_i}\bar u_{x_0,T}(x_0,y)\vert\lesssim R^{-j-d+1}.$$
We get as in \eqref{Lemma3Massive:Eq3Bis}, using $R\leq \sqrt{T}$
$$\vert(\partial_{x_i}h_{x_0,y_0,T}(x,\cdot),\sqrt{T}\partial_{x_i}f_{x_0,y_0,T}(x,\cdot))\vert\lesssim R^{-d-2}\min\{\vert \cdot -y_0\vert,1\}.$$
The further steps are unchanged up to replacing $\sqrt{T}$ by $R$ (and ignoring the exponential terms).
\end{proof}
\section{Intermediate results for the heuristic result of Section \ref{S:heur}}
We prove in this section some intermediate results for the heuristic argument in Section \ref{S:heur}.
\subsection{Argument for (\ref{rr08}) : symmetry of $c^{\text{sym}}_L$}\label{SS:App2}

By periodicity (\ref{rr09}), it is enough to consider $x_d,y_d\in[-\frac{L}{2},\frac{L}{2}]$.
If $x_d\in[-\frac{L}{2},0]$ we appeal to reflection symmetry (\ref{rr06}) to replace
the corresponding argument on the l.~h.~s.~of (\ref{rr08}) by $-x_d\in[0,\frac{L}{2}]$; 
if $x_d\in[0,\frac{L}{2}]$ we use both periodicity and reflection symmetry to replace
the corresponding argument on the r.~h.~s.~ of \eqref{rr08} by $\frac{L}{2}-x_d\in[0,\frac{L}{2}]$.
We proceed the same way for $y_d$. This way, all four arguments are in the range $[0,\frac{L}{2}]$
where (\ref{rr07})
applies. We also note that the isotropy of $c$ leads to reflection symmetry of $c$ in $x_d$,
which by (\ref{rr17}) transmits to $c_L'$, that is,
\begin{align}\label{rr33}
c_L'(z',z_d)=c_L'(z',-z_d).
\end{align}
It remains to distinguish the three (relevant) cases, 
\begin{align*}
&\mbox{if}\;(x_d,y_d)\in[0,{\textstyle\frac{L}{2}}]^2\;\mbox{then}\nonumber\\
&\mbox{}\;c^{sym}_{L}(x+\tfrac{L}{2}e_d,y+\tfrac{L}{2}e_d)=c^{sym}_{L}((x',\tfrac{L}{2}-x_d),(y',\tfrac{L}{2}-y_d))\stackrel{\eqref{rr07},\eqref{rr33}}{=}c^{sym}_L(x,y),\\
&\mbox{if}\;(x_d,y_d)\in[0,{\textstyle\frac{L}{2}}]\times[-{\textstyle\frac{L}{2}},0]
\;\mbox{then}\nonumber\\
&\mbox{}\;c^{sym}_{L}(x+\tfrac{L}{2}e_d,y+\tfrac{L}{2}e_d)=c^{sym}_{L}((x',\tfrac{L}{2}-x_d),y+\tfrac{L}{2}e_d)\stackrel{\eqref{rr07},\eqref{rr06},\eqref{rr33}}{=}c^{sym}_L(x,y),\nonumber\\
&\mbox{if}\;(x_d,y_d)\in[-{\textstyle\frac{L}{2}},0]^2\;\mbox{then}\nonumber\\
&\mbox{}\;c^{sym}_{L}(x+\tfrac{L}{2}e_d,y+\tfrac{L}{2}e_d)\stackrel{\eqref{rr07},\eqref{rr33}}{=}c^{sym}_L(x,y).
\end{align*}


\subsection{Computation of the integral (\ref{rr31})}\label{SS:App1}
We change variables in (\ref{rr30}) according to
\begin{align*}
z_d=x_d-y_d,\quad w_d=x_d+y_d,
\end{align*}
so that $\int_0^\infty \int_{-\infty}^0 dx_ddy_d$ $=2\int_0^\infty dz_d\int_{-z_d}^{z_d}dw_d$
$=2\int_{-\infty}^\infty dw_d\int_{|w_d|}^{\infty}dz_d$. We note that because of our isotropy assumption,
$r\partial_r{\mathcal A}(c)$ is radial and thus the integrand in (\ref{rr30}) is in particular invariant under $w_d\mapsto-w_d$.
This allows to substitute $\int_0^\infty dz_d\int_{-z_d}^{z_d}dw_d$
$=2\int_0^\infty dz_d\int_{0}^{z_d}dw_d$ and $\int_{-\infty}^\infty dw_d\int_{|w_d|}^{\infty}dz_d$
$=2\int_{0}^\infty dw_d\int_{w_d}^{\infty}dz_d$. Hence from (\ref{rr30}) we obtain
\begin{align*}
I=
16\int_{\mathbb{R}^{d-1}\times(0,\infty)}dx(r\partial_r){\mathcal A}(c)(x)
\Big(\int_{x_d}^\infty\dd y_d \partial_1^2G_{\rm hom}(x',y_d)  - x_d \partial_1^2G_{\rm hom}(x) \Big)
\end{align*}
Once more by the isotropy assumption, the value of this expression is the same when
$\partial_1^2$ is replaced by $\partial_i^2$ for $i=1,\cdots,d-1$. By the characterizing 
property of the fundamental solution we have $\sum_{i=1}^{d-1}\partial_i^2G_{\rm hom}=
-\delta-\partial_d^2G_{\rm hom}$. The contribution from the Dirac $\delta$ vanishes since
$r\partial_r{\mathcal A}(c)$ vanishes (actually to second order) at $x=0$.
Integrating in $y_d$, this yields
\begin{align*}
I=\frac{16}{d-1}\int_{\mathbb{R}^{d-1}\times(0,\infty)}dx
(r\partial_r){\mathcal A}(c)\big(x_d\partial_d^2G_{\rm hom} + \partial_dG_{\rm hom}\big).
\end{align*}
By radial symmetry of $G_{\rm hom}$ we have $\partial_dG_{\rm hom}$ 
$=\frac{x_d}{r}\partial_rG_{\rm hom}$, so that together with the characterizing property
$\partial_rG_{\rm hom}$ $=-\frac{1}{|\partial B_1|}\frac{1}{r^{d-1}}$
we obtain $\partial_dG_{\rm hom}$ $=-\frac{x_d}{|\partial B_1|}\frac{1}{r^{d}}$
and thus by $x_d\partial_d^2G_{\rm hom}$ $=-\frac{x_d}{|\partial B_1|}\frac{1}{r^{d}}$
$-\frac{x_d}{|\partial B_1|}(-\frac{d x_d}{r^{d+1}})\frac{x_d}{r}$. Hence we obtain
\begin{align*}
I=\frac{16}{d-1}\int_{\mathbb{R}^{d-1}\times(0,\infty)}dx
(r\partial_r){\mathcal A}(c)\frac{1}{|\partial B_1|}\frac{1}{r^{d+2}}\big(-2x_dr^2+dx_d^3\big).
\end{align*}
By radial coordinates, this expression factorizes into
\begin{align*}
I=\frac{16}{d-1}\Big(\int_0^\infty dr(r\partial_r){\mathcal A}(c)\Big)
\frac{1}{|\partial B_1|}\int_{\partial B_1\cap\{x_d>0\}}dx
\big(-2x_d+dx_d^3\big).
\end{align*}
Thanks to the normalization ${\mathcal A}(0)=0$, we may integrate by parts in the
first expression in order to obtain
\begin{align*}
I=\frac{16}{d-1}\frac{1}{|\partial B_1|}
\int_0^\infty dr{\mathcal A}(c)\,I'\;\;\mbox{where}\;\;I':=
\int_{\partial B_1\cap\{x_d>0\}}dx\big(2x_d-dx_d^3\big).
\end{align*}
It remains to compute $I'$.

\medskip

We first note that
\begin{align*}
I'
=2\int_{\{|x|\le 1\}\cap\{x_d>0\}}x_d,
\end{align*}
which can be seen from identifying the l.~h.~s.~integrand with the normal
component of the vector field $2x_d x-dx_d^2e_d$, and applying the divergence
theorem on $\{|x|\le 1\}\cap\{x_d>0\}$. The r.~h.~s.~ integral
can easily be made explicit:
Using first Fubini's theorem based on $x=(x',x_d)$, 
and then radial coordinates $|x'|$, we obtain
\begin{align*}
I'=\int_{\{|x'|\le 1\}}(1-|x'|^2)
=(1-\frac{d-1}{d+1})\int_{\{|x'|\le 1\}}1=\frac{2}{d+1}|B_1'|.
\end{align*}
\subsection{Argument for \eqref{rr14} : representation formula for the $L$-derivative}\label{SS:App3}
Let $\hat{c}_L(x,y):=c_L(Lx,Ly)$ be the rescaled covariance function. The family of covariance functions $\hat{c}_L$ is $1$-periodic and we assume \eqref{rr21}, which takes the form
\begin{equation}\label{MJ:Num:0002}
	\frac{d}{d L}\hat{c}_L(x,x) = 0 \qquad \text{ for all } x \in [-\tfrac{1}{2},\tfrac{1}{2})^d.
\end{equation}
In this case, $\langle\cdot\rangle_L$ is determined by its covariance function $\hat{c}_L$ that we identify with its push-forward under $a=A(g)$. For any $\xi,\xi^*\in\mathbb{R}^d$, we define the corrector $\phi:=\sum_{i}\xi_i\phi_i$ (and $\phi^*$ accordingly) and
\begin{align*}
\xi^* \cdot \abar \xi := \int_{[-\frac{1}{2},\frac{1}{2})^d} \xi^* \cdot a (\xi + \nabla \phi).
\end{align*}
We shall prove, as a consequence Price's formula, that
\begin{equation}\label{MJ:Num:0001}
\begin{aligned}
\frac{d}{d L} \langl \xi^*\cdot\abar\xi  \rangl_{L}
=
-\int_{[-\frac{1}{2},\frac{1}{2})^d} \dd x \int_{[-\frac{1}{2},\frac{1}{2})^d} \dd y \big\langle (a'(\xi^* +\nabla \phi^*))(x) \cdot \nabla_x \nabla_y G^{per}(x,y)(a' (\xi +\nabla \phi))(y) \big\rangle_{L}
\frac{d}{dL} \hat{c}_L(x,y),
\end{aligned}
\end{equation}
where $G^{per}$ denotes the Green function associated with the operator $-\nabla\cdot a\nabla$ on the torus $[0,1)^d$. Note that the rescaling in $L$ of \eqref{MJ:Num:0001} is exactly \eqref{rr14}.
\medskip

In contrast to Section \ref{SS:formula}, here we assume that the Gaussian field $g$ is not defined on the whole space, but only restricted to the torus $[0,1)^d$.
	However, we may establish \eqref{MJ:Num:0001} by following the lines leading to \eqref{ao04}.
	
	Indeed, applying Price's formula \cite{PriceFormula}, we get
	\begin{equation}\label{MJ:Num:0003}
	\begin{aligned}
	\frac{d}{dL} \langl \xi^*\cdot\abar\xi  \rangl_{L}
	=
	\frac{1}{2} \int_{[-\frac{1}{2},\frac{1}{2})^d} \dd x \int_{[-\frac{1}{2},\frac{1}{2})^d} \dd y \int_{[-\frac{1}{2},\frac{1}{2})^d} \dd z \big\langle \xi^* \cdot  \frac{\partial^2 (a (\xi + \nabla \phi))(z)}{\partial g(x) \partial g(y)} \big \rangle_{L}
	\frac{d}{dL} \hat{c}_L(x,y),
	\end{aligned}
	\end{equation}
	where, in contrast to \eqref{ao02}, the partial derivatives $\frac{\partial}{\partial g}$ have to be understood in a periodic sense.
	Then, notice that we may apply the operator $\frac{\partial^2 }{\partial g(x) \partial g(y)}$ to \eqref{pde:9.1_quad} for $e_i \rightsquigarrow \xi$ and test it against $\phi^*$; similarly, we may test \eqref{ao01} with the periodic function $\frac{\partial^2 \phi}{\partial g(x) \partial g(y)}$.
	Since the boundary contributions vanish because every considered function is periodic, this yields
	\begin{align*}
	\int_{[-\frac{1}{2},\frac{1}{2})^d}\nabla \phi^* \cdot  \frac{\partial^2 a (\xi + \nabla \phi)}{\partial g(x) \partial g(y)} = 0
	=
	\int_{[-\frac{1}{2},\frac{1}{2})^d} (\xi^* + \nabla \phi^*) \cdot a \frac{\partial^2 \nabla \phi}{\partial g(x) \partial g(y)}.
	\end{align*}
	Using Leibniz' rule on \eqref{MJ:Num:0003} into which we insert the above identities yields
	\begin{equation}\label{MJ:Num:0004}
	\begin{aligned}
	\frac{d}{dL} \langl \xi^*\cdot\abar\xi  \rangl_{L}
	=~&
	\frac{1}{2} \int_{[-\frac{1}{2},\frac{1}{2})^d} \dd x \int_{[-\frac{1}{2},\frac{1}{2})^d} \dd y \int_{[-\frac{1}{2},\frac{1}{2})^d}\Big(\big\langle (\xi^* + \nabla \phi^*) \cdot  \frac{\partial^2 a}{\partial g(x) \partial g(y)} (\xi + \nabla \phi)\big \rangle_{L}
	\\
	&+\big\langle (\xi^* + \nabla \phi^*) \cdot  \frac{\partial a}{\partial g(x)} \frac{\partial \nabla \phi}{\partial g(y)} \big \rangle_{L}+\big\langle (\xi^* + \nabla \phi^*) \cdot  \frac{\partial a}{\partial g(y)} \frac{\partial \nabla \phi}{\partial g(x)} \big \rangle_{L}
	\Big) \frac{d}{dL} \hat{c}_L(x,y).
	\end{aligned}
	\end{equation}
	
	As in Section \ref{SS:formula}, we remark that by \eqref{Def_a} we have $\frac{\partial a(z)}{\partial g(x)}$ $=a'(x)\delta(x-z)$ and $\frac{\partial^2 a(z)}{\partial g(x)\partial g(y)}$ $=a'(x)\delta(x-z) \delta(y-z)$.
	Moreover, applying the operator $\frac{\partial}{\partial g(x)}$ on \eqref{pde:9.1_quad} for $e_i \rightsquigarrow \xi$, we obtain the representation
	\begin{align*}
	\frac{\partial \nabla \phi(z)}{\partial g(x)}=-\nabla\nabla G^{per}(z,x)a'(x)(\nabla\phi+\xi)(x).
	\end{align*}
	Inserting these identities into \eqref{MJ:Num:0004}, recalling \eqref{MJ:Num:0002}, and using the symmetry
	$\hat{c}_L(x,y) = \hat{c}_L(y,x) $, we get \eqref{MJ:Num:0001}.

\bibliographystyle{plain}
\bibliography{0_Bib_unifie}

\end{document}